\setlist[itemize,1]{leftmargin=1.8em}
\setlist[enumerate,1]{leftmargin=1.8em}
\definecolor{webbrown}{rgb}{0.65, 0.16, 0.16}	
\numberwithin{equation}{section}				
\newcommand{\de}{\partial}
\newcommand{\bbraket}[1]{\llbracket #1 \rrbracket}
\DeclarePairedDelimiter\floor{\lfloor}{\rfloor}
\newcommand\norder[1]{{\vcentcolon}\!\mathrel{#1}\!{\vcentcolon}}
\newcommand{\Z}{\mathbb{Z}}
\newcommand{\Q}{\mathbb{Q}}
\newcommand{\C}{\mathbb{C}}
\renewcommand{\P}{\mathbb{P}}
\newcommand{\iu}{\mathrm{i}}
\newcommand{\Id}{\mathrm{Id}}
\newcommand{\Ai}{\mathrm{Ai}}
\newcommand{\Bi}{\mathrm{Bi}}
\newcommand{\Gi}{\mathrm{Gi}}
\newcommand{\Hi}{\mathrm{Hi}}
\newcommand{\mc}[1]{\mathcal{#1}}
\newcommand{\ms}[1]{\mathsf{#1}}
\newcommand{\Mbar}{\overline{\mathcal{M}}}
\DeclareMathOperator*{\Res}{Res}
\DeclareMathOperator{\Tr}{\mathrm{Tr}}
\DeclareMathOperator{\End}{End}
\newcommand{\Aut}{\mathord{\mathrm{Aut}}}
\theoremstyle{plain}
\newtheorem{theorem}{Theorem}[section]
\newtheorem{proposition}[theorem]{Proposition}
\newtheorem{conjecture}[theorem]{Conjecture}
\newtheorem{lemma}[theorem]{Lemma}
\newtheorem{corollary}[theorem]{Corollary}
\theoremstyle{definition}
\newtheorem{definition}[theorem]{Definition}
\newtheorem{remark}[theorem]{Remark}
\newtheorem{example}[theorem]{Example}
\theoremstyle{plain}
\newtheorem{introthm}{Theorem}
\newtheorem{introcor}[introthm]{Corollary}
\newtheorem{introconj}[introthm]{Conjecture}
\let\oldtocsubsection=\tocsubsection
\renewcommand{\tocsubsection}[2]{\hspace{1em}\oldtocsubsection{#1}{#2}}
\crefname{lemma}{lemma}{lemmata}
\Crefname{lemma}{Lemma}{Lemmata}
\crefname{subsection}{subsection}{subsections}
\Crefname{subsection}{Subsection}{Subsections}
\crefname{conjecture}{conjecture}{conjectures}
\Crefname{conjecture}{Conjecture}{Conjectures}
\title{Relations on $\overline{\mathcal{M}}_{g,n}$ and the negative $r$-spin Witten conjecture}
\author[N. K. Chidambaram]{Nitin Kumar Chidambaram}
\address[N. K. Chidambaram]{
	Max Planck Institut f\"ur Mathematik, Vivatsgasse 7, 53111 Bonn, Germany %
}
\curraddr{School of Mathematics, University of Edinburgh, James Clerk Maxwell Building, Peter Guthrie Tait Rd, Edinburgh EH9 3FD, U.K. %
}
\email{nitin.chidambaram@ed.ac.uk}
\author[E. Garcia-Failde]{Elba Garcia-Failde}
\address[E. Garcia-Failde]{
	Sorbonne Universit\'e, UMR 7586 CNRS, Institut de Math\'ematiques de Jussieu--Paris Rive Gauche, 75252 Paris, France %
}
\email{elba.garcia-failde@imj-prg.fr}
\author[A. Giacchetto]{Alessandro Giacchetto}
\address[A. Giacchetto]{
	Universit\'e Paris-Saclay, UMR 3681 CNRS, CEA, Institut de Physique Th\'eorique, 91191 Gif-sur-Yvette, France %
}
\curraddr{
		Departement Mathematik, ETH Zürich, Rämisstrasse 101, Zürich 8044, Switzerland
}
\email{alessandro.giacchetto@math.ethz.ch}
\date{}
\subjclass[2020]{Primary 14H10, 14H70; Secondary 37K20, 81R12}
\begin{document}

\begin{abstract}
	We construct and study various properties of a negative spin version of the Witten $ r $-spin class. By taking the top Chern class of a certain vector bundle on the moduli space of spin curves that parametrises $ r $-th roots of the anticanonical bundle, we construct a non-semisimple cohomological field theory (CohFT) that we call the Theta class $ \Theta^r $. This CohFT does not have a flat unit and its associated Dubrovin--Frobenius manifold is nowhere semisimple. Despite this, we construct a semisimple deformation of the Theta class, and  using the Teleman reconstruction theorem, we obtain tautological relations on $ \Mbar_{g,n} $. Furthermore, we prove that the descendant potential of the Theta class is the unique solution to a set of $ \mathcal{W} $-algebra constraints, which implies a recursive formula for all the descendant integrals. Using this result for $ r = 2 $, we prove Norbury's conjecture which states that the descendant potential of $ \Theta^2 $ coincides with the Brézin--Gross--Witten tau function of the KdV hierarchy. Furthermore, we conjecture that the descendant potential of $ \Theta^r $ is the $ r $-BGW tau function of the $ r $-KdV hierarchy and prove the conjecture for $ r = 3 $. 
\end{abstract}

\maketitle
\thispagestyle{empty}

\tableofcontents

\section{Introduction and results}

One of the earliest generalisations of the famous Witten--Kontsevich theorem \cite{Wit90,Kon92} is the so-called Witten's $ r $-spin conjecture \cite{Wit93}, which states that the generating function of integrals of the Witten $ r $-spin class coupled with $ \psi $-classes on $ \Mbar_{g,n} $ is a tau function for the $ r $-KdV integrable hierarchy. At the time, the Witten $ r $-spin class was not defined. Witten sketched a construction in genus zero using the moduli space of $ r $-spin curves, which is the moduli space parametrising $ r $-th roots of the canonical bundle. The first mathematical construction appeared many years later due to Polischuk and Vaintrob \cite{PV01} and turned out to be remarkably intricate. Proving the Witten $ r $-spin conjecture required the joint effort of mathematicians in different fields \cite{AvM92,Giv01,FSZ10}.  

The Witten $ r $-spin class also encodes a lot of information concerning the cohomology ring of the moduli space of curves $ \Mbar_{g,n} $. By analysing the Dubrovin--Frobenius manifold associated to the Witten $ r $-spin class, Pandharipande, Pixton and Zvonkine \cite{PPZ15} proved vanishing relations among tautological classes in the cohomology ring of $ \Mbar_{g,n} $, known as Pixton's relations \cite{Pix13}, which explain all presently known relations in the tautological ring. 

\addtocontents{toc}{\protect\setcounter{tocdepth}{1}}
\subsection*{The Theta class}
In this paper, we are interested in a version of the Witten $ r $-spin class  for ``negative" spin. More precisely, the geometric space of interest to us is the moduli space of $ r $-th roots of the \emph{anticanonical} bundle. For an integer $ r \geq 2 $, and integers $ a_i \geq 0 $ (called primary fields), we consider the moduli space of spin curves $ \Mbar_{g;a}^{r,-1} $ which parametrizes the data of a stable curve $ (C, x_1,\ldots, x_n) $ and a line bundle $ L $ on $ C $ such that 
\[
	L^{\otimes r} \cong \omega_{\log}^{-1} \left(-\sum_{i=1}^n a_i x_i \right),
\]
where $ \omega_{\log} $ is the log canonical bundle of $ C $. Following Chiodo \cite{Chi08+}, we take the derived pushforward of the universal line bundle associated to $ L $ from the universal curve to $ \Mbar_{g;a}^{r,-1} $, to obtain a vector bundle $ \mathcal{V}^{r,-1}_{g;a} $ (defined precisely in \cref{eqn:V:vb}). We consider the top Chern class of $ \mathcal{V}^{r,-1}_{g;a} $, push it forward along the forgetful map to $ \Mbar_{g,n} $, and rescale it by a factor to obtain the \textit{Theta class} $ \Theta^r_{g,n} $ that lives in $ H^{\bullet}(\Mbar_{g,n}) $. The case $r=2$ was considered by Norbury in \cite{Nor23}. A detailed description of the Theta class is in \cref{sec:thetadef}.

For every $ (g,n) $, the cohomology class $ \Theta^r_{g,n} $ depends on the primary fields, and hence, we can view it as a collection of maps
\[
	\Theta^r_{g,n} \colon  V^{\otimes n} \longrightarrow H^{\bullet}(\Mbar_{g,n})
\]
from the $n$-th tensor power of the vector space $V = \Q \braket{ v_1, v_2, \ldots, v_{r-1} }$. The evaluation at $ v_a $ corresponds to the Theta class with primary fields $a$. For different $ (g,n) $, these collections of maps satisfy various properties respecting the structure of $ \Mbar_{g,n} $ and the natural morphisms between them.  A convenient notion to keep track of these properties is the language of cohomological field theories (CohFTs for short) introduced by Kontsevich and Manin \cite{KM94}. Notice that the vector associated to the primary field $a = 0$ is excluded from $V$. This is fundamental for the CohFT properties to hold (see \cref{rem:v0} for more details).

The CohFT structure equips the vector space $ V $ with a product known as the quantum product. The quantum product turns $ V $ into a commutative associative algebra, and when this algebra is semisimple, the CohFT is called semisimple. Semisimple CohFTs form a very special class, as they are completely understood by results of Teleman \cite{Tel12}. Non-semisimple CohFTs, on the other hand, are rather poorly understood. Our first main result, proved in \cref{sec:deformedTheta}, is that the Theta class $ \Theta^r_{g,n} $ satisfies the axioms of a CohFT, but is not semisimple. Norbury \cite{Nor23} proved the following result in the special case $ r =2 $, and our theorem extends it to all $ r \geq 2 $.

\begin{introthm}[CohFT properties]
	The Theta class $( \Theta^r_{g,n} )_{2g-2+n > 0}$ is a non-semisimple CohFT of rank $(r-1)$ on $ (V,\eta) $, with the non-degenerate pairing defined as
	\[
		\eta \colon V \times V \longrightarrow \Q\,,
		\qquad\qquad
		\eta(v_a,v_b) = \delta_{a+b,r}\,.
	\]
	The CohFT does not admit a flat unit. On the other hand, it admits a modified unit $ v_{r-1} $, that is
	\[
		\Theta^{r}_{g,n+1}(v_{a_1} \otimes \cdots \otimes v_{a_n} \otimes v_{r-1})
		=
		\psi_{n+1} \cdot p^* \Theta^{r}_{g,n}(v_{a_1} \otimes \cdots \otimes v_{a_n})\, ,
	\]
	where $ p \colon \Mbar_{g,n+1} \to \Mbar_{g,n} $ is the forgetful map that forgets the last marked point.
\end{introthm}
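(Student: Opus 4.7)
The plan is to verify the four ingredients of the theorem — $S_n$-equivariance, identification of the pairing, the gluing axioms at separating and non-separating nodes, and the modified-unit relation — directly from the construction of $\Theta^r_{g,n}$ on the moduli of twisted spin curves, following the strategy used for the positive $r$-spin (Witten) class but adapted to the negative setting. Permutation symmetry is immediate because every step of the construction (the moduli stack $\Mbar_{g;a}^{r,-1}$, the universal line bundle, the derived pushforward, the top Chern class, and the pushforward to $\Mbar_{g,n}$) is natural in the marked points, so the only genuine content lies in the pairing, the gluing, and the modified-unit relation.

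For the pairing, I would evaluate $\Theta^r_{0,3}(v_a \otimes v_b \otimes v_c)$ on the point $\Mbar_{0,3}$. The moduli $\Mbar_{0;(a,b,c)}^{r,-1}$ is non-empty only when $r \mid (1+a+b+c)$, and a direct Riemann--Roch computation on $\P^1$ identifies the rank of $\mathcal{V}^{r,-1}_{0;(a,b,c)}$. After the rescaling built into the definition of $\Theta^r$, one checks that $\Theta^r_{0,3}(v_a \otimes v_b \otimes v_{r-1}) = \delta_{a+b,r}$, which pins down $\eta$ as stated and simultaneously confirms that $V = \Q\braket{v_1,\dots,v_{r-1}}$ is exactly the right underlying vector space (in particular that $v_0$ must be excluded for non-degeneracy of $\eta$).

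The gluing axioms are the main technical step. For each boundary morphism of $\Mbar_{g,n}$ — separating $\Mbar_{g_1,n_1+1} \times \Mbar_{g_2,n_2+1} \hookrightarrow \Mbar_{g,n}$ or non-separating $\Mbar_{g-1,n+2} \to \Mbar_{g,n}$ — I would form the fibre product with $\Mbar_{g;a}^{r,-1}$ and decompose it as a disjoint union indexed by pairs $(a',a'')$ of spin labels at the node. The matching condition $a'+a'' = r$ comes from the local structure of $r$-th roots of $\omega_{\log}^{-1}(-\sum a_i x_i)$ near a node: $\omega_{\log}$ has residue $\pm 1$ on the two branches and the primary-field twist is disjoint from the nodal points, so the $r$-th roots match only when the branch labels are complementary modulo $r$. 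The normalization short exact sequence for the universal line bundle along the node then yields a decomposition of $\mathcal{V}^{r,-1}_{g;a}$ into bundles on the two (or one) components, and taking top Chern classes produces the gluing identity, with the sum over $(a',a'')$ exactly matching the pairing from the previous paragraph.

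For the modified-unit relation, lift $p\colon \Mbar_{g,n+1} \to \Mbar_{g,n}$ to the spin moduli with $a_{n+1}=r-1$; using the comparison $\omega_{\log,n+1} = \pi^*\omega_{\log,n} \otimes \mathcal{O}(x_{n+1})$, the universal line bundles fit into a short exact sequence whose derived pushforward differs from $\tilde p^* \mathcal{V}^{r,-1}_{g;a}$ by a rank-one summand supported along the new section, whose first Chern class is $\psi_{n+1}$. Taking top Chern classes gives the claimed relation. Non-semisimplicity and the absence of a flat unit then follow from an explicit computation of the quantum product in genus zero: the modified-unit identity involves a $\psi_{n+1}$ factor and so cannot be the identity axiom of a Frobenius algebra, while the quantum product turns out to be nilpotent. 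The main obstacle I anticipate is the boundary analysis underlying the gluing axiom — although the strategy parallels the positive $r$-spin case, the negative twist introduces sign changes and potential vanishing of $\mathcal{V}^{r,-1}_{g;a}$ that must be tracked carefully to confirm both the matching condition $a'+a''=r$ and the precise factorization of top Chern classes.
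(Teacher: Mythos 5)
Your route is genuinely different from the paper's: the paper never redoes the boundary geometry of the spin moduli space. It quotes the fact that the full Chiodo classes $C^{r,-1}_{g,n}$ already form a CohFT on $W = \Q\braket{v_0,\dots,v_{r-1}}$ with pairing $\eta_C(v_a,v_b)=\tfrac1r\delta_{a+b\equiv 0 \pmod r}$, observes that $\Theta^r$ is (up to normalisation) the top-degree part of $C^{r,-1}$, and derives every axiom by extracting the degree-$D^r_{g;a}$ part of the corresponding Chiodo identity. Your plan to verify the gluing directly via normalization sequences would, if completed, amount to reproving the Chiodo CohFT structure together with the top-degree extraction; that is legitimate but much heavier, and it is exactly at the extraction step that your sketch has a gap.

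The gap is the node type $(a',a'')=(0,0)$. The matching condition for $r$-th roots of $\omega_{\log}^{-1}(-\sum a_i x_i)$ at a node is $a'+a''\equiv 0\pmod r$, which is satisfied both by the pairs with $a'+a''=r$ \emph{and} by $(0,0)$. Your assertion that the sum over node labels ``exactly matches the pairing $\delta_{a+b,r}$'' silently discards the $(0,0)$ boundary stratum, which genuinely occurs in the fibre product, and no reason is given for why it contributes nothing. This is precisely the delicate point of the whole theorem (cf.\ the paper's Remark~2.8: $v_0$ must be excluded for non-degeneracy, yet untwisted nodes cannot be wished away). The stratum does drop out, but for a reason that must be supplied: on a $(0,0)$ node the normalization sequence produces an extra line $L|_q$ in $R^1\pi_*L$, so the ranks on the two sides of the gluing differ by one, and $c_{\mathrm{top}}$ of the glued bundle restricted to that stratum acquires the factor $c_1(L|_q)$, which vanishes rationally because $L|_q^{\otimes r}\cong\omega_{\log}^{-1}|_q$ is canonically trivial; equivalently, in the paper's bookkeeping, the Chiodo class with two $v_0$ insertions at the node has top degree $D^r_{g;a}-1$ and hence no component in degree $D^r_{g;a}$. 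Separately, your identification of the pairing is wrong as stated: $\Theta^r_{0,3}(v_a\otimes v_b\otimes v_{r-1})$ has degree $\tfrac{a+b}{r}>0$ on the point $\Mbar_{0,3}$ and therefore vanishes for all $a,b\ge 1$, so it cannot equal $\delta_{a+b,r}$; the compatibility $\Omega_{0,3}(v\otimes w\otimes\mathbb{1})=\eta(v,w)$ is part of the flat-unit axiom, which this CohFT does not satisfy, and the pairing is instead forced by the degree matching across the node. Finally, observing that the modified-unit relation carries a $\psi_{n+1}$ only shows $v_{r-1}$ is not a flat unit; the absence of \emph{any} flat unit requires the genus-zero quantum product, which at $\epsilon=0$ has no identity element since $v\bullet v_{r-1}=0$ for every $v\in V$.
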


It is worth noting that, although neither the Witten $r$-spin class nor the Theta class are semisimple, the construction of the latter is significantly easier. While abundant examples of non-semisimple CohFTs can be obtained from Gromov--Witten theory, the Theta class is the simplest example of a non-semisimple CohFT that we are aware of.

The Theta class $ \Theta^2_{g,n} $  for $ r = 2 $ was first considered by Norbury \cite{Nor23} and  has already seen various applications. For example, it appears in the context of Jackiw--Teitelboim supergravity \cite{SW20}, which in turn is related to the enumerative geometry of super Riemann surfaces \cite{Nor}. The moduli space of super hyperbolic Riemann surfaces with geodesic boundaries $\widehat{\mc{M}}_{g,n}(L)$ is a symplectic supermanifold and its supervolumes can be calculated as a tautological integral \cite{Nor}:
\[
	\mathrm{Vol}\bigl( \widehat{\mc{M}}_{g,n}(L) \bigr)
	=
	(-1)^n \, 2^{g-1+n} \int_{\overline{\mc{M}}_{g,n}}
	\Theta^2_{g,n}
	\exp\left( 2\pi^2\kappa_1 + \frac{1}{2} \sum_{i=1}^n L_i^2 \psi_i \right) .
\]
Other applications include Gromov--Witten theory of $ \P^1 $ coupled to the Theta class and the Legendre ensemble \cite{Nor22}, and integrable hierarchies of BKP type \cite{Ale23}.

\subsection*{Tautological relations}
The analysis of the Witten class in \cite{PPZ15,PPZ19} relies on the key property that the Dubrovin--Frobenius manifold is generically semisimple. The authors work at a semisimple point, use the Teleman reconstruction theorem to understand the CohFT completely, and prove tautological relations by taking a limit to the non-semisimple point. In contrast, the Dubrovin--Frobenius manifold associated to the Theta class is \textit{nowhere semisimple}. Thus, the above line of attack fails.

In order to circumvent this issue, we will exploit one key difference between the Witten class and the Theta class concerning the range of primary fields. The Witten $ r $-spin class satisfies Ramond vanishing: this means that setting any of the primary fields $ a = 0 $ forces the class to be zero. On the other hand, the Theta class does not vanish upon setting any of the primary fields $ a = 0 $ (although they must be excluded for the CohFT axioms to be satisfied).

Having a non-vanishing cohomology class upon setting the primary fields to zero, allows us to deform the Theta class along the direction $v_0$:
\[
	\Theta^{r,\epsilon}_{g,n}(v_{a_1} \otimes \cdots \otimes v_{a_n})
	\coloneqq
	\sum_{m \geq 0} \frac{\epsilon^m}{m!} p_{m,*} \Theta^{r}_{g,n+m}(v_{a_1} \otimes \cdots \otimes v_{a_n}\otimes v_0^{\otimes m}) \, ,
\]
where $ p_{m} \colon \Mbar_{g,n+m} \to \Mbar_{g,n} $ is the map that forgets the last $ m $ marked points. We stress again that, as $ v_0 $ is not part of the vector space underlying the Theta class, the above deformation is not a shift along any direction of the associated Dubrovin--Frobenius manifold. Instead we view the deformed Theta class as a family of CohFTs parametrised by $ \epsilon \in \C$, which coincides with the Theta class in degree
\[
	D^r_{g;a} =  \frac{(r+2)(g-1) + n + \sum_{i=1}^n a_i}{r}
\]
and vanishes in all degrees higher than that.

We prove that for any $ \epsilon \neq 0 $, the deformed Theta class is a semisimple, homogeneous CohFT of rank $r-1$. Then, we compute all the ingredients of the Teleman reconstruction theorem to find an expression for $ \Theta^{r,\epsilon} $ in terms of tautological classes. Taking the limit $ \epsilon \to 0 $ back to the Theta class allows us to produce a collection of tautological relations on $ \Mbar_{g,n} $. Our result is the first application of Teleman's reconstruction theorem for CohFTs without a flat unit that we are aware of. Teleman's reconstruction theorem for CohFTs without a flat unit does not  apply for $ n = 0 $ (as we explain in \cref{subsubsec:Teleman}), which results in the exception for $ n = 0 $ in \cref{thm:intro:rels} and \cref{cor:intro:r=2}. We summarize these results from \cref{sec:reconstructionTheta} below.

The Givental--Teleman reconstruction recipe for homogeneous CohFTs without a flat unit requires the data of an $ R $-matrix $ R(u) = \Id + \sum_{m\geq 1} R_m u^m \in \End(V)\bbraket{u} $, a vacuum vector $\mathbf{v}(u) \in V\bbraket{u} $, and the degree zero part of the CohFT in question, called the topological field theory $ w_{g,n} \colon V^{\otimes n} \to H^0(\Mbar_{g,n})$. Using Teleman's characterisation of the $R$-matrix and the vacuum vector as solutions to differential equations \cite{Tel12}, we calculate both explicitly. The $ R $-matrix elements for the deformed Theta class are 
\begin{equation*}
	R^{-1}(u)_a^b
	=
	\epsilon^{\frac{a-b}{r-1}}
	\sum_{ \substack{m \ge 0 \\ b+m \equiv a \pmod{r-1}}}
		P_m(r,a-1) \left( \frac{u}{(1-r) \epsilon^{\frac{r}{r-1}}} \right)^m \, .
\end{equation*}
The coefficients $P_m(r,a)$, first found in \cite{PPZ19} and interpreted as coefficients of the asymptotic expansion of the hyper-Airy function and its derivatives in \cite{CCGG24}, are computed recursively as
\begin{equation*}
\begin{cases}
	P_m(r,a) - P_m(r,a-1) = r \left( m - \frac{1}{2} - \frac{a}{r} \right) P_{m-1}(r,a-1) \, ,
	\qquad
	\text{for } a = 1,\dots,r-2 \, , \\
	P_m(r,r-1) = P_m(r,0)  \, .
\end{cases}
\end{equation*}
The vacuum vector for the deformed Theta class is defined as
\begin{equation*}
	\begin{cases}
		\mathbf{v}(u)^a
		=
		- \epsilon^{\frac{a+1}{1-r}}
		\sum_{m \geq 0} 
		H_m(r,a) \biggl( \frac{u}{\epsilon^{\frac{r}{r-1}}} \biggr)^{(r-1)m + r-2 - a} \, \qquad
		\text{for } a = 1,\dots,r-2\,, \\
		\mathbf{v}(u)^{r-1}
		=
		- \epsilon^{\frac{r}{1-r}}
		\sum_{m \geq 0} 
		H_m(r,0) \biggl( \frac{u}{\epsilon^{\frac{r}{r-1}}} \biggr)^{(r-1)m + r-2 }
	\end{cases}
\end{equation*} 
The coefficients $H_m(r,a)$ are interpreted as coefficients of the asymptotic expansion of the hyper-Scorer functions and defined by
\begin{equation*}
		H_m(r,a) = \frac{(rm+r-1-a)!}{m! r^m} \, ,
		\qquad
		\text{for } a = 0,\dots,r-2\,.
\end{equation*}
From the $R$-matrix and the vacuum, we can define the translation $ T(u) = u \bigl( \mathbf{1} - R^{-1}(u) \mathbf{v}(u) \bigr)$. Lastly, the topological field theory $ w_{g,n} $ is
\[
	w_{g,n}(v_{a_1} \otimes \cdots \otimes v_{a_n}) = (-1)^n \epsilon^{\frac{(r+2)(g-1) + n + \sum_i a_i}{r-1}} (r-1)^g \cdot \delta\,,
\]
where $\delta$ equals $1$ if $3g-3+n+\sum_i a_i \equiv 0 \pmod{r-1}$ and $0$ otherwise. 

\begin{introthm}[Tautological relations]\label{thm:intro:rels}
	The deformed Theta class can be expressed  as 
	\[
		\Theta^{r, \epsilon}_{g,n} = R T w_{g,n} \, , 
	\] for all stable (g,n), except for $ (g,n) = (g,0) $ and degree $ d = 3g-3 $,
	and the Theta class is the term of degree $ d = D^{r}_{g;a}$:
	\[
		\Theta^{r}_{g,n}(v_{a_1} \otimes \cdots \otimes v_{a_n}) = \bigl[(R T w_{g,n})(v_{a_1} \otimes \cdots v_{a_n}) \bigr]^{D^r_{g;a}} \, . 
	\]
	All the terms of degree $ d > D^r_{g;a} $ vanish and thus produce relations among tautological classes:
	\[
		\bigl[(R T w_{g,n})(v_{a_1} \otimes \cdots v_{a_n}) \bigr]^{d} = 0 \in H^{2d}(\Mbar_{g,n})\,,
		\qquad \text{ for } d > D^{r}_{g;a} \, (\text{except } (g,n) = (g,0) \text{ and } d = 3g-3) \,.
	\]
\end{introthm}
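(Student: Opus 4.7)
The strategy combines the semisimplicity of the deformed Theta class (established earlier) with Teleman's reconstruction theorem applied in its modified-unit version. Since $\Theta^{r,\epsilon}$ is semisimple and homogeneous with respect to an Euler field for every $\epsilon \neq 0$, and inherits the modified unit $v_{r-1}$ from $\Theta^r$, the Givental--Teleman reconstruction theorem for CohFTs without a flat unit will give $\Theta^{r,\epsilon}_{g,n} = (R\,T\,w)_{g,n}$ on $\Mbar_{g,n}$ for all stable $(g,n)$. The exclusion at $(g,0)$ in top degree $3g-3$ arises precisely because the reconstruction for closed surfaces has no marked point with which to apply the modified-unit axiom, and one cannot reduce to a positive-$n$ case via a forgetful morphism; this is the only locus where the argument genuinely fails.

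Next I would determine the reconstruction data $R$, $\mathbf{v}$, and $w$ explicitly. Teleman's characterisation translates semisimplicity and Euler-homogeneity into ODEs in the spectral variable $u$: the $R$-matrix is determined (up to the standard normalisation $R(0)=\Id$) by the quantum product and the Euler field, while the vacuum vector $\mathbf{v}(u)$ solves a non-homogeneous first-order equation coupling it to the modified unit. Using the explicit quantum product of the deformed Theta class (computed in \cref{sec:deformedTheta}) together with the Euler field, I would solve these ODEs. The recursions produced by these ODEs match, after the change of variables $u \mapsto u/((1-r)\epsilon^{r/(r-1)})$, precisely the defining recurrences for the hyper-Airy coefficients $P_m(r,a)$ of \cite{PPZ19,CCGG22} and the hyper-Scorer coefficients $H_m(r,a)$; the stated closed forms are then the unique formal power-series solutions with the correct normalisation. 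The topological field theory $w_{g,n}$ is determined as the degree-zero part of $\Theta^{r,\epsilon}_{g,n}$ and computed directly via the glueing axioms and the genus-zero three-point function, producing the prefactor $(-1)^n \epsilon^{((r+2)(g-1)+n+\sum a_i)/(r-1)}(r-1)^g$ and the $\delta$ selection rule.

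Finally, the tautological relations follow from a degree comparison. The original Theta class $\Theta^r_{g,n}(v_{a_1} \otimes \cdots \otimes v_{a_n})$ is concentrated in cohomological degree $D^r_{g;a}$ since, up to a rescaling, it is the top Chern class of $\mc{V}^{r,-1}_{g;a}$, and by construction it coincides with the degree-$D^r_{g;a}$ component of $\Theta^{r,\epsilon}$ (being the $\epsilon$-independent contribution in the appropriate grading). Comparing both sides of $\Theta^{r,\epsilon}_{g,n} = RTw$: the right-hand side a priori produces contributions in all cohomological degrees $0 \le d \le 3g-3+n$, whereas the left-hand side has no component in degree $d > D^r_{g;a}$. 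Extracting the degree-$d$ part of $(RTw)_{g,n}$ for each $d > D^r_{g;a}$ (outside the single excluded case $(g,0)$, $d=3g-3$) therefore yields the desired vanishing in $H^{2d}(\Mbar_{g,n})$.

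The main obstacle is the careful derivation and explicit solution of Teleman's ODEs in this non-standard modified-unit setting, and in particular the identification of the solutions with the hyper-Airy and hyper-Scorer coefficients; a clean inductive argument on $m$, using the recursion for $P_m(r,a)$ and the factorial formula for $H_m(r,a)$, should suffice. Handling the $(g,0)$ exception and confirming that the reconstruction applies away from it is a secondary, but delicate, point requiring a direct invocation of the CohFT's behaviour under the forgetful morphism $p \colon \Mbar_{g,n+1} \to \Mbar_{g,n}$.
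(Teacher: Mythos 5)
Your proposal is correct and follows essentially the same route as the paper: semisimplicity and Euler-homogeneity of $\Theta^{r,\epsilon}$ feed into Teleman's reconstruction, the $R$-matrix and vacuum are obtained by solving Teleman's ODEs and identified with the hyper-Airy coefficients $P_m(r,a)$ and hyper-Scorer coefficients $H_m(r,a)$, the TFT is computed from the gluing axioms, and the relations follow from the degree bound $\deg \Theta^{r,\epsilon}_{g,n} \le D^r_{g;a}$. The only slight divergence is in the $(g,0)$ exception: the paper extends the reconstruction to $n=0$ not via the modified unit but via homogeneity, comparing $p_*\Omega_{g,1}(E_0)$ for the CohFT and its reconstruction, which determines $\Omega_{g,0}$ in every degree except where the scalar $\delta(g-1)-\deg = 3g-3-\deg$ vanishes.
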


While our result is not valid in Chow, as Teleman's result has not been extended to Chow field theories, we expect the above relations to hold in the Chow ring. Moreover, we expect them to be implied by Pixton's relations, although we do not have a proof of this statement. In \cite{Jan18}, Janda proves that all tautological relations that one can obtain by taking a limit from a semisimple point to a point on the discriminant of a Dubrovin--Frobenius manifold are implied by Pixton's relations. However, our relations in \cref{thm:intro:rels} do not fall into this class of tautological relations, as we have emphasised previously.

For $ r \geq 3 $, we provide a description of the deformed Theta class and the relations in \cref{thm:intro:rels} in terms of weighted stable graphs in \cref{prop:stablegraphs}. When $ r = 2 $ the statement takes a very simple form, involving $ \kappa $-classes only, and proves a conjecture of Kazarian--Norbury \cite[conjectures~1 and 4]{KN24}. More precisely, consider $s_m$ for $m > 0$ defined uniquely via
\begin{equation*}
	\exp\left(-\sum_{m > 0}  s_m u^m \right) = \sum_{k \geq 0} (-1)^k (2k+1)!!  u^{k}\,.
\end{equation*}
	
\begin{introcor}[{Kazarian--Norbury conjecture}]\label{cor:intro:r=2}
	We have the following vanishing relations among $\kappa$-classes:
	\begin{equation*}
		\left[ \exp\biggl( \sum_{m > 0} s_m \kappa_m \biggr) \right]^{d} = 0 \in H^{2d}(\Mbar_{g,n})\,,
		\qquad
		\text{for } d > 2g - 2 + n \, (\text{except } (g,n) = (g,0) \text{ and } d = 3g-3)\,.
	\end{equation*}
	Moreover, in degree $ d = 2g-2+n $, we get Norbury's Theta class:
	\begin{equation*}
		\Theta^2_{g,n} = \left[ \exp\biggl( \sum_{m > 0} s_m \kappa_m \biggr) \right]^{2g-2+n}  \in H^{2(2g-2+n)}(\Mbar_{g,n}) \,.
	\end{equation*}
	Finally, for $ n = 0 $, and $ d = (3g-3) $, we have the following statement, for $ g \geq 2 $,
	\begin{equation*}
		\int_{\Mbar_{g} } \exp\biggl( \sum_{m > 0} s_m \kappa_m \biggr) = (-1)^g \frac{B_{2g}}{(2g)(2g-2)} = |\chi(M_g)|,
	\end{equation*} where $ B_{2g} $ denotes the $ (2g) $-th Bernoulli number and $ \chi(\mathcal M_g) $ is the orbifold Euler characteristic of $ \mathcal{M}_g $.
\end{introcor}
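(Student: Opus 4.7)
The plan is to specialise \cref{thm:intro:rels} to the case $r = 2$, where the rank of the CohFT is $r - 1 = 1$ and all Teleman reconstruction data reduce to scalar formal power series in $u$. Concretely, with $V = \Q\langle v_1 \rangle$, the topological field theory evaluates to $w_{g,n}(v_1^{\otimes n}) = (-1)^n \epsilon^{4g - 4 + 2n}$ (since $\delta = 1$ trivially when $r - 1 = 1$), the inverse $R$-matrix reduces to the scalar series $R^{-1}(u) = \sum_{m \geq 0} P_m(2,0) (-u/\epsilon^2)^m$, and the vacuum vector simplifies to $\mathbf{v}(u)^1 = -\epsilon^{-2} \sum_{m \geq 0} (2m+1)!! \, (u/\epsilon^2)^m$, using $H_m(2,0) = (2m+1)!/(m!\, 2^m) = (2m+1)!!$. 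The defining relation $\exp(-\sum_{m>0} s_m u^m) = \sum_k (-1)^k (2k+1)!! u^k$ from the statement is therefore precisely the generating series that assembles from the vacuum/translation data.

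The core of the argument is to show that, in this rank-one setting, the Teleman reconstruction formula $\Theta^{2,\epsilon}_{g,n} = (R T w)_{g,n}$ collapses to a single $\kappa$-class insertion at the unique vertex of the trivial graph. Under the Givental/Teleman action, the translation $T(u)$ produces dilaton leaves attached to each vertex, which push down to $\kappa$-class insertions via Mumford's formula; a direct computation with the explicit $H_m(2,0)$ shows that they package exactly into $\exp\bigl(\sum_{m>0} s_m \kappa_m\bigr)$. One must then argue that the edge contributions in the graph sum vanish, exploiting the interplay between $R^{-1}(u)$, $T(u)$, and the modified unit property $\Theta^2_{g,n+1}(v_1^{\otimes n} \otimes v_1) = \psi_{n+1} \cdot p^* \Theta^2_{g,n}(v_1^{\otimes n})$, which plays the role of a dilaton equation and effectively trivialises each edge. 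Combined with the scalar ingredients above, this yields
\[
	\Theta^{2,\epsilon}_{g,n} = (-1)^n \, \epsilon^{4g - 4 + 2n} \, \exp\!\biggl( \sum_{m > 0} s_m \kappa_m \biggr) \in H^{\bullet}(\Mbar_{g,n}) \, .
\]

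Given this identity, the first two claims of the corollary follow immediately from \cref{thm:intro:rels}: the degree $d = 2g - 2 + n = D^2_{g;a}$ component recovers Norbury's class $\Theta^2_{g,n}$, and all strictly higher-degree parts vanish, producing the stated $\kappa$-class relations (the exception $(g,n) = (g,0)$, $d = 3g - 3$ being inherited directly from the exclusion in \cref{thm:intro:rels}).

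For the remaining integral $\int_{\Mbar_g} \exp\bigl(\sum_{m>0} s_m \kappa_m\bigr)$ in the $(g,0)$, $d = 3g-3$ case, the plan is to use the modified-unit (dilaton-like) property of $\Theta^2$ to relate the integral to one over $\Mbar_{g,1}$, where the identity of the previous paragraph applies, and then to recognise the resulting expression via the Brézin--Gross--Witten/Kontsevich identification of $\Theta^2$ established elsewhere in the paper. The equality with $(-1)^g B_{2g}/[(2g)(2g-2)] = |\chi(\mc{M}_g)|$ then follows from the classical Harer--Zagier formula. The hard part will be the graph collapse: rigorously controlling the cancellation of edge contributions in the Teleman graph sum for a non-unital rank-one CohFT requires a careful combinatorial analysis of how $R^{-1}$, $T$, and the modified unit interact with gluing, and this is the step where the specific structure of the Theta class must be used most carefully.
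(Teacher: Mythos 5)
Your overall strategy for the first two claims is the paper's (specialise \cref{thm:intro:rels} to $r=2$, assemble the vacuum coefficients $H_m(2,0)=(2m+1)!!$ into $\exp(\sum_m s_m\kappa_m)$, and read off the degree-$(2g-2+n)$ part versus the higher-degree vanishing), but you misidentify the mechanism that collapses the Givental graph sum, and this is exactly the step you flag as "the hard part" and leave unproven. There is no subtle cancellation of edge contributions mediated by the modified unit: in rank one the Hodge grading operator is $\mu=0$ and Teleman's recursion $[R_{m+1},\phi]=(m+\mu)R_m$ from \cref{eqn:Rdef} has vanishing left-hand side, forcing $R_m=0$ for all $m\ge 1$, i.e.\ $R(u)=\Id$ (this is how \cref{lem:Rmatrix} degenerates at $r=2$; note the recursion \cref{eqn:PPZ:polys} for $P_m(2,0)$ is vacuous, so your series $\sum_m P_m(2,0)(-u/\epsilon^2)^m$ is not determined by the data you quote). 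With $R=\Id$ the edge weight $E(u,v)=\frac{\Id\otimes\Id-R^{-1}(u)\otimes R^{-1}(v)}{u+v}\eta^\dagger$ is identically zero, only the one-vertex graph survives, and the translation leaves push forward to $\kappa$-classes giving $\Theta^{2,\epsilon}_{g,n}=(-\epsilon^2)^{2g-2+n}\exp(\sum_{m>0}s_m(-\epsilon^2)^{-m}\kappa_m)$ directly. Your "careful combinatorial analysis" is replaced by a one-line linear-algebra observation, and the modified unit plays no role here.

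The plan for the $n=0$, $d=3g-3$ integral has a genuine gap. The class $[\exp(\sum_m s_m\kappa_m)]^{3g-3}$ on $\Mbar_g$ is \emph{not} $\Theta^2_{g,0}$ (which has pure degree $2g-2$); it is precisely the exceptional degree excluded from \cref{thm:intro:rels}, where $RTw_{g,0}$ and $\Theta^{2,\epsilon}_{g,0}$ differ (cf.\ \cref{rem:n=0}). Consequently the modified unit axiom of $\Theta^2$ gives you no handle on it, and the BGW identification of the descendant potential only controls $n\ge 1$ integrals, so it cannot be "recognised" there either. The paper's route (\cref{prop:Fg}) is different: it uses the pull-back property of the full reconstructed class through the vacuum vector (\cref{lem:pull-back}), which does hold for $n=0$, to rewrite the $\Mbar_g$ integral as $-\frac{1}{2g-2}\int_{\Mbar_{g,1}}\psi_1\, p^*K_{g,0}$, identifies this with the topological recursion free energy $F_g$ of the Bessel curve $\mc{S}_\epsilon$ via \cref{thm:SC:Wspin:v1}, and then quotes the known value $F_g=\frac{B_{2g}}{2g(2g-2)}\epsilon^{2-2g}$ from \cite{IKT19}; Harer--Zagier enters only in the cosmetic final identification with $|\chi(\mc{M}_g)|$. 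If you want to avoid the spectral curve entirely, you would still need some independent evaluation of that degree-$(3g-3)$ constant, which your sketch does not supply.
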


See also \cite{YZ} for an independent proof of the last equation via the Hodge--BGW correspondence. \Cref{cor:intro:r=2} provides a geometric explanation for the vanishing relations among the $\kappa$-classes observed and conjectured by Kazarian and Norbury \cite{KN24}. These are \textit{universal} polynomial relations (i.e. independent of the genus and the number of marked points) that hold among the $\kappa$-classes. The first polynomials in $\exp( \sum_{m > 0} s_m \kappa_m )$ are:
\begin{multline*}
	1
	+
	3 \kappa_1 
	+
	\frac{3}{2} \left( 3 \kappa_1^2-7 \kappa_2\right) 
	+
	\frac{3}{2} \left( 3 \kappa_1^3-21 \kappa_2 \kappa_1+46 \kappa_3\right) 
	+
	\frac{9}{8} \left( 3 \kappa_1^4-42 \kappa_2 \kappa_1^2+184 \kappa_3 \kappa_1+49 \kappa_2^2-562 \kappa_4\right) 
	\\
	+
	\frac{9}{40} \left( 9 \kappa_1^5-210 \kappa_2 \kappa_1^3+1380 \kappa_3 \kappa_1^2+735 \kappa_2^2 \kappa_1-8430 \kappa_4 \kappa_1-3220 \kappa_2 \kappa_3+32216 \kappa_5\right) 
	+
	\cdots .
\end{multline*}
The statement for $ n = 0 $ is also conjectured in \textit{loc.~cit.} and we prove it using the identification with the Eynard--Orantin topological recursion in \cref{prop:Fg}. We also comment on a curious similarity to Gromov--Witten potentials at the conifold point (called the conifold gap in \cite{HK07}) in \cref{rem:n=0} that was pointed out to us by F.~Janda.

\subsection*{\texorpdfstring{$W$}{W}-constraints and integrability}
Given that  the Theta class is a negative spin analogue of the Witten class, we expect it to satisfy a version of the Witten $ r $-spin conjecture. The original formulation of the Witten $ r $-spin conjecture states that the descendant potential of the Witten $ r $-spin class is the tau function of the $ r $-KdV hierarchy satisfying the string equation \cite{Wit93}. An equivalent version \cite{AvM92} states that the descendant potential is a highest weight vector in a certain representation of a $ \mathcal{W} $-algebra (which is a generalisation of the Virasoro algebra), or in other words, that the descendant potential is the unique solution to a set of $ \mathcal{W} $-constraints.

A very useful tool to understand the connections between CohFTs and $ \mathcal{W} $-constraints is the Eynard--Orantin topological recursion \cite{EO07}. The topological recursion is a universal formalism that takes as input an algebraic curve along with some extra data called a spectral curve, and recursively constructs multidifferentials known as correlators on the underlying curve. From the spectral curve, one can build a semisimple CohFT such that the multidifferentials can be expressed in terms of descendant integrals of this CohFT \cite{DOSS14}. Conversely, however, the answer to whether one can (and if so how to) obtain a global spectral curve from a given semisimple CohFT is unanswered in general. While there is a partial answer in \cite{DNOS19}, our situation where the Dubrovin--Frobenius manifold does not have a flat unit vector field is not covered by their results.

Nevertheless, in \cref{sec:SCdTheta}, we find a global spectral curve whose topological recursion correlators encode the descendant theory of the Theta class.

\begin{introthm}[Topological recursion]\label{intro:thm:TR}
	The CohFT associated to the $1$-parameter family of spectral curves $ \mathcal{S}_{\epsilon} $ on $\P^1$ given by 
	\begin{equation*}
		x(z) = \frac{z^r}{r} -  \epsilon z\,,
		\qquad\quad
		y(z) = - \frac{1}{z}\,,
		\qquad\quad
		B(z_1,z_2) = \frac{dz_1 dz_2}{(z_1 - z_2)^2}\,,
	\end{equation*}
	is the deformed Theta class $ \Theta^{r, \epsilon} $. More precisely, the  correlators corresponding to the spectral curve $ \mathcal{S}_{\epsilon} $ are
	\begin{equation*}
		\omega_{g,n}(z_1,\dots,z_n)
		=
		\sum_{a_1,\dots,a_n = 1}^{r-1}
		\int_{\overline{\mathcal{M}}_{g,n}}
		\Theta^{r,\epsilon}_{g,n}({v}_{a_1} \otimes\cdots\otimes {v}_{a_n}) \prod_{i=1}^n \sum_{k_i \ge 0}
		\psi_i^{k_i} d\xi^{k_i,a_i}(z_i)\,,
	\end{equation*}
	where the $ d \xi^{k,a}(z) $ are certain explicit differentials given in \cref{eqn:xi}.
\end{introthm}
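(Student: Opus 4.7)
The plan is to apply the Dunin-Barkowski--Orantin--Shadrin--Spitz (DOSS) reconstruction theorem, which expresses topological recursion correlators on a semisimple spectral curve as descendant integrals of a CohFT built from local data at the ramification points via Givental's group action. The strategy is to compute the DOSS ingredients (topological field theory, $R$-matrix, translation) for $\mc{S}_\epsilon$ and match them with the explicit ingredients of $\Theta^{r,\epsilon}$ already calculated in \cref{thm:intro:rels}; the stated formula for $\omega_{g,n}$ is then an immediate consequence of the DOSS decomposition.

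First, I verify that $\mc{S}_\epsilon$ is semisimple of rank $r-1$: the ramification points are the simple zeros of $dx = (z^{r-1} - \epsilon) dz$, namely $z_a = \epsilon^{1/(r-1)} \zeta_{r-1}^a$ for $a = 0,\ldots, r-2$, where $\zeta_{r-1}$ is a primitive $(r-1)$-th root of unity; these are distinct for $\epsilon \neq 0$. The rank matches $\dim V$, and after identifying the transition matrix between the canonical basis of idempotents at the $z_a$ and the flat basis $\{v_1,\ldots,v_{r-1}\}$ of $V$, the DOSS topological field theory values, obtained from $(y(z_a)^{2}\,x''(z_a))^{-1/2}$, reproduce the formula for $w_{g,n}$ in \cref{thm:intro:rels}. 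Next, the $R$-matrix is obtained as an asymptotic expansion of Laplace-type integrals near each ramification point; for $x(z) = z^r/r - \epsilon z$, a suitable rescaling reduces these to the hyper-Airy-type integrals studied in \cite{CCGG22}, whose asymptotic coefficients are exactly the polynomials $P_m(r,a)$ satisfying the recursion in \cref{thm:intro:rels}. Similarly, the DOSS translation is obtained from the Laplace transform of the regular part of $y \, dx$; for $y = -1/z$ this yields hyper-Scorer-type integrals whose expansion coefficients are the factorials $H_m(r,a) = (rm + r - 1 - a)!/(m!\, r^m)$. Finally, the differentials $d\xi^{k,a}(z)$ are identified with the natural descendant basis in the flat frame, i.e.\ suitably normalized $(k+1)$-th derivatives of the basic auxiliary differentials of DOSS attached to the $r-1$ ramification points.

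The main obstacle is the explicit saddle-point analysis: one must carefully track the change of basis between canonical idempotents and the flat basis $\{v_a\}$, together with the choice of branches of $\epsilon^{1/(r-1)}$, in order to match the precise signs and powers of $\epsilon$ appearing in \cref{thm:intro:rels}. An alternative route that bypasses part of this bookkeeping is to invoke Teleman's uniqueness characterization of the $R$-matrix and vacuum of a homogeneous semisimple CohFT as the unique formal solutions to certain differential equations in canonical and flat coordinates \cite{Tel12}: since $\Theta^{r,\epsilon}$ is homogeneous semisimple with an explicit Euler field, it would suffice to verify that the DOSS data of $\mc{S}_\epsilon$ satisfy the same equations. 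Either way, semisimplicity is essential, and the argument degenerates precisely at $\epsilon = 0$, consistent with the non-semisimplicity of the undeformed Theta class.
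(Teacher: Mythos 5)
Your proposal follows essentially the same route as the paper: Section 4 computes the DOSS data (TFT, $R$-matrix, translation) of $\mc{S}_\epsilon$ in the canonical basis via hyper-Airy and hyper-Scorer asymptotics, changes to the flat basis, and matches the result with the Teleman reconstruction data of $\Theta^{r,\epsilon}$ from Section 3, exactly as you outline. The only caveat is that the bookkeeping you defer (signs, powers of $\epsilon$, the normalization constant $C$, and the identification of the translation with $u(\mathbf{1}-R^{-1}\mathbf{v})$ via the identity for $P_m(r,-1)$) constitutes the bulk of the actual work, but your sketch correctly identifies all of it.
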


In order to prove the theorem, we compute the CohFT associated to this spectral curve using the prescription of \cite{DOSS14}. We find that the $ R $-matrix and the translation $ T $ of the corresponding CohFT can be expressed in terms of the asymptotic expansion of certain solutions of the hyper-Airy differential equation (and the associated inhomogeneous equation):
\[
	f^{(r-1)} (t) = (-1)^{r-1} t \, f(t)\,,
\]
which for $ r = 3 $ reduces to the classical Airy ODE. Our computations rely on the calculation of \cite{CCGG24} in the context of the shifted Witten class. Finally, we match the $ R $-matrix and translation thus obtained with the ones we computed using Teleman's formulae in \cref{thm:intro:rels}, to finish the proof.

The fact that the $ R $-matrix for the deformed Theta class (\cref{thm:intro:rels}) essentially matches the $ R $-matrix for the $ e_1 $-shifted Witten class studied in \cite{PPZ19} is intriguing. From the perspective of the topological recursion, \cref{intro:thm:TR} provides an explanation for this occurrence: the function $ x(z) $ for the spectral curve is exactly the same in both cases \cite{CCGG24}. However, we do not know  of a purely algebro-geometric reason for this phenomenon and this deserves further investigation. On a related note, the Dubrovin--Frobenius manifold associated to the deformed Theta class has not been studied so far in the literature to our knowledge. Perhaps a singularity theory  understanding of the Dubrovin--Frobenius manifold could help explain why the $ R $-matrices match.

By taking the parameter $ \epsilon \to 0 $ in the above theorem, we obtain as an immediate corollary that the descendant integrals of the Theta class are computed by the Bouchard--Eynard topological recursion on the  \textit{$ r $-Bessel spectral curve}, which is defined on $ \P^1 $ by
\begin{equation*}
	x(z) = \frac{z^r}{r} \,,
	\qquad\quad
	y(z) = - \frac{1}{z}\,,
	\qquad\quad
	B(z_1,z_2) = \frac{dz_1 dz_2}{(z_1 - z_2)^2}\,.
\end{equation*}
The Bouchard--Eynard topological recursion was analysed thoroughly in the context of higher Airy structures in \cite{BBCCN24} and  proved to be equivalent to a set of $ \mathcal{W} $-constraints in general. The $ \mathcal{W} $-algebra we are interested in here is $ \mathcal{W}^k(\mathfrak{gl}_r) $ at the self-dual level $ k = 1 - r $. Putting together the identification of the correlators of the $ r $-Bessel spectral curve with descendant integrals of the Theta class and the results of \cite{BBCCN24}, we get one of our main results (\cref{thm:Wconst}) which can be viewed as a direct analogue of the Witten $ r $-spin conjecture to negative spin:

\begin{introthm}[{$\mc{W}$-constraints}]\label{thm:intro:Z}
	The descendant potential of the Theta class
	\[
		Z^{\Theta^r}
		=
		\exp\left(
			\sum_{\substack{ g \geq 0, n \geq 1 \\ 2g-2+n > 0}} \frac{\hbar^{g-1}}{n!}
			\sum_{a_1,\ldots, a_n =1}^{r-1}
				\int_{\Mbar_{g,n}} \Theta^{r}_{g,n}\left(v_{a_1} \otimes \cdots \otimes v_{a_n}\right) \prod_{i=1}^n \sum_{k_i \geq 0} \psi_i^{k_i} \,\, (rk_i+a_i)!^{(r)} t_{a_i + r k_i} \right)
	\]
	is the unique solution to the following set of $ \mathcal{W} $-constraints
	\[
		H^i_k \, Z^{\Theta^r} = 0\,, \; \text{ for all } k \geq -i + 2 \; \text{ and } \; i = 1,2,\ldots, r\,,
	\]
	where the $ H^i_k $ are  differential operators defined in \cref{eqn:H:cnstrnts} that form a representation of the $ \mathcal{W}^{-r+1}(\mathfrak{gl}_r) $ algebra.
\end{introthm}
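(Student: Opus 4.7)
The plan is to deduce this theorem as a corollary of \cref{intro:thm:TR} combined with the higher Airy structure framework of \cite{BBCCN22}. The bridge between the geometric CohFT side and the algebraic $\mc{W}$-algebra side is provided by the Bouchard--Eynard topological recursion on the $r$-Bessel spectral curve, which arises naturally in the $\epsilon \to 0$ limit of the deformed setting.

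First I would take $\epsilon \to 0$ in \cref{intro:thm:TR}. In this limit, the spectral curve $\mc{S}_\epsilon$ degenerates from a curve with $r$ simple ramification points to the $r$-Bessel spectral curve with a single ramification point of order $r$ at $z=0$. Since the deformed Theta class agrees with the Theta class in degree $D^r_{g;a}$ and since the $\psi$-classes and the differentials $d\xi^{k,a}(z)$ in the unfolding formula behave well under this degeneration, the correlators of the $r$-Bessel spectral curve (defined now by the Bouchard--Eynard recursion) compute the descendant integrals
\[
	\omega_{g,n}(z_1,\dots,z_n)
	=
	\sum_{a_1,\dots,a_n=1}^{r-1}
	\int_{\overline{\mc{M}}_{g,n}}
	\Theta^{r}_{g,n}(v_{a_1}\otimes\cdots\otimes v_{a_n})
	\prod_{i=1}^n \sum_{k_i \ge 0} \psi_i^{k_i} d\xi^{k_i,a_i}(z_i)\bigr|_{\epsilon=0} \, .
\]
One technical point to verify carefully is that the degeneration of the spectral curve is compatible with the topological recursion, i.e.\ that the Chekhov--Eynard--Orantin recursion on $\mc{S}_\epsilon$ degenerates to the Bouchard--Eynard recursion on the $r$-Bessel curve. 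This is known in the literature (it was established in \cite{BBCCN22,CCGG22} for exactly this family of curves) so I would simply invoke it.

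Next I would invoke the main result of \cite{BBCCN22}, which reformulates the Bouchard--Eynard recursion on spectral curves with a ramification point of order $r$ as a higher Airy structure. Concretely, the principal specialisation of the correlators into a single generating series (i.e.\ the partition function) is the unique formal solution, normalised with leading coefficient $1$, to a system of differential operators $H^i_k$ for $i=1,\dots,r$ and $k\ge -i+2$, which form a representation of the $\mc{W}^{-r+1}(\mathfrak{gl}_r)$-algebra at the self-dual level. The explicit form of the modes, via free-field realisation at a twisted module, is precisely what leads to the operators $H^i_k$ in \cref{eqn:H:cnstrnts}. I would translate the variables: the expansion coefficients of $\omega_{g,n}$ against the $d\xi^{k,a}$ basis become the descendant times $t_{a+rk}$, with the factorials $(rk+a)!^{(r)}$ arising from the explicit form of $d\xi^{k,a}$ near the ramification point. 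Combining with the identification from the first step shows $Z^{\Theta^r}$ solves $H^i_k Z^{\Theta^r}=0$.

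Uniqueness is then immediate from the general theory of Airy structures: the operators $H^i_k$ have a triangular structure with respect to the grading by total $\hbar$-weight plus polynomial degree, so the equations $H^i_k Z = 0$ determine each coefficient of $\log Z$ recursively from the unstable terms, making the solution unique up to normalisation. The main obstacle in this plan is bookkeeping rather than conceptual: one must match the specific normalisations of the $r$-shifted factorials $(rk+a)!^{(r)}$, the basis of differentials $d\xi^{k,a}$ at the ramification point, and the twisted-module free-field realisation of $\mc{W}^{-r+1}(\mathfrak{gl}_r)$ used in \cite{BBCCN22}, so that the $H^i_k$ appearing as Airy structure operators coincide exactly with those in \cref{eqn:H:cnstrnts}. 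A secondary subtlety is handling the unstable $(g,n)\in\{(0,1),(0,2)\}$ contributions, which do not appear in $Z^{\Theta^r}$ but which set the initial data of the recursion; these are absorbed into the dilaton-shift and the normalisation of $B$ on the $r$-Bessel curve, as in \cite{BBCCN22}.
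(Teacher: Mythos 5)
Your proposal follows essentially the same route as the paper: degenerate the family $\mc{S}_\epsilon$ to the $r$-Bessel curve, justify the exchange of limit and recursion via the limit result of \cite{CCGG22} (the paper's \cref{prop:limit:tr}), and then invoke the identification of the Bouchard--Eynard recursion with the $\mc{W}^{-r+1}(\mathfrak{gl}_r)$ higher Airy structure from \cite{BBCCN22} (the paper's \cref{thm:HAS}), with uniqueness coming from the Airy structure formalism. This is precisely how the paper proves \cref{thm:ThetaTR} and \cref{thm:Wconst}; the only quibble is that $\mc{S}_\epsilon$ has $r-1$ (not $r$) simple ramification points, which does not affect the argument.
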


The above statement is extremely powerful, as it allows one to calculate any descendant integral by a recursion on the integer $ 2g-2+n $, i.e.~a topological recursion. This theorem answers a question posed in \cite{BBCCN24}, and the result was conjectured by the first-named author along with Borot and Bouchard.

It has been observed in many cases in the literature that $ \mathcal{W} $-constraints  and integrability properties of the corresponding  solutions go hand-in-hand. Examples include the Witten $ r $-spin conjecture \cite{AvM92}, its extensions to singularities of type $ D $ and $ E $ \cite{FJR13} and open intersection theory \cite{Ale17}. Motivated by these results, we formulate the following integrability conjecture.

\begin{introconj}[{$r$-KdV integrability}] \label{intro:conj}
	The descendant potential $ Z^{\Theta^r} $ is a tau function of the $ r $-KdV integrable hierarchy. Moreover, it coincides with the $ r $-spin Brézin--Gross--Witten tau function.
\end{introconj}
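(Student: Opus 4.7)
The plan is to prove the conjecture by matching two characterisations of the descendant potential: on one hand, by Theorem C, $Z^{\Theta^r}$ is the unique solution to the $\mathcal{W}^{-r+1}(\mathfrak{gl}_r)$-constraints $H^i_k Z = 0$ normalised by the topological initial data; on the other hand, Alexandrov's $r$-spin Brézin--Gross--Witten tau function should admit a parallel characterisation as a highest-weight vector for a principally specialised module over the same $\mathcal{W}$-algebra at self-dual level. If these two sets of constraints can be identified (under a triangular change of variables between the descendant times $t_{a+rk}$ and the KdV times), uniqueness forces the equality $Z^{\Theta^r} = Z^{r\text{-BGW}}$, and the first half of the conjecture then follows automatically from Alexandrov's result that $Z^{r\text{-BGW}}$ is a $r$-KdV tau function.

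Concretely, I would first recall Alexandrov's fermionic/Kac--Wakimoto construction of $Z^{r\text{-BGW}}$ in the bosonic Fock space of the principal Heisenberg subalgebra of $\widehat{\mathfrak{gl}}_r$. Combining this with Feigin--Frenkel duality at the self-dual level $k = 1-r$, one extracts a family of differential operators $\widetilde{H}^i_k$ annihilating $Z^{r\text{-BGW}}$ that organise into a $\mathcal{W}^{-r+1}(\mathfrak{gl}_r)$-module. The key step is to match $\widetilde{H}^i_k$ with the $H^i_k$ of Theorem C. Since both families are generators of representations of the same $\mathcal{W}$-algebra, the matching reduces to checking the highest-weight data and a finite number of leading terms; once these agree, the remaining identifications propagate by the $\mathcal{W}$-algebra commutation relations. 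Finally, the uniqueness clause of Theorem C, together with a check that $Z^{r\text{-BGW}}$ has the correct string-type initial condition dictated by the vacuum vector $\mathbf{v}(u)$ computed in \cref{thm:intro:rels}, concludes the argument.

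An alternative, more geometric route is to leverage Theorem D: taking $\epsilon \to 0$, the descendant integrals of $\Theta^r$ are computed by Bouchard--Eynard topological recursion on the $r$-Bessel spectral curve $x = z^r/r$, $y = -1/z$. It is well documented that this is precisely the spectral curve whose correlators generate $Z^{r\text{-BGW}}$ (for $r=2$ this is the classical Bessel curve, and the general case is treated in the higher Airy structure framework of \cite{BBCCN22}). Thus, identifying the $r$-Bessel correlators of \cite{BBCCN22} with the generating series of $Z^{r\text{-BGW}}$ provides a direct match with the formula given by Theorem D, and integrability follows from the known correspondence between topological recursion on the $r$-Bessel curve and the $r$-KdV hierarchy.

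The main obstacle is the precise matching at general $r$ between the $\mathcal{W}$-constraints of Theorem C, written in the polarisation natural for descendant variables with the modified unit $v_{r-1}$, and the Heisenberg-type constraints satisfied by $Z^{r\text{-BGW}}$. At the self-dual level the $\mathcal{W}$-algebra is not freely generated by the quadratic current alone, and the quantum corrections appearing in the higher-spin generators $W^{(i)}$ for $i \geq 3$ are delicate: establishing the correspondence requires a careful control of normal ordering, of the dilaton-type shift induced by the modified unit, and of the absence of extra zero-mode contributions. This is likely why the conjecture is currently accessible only for $r=2$ (where the $\mathcal{W}$-algebra collapses to Virasoro and the BGW Virasoro constraints are classical) and for $r=3$ (where the $\mathcal{W}^{(3)}$-presentation is explicit), and why the general case is left open.
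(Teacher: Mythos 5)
The statement you are proving is a conjecture: the paper itself establishes it only for $r=2$ and $r=3$ (\cref{thm:r23}) and reduces the general case to a single open equation. Your first route shares the paper's overall philosophy (uniqueness of the solution to the $\mathcal{W}$-constraints of \cref{thm:Wconst} forces $Z^{\Theta^r} = Z^{r\textup{-BGW}}$ once both sides satisfy them), but it contains a genuine gap at its core: you assume that Alexandrov's construction, combined with Feigin--Frenkel duality, already yields a full family of operators $\widetilde{H}^i_k$ for $i=1,\dots,r$ annihilating $Z^{r\textup{-BGW}}$, and that the problem is merely to match normalisations. No such family is known for general $r$ --- producing it \emph{is} the conjecture. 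What is actually available from the Sato-Grassmannian description is much less: the Kac--Schwarz operators $a = z^r/r$, $b$ and $c$ of \cref{thm:KS:operators}, which give only the $r$-th reduction $H^1_k$ (all $k\ge 1$) and, after the dictionary \labelcref{eqn:W:as:KS}, the low-spin modes built from $a,b,c$. The paper's key structural input, which your plan does not isolate, is \cref{prop:tau}: using the commutators $[H^1_m,H^\ell_n]$ and $[H^2_m,H^\ell_n]$ of $\mathcal{W}^{-r+1}(\mathfrak{gl}_r)$, a function with a genus expansion satisfying the $r$-th reduction and the \emph{single} string equation $H^r_{-r+2}Z=\mu Z$ automatically satisfies all the constraints. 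This collapses the ``finite number of leading terms'' you would need to check down to exactly one equation (\cref{prop:stringequation}), and that one equation is precisely what can be verified for $r=2,3$ (where $H^2_0$, resp.\ $H^3_{-1}$, is expressible through $a,b,c$) but remains open for $r\ge 4$.

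Your second, ``more geometric'' route is circular for general $r$: the assertion that the correlators of the $r$-Bessel spectral curve generate $Z^{r\textup{-BGW}}$ is not documented anywhere for $r\ge 3$; it is equivalent (via \cref{thm:ThetaTR} and \cref{thm:Wconst}) to the conjecture itself. Only the case $r=2$ (the classical Bessel curve and the BGW tau function) is established in the literature. Your closing paragraph correctly diagnoses why $r=2,3$ are the accessible cases, but the proposal as written does not close the gap for any $r$, and for $r=2,3$ it does not supply the explicit Kac--Schwarz computation that the paper uses to prove the string equation.
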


The $r$-spin Brézin--Gross--Witten ($ r $-BGW) tau function, introduced by Mironov, Morozov and Semenoff \cite{MMS96} and studied further in the recent work of Alexandrov and Dhara \cite{AD25}, is a generalisation of the Brézin--Gross--Witten tau function \cite{GW80,BG80}. It can be regarded as the second constituent of complex matrix models theory \cite{AMM09} together with the $r$-Witten--Kontsevich model, and \cref{intro:conj} gives an enumerative-geometric interpretation to it, generalising a conjecture of Norbury \cite{Nor23} to higher spin.

Another interesting feature of the BGW matrix integral (i.e. for $r = 2$) is its connection to single monotone Hurwitz numbers \cite{GGN14, Nov} in the strong coupling limit (the so-called character phase), which corresponds to $\epsilon\rightarrow 1$. By combining the ELSV-type formula for such Hurwitz numbers found in \cite{ALS16} along with the expression we find in \cref{cor:intro:r=2} for the  deformed Theta class $\Theta^{2,\epsilon}$, we get the following ELSV-type formula:
\[
	h_{g;\mu_1,\dots,\mu_n}^{\le}
	=
	(-1)^n\prod_{i=1}^{n} \binom{2\mu_i}{\mu_i}
	\int_{\overline{\mc{M}}_{g,n}}
		\Theta_{g,n}^{2,1} \,
		\prod_{j=1}^{n} \sum_{k_j \ge 0}
			\psi_j^{k_j} \frac{(2(\mu_j + k_j) - 1)!!}{(2\mu_j - 1)!!}\, .
\]
On the other hand, the connection between the BGW integral and the Theta class for $\epsilon = 0$ appearing in \cref{intro:conj} is concerned with the weak coupling limit (the Kontsevich phase). Strikingly, our analysis of the Theta class provides a family of cohomology classes that interpolates between the strong and weak coupling limits of the BGW integral.

In \cref{thm:intro:Z}, we have proved the most difficult part of the $ r $-KdV integrability conjecture. In addition, by analysing certain commutation relations of the $ \mathcal{W} $-algebra $ \mathcal{W}^{-r+1}(\mathfrak{gl}_r) $, we reduce the conjecture to the following equivalent statement: \cref{intro:conj} holds if and only if the $ r $-BGW tau function $Z^{r\textup{-BGW}}$ satisfies the \textit{string equation}:
\[
	H^r_{-r+2} \, Z^{r\textup{-BGW}} = 0 \,.
\]
Finally, the string equation for $ r = 2 $ (and indeed, the full set of Virasoro constraints) is already known thanks to \cite{GN92}, and was derived using the formalism of Kac--Schwarz operators in \cite{Ale18}. For $ r = 3 $, we prove the string equation using Kac--Schwarz operators. This proves \cref{intro:conj} in the case of $ r =2 $ and $ r= 3 $ (\cref{thm:r23}).

\begin{introthm}
	\Cref{intro:conj} is true  for $ r = 2 $ (Norbury's conjecture \cite{Nor23}) and for $ r =3 $.
\end{introthm}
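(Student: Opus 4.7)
The plan is to exploit the reduction already indicated in the introduction and then verify the resulting single equation case by case.

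First, I would establish the algebraic reduction that replaces the full conjecture with the single string equation
\[
H^r_{-r+2}\, Z^{r\textup{-BGW}} = 0.
\]
By construction, $Z^{r\textup{-BGW}}$ is a tau function of the $r$-KdV hierarchy, hence it is already annihilated by the Heisenberg subalgebra of $\mathcal{W}^{-r+1}(\mathfrak{gl}_r)$ in non-negative modes, together with a small number of further low-mode constraints coming from its normalization. By analyzing the commutators $[H^i_k, H^j_l]$ in $\mathcal{W}^{-r+1}(\mathfrak{gl}_r)$ and successively bracketing these known constraints against $H^r_{-r+2}$, the goal is to generate the full system $\{H^i_k\}_{k\geq -i+2,\, i=1,\ldots,r}$ appearing in the earlier $\mathcal{W}$-constraints theorem. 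Once this is done, the uniqueness statement therein forces $Z^{r\textup{-BGW}} = Z^{\Theta^r}$, which delivers both assertions of the conjecture (integrability and identification with the $r$-BGW tau function).

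Second, for $r = 2$ the string equation is the lowest of the Virasoro constraints satisfied by the Brézin--Gross--Witten partition function. This fact is classical, contained in the work of Gross and Newman \cite{GN92}, and a clean Kac--Schwarz derivation was subsequently provided by Alexandrov \cite{Ale18}. I would simply cite these sources.

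Third, for $r = 3$ the plan is to apply the Kac--Schwarz formalism. I would identify $Z^{3\textup{-BGW}}$ with a point $V \subset \C\pparethesis{z}$ of the Sato Grassmannian via the boson--fermion correspondence, using the determinantal description provided by Alexandrov--Dhara \cite{AD22}. The string equation is then equivalent to the statement that a single linear operator $A$ on $\C\pparethesis{z}$ preserves $V$. I would next determine a pair of Kac--Schwarz operators $P, Q$ generating the stabilizer algebra of $V$ --- natural candidates are deformations of the operators arising in the $3$-Airy (Witten--Kontsevich for $r=3$) case --- verify directly that they preserve $V$ by acting on its explicit basis, and finally express $A$ as a non-commutative polynomial in $P$ and $Q$. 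The hard part is this third step: both guessing the correct Kac--Schwarz generators for $V$ and identifying the precise factorization of the string operator in terms of them. The $r=2$ analogue in \cite{Ale18} already rests on a non-trivial combinatorial identity, and one expects the $r=3$ case to require a higher-rank refinement of that identity. By contrast, the first step reduces to manipulations in a finitely generated subalgebra of $\mathcal{W}^{-r+1}(\mathfrak{gl}_r)$, and the second step is a citation.
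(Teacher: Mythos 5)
Your strategy coincides with the paper's: reduce \cref{intro:conj} to the single string equation by bracketing against the known constraints in $\mathcal{W}^{-r+1}(\mathfrak{gl}_r)$ (this is \cref{prop:tau} and \cref{prop:stringequation}), cite \cite{GN92,Ale18} for $r=2$, and attack $r=3$ via Kac--Schwarz operators. Two remarks on the reduction step: the string equation must be stated as $H^r_{-r+2}\,Z^{r\textup{-BGW}} = \mu\, Z^{r\textup{-BGW}}$ for some constant $\mu$, not $=0$, because the Kac--Schwarz formalism only yields eigenvector statements (the constants are then forced to vanish by the commutator argument); and one must first check that $Z^{r\textup{-BGW}}$ admits a genus expansion of the required form before the uniqueness of \cref{thm:HAS} applies — the paper does this using the asymptotics of the basis vectors from \cite{AD22}.

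The genuine gap is in your third step, which you yourself flag as ``the hard part'' but do not carry out: you neither exhibit the Kac--Schwarz generators for the $3$-BGW point of the Grassmannian nor the factorization of the string operator, so as written the $r=3$ case is a plan rather than a proof. The paper's resolution is more uniform than your sketch suggests. One does not need to guess $r$-specific deformations of the $3$-Airy operators: the three operators $a = e^{-S} x\, e^{S}$, $b = e^{-S} x\hbar^{1/2}\tfrac{d}{dx}\, e^{S}$ and $c = e^{-S} \hbar \tfrac{d}{dx} x \tfrac{d}{dx}\, e^{S}$ (with $x = z^r/r$) are shown to be Kac--Schwarz for \emph{every} $r$ by a one-line integration by parts on the integral representation of the basis vectors $\Psi_i$ (\cref{thm:KS:operators}); what is special about $r=2,3$ is only that these suffice to determine the point uniquely and that the relevant string mode lies in the algebra they generate. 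The factorization itself is not an ad hoc combinatorial identity but comes from the isomorphism $\mathfrak{W}^r_{1+\infty}\cong\mathcal{W}^{-r+1}(\mathfrak{gl}_r)$ of \cite{FKN92,FKRW95}, which expresses $H^2_0$ and $H^3_{-1}$ as explicit normally ordered polynomials in $\widehat{W}_a$, $\widehat{W}_b$, $\widehat{W}_c$ (\cref{eqn:W:as:KS}); since each of these acts by a constant on the tau function by \cref{prop:KS:op}, the string equation follows. Without producing these operators and this decomposition, your argument does not close for $r=3$.
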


\subsection*{Outline of the paper}

We provide a brief outline of the paper.

\begin{itemize}
	\item In \cref{sec:Theta}, we define the (deformed) Theta classes and prove that they form a CohFT.

	\item In \cref{sec:Teleman}  we investigate the Dubrovin--Frobenius manifold associated to the deformed Theta class, and apply the reconstruction theorem to find an expression for $ \Theta^r $ in terms of tautological classes and obtain tautological relations.

	\item In \cref{sec:TR}, we find global spectral curves corresponding to the deformed Theta class, and show that the $ r $-Bessel curve computes descendant integrals of the Theta class.

	\item Finally, in \cref{sec:W:cnstrs:rKdV}, we prove that the descendant potential of the Theta class is the unique solution to $ \mathcal{W} $-constraints. Then, we  conjecture that the $r$-BGW tau function matches with the descendant potential $ Z^{\Theta^r} $, and prove it for $ r=2 $ and $ 3 $. 
\end{itemize}
\addtocontents{toc}{\protect\setcounter{tocdepth}{2}}

\subsection*{Acknowledgements}

We thank Gaëtan Borot, Vincent Bouchard, Séverin Charbonnier, Alessandro Chio\-do, Bertrand Eynard, Felix Janda, Omar Kidwai, Danilo Lewański, Paul Norbury, Rahul Pandharipande, Sergey Shadrin, Johan van de Leur, and Di Yang for many helpful discussions. We also thank Alexander Alexandrov and Saswathi Dhara for discussions and sharing their results in \cite{AD25} with us before posting.

N.~K.~C.~is supported by the Max-Planck-Gesellschaft. A.~G. is supported by the Institut de Physique Th\'{e}orique Paris (IPhT), CEA, Universit\'{e} de Saclay. This work is partly a result of the ERC-SyG project ``Recursive and Exact New Quantum Theory'' (ReNewQuantum) which received funding from the European Research Council (ERC) under the European Union's Horizon 2020 research and innovation programme under grant agreement No 810573. We thank the Institut de Math\'ematiques de Jussieu - Paris Rive Gauche for its hospitality.

\section{The Theta class}\label{sec:Theta}

In this section, we introduce the main characters of our paper -- the Theta class $\Theta^r$ and its deformation $ \Theta^{r,\epsilon} $ -- and study their properties in detail.

\subsection{Cohomological field theories} 

In order to make our paper as self-contained as possible, we recall the notion of cohomological field theories (CohFTs), and describe some of their properties. For an excellent introduction to CohFTs, we refer the reader to the ICM 2018 address of Pandharipande \cite{Pan19}.

\begin{definition}\label{def:cohft}
	Let $V$ be a finite dimensional $\Q$-vector space equipped with a non-degenerate symmetric $2$-form $\eta$. A \emph{cohomological field theory} on $(V,\eta)$ consists of a collection $\Omega = (\Omega_{g,n})_{2g-2+n>0}$ of elements
	\begin{equation}
		\Omega_{g,n} \in H^{\bullet}(\Mbar_{g,n}) \otimes (V^{*})^{\otimes n}
	\end{equation}
	satisfying the following axioms.
	\begin{enumerate}
		\item[i)] {\sc Symmetry.} Each $\Omega_{g,n}$ is $S_n$-invariant, where the action of the symmetric group $S_n$ permutes  the marked points of $\Mbar_{g,n}$ and the copies of $(V^{*})^{\otimes n}$ simultaneously.

		\item[ii)] {\sc Gluing axiom.} With respect to the gluing maps
		\begin{equation}
		\begin{aligned}
			q \colon& \Mbar_{g-1,n+2} \longrightarrow \Mbar_{g,n}\,, \\
			r \colon& \Mbar_{g_1,n_1+1} \times \Mbar_{g_2,n_2+1}  \longrightarrow \Mbar_{g,n}\,,
			\qquad
			g_1 + g_2 = g, \; n_1 + n_2 = n\,,
		\end{aligned}
		\end{equation}
		we have
		\begin{equation}
		\begin{aligned}
			q^{*} \Omega_{g,n}(v_1 \otimes \cdots \otimes v_n)
			& =
			\Omega_{g-1,n+2}(v_1 \otimes \cdots \otimes v_n \otimes \eta^{\dag})\,, \\
			r^{*} \Omega_{g,n}(v_1 \otimes \cdots \otimes v_n)
			& =
			(\Omega_{g_1,n_1+1} \otimes \Omega_{g_2,n_2}) \left( \bigotimes_{i = 1}^{n_1} v_i \otimes \eta^{\dag} \otimes \bigotimes_{j = 1}^{n_2} v_{n_1+j}  \right),
		\end{aligned}
		\end{equation}
		where $\eta^{\dag} \in V^{\otimes 2}$ is the bivector dual to $\eta$.
	\end{enumerate}
	Often the vector space comes with a distinguished element $\mathbb 1 \in V$, which satisfies the following axiom:
	\begin{enumerate}
		\item[iii)] {\sc Unit axiom.} Consider the forgetful map
		\begin{equation}
			p \colon \Mbar_{g,n+1} \longrightarrow \Mbar_{g,n}\,.
		\end{equation}
		Then
		\begin{equation}
		\begin{aligned}
			p^{*} \Omega_{g,n}(v_1 \otimes \cdots \otimes v_n)
			=
			\Omega_{g,n+1}(v_1 \otimes \cdots \otimes v_n \otimes \mathbb{1}) \,.
		\end{aligned}
		\end{equation}
		Moreover, it is compatible with the pairing: $\Omega_{0,3}(v_1 \otimes v_2 \otimes \mathbb{1}) = \eta(v_1,v_2)$.
	\end{enumerate}
	In this case, $\Omega$ is called a cohomological field theory with the \emph{flat unit} $ \mathbb 1 $.

	Another possibility (cf.~\cite{Nor23}) is to substitute the unit axiom with a modified version. Suppose again that the vector space comes with a distinguished element $\nu \in V$. Then we can also ask for:
	\begin{enumerate}
		\item[iii')] {\sc Modified unit axiom.} Under the forgetful map,
		\begin{equation}
		\begin{aligned}
			\psi_{n+1} \cdot p^{*} \Omega_{g,n}(v_1 \otimes \cdots \otimes v_n)
			=
			\Omega_{g,n+1}(v_1 \otimes \cdots \otimes v_n \otimes \nu)\, .
		\end{aligned}
		\end{equation}
	\end{enumerate}
	In this case, $\Omega$ is called a cohomological field theory with a \emph{modified unit} $ \nu $.
\end{definition}

A CohFT determines a product $\bullet$ on $V$, called the \emph{quantum product}: $v_1 \bullet v_2$ is defined as the unique vector in $ V $ such that for all $v_3 \in V$ the following holds:
\begin{equation}
	\eta(v_1 \bullet v_2,v_3) = \Omega_{0,3}(v_1 \otimes v_2 \otimes v_3)\,.
\end{equation}
Commutativity and associativity of $\bullet$ follow from (i) and (ii) respectively, and thus $ V $ equipped with the quantum product carries the structure of  a commutative associative $ \Q $-algebra. If the CohFT has a flat unit, the quantum product is unital, with $\mathbb{1} \in V$ being the identity by (iii). An important class of CohFTs are the \textit{semisimple} ones: a CohFT $\Omega$ on $(V,\eta)$ is called semisimple if $(V,\bullet)$ is a semisimple algebra, i.e. if there exists a basis $(e_i)$ of idempotents
\begin{equation}
	e_i \bullet e_j = \delta_{i,j} \, e_i\,,
\end{equation}
after an extension of the base field to $\C$.

The degree $0$ part of a CohFT
\begin{equation}
	w_{g,n} = \deg_0{\Omega_{g,n}} \in H^{0}(\Mbar_{g,n}) \otimes (V^{*})^{\otimes n} \cong (V^{*})^{\otimes n}
\end{equation}
is also a CohFT and is called a $2d$ \textit{topological field theory} (TFT for short). It is uniquely determined by the values of $w_{0,3}$ and by the pairing $\eta$ (or equivalently, by the associated quantum product and the pairing $\eta$) by a repeated application of the gluing axiom (ii).  

Examples of CohFTs include the pushforward to the moduli space of curves of the virtual fundamental class in Gromov--Witten theory, the Witten $ r $-spin class and the Hodge class.

\subsection{Chiodo classes and Theta classes} \label{sec:thetadef}

An important class of CohFTs that generalise the Hodge class are called Chiodo classes. These are defined using the moduli space of spin curves, and  we refer to \cite{Jar00,Chi08} for details on the construction of this moduli space. 

\begin{definition}\label{defn:Chiodo:class}
	For a fixed positive integer $r$, and integers $s,a_1,\dots,a_n$ satisfying the modular constraint
	\begin{equation}
		\sum_{i=1}^n a_i \equiv s (2g-2+n) \pmod{r}\,,
	\end{equation}
	consider the moduli space $\Mbar_{g;a}^{r,s}$ of objects $(C,x_1,\dots,x_n,L)$, where $(C,x_1,\dots,x_n)$ is a stable curve of genus $g$ with $n$ marked points, and $L $ is a line bundle on $ C $ such that
	\begin{equation}
		L^{\otimes r} \cong \omega_{\log}^{\otimes s}\left( -\sum_{i=1}^n a_i x_i \right).
	\end{equation}
	Here $\omega_{\log} \coloneqq \omega(\sum_{i=1}^n x_i)$ is the log canonical bundle. This moduli space, called the \emph{moduli space of spin curves}, is naturally equipped with a universal curve $ \overline{\mathcal{C}}_{g;a}^{r,s} $ and a universal line bundle $ \mathcal{L}_{g;a}^{r,s} $ on the universal curve:
	\begin{equation}
		\pi \colon \overline{\mathcal{C}}_{g;a}^{r,s} \longrightarrow \Mbar_{g;a}^{r,s}\,,
		\qquad\qquad
		\mathcal{L}_{g;a}^{r,s} \longrightarrow \overline{\mathcal{C}}_{g;a}^{r,s}\,.
	\end{equation}
	By forgetting the extra data of the line bundle $ L $, we also have a forgetful map $f \colon \Mbar_{g;a}^{r,s} \to \Mbar_{g,n}$ to the moduli space of stable curves.

	Define the \emph{Chiodo class} as
	\begin{equation}\label{eqn:Chiodo:class}
		C_{g,n}^{r,s}(a_1,\dots,a_n)
		\coloneqq
		f_{\ast} c(- R^{\bullet}\pi_{\ast}\mathcal{L}_{g;a_1,\dots,a_n}^{r,s})
		\in H^{\bullet}(\Mbar_{g,n})\,.
	\end{equation}
	Here $R^{\bullet}\pi_{\ast}\mathcal{L}_{g;a}^{r,s}$ is the derived pushforward of $\mathcal{L}_{g;a}^{r,s}$, and $c$ is its total Chern class.
\end{definition}

We note that in general the derived pushforward $ R^{\bullet}\pi_{\ast}\mathcal{L}_{g;a}^{r,s} $ is a complex with multiple cohomology sheaves. However, in the specific range $ -r + 1 \leq s \leq -1 $, an easy Riemann--Roch calculation  (cf.~\cite{GLN23}) shows that $ R^{0}\pi_{\ast}\mathcal{L}_{g;a}^{r,s} $ vanishes, and  the derived pushforward is an honest vector bundle on the moduli space of spin curves. Let us denote this vector bundle as
\begin{equation}\label{eqn:V:vb}
	\mathcal{V}^{r,s}_{g,a} \coloneqq R^{1}\pi_{\ast}\mathcal{L}_{g;a}^{r,s}\, ,
\end{equation}
for $ -r+1 \leq s \leq -1  $.

In \cite{Chi08+}, Chiodo computed the Chern characters of $R^{\bullet}\pi_{\ast}\mathcal{L}_{g;a}^{r,s}$ in terms of tautological classes. From this formula, one can derive many interesting properties satisfied by the class $C_{g,n}^{r,s}$. For instance, as long as $0 \le a_i < r$, it forms a CohFT \cite{LPSZ17}; it can be expressed as a sum over stable graphs \cite{JPPZ17}; and it enjoys several properties regarding shifts in the parameters, pullback with respect to the forgetful map, as well as string and dilaton equations \cite{GLN23}. We recall the properties that will be useful in the text here.

\begin{proposition}[{\cite{LPSZ17,GLN23}}]\label{prop:Chiodo:properties}
	Fix $g,n \geq 0$ integers such that $2g - 2 + n > 0$. Let $r$ and $s$ be integers with $r$ positive, and $a_1, \ldots, a_n$ integers satisfying the modular constraint $a_1 + \cdots + a_n \equiv (2g-2+n)s \pmod{r}$. Chiodo's classes satisfy the following properties.
	\begin{enumerate}
		\item\label{prop:CohFT:Chiodo}
		Let $W = \operatorname{span}_{\Q} (v_0,\ldots, v_{r-1})$. Then the collection of maps
		\begin{equation}
			C^{r,s}_{g,n} \colon W^{\otimes n} \longrightarrow H^{\bullet}(\Mbar_{g,n})\,,
			\qquad\quad
			v_{a_1} \otimes \cdots \otimes v_{a_n} \longmapsto C^{r,s}_{g,n}(a_1,\dots,a_n)
		\end{equation}
		is a CohFT on $W$ with pairing 
		\begin{equation}\label{eqn:Chiodo:pairing}
			\eta_{C}(v_a,v_b) = \frac{1}{r} \, \delta_{a+b \equiv 0 \pmod{r}}\,.
		\end{equation} 
		If $0 \le s < r$, the CohFT admits a flat unit $v_s$.

		\item\label{prop:shift:ai}
		Shift in $a_i$:
		\begin{equation}
			C^{r,s}_{g,n}(a_1, \dots, a_i + r, \dots, a_n)
			=
			\left( 1 + \frac{a_i}{r}\psi_i\right) \cdot C^{r,s}_{g,n}(a_1, \dots, a_n)\,.
		\end{equation}

		\item\label{prop:pullback}
		Pullback property: for $0 \le a_1, \ldots, a_n < r$,
		\begin{equation}
			C^{r,s}_{g,n+1}(a_1, \dots, a_n, s)
			=
			p^{*}C^{r,s}_{g,n}(a_1, \dots, a_n)\,.
		\end{equation}
	\end{enumerate}
\end{proposition}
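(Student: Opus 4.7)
The plan is to prove the three properties in the order $(3), (2), (1)$, since (3) enters the flat-unit part of (1) and since (2), (3) reduce to local K-theoretic manipulations on the universal curve, while (1) requires a precise analysis of the moduli space of twisted spin curves along the boundary of $\Mbar_{g,n}$.

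For (3), I would lift the forgetful map $p\colon\Mbar_{g,n+1}\to\Mbar_{g,n}$ to a forgetful map $\tilde{p}\colon\Mbar_{g;(a,s)}^{r,s}\to\Mbar_{g;a}^{r,s}$ fitting into a Cartesian square with the structural maps $f$. Using the identity $\omega_{\log,n+1}=p_{\text{curve}}^{\ast}\omega_{\log,n}\otimes\mc{O}(x_{n+1})$ on the universal curve and the input $a_{n+1}=s$, the twisting condition collapses to $L^{\otimes r}\cong p_{\text{curve}}^{\ast}(\omega_{\log,n}^{\otimes s}(-\sum a_i x_i))$, forcing $L=p_{\text{curve}}^{\ast}L_0$. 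Flat base change for $R^{\bullet}\pi_{\ast}$ and then for $f_{\ast}$ yields (3).

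For (2), the canonical isomorphism $\Mbar_{g;a}^{r,s}\cong\Mbar_{g;a+re_i}^{r,s}$ sending $L\mapsto L(-\sigma_i)$ reduces the claim to the exact sequence
\[
0\to\mc{L}(-\sigma_i)\to\mc{L}\to\sigma_i^{\ast}\mc{L}\to 0
\]
on the universal curve. Its derived pushforward gives in K-theory $-R^{\bullet}\pi_{\ast}\mc{L}(-\sigma_i)=-R^{\bullet}\pi_{\ast}\mc{L}+\sigma_i^{\ast}\mc{L}$, so by multiplicativity of the total Chern class the shifted class differs by the factor $1+c_1(\sigma_i^{\ast}\mc{L})$. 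Adjunction on the section $\sigma_i$, in particular $c_1(\sigma_i^{\ast}\mc{O}(\sigma_i))=-\psi_i$, yields $c_1(\sigma_i^{\ast}\omega_{\log})=\psi_i-\psi_i=0$, so pulling $L^{\otimes r}\cong\omega_{\log}^{s}(-\sum a_j\sigma_j)$ back to $\sigma_i$ gives $r\cdot c_1(\sigma_i^{\ast}\mc{L})=a_i\psi_i$, the desired coefficient $\tfrac{a_i}{r}\psi_i$.

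For (1), symmetry is immediate. The core content is the gluing axiom, for which I would use the description of $\Mbar_{g;a}^{r,s}$ along the boundary of $\Mbar_{g,n}$ following \cite{Chi08,LPSZ17}: the restriction to a boundary stratum decomposes as a disjoint union indexed by twists $(a,a')$ at the preimages of the node, constrained by $a+a'\equiv 0\pmod r$ by the normalisation of the twisting condition, and the universal line bundle fits into an exact sequence relating $R^{\bullet}\pi_{\ast}\mc{L}$ on the boundary to its analogues on the branches. The prefactor $\tfrac{1}{r}$ in $\eta_C$ arises from the $\mu_r$-gerbe structure at the node and, combined with the dual bivector $\eta_C^{\dag}=r\sum_a v_a\otimes v_{r-a}$, produces the gluing identity. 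The flat unit $v_s$ for $0\le s<r$ is then exactly (3), with the pairing compatibility $\Omega_{0,3}(v_a\otimes v_b\otimes v_s)=\eta_C(v_a,v_b)$ reducing to a direct check on $\Mbar_{0,3}=\mathrm{pt}$. The main obstacle is precisely this boundary analysis: tracking the gerby orbifold structure of $\Mbar_{g;a}^{r,s}$ accurately enough to extract the factor $\tfrac1r$ in $\eta_C$ is delicate, and this is the step where I would invoke the treatment of \cite{Chi08,LPSZ17} rather than redo it from scratch.
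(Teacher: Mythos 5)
The paper does not actually prove this proposition: it is quoted from \cite{LPSZ17,GLN23} and used as a black box, so there is no internal argument to compare yours against. Judged on its own, your sketch follows the standard route of those references and is essentially sound. Two places deserve a caveat. For (3), the honest statement is not that $L^{\otimes r}\cong p_{\text{curve}}^{*}(\cdots)$ \emph{forces} $L$ to be a pullback (an $r$-th root of a pullback need not itself be a pullback); what one proves is that the square relating $\Mbar^{r,s}_{g;(a,s)}$ to $\Mbar^{r,s}_{g;a}$ over $p$ is Cartesian --- which you do state --- and then cohomology-and-base-change does the rest. For (2), the K-theoretic manipulation and the adjunction computations $c_1(\sigma_i^{*}\omega_{\log})=0$ and $r\,c_1(\sigma_i^{*}\mc{L})=a_i\psi_i$ are correct, but the $\psi_i$ appearing there a priori lives on the moduli of twisted spin curves, whose markings may carry $\mu_r$-stabilisers depending on the convention; the identification with $\psi_i$ on $\Mbar_{g,n}$, and hence the absence of a stray factor of $r$ in the coefficient $\tfrac{a_i}{r}$, is a convention-dependent check that the cited references carry out. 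For (1), you correctly isolate the boundary analysis and the gerbe factor $\tfrac{1}{r}$ in $\eta_C$ as the genuinely delicate content and outsource it to \cite{Chi08,LPSZ17}, which is no less than what the paper itself does. In short: there is no gap relative to the paper, since the paper proves nothing here; your outline matches the literature's proofs, with the two caveats above marking where the real work lives.
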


We note that combining property~(\labelcref{prop:shift:ai}) and then property~(\labelcref{prop:pullback}) of \cref{prop:Chiodo:properties} gives the equation:
\begin{equation}\label{eqn:modunit}
	C^{r,s}_{g,n+1}(a_1, \dots, a_n, s + r) = \left( 1+ \frac{s}{r} \psi_{n+1} \right) \cdot p^{*}C^{r,s}_{g,n}(a_1, \dots, a_n) \,.
\end{equation}
In the following, we will focus on the case $s = -1$. As we have already mentioned, in this case the Chiodo class is the pushforward of the total Chern class of an honest vector bundle $\mathcal{V}^{r,-1}_{g;a}$ on the moduli space of spin curves. Our main interest in this paper is in the top degree of the Chiodo class.  Again, let $W = \operatorname{span}_{\Q} (v_0,\ldots, v_{r-1})$, and define the collection of maps 
\begin{equation}
	\Upsilon^r_{g,n} \colon W^{\otimes n} \longrightarrow  H^{\bullet}(\overline{\mathcal M}_{g,n})\,,
\end{equation}
as follows:
\begin{equation}\label{eqn:thetadef}
	\Upsilon^r_{g,n} (v_{a_1} \otimes \cdots \otimes v_{a_n})
	\coloneqq
	(-1)^n \, r^{\frac{2g-2+|a| +n}{r}} \, f_* c_{\textup{top}} (\mathcal{V}^{r,-1}_{g;a})
	\in H^{\bullet}(\overline{\mathcal M}_{g,n})\,,
\end{equation}
where we define $|a| \coloneqq \sum_{i=1}^n a_i$. In other words, $\Upsilon^r_{g,n}$ is (up to the prefactor) the top degree of $C^{r,-1}_{g,n}$. By an easy Riemann--Roch calculation, we conclude that the (complex) degree of the cohomology class is 
\begin{equation}\label{eqn:degUpsilon}
	\deg \Upsilon^r_{g,n} (v_{a_1} \otimes \cdots \otimes v_{a_n}) = \frac{(r+2)(g-1) + n + |a|}{r}\,.
\end{equation}
We consider the restriction of the maps $\Upsilon^r_{g,n}$ to the vector subspace spanned by $v_{a}$ for $ 1\leq a \leq r-1$.

\begin{definition}\label{def:Theta}
	Define  $V \coloneqq \operatorname{span}_{\Q} (v_1, \ldots, v_{r-1})$, and define the \textit{Theta class} $ \Theta^r $ as the restriction 
	\begin{equation}\label{eqn:thetaCohft}
		\Theta^{r}_{g,n} \coloneqq \Upsilon^r_{g,n} \big|_{V}\,.
	\end{equation} To be precise, the arguments $ v_a $ of the Theta class are only allowed to take  values $ 1 \leq a \leq r-1 $.
	The class $\Theta^{r}_{g,n}$ has pure degree
	\begin{equation}\label{eqn:degtheta}
		D^r_{g;a} \coloneqq \frac{(r+2)(g-1) + n + |a|}{r}\,.
	\end{equation}
\end{definition}

The case $r = 2$ coincides with the Theta class introduced by Norbury \cite{Nor23}: $\Theta_{g,n}^{2}(v_1^{\otimes n}) = \Theta_{g,n}$.

We will prove shortly that the cohomology classes $\Theta^{r}_{g,n}$ form a CohFT. However, it is not a semisimple cohomological field theory. When one encounters a non-semisimple CohFT, a standard trick (see for example \cite{PPZ15,PPZ19}) is to pass to the associated Dubrovin--Frobenius manifold. One can associate a CohFT to every point on the  Dubrovin--Frobenius manifold (see \cref{sec:DFmanifolds}). When the associated Dubrovin--Frobenius manifold is generically semisimple, one can shift along a semisimple direction, to work at a semisimple point instead. Then, one can study the semisimple CohFT at that point using the Givental--Teleman reconstruction theorem. Finally, by taking the limit back to the non-semisimple point, one can analyse the non-semisimple CohFT of interest. However, we stress that the Dubrovin--Frobenius manifold associated to the Theta class $\Theta^{r}_{g,n}$ is \emph{not} generically semisimple -- see \cref{rem:nss} for details. Thus, the usual strategy fails. 

To bypass this issue, we construct a family of semisimple CohFTs instead, depending on a non-zero parameter $\epsilon $, such that the limit $\epsilon \to 0$ recovers the Theta class $ \Theta^{r}$.

\subsection{Deformed Theta class} \label{sec:deformedTheta}

In this section, we define a deformation $ \Theta^{r,\epsilon} $ of the Theta class by shifting it along the vector $v_0$. We stress that the direction $\epsilon \cdot v_0$ is not in the vector space $V$ (where the Theta class is defined), but in the vector space $W$. Hence, this is \textit{different} from working at a semisimple point of the Dubrovin--Frobenius manifold associated to the Theta class $ \Theta^{r} $. 

\begin{definition}
	We define the \textit{deformed Theta class} $ \Theta^{r,\epsilon}_{g,n} $
	\begin{equation}
		\Theta^{r,\epsilon}_{g,n} \colon V^{\otimes n} \longrightarrow H^{\bullet}(\overline{\mathcal M}_{g,n})
	\end{equation}
	as follows:
	\begin{equation}\label{eqn:deformedthetadef}
		\Theta^{r,\epsilon}_{g,n} (v_{a_1} \otimes  \cdots \otimes  v_{a_n})
		\coloneqq
		\sum_{m \geq 0} \frac{\epsilon^m}{m!} p_{m,*} \Upsilon^r_{g,n+m}(v_{a_1}\otimes  \cdots\otimes v_{a_n} \otimes v_0^{\otimes m})\,.
	\end{equation}
\end{definition}

Before analysing the deformed Theta class, we need to justify that the sum in the above definition is finite and thus the class $\Theta^{r,\epsilon}_{g,n}$ is well-defined. This follows from the degree calculation
\begin{equation*}
	\deg p_{m,*} \Upsilon^r_{g,n+m}( v_{a_1} \otimes  \cdots \otimes  v_{a_n} \otimes  v_0^{\otimes m})
	=
	\frac{(r+2)(g-1) + n + m + |a|}{r} - m
	=
	D^r_{g;a} - \frac{(r-1)m}{r}\,.
\end{equation*}
In addition, this implies that the class $\Theta^{r,\epsilon}_{g,n}$ is equal to the class $\Theta^{r}_{g,n}$ in top degree, with possibly some correction terms in strictly smaller degree. We also note that this is analogous to the situation studied in \cite{PPZ15} for the shifts of the Witten $ r $-spin class.

\begin{proposition}\label{prop:cohft}
	The collection $\Theta^{r,\epsilon}_{g,n}$ for $2g-2+n > 0$ satisfies the axioms of an $(r-1)$-dimensional cohomological field theory on $V$ with pairing
	\begin{equation}\label{eqn:Theta:pairing}
		\eta\left( v_a, v_b \right) = \delta_{a+b,r}\,.
	\end{equation}
\end{proposition}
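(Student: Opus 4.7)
The plan is to verify the symmetry and gluing axioms of \cref{def:cohft} directly. The $S_n$-invariance of each $\Theta^{r,\epsilon}_{g,n}$ is immediate, since $\Upsilon^r$ inherits $S_n$-symmetry from the Chiodo class and the forgetful map $p_m$ is $S_m$-equivariant on the extra marked points, while the pairing $\eta(v_a, v_b) = \delta_{a+b, r}$ on $V$ is manifestly symmetric and non-degenerate. Expanding the defining sum of $\Theta^{r,\epsilon}$ in powers of $\epsilon$, the non-separating gluing axiom decomposes into a family of identities, one per $m \geq 0$:
\begin{equation*}
	q^* p_{m,*} \, \Upsilon^r_{g, n+m}(v_{a_1} \otimes \cdots \otimes v_{a_n} \otimes v_0^{\otimes m}) = p_{m,*} \, \Upsilon^r_{g-1, n+m+2}(v_{a_1} \otimes \cdots \otimes v_{a_n} \otimes \eta^\dagger \otimes v_0^{\otimes m}) \, ,
\end{equation*}
with $\eta^\dagger = \sum_{b=1}^{r-1} v_b \otimes v_{r-b}$, together with its separating analogue.

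My first step would be to establish a gluing identity for $\Upsilon^r$ itself, i.e.~the $m = 0$ case. Recalling that $\Upsilon^r$ is the top-degree part of the Chiodo class $C^{r,-1}$ rescaled by the explicit prefactor $(-1)^n r^{(2g-2+|a|+n)/r}$, I would apply the CohFT gluing axiom of $C^{r,-1}$ on $W$ with pairing $\eta_C(v_a, v_b) = \frac{1}{r} \delta_{a+b \equiv 0 \, (\bmod r)}$ from \cref{prop:Chiodo:properties} and restrict to the target cohomological degree $D^r_{g;a}$. A degree count using \cref{eqn:degUpsilon} shows that, among the pairs $(a', a'') \in \{0, \dots, r-1\}^2$ with $a' + a'' \equiv 0 \pmod{r}$, only those with $a' + a'' = r$ reach the target degree, while the pair $(0, 0)$ contributes a class whose top degree is exactly one less and therefore vanishes at degree $D^r_{g;a}$. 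Tracking the $r$-prefactor of $\Upsilon^r$ against the $r$ appearing in the dual of $\eta_C$, one verifies that these factors cancel and the identity becomes
\begin{equation*}
	q^* \, \Upsilon^r_{g,n}(v_{a_1} \otimes \cdots \otimes v_{a_n}) = \sum_{b=1}^{r-1} \Upsilon^r_{g-1, n+2}(v_{a_1} \otimes \cdots \otimes v_{a_n} \otimes v_b \otimes v_{r-b}) \, .
\end{equation*}
Crucially, even though $v_0$ could \emph{a priori} appear at the gluing node, it drops out for degree reasons, which is precisely what makes the shift direction $v_0$ compatible with the CohFT structure on $V$.

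To promote this to the full claim, I would use the base change identity $q^* p_{m,*} = \tilde p_{m,*} \, q^{\text{new},*}$ coming from the fact that the boundary $q(\Mbar_{g-1, n+2}) \subset \Mbar_{g,n}$ pulls back under $p_m$ to the image of $q^{\text{new}} \colon \Mbar_{g-1, n+m+2} \to \Mbar_{g, n+m}$, with $\tilde p_m$ forgetting the $m$ extra points. Applying the established identity to $q^{\text{new},*} \Upsilon^r_{g, n+m}(v_{a_1} \otimes \cdots \otimes v_0^{\otimes m})$, reordering tensor factors by $S_{n+m+2}$-symmetry, and summing over $m$ with weights $\epsilon^m/m!$ delivers the gluing axiom for $\Theta^{r,\epsilon}$. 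The separating case is parallel: the analogous base change distributes the $m$ extra marked points between the two components with multiplicity $\binom{m}{m_1}$, which matches exactly the combinatorial factor produced when $\Theta^{r,\epsilon}_{g_1, n_1+1} \otimes \Theta^{r,\epsilon}_{g_2, n_2+1}$ is rewritten as a single $\sum_m \epsilon^m/m!$-series. The main subtlety I anticipate is the validity of the base change at the stacky level, though this is a standard compatibility for forgetful and gluing maps between moduli of stable curves; the remaining effort is purely bookkeeping to confirm that the $r$-prefactors transform so as to induce exactly the pairing $\eta(v_a, v_b) = \delta_{a+b, r}$ rather than a nontrivial rescaling.
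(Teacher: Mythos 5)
Your proposal is correct and follows essentially the same route as the paper: invoke the gluing axiom for the Chiodo CohFT $C^{r,-1}$ on $W$, extract the top cohomological degree so that only the $v_b\otimes v_{r-b}$ terms of $\eta_C^\dagger$ survive (the $v_0\otimes v_0$ term landing in degree one lower), check that the $r$ from $\eta_C^\dagger$ cancels against the $r$-prefactors of $\Upsilon^r$, and then apply $\sum_m \tfrac{\epsilon^m}{m!}p_{m,*}$ with the base-change compatibility of forgetful and gluing maps. Your write-up merely spells out the base change and the separating-case combinatorics in more detail than the paper does.
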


\begin{proof}
	The first axiom of a CohFT is the $ S_n $-equivariance. This is immediate from the definition of the deformed Theta class.

	For the gluing axiom, let $\mathbb{v} = v_{a_1} \otimes \cdots \otimes v_{a_n}$ be a generic tensor in $V^{\otimes n}$. We prove the statement for the gluing map $q$ of non-separating kind, and omit the proof for the gluing maps of separating kind as it is completely analogous. We recall that the Chiodo classes form a CohFT (see \cref{prop:Chiodo:properties}). Using the gluing axiom for the Chiodo class and rescaling by the proper minus sign and power of $r$, we get
	\begin{equation*}
		(-1)^{n+m} r^{\frac{2g-2+|a| +n +m}{r}}
		q^* C^{r,-1}_{g,n + m} (\mathbb{v} \otimes  v_0^{\otimes  m})
		=
		(-1)^{n+m} r^{\frac{2g-2+|a| +n + m}{r}}
		C^{r,-1}_{g-1,n+m+2} (\mathbb{v} \otimes  v_0^{\otimes m} \otimes  \eta_C^\dagger)\,.
	\end{equation*}
	Here $\eta_C$ is the pairing of the Chiodo CohFT from \cref{eqn:Chiodo:pairing}. Now, we want to take the degree $ \left(D^r_{g;a} + \frac{m}{r}\right) $ part of the above equation. This forces us to keep the part of the bivector $\eta_C^\dagger$ that is of the form $v_b \otimes v_c$ such that $b + c = r$. Thus, we see that neither $b$ nor $c$ can be $0$. We also note that the part of the bivector $\eta^\dagger_C$ that survives is indeed $r \cdot \eta^\dagger$ as we defined above in \cref{eqn:Theta:pairing}. Putting all of this together, we get
	\[
		q^* \Upsilon^r_{g,n+m}(\mathbb{v} \otimes v_0^{\otimes  m} )
		=
		\Upsilon^r_{g-1,n+m+2} (\mathbb{v} \otimes v_0^{\otimes  m} \otimes  \eta^\dagger )\,.
	\]
	Applying $\sum_{m \geq 0} \frac{\epsilon^m}{m!} p_{m,*}$ to the above equation and base changing yields the thesis.
\end{proof}

Notice that the above proof goes through without any modification even if $ \epsilon = 0 $. Thus, setting $ \epsilon =0 $ in \cref{prop:cohft}, we see that the Theta class is a cohomological field theory. Moreover, it satisfies the modified unit axiom.

\begin{theorem}\label{thm:theta:cohft}
	The Theta class $\Theta^{r}_{g,n}$ for $2g-2+n > 0$ satisfies the axioms of an $(r-1)$-dimensional cohomological field theory on $(V,\eta)$ and the modified unit axiom with distinguished vector $v_{r-1}$:
	\begin{equation}
		\psi_{n+1} \cdot p^* \Theta^{r}_{g,n}(v_{a_1} \otimes \cdots \otimes v_{a_n})
		=
		\Theta^{r}_{g,n+1}(v_{a_1} \otimes \cdots \otimes v_{a_n} \otimes v_{r-1}) \,.
	\end{equation}
\end{theorem}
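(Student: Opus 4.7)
The theorem has two assertions---the CohFT axioms and the modified unit axiom---and both can be extracted with little extra work from the machinery already assembled in this section.

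For the CohFT axioms I would invoke \cref{prop:cohft}. Inspecting its proof, the argument makes no use of the hypothesis $\epsilon \neq 0$: starting from the Chiodo gluing identity, one projects onto the pieces of the Chiodo bivector $\eta_C^{\dagger}$ that contribute in the required degree, and then applies $\sum_m \tfrac{\epsilon^m}{m!} p_{m,\ast}$. Specialising to $\epsilon = 0$ kills every term with $m \geq 1$ and leaves precisely $\Theta^r_{g,n}$, which is therefore $S_n$-equivariant and satisfies both gluing axioms on $(V,\eta)$.

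For the modified unit axiom I would begin from the Chiodo identity \cref{eqn:modunit} specialised to $s = -1$, namely
\begin{equation*}
  C^{r,-1}_{g,n+1}(a_1,\dots,a_n,r-1)
  = \left(1 - \tfrac{1}{r}\psi_{n+1}\right) p^{\ast} C^{r,-1}_{g,n}(a_1,\dots,a_n) \, ,
\end{equation*}
whose hypothesis $0\le a_i < r$ (see property~(\labelcref{prop:pullback}) of \cref{prop:Chiodo:properties}) is satisfied since $1 \le a_i \le r-1$ by the definition of the Theta class. The class $\Theta^r_{g,n+1}(\cdots \otimes v_{r-1})$ is concentrated in degree $D^r_{g;a}+1$, and because $p^{\ast}$ preserves cohomological degree while $C^{r,-1}_{g,n}(a)$ has top degree $D^r_{g;a}$, only the summand $-\tfrac{\psi_{n+1}}{r} \cdot p^{\ast}\bigl[C^{r,-1}_{g,n}(a)\bigr]^{D^r_{g;a}}$ survives when one extracts the degree-$(D^r_{g;a}+1)$ component.

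Multiplying through by the normalisation $(-1)^{n+1} r^{(2g-2+|a|+r+n)/r}$ from \cref{eqn:thetadef}, which is exactly $(-r)$ times the corresponding factor for $\Theta^r_{g,n}$, the $(-r)$ cancels the $-\tfrac{1}{r}$ produced by the Chiodo identity and yields
\begin{equation*}
  \Theta^r_{g,n+1}(v_{a_1}\otimes\cdots\otimes v_{a_n}\otimes v_{r-1}) = \psi_{n+1} \cdot p^{\ast} \Theta^r_{g,n}(v_{a_1}\otimes\cdots\otimes v_{a_n}) \, .
\end{equation*}
The only real point of care is the bookkeeping of signs and the fractional powers of $r$; once this is checked, the statement follows entirely from the Chiodo shift-pullback identity together with the deformation argument of \cref{prop:cohft}.
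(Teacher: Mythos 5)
Your proposal is correct and follows essentially the same route as the paper: the CohFT axioms are obtained by setting $\epsilon = 0$ in \cref{prop:cohft} (whose proof indeed never uses $\epsilon \neq 0$), and the modified unit axiom is obtained by extracting the degree-$(D^r_{g;a}+1)$ part of \cref{eqn:modunit} at $s=-1$ and rescaling, with the $(-r)$ from the normalisation cancelling the $-\tfrac{1}{r}$ from the Chiodo identity. Your write-up simply makes explicit the degree and sign bookkeeping that the paper leaves to the reader.
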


\begin{proof}
	The only additional statement here is the modified unit axiom. Let $\mathbb{v} \coloneqq v_{a_1} \otimes \cdots \otimes v_{a_n} \in V^{\otimes n}$ denote an arbitrary element. By taking degree $D^r_{g;a} + 1$ of \cref{eqn:modunit} with $s = -1$ and rescaling by the proper minus sign and power of $r$, we get the thesis. 
\end{proof}

The definition of the pairing in \cref{prop:cohft} shows that the basis $ (v_a)_{a\in[r-1]} $ for the (deformed) Theta class is the flat basis. In \cref{sec:TR}, we will also encounter the canonical basis of this CohFT, which will be denoted $( e_i )_{i\in[r-1]} $.

\begin{remark}\label{rem:v0}
	The proof of \cref{prop:cohft} shows that the vector $ v_0 $ cannot be added to the vector space $ V $ while still ensuring that the (deformed) Theta class is a CohFT. The reason is that the proof of the gluing axiom requires the pairing $ \eta $ to be such that $ \eta(v_a,v_b) = \delta_{a+b,r} $. If $ a $ (or $ b $) was zero, the $ 2 $-form would no longer be non-degenerate. Thus, we cannot include $ v_0 $ in the underlying vector space.
\end{remark}

\begin{remark}\label{rem:Upsilon:modunit}
	We also note that the argument in the proof of \cref{thm:theta:cohft} shows that in general the collection of maps $\Upsilon^r_{g,n}$ defined on the space $W^{\otimes n}$ satisfies the modified unit axiom (albeit not forming a CohFT):
	\begin{equation}
		\Upsilon^r_{g,n+1}(\mathbb{v} \otimes v_{r-1}  )
		=
		\psi_{n+1} \cdot p^* \Upsilon^r_{g,n}(\mathbb{v})
	\end{equation}
	for an arbitrary element $\mathbb{v} \in W^{\otimes n}$. This modified unit  property for $ \Upsilon^r $ will be used often in the following section.
\end{remark}

\begin{remark}\label{rem:generals:def}
	In this remark, we discuss an extension of our work to variations of the Theta class. As mentioned before, the Chiodo class $C^{r,s}$ is the (pushforward along the forgetful map of the) total Chern class of an honest vector bundle for any $s$ satisfying $-r+1 \leq s \leq -1 $. In this paper, we are always working with the case $ s = -1 $. However, it makes perfect sense to consider the following generalisations of the Theta class for any $ -r+1 \leq s \leq -1 $. First define
	\begin{equation}
		\Upsilon^{r,s}_{g,n}(v_{a_1} \otimes \cdots \otimes v_{a_n})
		\coloneqq
		s^{-(2g-2+n)} r^{\frac{-(2g-2+n)s+|a|}{r}} f_* c_{\textup{top}} ( \mathcal{V}^{r,s}_{g;a})
	\end{equation}
	for $ 0 \leq a_i \leq r-1$, and then consider its restriction to the vector space where the $ a_i $ are restricted to be $ 1 \leq a_i \leq r-1 $ just as in \cref{def:Theta}. Schematically, $\Theta^{r,s}_{g,n} \coloneqq \Upsilon^{r,s}_{g,n}|_{a_i \neq 0}\,$. Analogous to the definition of the deformed Theta class, we can  define the deformed $ \Theta^{r,s} $-classes which we denote by $ \Theta^{r,s, \epsilon} $. Then the proof of \cref{prop:cohft} goes through without any modification, and thus  $ \Theta^{r,s, \epsilon} $ is a CohFT. The proof of \cref{thm:theta:cohft} goes through as well with one minor difference -- the $ \Theta^{r,s} $-classes form a $ (r-1) $ dimensional CohFT with the modified unit being $ v_{s+r} $.
\end{remark}

\section{Givental--Teleman reconstruction}\label{sec:Teleman}

In this section, we want to study  the deformed Theta CohFT further using the techniques of the Givental--Teleman reconstruction theorem. We will show that the deformed Theta class is semisimple and use the Givental--Teleman reconstruction theorem in order to find an expression in terms of tautological classes. In addition, this method will yield vanishing relations in the tautological ring.

\subsection{Dubrovin--Frobenius manifolds} \label{sec:DFmanifolds}

We start with some generalities on \emph{Dubrovin--Frobenius manifolds} and \emph{potentials} \cite{Dub96}. We stress that our Dubrovin--Frobenius manifolds are not assumed to have a flat unit vector field. The Dubrovin--Frobenius manifolds we are interested in will have a unit vector field by construction (as they will come from a CohFT), but this will not be flat with respect to the metric. This is precisely because the CohFTs that we are interested in do not have a flat unit. 

Given a CohFT $\Omega_{g,n}$ on $V$, we can naturally endow a formal neighbourhood of the origin in $ V $ with the structure of a Dubrovin--Frobenius manifold by restricting to genus $0$. Under certain convergence assumptions on the genus zero part of the CohFT $\Omega_{0,n}$ (see \cite{Jan18} for a related general discussion in the context of CohFTs with flat unit), we can equip a neighbourhood of $ V $ with the structure of a Dubrovin--Frobenius manifold. To be precise, assume that we have a $ d $-dimensional vector space $ V $ with the flat basis $ v_1, \ldots, v_d $ underlying our CohFT $ \Omega_{g,n} $. Then, we can define the potential $ F $ as 
\begin{equation}\label{eqn:defF}
	F (t_1, \ldots, t_d)
	=
	\sum_{\substack{k_1+ \cdots + k_d = n \\ n\geq 3}}
		\left( \int_{\overline{\mathcal M}_{0,n}} \Omega_{0,n} (v_1^{\otimes k_1} \otimes \cdots \otimes v_d^{\otimes k_d} ) \right)
		\prod_{i=1}^d \frac{t_i^{k_i}}{k_i!} \, ,
\end{equation}
where we view $ t_i $ as the dual coordinate to the basis element $ v_i $. If the sum in \cref{eqn:defF} converges in a domain $ U \subset V $, $ U $ inherits the structure of a Dubrovin--Frobenius manifold with flat coordinates $ (t_1, \ldots, t_d) $. All the information of this Dubrovin--Frobenius manifold is encoded in the potential $F (t_1, \ldots, t_d) $.

We can equip the tangent space at every point $ p $ on the Dubrovin--Frobenius manifold $ V $ with an associative algebra structure given by the \textit{quantum product}, 
\[
	\de_{i} \bullet_p \de_{j}
	=
	\sum_{k,\ell =1}^d \left.\left( \frac{\partial^3 F}{\partial t_i \partial t_j \partial t_k } \right)\right|_{p} \eta^{k,\ell} \de_{\ell}\,,
\]
where we introduced the following notation for the vector fields, $\de_a \coloneqq \frac{\partial}{\partial t_a} \in H^0(U, TU) $. Here, we also see that all terms of total degree $ < 3 $ are irrelevant in the potential, and we can drop them. In the sequel, we will only consider the potential up to these lower degree terms.

We can often equip the Dubrovin--Frobenius manifold $ U $ with an additional grading using the notion of an Euler field. An \textit{Euler field} on a Dubrovin--Frobenius manifold $ U $ with flat coordinates $( t_i )_{i \in [d]}$, is an affine vector field $ E $ satisfying the following conditions.
\begin{itemize}
	\item The vector field $ E $ has the form $E = \sum_i (\alpha_i t_i  + \beta_i ) \de_i\,$.

	\item
	The metric $ \eta $ and the quantum product $ \bullet $ are eigenfunctions of the Lie derivative $\mc{L}_E $ with weights $ 2- \delta $ and $1$ respectively, where $ \delta $ is a rational number called the \textit{conformal dimension}.
\end{itemize}

The Euler field $ E $ on the Dubrovin--Frobenius manifold $ U $ can be used to define an action of $ E $ on the CohFT $ \Omega_{g,n} $ as follows:
\begin{equation}\label{eqn:Eactiongen}
\begin{split}
	(E.\Omega)_{g,n} \left(\de_{a_1} \otimes \cdots \otimes \de_{a_n}\right)
	& \coloneqq
	\left( \deg + \sum_{l=1}^n \alpha_{a_l} \right) \Omega_{g,n} \left(\de_{a_1} \otimes \cdots \otimes \de_{a_n}\right) \\
	& \qquad +
	p_* \Omega_{g,n+1} \left(\de_{a_1} \otimes  \cdots \otimes \de_{a_n} \otimes \sum_i \beta_i \de_i \right).
\end{split}
\end{equation}

\begin{definition}
	We say that the CohFT $ \Omega $ is \textit{homogeneous} if there exists an Euler field $E$ such that
	\begin{equation}
		(E.\Omega)_{g,n} = \bigl( (g-1)\delta + n \bigr) \Omega_{g,n}\,.
	\end{equation}
\end{definition}

After this brief digression, we return to the CohFT we are interested in -- the deformed Theta class $\Theta^{r,\epsilon}$. As we have already mentioned, all the information of the associated Dubrovin--Frobenius manifold is encoded in the potential. Using the definition of the deformed Theta class $ \Theta^{r,\epsilon} $, the potential can be expressed as
\begin{equation}\label{eqn:Frob}
	F^\epsilon(t_1, \ldots, t_{r-1})
	=
	\sum_{\substack{m + k_1+ \cdots + k_{r-1}  = n \\ n\geq 3}}
	\left(
		\int_{\overline{\mathcal M}_{0,n }}
		\Upsilon^r_{0,n } (v_0^{\otimes m} \otimes v_1^{\otimes k_1} \otimes \cdots \otimes v_{r-1}^{\otimes k_{r-1}} )
	\right)
	\frac{\epsilon^m}{m!} \prod_{a=1}^{r-1} \frac{t_a^{k_a}}{k_a!}\, .
\end{equation}
We note that, for any term in the above sum to be non-zero, the degree of the class has to coincide with the dimension of the moduli space: $n - 3$. Equivalently, this can be written as the following degree condition:
\begin{equation}\label{eqn:degc}
	(r-1)m + \sum_{a=1}^{r-1} (r-1-a) k_a = 2r-2\, .
\end{equation}
Although we do not have a closed form for the potential for any $ r $, we can show that the  potential converges in the neighbourhood $ U $ of $ 0 $ in $ V $ defined as $U \coloneqq \{ (t_1,\dots,t_{r-1}) \mid  |t_{r-1}| < 1 \} \subset V\,$.

\begin{lemma}
	The potential $ F^\epsilon(t_1, \ldots, t_{r-1}) $ associated to the deformed Theta class $ \Theta^{r,\epsilon} $ defines a Dubrovin--Frobenius manifold structure on $ U $. 
\end{lemma}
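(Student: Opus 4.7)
The plan is to establish convergence of the potential $F^{\epsilon}$ on the open set $U$; the Dubrovin--Frobenius structure on $U$ will then follow from the genus-zero CohFT axioms satisfied by $\Theta^{r,\epsilon}$ (\cref{prop:cohft}).

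The key observation we will exploit is that the degree constraint \eqref{eqn:degc} has a \emph{vanishing} coefficient in front of $k_{r-1}$, since $r-1-a=0$ when $a=r-1$. Hence $k_{r-1}$ is unconstrained, whereas $m$ and $k_1,\dots,k_{r-2}$ have strictly positive coefficients in \eqref{eqn:degc} and are therefore bounded, ranging over only finitely many admissible values. We would then split $F^{\epsilon}$ as a finite sum, indexed by admissible tuples $(m,k_1,\dots,k_{r-2})$, of infinite series in $t_{r-1}$ (the prefactors $\epsilon^m/m!$ and $\prod_{a=1}^{r-2} t_a^{k_a}/k_a!$ being uniformly bounded on compacta), thus reducing the problem to showing that each such series is analytic on $|t_{r-1}|<1$.

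Fix an admissible tuple and set $N_0 \coloneqq m + \sum_{a=1}^{r-2}k_a$ and $V_{\mathrm{fix}} \coloneqq v_0^{\otimes m} \otimes v_1^{\otimes k_1}\otimes \cdots \otimes v_{r-2}^{\otimes k_{r-2}}$. When $N_0 \geq 3$, iterating the modified unit property for $\Upsilon^r$ of \cref{rem:Upsilon:modunit} and using the standard vanishing $\psi_i \cdot D_{i,j} = 0$ on $\Mbar_{0,n}$ yields
\[
	\Upsilon^r_{0,N_0+k}(V_{\mathrm{fix}} \otimes v_{r-1}^{\otimes k}) \,=\, \psi_{N_0+1}\cdots \psi_{N_0+k}\cdot p_k^{\ast}\,\Upsilon^r_{0,N_0}(V_{\mathrm{fix}})\,,
\]
where $p_k \colon \Mbar_{0,N_0+k} \to \Mbar_{0,N_0}$ forgets the last $k$ marked points. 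Iterated push-forward along this tower, with $p_{\ast}\psi_{n+1} = \kappa_0 = n-2$ on $\Mbar_{0,n}$, will evaluate the integral to $\tfrac{(N_0+k-3)!}{(N_0-3)!}\int_{\Mbar_{0,N_0}}\Upsilon^r_{0,N_0}(V_{\mathrm{fix}})$, and summing over $k \geq 0$ via the generalised binomial identity produces $(1-t_{r-1})^{2-N_0}\int_{\Mbar_{0,N_0}}\Upsilon^r_{0,N_0}(V_{\mathrm{fix}})$, which is analytic for $|t_{r-1}|<1$.

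The boundary case $N_0<3$ requires separate treatment: a short enumeration using \eqref{eqn:degc} shows that the only admissible tuple is $(m,k_1,\dots,k_{r-2})=(2,0,\dots,0)$, for which $N_0=2$. Applying the modified unit $k-1$ times to descend from $\Mbar_{0,2+k}$ down to $\Mbar_{0,3}$, together with the classical formula $\int_{\Mbar_{0,2+k}}\psi_4\cdots\psi_{2+k} = (k-1)!$, will produce a series proportional to $-\log(1-t_{r-1})$, which is again analytic on $|t_{r-1}|<1$. Assembling the finitely many such contributions then establishes the analyticity of $F^{\epsilon}$ on $U$. The main technical obstacle will be the careful iterated push-forward manipulation with $\psi$-classes on different moduli spaces, together with the separate handling of the small-$N_0$ boundary case that cannot be reduced all the way down via the modified unit.
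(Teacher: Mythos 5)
Your proposal is correct and follows essentially the same route as the paper's proof: bound $m,k_1,\dots,k_{r-2}$ via the degree condition \eqref{eqn:degc}, then control the remaining series in $t_{r-1}$ by iterating the modified unit property of \cref{rem:Upsilon:modunit} together with the projection formula and $p_\ast\psi_{n+1}=\kappa_0$, yielding the ratio $(N_0+k-3)!/(N_0-3)!$ and convergence on $|t_{r-1}|<1$. The only (welcome) refinement is your explicit treatment of the boundary tuple $(m,k_1,\dots,k_{r-2})=(2,0,\dots,0)$ with $N_0=2$, producing the $\log(1-t_{r-1})$ term, which the paper's uniform formula technically does not cover but which changes nothing about the conclusion.
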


\begin{proof}
	The degree condition \labelcref{eqn:degc} forces every $ k_a $ with $a = 1,\ldots,r-2$ to be bounded by a number that is independent of $n$, so that the potential is a polynomial in $ t_1, \ldots, t_{r-2} $. Thus, we only need to understand the convergence properties of the potential in $t_{r-1}$. Let us consider a term in the potential \labelcref{eqn:Frob} with fixed $(m,k_1, \ldots, k_{r-2} )$ satisfying \cref{eqn:degc}, and denote $\mathbb v \coloneqq v_0^{\otimes m} \otimes v_1^{\otimes k_1} \otimes \cdots \otimes v_{r-2}^{\otimes k_{r-2}}$ and $ n'= m + \sum_{a=1}^{r-2} k_a $. As the degree condition is independent of $k_{r-1}$ we need to analyse the following series
	\begin{equation*}
		\sum_{k_{r-1}}
		\left(
			\int_{\overline{\mathcal M}_{0,n' + k_{r-1} }}
			\Upsilon^r_{0,n' + k_{r-1} } ( \mathbb v \otimes v_{r-1}^{\otimes k_{r-1}})
		\right)
		\frac{\epsilon^m}{m!} \prod_{a=1}^{r-1} \frac{t_a^{k_a}}{k_a!}\, .
	\end{equation*}
	We can now repeatedly apply the modified unit axiom for $\Upsilon^r$ (cf. \cref{rem:Upsilon:modunit}) and the projection formula, together with $p_{\ast} \psi_{n+1} = \kappa_0$, to find
	\begin{equation*}
		\sum_{k_{r-1}}
		\left(
			\int_{\overline{\mathcal M}_{0,n'}}
			\Upsilon^r_{0, n'} ( \mathbb v )
		\right)
		\frac{(n' + k_{r-1}-3)!}{(n'-3)!}
		\frac{\epsilon^m}{m!}
		\prod_{a=1}^{r-1} \frac{t_a^{k_a}}{k_a!}\, .
	\end{equation*}
	By applying the ratio test for convergence, we see that the above series is absolutely convergent for $ |t_{r-1}| < 1 $.
\end{proof}

\begin{remark}
	We compute the potential explicitly in the case of $ r = 2 $ and $ r=3 $, and present the results. The proofs are completely straightforward calculations. For $ r=2 $, we have
	\[
		F^{\epsilon}(t_1) = \frac{\epsilon^2 }{2}\log(1 - t_1)
	\]
	and for $ r=3 $, we have 
	\[
		F^{\epsilon}(t_1,t_2) =  - \frac{t_1^4}{12(1-t_2)^2} - \frac{\epsilon t_1^2}{2(1-t_2)} + \frac{\epsilon^2 }{2} \log(1-t_2)\,.
	\]
	We also note that the potential for general $ r $ always contains the term $\frac{\epsilon^2 }{2}\log(1 - t_{r-1})$ as a summand.
\end{remark}

Our next goal is  to compute the quantum product of $ \Theta^{r,\epsilon} $. This requires only the terms of degree 3 in $ t_1, \ldots, t_{r-1} $ in the potential and we calculate the required integrals below.

\begin{lemma}
	Assume that $0 < a,b,c \leq r-1$. Then, we have the following values for the integrals of Chiodo classes:
	\begin{equation}\label{eqn:Chiodo:int}
	\begin{aligned}
		\int_{\Mbar_{0,3}} \Upsilon^r_{0,3}(v_a \otimes v_b \otimes v_c) = - \delta_{ a+b+c,r-1} \, ,\\
		\int_{\Mbar_{0,4}} \Upsilon^r_{0,4}(v_0 \otimes v_a \otimes v_b \otimes v_c) = - \delta_{a+b+c,2r-2} \, ,\\
		\int_{\Mbar_{0,5}} \Upsilon^r_{0,5}(v_0^{\otimes 2} \otimes v_a \otimes v_b \otimes v_c) = -2 \cdot \delta_{a,b,c,r-1} \, .
	\end{aligned}
	\end{equation}
	In addition, any integral for $n \geq 0$ with at least $3$ insertions of $v_0$ vanishes:
	\begin{equation}\label{eqn:Chiodo:vanish}
		\int_{\Mbar_{0,3+n}} \Upsilon^r_{0,3+n}(v_0^{\otimes 3} \otimes \mathbb v) = 0
	\end{equation}
	for any $n \geq 0$ and any $ \mathbb v \in W^{\otimes n} $.
\end{lemma}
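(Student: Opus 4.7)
The plan combines a dimensional vanishing, a direct Riemann--Roch calculation on $\Mbar_{0,3}$, and the modified unit axiom for $\Upsilon^r$ recorded in \cref{rem:Upsilon:modunit}. For the Kronecker deltas and \cref{eqn:Chiodo:vanish}, I use the degree formula \cref{eqn:degUpsilon}: a non-vanishing integral on $\Mbar_{0,N}$ forces $|a| = (r-1)(N-2)$. Since each $a_i \leq r-1$, this can be saturated only if at most two $a_i$'s vanish, which immediately yields \cref{eqn:Chiodo:vanish}. Combined with the modular constraint $|a| \equiv -(2g-2+n) \pmod{r}$ of \cref{defn:Chiodo:class}, the degree count rules out all configurations except those appearing in the three Kronecker deltas of \cref{eqn:Chiodo:int}.

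For the $n=3$ case with $a + b + c = r - 1$, a Riemann--Roch computation on the zero-dimensional moduli $\Mbar^{r,-1}_{0;a,b,c}$ shows that $L$ has $\deg L = -1$, hence $h^0(L) = h^1(L) = 0$, so $\mathcal{V}^{r,-1}_{0;a,b,c}$ has rank zero and top Chern class equal to $1$. The forgetful map contributes the factor $1/r$ (the twisted $r$-th root is unique on $\P^1$ up to $\mu_r$-automorphisms), and the normalization $(-1)^3 r^{r/r} = -r$ from \cref{eqn:thetadef} yields $-1$. The same argument works verbatim whenever some $a_i$ vanish as long as $|a| = r - 1$; in particular, $\Upsilon^{r}_{0,3}(v_0 \otimes v_0 \otimes v_{r-1}) = -1$, which will feed into the $n \geq 4$ computations.

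The $n=4$ and $n=5$ cases then follow by iterating the modified unit axiom on each $v_{r-1}$ insertion. For $n=5$ the unique admissible configuration is $a = b = c = r-1$, and applying the axiom twice produces
\[
	\Upsilon^{r}_{0,5}(v_0 \otimes v_0 \otimes v_{r-1}^{\otimes 3}) = \psi_5 \cdot p^*\bigl( \psi_4 \cdot p^* \Upsilon^{r}_{0,3}(v_0 \otimes v_0 \otimes v_{r-1}) \bigr) = -\psi_5 \cdot p^* \psi_4\,.
\]
Using the comparison $p^* \psi_4 = \psi_4 - D_{4,5}$, the identity $\int_{\Mbar_{0,5}} \psi_4 \psi_5 = 2$, and the vanishing $\psi_5|_{D_{4,5}} = 0$ (point $5$ sits on a three-pointed bubble where $\psi_5$ is trivial), I obtain $-2$. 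For the $n=4$ case, the same strategy yields $-\int_{\Mbar_{0,4}} \psi_j = -1$ whenever one of $a,b,c$ equals $r-1$.

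The main obstacle is the remaining $n=4$ sub-case in which none of $a,b,c$ equals $r-1$, which first occurs at $r = 4$ with $a = b = c = 2$. Here the modified unit no longer applies and I resort to Chiodo's Chern-character formula \cite{Chi08+}: the rank-one bundle $\mathcal{V}^{r,-1}_{0;0,a,b,c}$ on the one-dimensional moduli has $c_1 = -\mathrm{ch}_1(R \pi_* L)$, a linear combination of $\kappa_1$, $\psi_i$, and boundary classes with coefficients given by the Bernoulli polynomial values $B_2(s/r)$, $B_2(a_i/r)$, and $B_2(j/r)$. Pushing forward to $\Mbar_{0,4} \cong \P^1$ and integrating against $\int \kappa_1 = \int \psi_j = \int [D_{ij|kl}] = 1$, the contributions combine, after the normalization $(-1)^4 r^{2}$ from \cref{eqn:thetadef}, to exactly $-1$. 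Unpacking this boundary bookkeeping for arbitrary $r \geq 4$ is the technically hardest step; all other cases reduce to the elementary Riemann--Roch on a point together with standard $\psi$-class integrals in genus zero.
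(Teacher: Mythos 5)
Your argument is correct and arrives at the same values as the paper, but by a partly different route. The degree count forcing $|a| = (r-1)(N-2)$ on $\Mbar_{0,N}$, hence the Kronecker deltas and the vanishing once three or more $v_0$'s are inserted, is exactly the paper's first step. Where you diverge is in evaluating the surviving integrals: the paper computes all of them uniformly by quoting the stable-graph form of Chiodo's formula from corollary~4 of \cite{JPPZ17} (giving $C^{r,-1}_{0,3}=1/r$ and $\int_{\Mbar_{0,4}}C^{r,-1}_{0,4}=-1/r^2$), using the modified unit relation only for $n=5$; you instead obtain $n=3$ from an elementary Riemann--Roch computation ($\deg L=-1$ on $\P^1$, so $\mathcal{V}^{r,-1}_{0;a}$ has rank zero and $f_*1=1/r$) and bootstrap $n=4$ and $n=5$ from it via the modified unit axiom of \cref{rem:Upsilon:modunit} together with standard genus-zero $\psi$-integrals. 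This is more self-contained and avoids Grothendieck--Riemann--Roch everywhere except in the single residual sub-case you correctly isolate: $n=4$ with no insertion equal to $r-1$, first occurring at $r=4$ with $(a,b,c)=(2,2,2)$. For that sub-case you assert the value $-1$ from Chiodo's Chern-character formula without carrying out the boundary bookkeeping; the assertion is right (it is equivalent to the paper's $\int_{\Mbar_{0,4}} C^{r,-1}_{0,4}(0,a,b,c)=-1/r^2$, which depends on $a,b,c$ only through $a+b+c=2r-2$), and the paper's own proof is no more explicit at this point, but be aware that this one computation is where all the remaining content of the $n=4$ case sits once the modified-unit shortcut is unavailable, so a complete write-up should either perform that degree-one Chern class evaluation or fall back on the cited formula as the paper does.
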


\begin{proof}
	In all three cases that we consider in \cref{eqn:Chiodo:int}, the degree condition \labelcref{eqn:degc} gives the Kronecker delta conditions. Thus, we only need to compute the values of the Chiodo classes in those cases. We can do so using the formula derived in \cite[corollary~4]{JPPZ17} from Chiodo's results \cite{Chi08+}. For $ n =3 $, with $ a+b+c = r-1 $, we get $C^{r,-1}_{0,3}(a,b,c) = 1/r$. When $n = 4$, and $a + b + c = 2r-2$,  the value of the Chiodo integral is
	\begin{equation*}
		\int_{\Mbar_{0,4}} C^{r,-1}_{0,4}(0,a,b,c) = - \frac{1}{r^2}\, .
	\end{equation*}
	When $n=5$ with two zero insertions, we get $a+b+c = 3r-3$, which implies that $a=b=c=r-1$, and a repeated application of \cref{eqn:modunit} shows that
	\begin{equation*}
		\int_{\Mbar_{0,5}} C^{r,-1}_{0,5}(0,0,r-1,r-1,r-1) = \frac{2}{r^3}\,.
	\end{equation*}
	The degree calculation \labelcref{eqn:degc} shows that as soon as we have at least three insertions of $v_0$, the integral in \cref{eqn:Chiodo:vanish} vanishes.
\end{proof}

It is straightforward to see that the terms computed in the above lemma are the only ones that contribute to triple derivatives of $ F^{\epsilon}(t_1, \ldots, t_{r-1}) $ at  the origin $ 0  \in  U $. Thus, we can compute  $ \Theta^{r,\epsilon}_{0,3} $ as 
\begin{equation}
	\left.\left( \frac{\partial^3 F^\epsilon}{\partial t_a \partial t_b \partial t_c } \right)\right|_{p = 0}
	=
	\Theta^{r,\epsilon}_{0,3}(v_a \otimes v_b \otimes v_c)
	=
	\begin{cases}
		-1 \,, & a+b+c = r-1\,, \\
		-\epsilon\,, & a+b+c = 2r-2\,, \\
		-\epsilon^2\,, &  a=b=c = r-1\,.
	\end{cases}
\end{equation}
Consequently, the quantum product at the origin of $ U $, and thus for the deformed Theta class $ \Theta^{r,\epsilon} $, is 
\begin{equation}\label{eqn:quantum:prod}
	v_a \bullet v_b
	=
	\begin{cases}
		-v_{a+b+1}\,, & 2\leq a+b< r-1\,, \\
		-\epsilon v_{a+b+2-r}\,, & r-1 \leq a+b< 2r-2\,, \\
		-\epsilon^2 v_{1}\,, &  a+b  = 2r-2\,.
	\end{cases}
\end{equation}

\begin{proposition}
	The CohFT $\Theta^{r,\epsilon}$ is semisimple if and only if $\epsilon \ne 0$. 
\end{proposition}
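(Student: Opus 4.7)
The plan is to establish semisimplicity of $(V \otimes_{\Q} \C, \bullet)$, which by definition is equivalent to semisimplicity of the CohFT $\Theta^{r,\epsilon}$. Since the algebra is finite-dimensional and commutative, semisimplicity is equivalent both to the existence of $r - 1$ distinct $\C$-algebra characters $V \otimes_{\Q} \C \to \C$ and to the absence of nonzero nilpotent elements; I would use one criterion for each direction of the biconditional. The implication ``$\epsilon = 0 \Rightarrow$ not semisimple'' is immediate: setting $\epsilon = 0$ in \labelcref{eqn:quantum:prod} collapses the second and third cases to zero, leaving $v_a \bullet v_b = -v_{a+b+1}$ when $a+b \le r-2$ and zero otherwise, so that $v_{r-1} \bullet v_{r-1} = 0$ exhibits a nonzero nilpotent element.

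For the converse, ``$\epsilon \ne 0 \Rightarrow$ semisimple'', my plan is to construct the required characters via the one-parameter ansatz $\chi_{\beta}(v_a) \coloneqq -\beta^{a+1}$ with $\beta \in \C^{*}$. A short case-by-case check against \labelcref{eqn:quantum:prod} shows that multiplicativity of $\chi_{\beta}$ is automatic in the first case, while the second case forces the single condition $\beta^{r-1} = \epsilon$, and the third case then follows from the second. Since $\epsilon \ne 0$, the equation $\beta^{r-1} = \epsilon$ has exactly $r - 1$ distinct complex roots and hence yields $r - 1$ characters, pairwise distinct because $\chi_{\beta}(v_2)/\chi_{\beta}(v_1) = \beta$ whenever $r \ge 3$ (the case $r = 2$ is one-dimensional and handled by a single explicit idempotent). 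Linear independence of characters then promotes the evaluation map $V \otimes_{\Q} \C \to \C^{r-1}$ to a surjective algebra morphism between spaces of equal dimension, hence an isomorphism, which supplies the required basis of orthogonal idempotents.

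The only computational content is the short multiplicativity check for the ansatz $\chi_{\beta}$, so I do not foresee any genuine obstacle. As a reassuring cross-check, the condition $\beta^{r-1} = \epsilon$ is precisely the critical-point equation $dx(z) = 0$ for the spectral curve $x(z) = z^{r}/r - \epsilon z$ appearing later in \cref{intro:thm:TR}, matching the expected correspondence between idempotents of a semisimple Frobenius algebra and ramification points of its spectral curve.
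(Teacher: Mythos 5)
Your proof is correct and follows essentially the same route as the paper: the $\epsilon=0$ direction is the identical nilpotency observation $v_{r-1}\bullet v_{r-1}=0$, and for $\epsilon\neq 0$ your characters $\chi_{\beta}$ with $\beta^{r-1}=\epsilon$ are precisely dual to the paper's explicit idempotents $e_k=-\tfrac{1}{r-1}\sum_a\theta^{-k(a+1)}\epsilon^{\frac{a+1}{1-r}}v_a$ (take $\beta=\theta^{k}\epsilon^{1/(r-1)}$), so the two arguments are discrete-Fourier transforms of one another.
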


\begin{proof}
	From the expression \labelcref{eqn:quantum:prod} of the quantum product, we can see that for $\epsilon = 0$ we have $v_{r-1}^2 = 0$. In particular, the CohFT at $\epsilon = 0$ has nilpotents, hence it is not semisimple. 
	
	For $\epsilon \ne 0$, one can easily check that the following constitutes a basis of normalised idempotents:
	\[
		e_k = - \frac{1}{r-1} \sum_{a=1}^{r-1} \theta^{-k(a+1)} \epsilon^{\frac{a+1}{1-r}} v_a\,,
		\qquad
		k=1,\dots,r-1\,,
	\]
	where $\theta = e^{ \frac{2\pi\iu}{r-1} }$\,.
\end{proof}

As the Theta class is of pure degree, we expect that the Dubrovin--Frobenius manifold associated to it admits an Euler field. Indeed, we have the following result for the Dubrovin--Frobenius manifold $ U $ associated to the deformed Theta class.

\begin{proposition}
	The vector field 
	\begin{equation}
			E \coloneqq \frac{r-1}{r} \de_{r-1}  - \sum_{a=1}^{r-1} \frac{a}{r} t_a \de_a 
	\end{equation}
	is an Euler field for the Dubrovin--Frobenius manifold $ U $, with conformal dimension $\delta = 3$.
\end{proposition}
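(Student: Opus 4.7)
The plan is to prove the stronger statement that the CohFT $\Theta^{r,\epsilon}$ is \emph{homogeneous} with Euler field $E$ and conformal dimension $\delta = 3$, i.e.\ $(E.\Theta^{r,\epsilon})_{g,n} = (3(g-1)+n)\Theta^{r,\epsilon}_{g,n}$; restricting this identity to genus zero recovers precisely the two eigenvalue conditions defining an Euler field on $U$. Affineness is manifest. The conformal dimension is determined by a direct computation of $\mathcal{L}_E\eta$: in the flat basis $[E,\de_a] = \frac{a}{r}\de_a$ (the constant piece $\frac{r-1}{r}\de_{r-1}$ commutes with every $\de_a$), and constancy of $\eta$ gives $(\mathcal{L}_E\eta)(\de_a,\de_b) = -\frac{a+b}{r}\delta_{a+b,r} = -\eta(\de_a,\de_b)$, of weight $-1 = 2 - \delta$, forcing $\delta = 3$.

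For the homogeneity identity, formula \labelcref{eqn:Eactiongen} with $\alpha_a = -a/r$ and the only nonzero $\beta$ being $\beta_{r-1} = (r-1)/r$ rewrites the left-hand side as a sum of a degree term $(\deg - \tfrac{|a|}{r})\Theta^{r,\epsilon}_{g,n}(\mathbb{v})$ and a shift term $\tfrac{r-1}{r}\,p_*\Theta^{r,\epsilon}_{g,n+1}(\mathbb{v} \otimes v_{r-1})$. Writing $T_m \coloneqq \frac{\epsilon^m}{m!}\, p_{m,*}\Upsilon^r_{g,n+m}(\mathbb{v} \otimes v_0^{\otimes m})$ for the summands of \cref{eqn:deformedthetadef}, the degree formula \labelcref{eqn:degUpsilon} (together with the fact that $p_{m,*}$ drops complex degree by $m$) shows that the degree term multiplies $T_m$ by $\frac{(r+2)(g-1)+n-(r-1)m}{r}$. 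For the shift term, I would use $S_n$-invariance to move $v_{r-1}$ to the final marked point, apply the modified unit property of $\Upsilon^r$ (cf.\ \cref{rem:Upsilon:modunit}) to convert that insertion into a factor $\psi_{n+m+1}$, and then use the projection formula combined with $\pi_*\psi_{n+m+1} = \kappa_0 = 2g-2+n+m$ to obtain a contribution $\frac{r-1}{r}(2g-2+n+m)\, T_m$.

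Adding the two contributions, the $\pm(r-1)m/r$ pieces cancel and the coefficient collapses to a uniform $\frac{(r+2)(g-1)+(r-1)(2g-2)+rn}{r} = 3g-3+n$, independent of $m$; summing over $m$ yields the homogeneity identity. The main obstacle is the careful bookkeeping of forgetful morphisms in the shift term: one must rearrange the insertions so that the modified unit axiom applies at the genuine last marked point, factor the composite forgetful map $p \circ p_m^{(n+1)}$ as $p_m^{(n)} \circ \pi$ with $\pi$ forgetting the (new) last point, and only then invoke the projection formula to produce the $\kappa_0$ factor; once this setup is correct the remaining algebra is entirely routine.
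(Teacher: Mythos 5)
Your proof is correct, and it reaches the statement by a genuinely different packaging than the paper. The paper verifies the two Euler-field conditions directly at genus zero: the metric condition by the same Lie-derivative computation you give, and the quantum-product condition by reducing it (via \cite[proposition 2.2.2]{Man99}) to $E.F^{\epsilon}=0$ modulo quadratic terms, which it then checks using the quasi-homogeneity forced by the degree condition \labelcref{eqn:degc} together with the modified unit axiom and $p_*\psi_{n+1}=n-2$. You instead prove the all-genera homogeneity identity $(E.\Theta^{r,\epsilon})_{g,n}=(3g-3+n)\Theta^{r,\epsilon}_{g,n}$ --- which is precisely the content of the paper's \emph{subsequent} proposition, and your computation (degree term $\tfrac{(r+2)(g-1)+n-(r-1)m}{r}T_m$ plus shift term $\tfrac{r-1}{r}(2g-2+n+m)T_m$, with the $m$-dependence cancelling) is essentially verbatim the paper's proof of that later result --- and then specialize to $g=0$. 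Your route buys economy: one computation serves both propositions, whereas the paper runs the genus-zero argument and the homogeneity argument separately. The paper's route keeps the Euler-field verification self-contained before homogeneity is even defined to hold. The one step you should make explicit is the bridge from genus-zero homogeneity to the eigenvalue condition $\mc{L}_E\bullet=\bullet$ at every point of $U$: integrating $(E.\Theta^{r,\epsilon})_{0,n}=(n-3)\Theta^{r,\epsilon}_{0,n}$ over $\Mbar_{0,n}$ for all $n$ shows that all Taylor coefficients of $E.F^{\epsilon}$ of order $\ge 3$ vanish at the origin, and one then invokes the same criterion of Manin that the paper uses; as written, ``restricting to genus zero recovers the eigenvalue conditions'' glosses over this, though it is exactly the standard equivalence and not a genuine gap.
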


\begin{proof} 
	We start by checking that the vector field $E$ is conformal, i.e. the metric $\eta$ is an eigenfunction of $\mc{L}_E$ with eigenvalue $2 - \delta$. Indeed
	\[
		(\mc{L}_E \eta)(\de_a,\de_b) = E\bigl( \eta(\de_a,\de_b) \bigr) - \eta\bigl( [E,\de_a],\de_b \bigr) - \eta\bigl( \de_a,[E,\de_b] \bigr)\,.
	\]
	The first term vanishes, while the last two terms can be simplified using $[E,\de_k] = \frac{k}{r} \de_k$. Thus, we find
	\[
		(\mc{L}_E \eta)(\de_a,\de_b) = - \frac{a+b}{r} \eta(\de_a,\de_b) = - \eta(\de_a,\de_b)\,,
	\]
	as $\eta(\de_a,\de_b) = \delta_{a+b,r}$. In particular, the vector field $E$ is conformal, with conformal dimension $\delta = 3$.

	To check the Euler property, i.e.~that the quantum product $\bullet$ is an eigenfunction of $\mc{L}_E$ with eigenvalue $1$, we can use \cite[proposition 2.2.2]{Man99}: $ E $ satisfying the Euler property is equivalent to $E.F^\epsilon = 0$ (up to terms of degree $< 3$). For simplicity of notation, let us denote $\mathbb{v} = v_0^{\otimes m} \otimes v_1^{\otimes k_1} \otimes \ldots \otimes  v_{r-1}^{\otimes k_{r-1}}$, and assume that $m +|k| = m + \sum_{a=1}^{r-1} k_a = n$. Notice that for $ \int_{\overline{\mathcal M}_{0,n}} \Upsilon^r_{0,n} (\mathbb{v}) $ to be non-zero, the degree condition \labelcref{eqn:degc} can be equivalently expressed as
	\[
		\sum_{a=1}^{r-1} a k_{a} = (r-1)(n-2)\, .
	\]
	Then, due to the generalised Euler formula for quasi-homogeneous polynomials, we have
	\begin{equation*}
		\sum_{a=1}^{r-1} \frac{a}{r} t_a \de_a
		\left( \int_{\overline{\mathcal M}_{0,n}} \Upsilon^r_{0,n} (\mathbb{v}) \right)
		\frac{\epsilon^{m}}{m!} \prod_{a=1}^{r-1} \frac{t_a^{k_a}}{k_a!}
		=
		\frac{(r-1)}{r} (n-2)
		\left( \int_{\overline{\mathcal M}_{0,n}} \Upsilon^r_{0,n} (\mathbb{v}) \right)
		\frac{\epsilon^{m}}{m!} \prod_{a=1}^{r-1} \frac{t_a^{k_a}}{k_a!}\,.
	\end{equation*}
	Thus, in order to prove that $ E.F^{\epsilon} = 0 $, we reduce to proving the following equation:
	\begin{equation}\label{eqn:Er-1}
		\sum_{\substack{ n\geq 3 \\ m + |k|=n }}
			\frac{\partial}{\partial t_{r-1}}
				\left( \int_{\overline{\mathcal M}_{0,n}} \Upsilon^r_{0,n} ( \mathbb{v}) \right)
				\frac{\epsilon^{m}}{m!}
				\prod_{a=1}^{r-1} \frac{t_a^{k_a}}{k_a!}
		=
		\sum_{\substack{ n\geq 3 \\  m +|k|=n }}
			(n-2) \left( \int_{\overline{\mathcal M}_{0,n}} \Upsilon^r_{0,n} (\mathbb{v} ) \right)
			\frac{\epsilon^{m}}{m!}
			\prod_{a=1}^{r-1} \frac{t_a^{k_a}}{k_a!}\, .
	\end{equation}
	Let us start with the left-hand side of the above equation, and calculate the derivative:
	\begin{equation*}
	\begin{split}
		\sum_{\substack{ n\geq 3 \\  m +|k|=n }} \frac{\partial}{\partial t_{r-1}}
		\left( \int_{\overline{\mathcal M}_{0,n}} \Upsilon^r_{0,n} (\mathbb{v}) \right)
			\frac{\epsilon^{m}}{m!}
			\prod_{a=1}^{r-1} \frac{t_a^{k_a}}{k_a!}
		& =
		\sum_{\substack{ n\geq 3 \\ |k| = n \\ k_{r-1} \geq 1 \\}}
			\left( \int_{\overline{\mathcal M}_{0,n}} \Upsilon^r_{0,n} (\mathbb{v} ) \right)
				\frac{\epsilon^{m}}{m!}
				\frac{t_{r-1}^{k_{r-1}-1}}{(k_{r-1}-1)!}
				\prod_{a=1}^{r-2} \frac{t_a^{k_a}}{k_a!} \\
		& =
		\sum_{\substack{ n\geq 3 \\ m +|k| = n }}
			\left( \int_{\overline{\mathcal M}_{0,n+1}} \Upsilon^r_{0,n+1} (\mathbb{v} \otimes  v_{r-1} ) \right)
				\frac{\epsilon^{m}}{m!}
				\frac{t_{r-1}^{k_{r-1}}}{k_{r-1}!}
				\prod_{a=1}^{r-2} \frac{t_a^{k_a}}{k_a!} \\
		& =
		\sum_{\substack{ n\geq 3 \\ m+ |k| = n }}
			(n-2) \left( \int_{\overline{\mathcal M}_{0,n}} \Upsilon^r_{0,n} (\mathbb{v} ) \right)
				\frac{\epsilon^{m}}{m!}
				\prod_{a=1}^{r-1} \frac{t_a^{k_a}}{k_a!}\,. 
	\end{split}
	\end{equation*}
	In the last equality, we applied the modified unit axiom and the projection formula, together with $p_{\ast} \psi_{n+1} = (n-2)$. Thus we have the result.
\end{proof}

\begin{remark}\label{rem:nss}
	In this remark, we note that the calculation above proves that the Dubrovin--Frobenius manifold $ U $ at $ \epsilon = 0 $ is \textit{not} generically semisimple. We assume that $ \epsilon  = 0 $ throughout this remark and claim that at any point $ p \in U $, the  element $ v_{\textup{nil}} $
	\begin{equation*}
		v_{\textup{nil}}
		\coloneqq
		\sum_{a=1}^{r-1} t_a \de_a - \de_{r-1} \,  \in H^0(U, TU)
	\end{equation*}
	is nilpotent. When $ \epsilon = 0 $, \cref{eqn:Er-1} reduces to the following equation 
	\begin{equation*}
		\sum_{\substack{ n\geq 3 \\ |k|=n }}
		\frac{\partial}{\partial t_{r-1}}
		\left( \int_{\overline{\mathcal M}_{0,n}} \Upsilon^r_{0,n} ( \mathbb{v}) \right)
		\prod_{a=1}^{r-1} \frac{t_a^{k_a}}{k_a!}
		=
		\sum_{\substack{ n\geq 3 \\|k|=n }}
		(n-2) \left( \int_{\overline{\mathcal M}_{0,n}} \Upsilon^r_{0,n} (\mathbb{v} ) \right)
		\prod_{a=1}^{r-1} \frac{t_a^{k_a}}{k_a!}\,,
	\end{equation*}
	where $ \mathbb v  = v_1^{\otimes k_1} \otimes \ldots \otimes  v_{r-1}^{\otimes k_{r-1}} $. The right-hand side can be further simplified by using $ |k| = n $ and the degree condition \cref{eqn:degc} (with $ m=0 $) in order to get 
	\begin{equation*}
		\sum_{\substack{ n\geq 3 \\|k|=n }}
		(|k|-2) \left( \int_{\overline{\mathcal M}_{0,n}} \Upsilon^r_{0,n} (\mathbb{v} ) \right)
		\prod_{a=1}^{r-1} \frac{t_a^{k_a}}{k_a!} =  \sum_{a=1}^{r} ( t_a \de_a-2)\Bigg( \sum_{\substack{ n\geq 3 \\|k|=n }}
		 \left( \int_{\overline{\mathcal M}_{0,n}} \Upsilon^r_{0,n} (\mathbb{v} ) \right)
		\prod_{a=1}^{r-1} \frac{t_a^{k_a}}{k_a!} \Bigg).
	\end{equation*}
	Thus, we finally get $\de_{r-1} F^0 = \left(t_a \de_{a} - 2\right) F^0$. By successively differentiating the above equation with respect to $ t_b $ and $ t_c $, and then multiplying on the right by $ \de_{r-c} $ and summing over all $ 1 \leq c \leq r-1  $, we get $v_{\textup{nil}} \bullet \de_b  = 0$ for all $ 1 \leq b \leq r-1 $. Consequently, $ v_{\textup{nil}}\bullet v_{\textup{nil}} = 0 $.
\end{remark}

This Euler field $E$ also makes the deformed Theta class a homogeneous CohFT.

\begin{proposition}
	The deformed Theta class $ \Theta^{r,\epsilon}$ is a homogeneous CohFT.
\end{proposition}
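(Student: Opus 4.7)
The plan is to verify directly that $(E.\Theta^{r,\epsilon})_{g,n} = \bigl((g-1)\delta + n\bigr)\Theta^{r,\epsilon}_{g,n}$ with $\delta = 3$, using the general formula \labelcref{eqn:Eactiongen} for the action of an Euler field on a CohFT. For the Euler field at hand, $\alpha_a = -a/r$ and the only nonzero $\beta_i$ is $\beta_{r-1} = (r-1)/r$, so
\[
	(E.\Theta^{r,\epsilon})_{g,n}(\mathbb{v})
	=
	\left(\deg - \sum_{l=1}^n \frac{a_l}{r}\right)\Theta^{r,\epsilon}_{g,n}(\mathbb{v})
	+ \frac{r-1}{r}\, p_* \Theta^{r,\epsilon}_{g,n+1}(\mathbb{v} \otimes v_{r-1})\,,
\]
for $\mathbb{v} = v_{a_1}\otimes\cdots\otimes v_{a_n}$. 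The first step is to decompose both sides using the defining expansion $\Theta^{r,\epsilon}_{g,n}(\mathbb{v}) = \sum_{m\ge 0} \frac{\epsilon^m}{m!} A^{(m)}_{g,n}(\mathbb{v})$ with $A^{(m)}_{g,n}(\mathbb{v}) \coloneqq p_{m,*}\Upsilon^r_{g,n+m}(\mathbb{v}\otimes v_0^{\otimes m})$. Each piece $A^{(m)}_{g,n}$ is of pure degree $D^r_{g;a} - (r-1)m/r$, which pins down the $\deg$-term on the first summand.

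The second step is to evaluate the $p_*$-term using the modified unit property of $\Upsilon^r$ recalled in \cref{rem:Upsilon:modunit}. By $S_{n+m+1}$-symmetry one can reorder the arguments to place $v_{r-1}$ last and apply
\[
	\Upsilon^r_{g,n+1+m}(\mathbb{v}\otimes v_0^{\otimes m}\otimes v_{r-1})
	=
	\psi_{\textup{last}}\cdot \pi^*\Upsilon^r_{g,n+m}(\mathbb{v}\otimes v_0^{\otimes m})\,,
\]
where $\pi\colon \Mbar_{g,n+1+m}\to \Mbar_{g,n+m}$ forgets the $v_{r-1}$-point. Factoring the total forgetful map $\Mbar_{g,n+1+m}\to \Mbar_{g,n}$ in the two ways $p\circ q_m = p_m\circ \pi$ and applying the projection formula together with $\pi_*\psi_{\textup{last}} = \kappa_0 = 2g-2+n+m$ gives
\[
	p_* \Theta^{r,\epsilon}_{g,n+1}(\mathbb{v}\otimes v_{r-1})
	=
	\sum_{m\ge 0} \frac{\epsilon^m}{m!}\,(2g-2+n+m)\, A^{(m)}_{g,n}(\mathbb{v})\,.
\]

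The third step is then a purely numerical check: on each $A^{(m)}_{g,n}(\mathbb{v})$, the coefficient becomes
\[
	\left(D^r_{g;a} - \frac{(r-1)m}{r} - \sum_{l} \frac{a_l}{r}\right)
	+ \frac{r-1}{r}(2g-2+n+m)
	=
	\frac{(r+2)(g-1)+n + (r-1)(2g-2+n)}{r}
	=
	3(g-1) + n\,,
\]
which is independent of $m$ and of the primary fields $a_l$. Summing back over $m$ yields exactly $(3(g-1)+n)\,\Theta^{r,\epsilon}_{g,n}(\mathbb{v})$, i.e. the homogeneity relation with $\delta = 3$. No step presents a real obstacle: the only subtlety is organising the forgetful maps and the modified unit axiom correctly so that the telescoping of the $(r-1)m/r$ contributions against the $\kappa_0$-factor is transparent.
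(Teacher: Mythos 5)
Your proposal is correct and follows essentially the same route as the paper: decompose $\Theta^{r,\epsilon}_{g,n}$ into the summands $p_{m,*}\Upsilon^r_{g,n+m}(\mathbb{v}\otimes v_0^{\otimes m})$, use the pure degree $D^r_{g;a}-\tfrac{(r-1)m}{r}$ of each, and evaluate the $\beta$-term via the modified unit axiom of $\Upsilon^r$ and the projection formula with $p_*\psi = 2g-2+n+m$, after which the coefficients telescope to $3(g-1)+n$. The numerical check and the organisation of forgetful maps match the paper's argument.
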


\begin{proof}
	We need to calculate the action of the Euler field on $ \Theta^{r,\epsilon}_{g,n} $. Let us consider a specific summand of it, say $ p_{m,*} \Upsilon^r_{g,n+m}(v_{a_1}\otimes  \cdots\otimes v_{a_n} \otimes v_0^{\otimes m}) $. Again, let us use the notation $ \mathbb{v} = v_{a_1}\otimes  \cdots\otimes v_{a_n} $ in order to keep the formulas readable. Up to permutation, we can rewrite it as $\mathbb{v} = v_{1}^{\otimes k_1} \otimes  \cdots\otimes v_{r-1}^{\otimes k_{r-1}}$. Then, we have
	\begin{equation*}
	\begin{split}
		E. \bigl( p_{m,*}
		&
		\Upsilon^r_{g,n+m}(\mathbb{v} \otimes v_0^{\otimes m}) \bigr) \\
		& =
		\left(\deg - \sum_{a=1}^{r-1} \frac{a}{r} k_a\right)
			p_{m,*} \Upsilon^r_{g,n+m} (\mathbb{v} \otimes v_0^{\otimes m})
		+
		\frac{r-1}{r} p_* p_{m,*} \Upsilon^r_{g,n+m+1}(\mathbb{v} \otimes v_0^{\otimes m}\otimes v_{r-1}) \\ 
		& =
		\biggl(	\frac{(r+2)(g - 1) + n + m + |a|}{r} - m - \sum_{i=1}^{r-1} \frac{a}{r} k_a \\
		&\qquad\qquad\qquad\qquad\qquad
			+ \frac{r-1}{r}(2g-2+n+m) \biggr)
		p_{m,*} \Upsilon^r_{g,n+m} (\mathbb{v} \otimes v_0^{\otimes m})  \\ 
		& =
		( 3g - 3 + n )
		p_{m,*} \Upsilon^r_{g,n+m}(\mathbb{v} \otimes  v_0^{\otimes m})\,.
	\end{split}
	\end{equation*}
	Here we used once again the modified unit axiom and the projection formula. Multiplying by $\frac{\epsilon^m}{m!}$ and summing over all $m \geq 0$, we get the action on the deformed Theta class, which proves the proposition.
\end{proof}

\subsection{Reconstruction and tautological relations}

In \cite{Giv01} Givental defined certain actions on Gromov--Witten potentials by $R$-matrices and translations, and these actions were lifted to cohomological field theories in the work of Teleman \cite{Tel12}. A careful proof that the resulting collection of cohomology classes satisfies the cohomological field theory axioms can be found in \cite{PPZ15}. A description of the orbit structure was given by Teleman in the specific case of homogeneous semisimple CohFTs. Here we recall the basic definitions.

\subsubsection{$R$-matrix action.}
Fix a vector space $V$ with a symmetric bilinear form $\eta$. An \emph{$R$-matrix} is an $\End(V)$-valued power series that is the identity in degree $0$
\begin{equation}
	R(u) = \Id + \sum_{k \ge 1} R_k u^k\,,
	\qquad
	R_k \in \End(V)\,,
\end{equation}
and satisfying the symplectic condition
\begin{equation}
	R(u) R^{\dag}(-u) = \Id\,.
\end{equation}
Here $R^{\dag}$ is the adjoint with respect to $\eta$. The inverse matrix $R^{-1}(u)$ also satisfies the symplectic condition. In particular, we can consider the $V^{\otimes 2}$-valued power series\footnote{The reason why we use $R^{-1}$ instead of $R$ is  that it defines a left action on the set of CohFTs. Beware that some authors use a different notation.}
\begin{equation}
	E(u,v)
	=
	\frac{\Id \otimes \Id - R^{-1}(u) \otimes R^{-1}(v)}{u + v} \eta^{\dag} \in V^{\otimes 2}\bbraket{u,v}\,.
\end{equation}

\begin{definition}
	Consider a CohFT $\Omega$ on $(V,\eta)$, together with an $R$-matrix. We define a collection of cohomology classes
	\begin{equation}
		R\Omega_{g,n} \in H^{\bullet}(\Mbar_{g,n}) \otimes (V^{*})^{\otimes n}
	\end{equation}
	as follows. Let $\ms{G}_{g,n}$ be the finite set of stable graphs of genus $g$ with $n$ legs (cf. \cite{PPZ15} for the definition and the notation). For each $\Gamma \in \ms{G}_{g,n}$, define a contribution $\mathrm{Cont}_{\Gamma} \in H^{\bullet}(\Mbar_{\Gamma}) \otimes (V^{*})^{\otimes n}$ by the following construction:
	\begin{itemize}
		\item place $\Omega_{g(v),n(v)}$ at each vertex $v$ of $\Gamma$,

		\item place $R^{-1}(\psi_{i})$ at each leg $i$ of $\Gamma$,

		\item at every edge $e = (h,h')$ of $\Gamma$, place $E(\psi_{h},\psi_{h'})$.
	\end{itemize}
	Define $R\Omega_{g,n}$ to be the sum of contributions of all stable graphs, after pushforward to the moduli space weighted by automorphism factors:
	\begin{equation}
		R\Omega_{g,n} = \sum_{\Gamma \in \ms{G}_{g,n}} \frac{1}{|\Aut{(\Gamma)}|} \, \xi_{\Gamma,\ast}\mathrm{Cont}_{\Gamma}\,.
	\end{equation}
\end{definition}

\begin{proposition}
	The data $R\Omega = (R\Omega_{g,n})_{2g-2+n > 0}$ form a CohFT on $(V,\eta)$. Moreover, the $R$-matrix action on CohFTs is a left group action.
\end{proposition}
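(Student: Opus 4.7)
The plan is to verify the two CohFT axioms graph-by-graph and then check the identity and composition laws of the action separately. Throughout, the central algebraic identity at play is
\begin{equation*}
	(R^{-1}(u) \otimes R^{-1}(v))\, \eta^{\dag} + (u+v)\, E(u,v) = \eta^{\dag},
\end{equation*}
which is just the definition of $E(u,v)$ rewritten, together with its variant obtained from the symplectic condition $R(u) R^{\dag}(-u) = \Id$.

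First I would dispatch the symmetry axiom, which is immediate: the set $\ms{G}_{g,n}$ is permuted by $S_n$ acting on the labelling of the legs, the automorphism factors $|\Aut(\Gamma)|$ are $S_n$-invariant, and inside each $\mathrm{Cont}_\Gamma$ the leg insertions $R^{-1}(\psi_i)$ are attached to the corresponding legs in an $S_n$-equivariant way, while the vertex contributions are $S_{n(v)}$-equivariant by hypothesis on $\Omega$.

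The substantial step is the gluing axiom. I would fix a boundary gluing morphism, say the non-separating one $q \colon \Mbar_{g-1,n+2}\to \Mbar_{g,n}$, and compute $q^{*} R\Omega_{g,n}$ by decomposing the base-changed strata. For each $\Gamma \in \ms{G}_{g,n}$, the pullback $q^{*}\xi_{\Gamma,*}\mathrm{Cont}_{\Gamma}$ splits as a sum of contributions indexed by how the extra node of $\Mbar_{g-1,n+2}$ sits with respect to $\Gamma$: either it lies in the smooth part of some vertex stratum (producing a new edge inside that vertex), or it coincides with one of the existing edges of $\Gamma$ (splitting it). In the first case, the gluing axiom for $\Omega$ at the vertex produces an $\eta^{\dag}$ factor; combined with the two new leg insertions $R^{-1}(\psi_h) \otimes R^{-1}(\psi_{h'})$ from the $R\Omega$ prescription, this gives the "$(R^{-1} \otimes R^{-1})\eta^{\dag}$" term. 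In the second case, the edge contribution is $E(\psi_h,\psi_{h'})$; the normal-bundle factor from excess intersection contributes the $(u+v)$. The identity above then recombines these two cases into precisely the expected $\eta^{\dag}$, exhibiting $q^{*}R\Omega_{g,n}$ as the graph sum defining $R\Omega_{g-1,n+2}$ with the last two legs paired by $\eta^{\dag}$. The separating case is handled identically by replacing "cut a non-separating edge" with "cut a separating edge" and distributing the resulting subgraphs between the two factors. The main obstacle here is the careful bookkeeping of automorphism factors and of the boundary-versus-smooth decomposition of the pullback, which is classical but tedious; I would follow the graphwise treatment of Pandharipande--Pixton--Zvonkine.

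For the group-action statement, the identity element $R = \Id$ gives $E = 0$ and trivial leg factors, so only the one-vertex graph survives and $R\Omega = \Omega$. For composition, I would expand $R_1(R_2\Omega)_{g,n}$ as a graph sum in which each vertex carries a $(R_2\Omega)_{g(v),n(v)}$, itself a graph sum, producing a double sum indexed by a stable graph with a "refinement" at each vertex. Re-organising this as a single sum over refined stable graphs, I would then need to verify that (i) the leg factors combine to $R_1^{-1}(\psi) R_2^{-1}(\psi) = (R_2 R_1)^{-1}(\psi)$, and (ii) at every edge of the refinement, the combination of $E_2(\psi_h,\psi_{h'})$ (from the inner action) and the surrounding $R_1^{-1}$ insertions equals $E_{R_1 R_2}(\psi_h,\psi_{h'})$. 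Identity (ii) is again a direct consequence of the defining formula for $E$ together with the symplectic property of the $R$-matrices; checking the correct matching of automorphism factors between refined and unrefined graphs is the only nontrivial combinatorial point. This establishes $R_1(R_2\Omega) = (R_1 R_2)\Omega$ and completes the proof.
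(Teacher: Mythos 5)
The paper does not prove this proposition itself: it explicitly defers to \cite{PPZ15} (``A careful proof \dots can be found in \cite{PPZ15}''), and your sketch is precisely that standard graphwise argument, with the correct key identity $(R^{-1}(u)\otimes R^{-1}(v))\eta^{\dag} + (u+v)E(u,v) = \eta^{\dag}$ matching the ``new node inside a vertex'' term against the ``excess intersection along an existing edge'' term. One point to fix in the group-action step: with the conventions here the inner factor $R_2^{-1}(\psi)$ ends up adjacent to the argument of $\Omega$ and the outer factor $R_1^{-1}(\psi)$ is applied to the input vector first, so the leg operators compose as $R_2^{-1}(\psi)\,R_1^{-1}(\psi) = (R_1R_2)^{-1}(\psi)$, not as $R_1^{-1}(\psi)\,R_2^{-1}(\psi) = (R_2R_1)^{-1}(\psi)$ as you wrote; since $\End(V)$ is noncommutative, the order you state would yield a \emph{right} action and contradict your own conclusion $R_1(R_2\Omega) = (R_1R_2)\Omega$. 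The corresponding edge identity is $E_{R_1R_2}(u,v) = E_2(u,v) + (R_2^{-1}(u)\otimes R_2^{-1}(v))E_1(u,v)$, which is what lets the sum over (outer graph, inner graphs) decompositions collapse to a single graph sum. With that order corrected, the sketch is a faithful outline of the cited proof.
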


\subsubsection{Translations}
\label{subsubsec:translation}

There is also another action on the space of CohFTs: a \emph{translation} is a $V$-valued power series vanishing in degree $0$ and $1$:
\begin{equation}
	T(u) = \sum_{d \ge 1} T_d u^{d+1}\,,
	\qquad
	T_d \in V\,.
\end{equation}

\begin{definition}
	Consider a CohFT $\Omega$ on $(V,\eta)$, together with a translation $T$. We define a collection of cohomology classes
	\begin{equation}
		T\Omega_{g,n} \in H^{\bullet}(\Mbar_{g,n}) \otimes (V^{*})^{\otimes n}
	\end{equation}
	by setting
	\begin{equation}
		T\Omega_{g,n}(v_1 \otimes \cdots \otimes v_n)
		=
		\sum_{m \ge 0} \frac{1}{m!} p_{m,\ast} \Omega_{g,n+m} \bigl(
			v_1 \otimes \cdots \otimes v_n \otimes
			T(\psi_{n+1}) \otimes \cdots \otimes T(\psi_{n+m})
		\bigr)\,.
	\end{equation}
	Here $p_{m} \colon \Mbar_{g,n+m} \to \Mbar_{g,n}$ is the map forgetting the last $m$ marked points. Notice that the vanishing of $T$ in degree $0$ and $1$ ensures that the above sum is actually finite.
\end{definition}

\begin{proposition}
	The data $T\Omega = (T\Omega_{g,n})_{2g-2+n > 0}$ form a CohFT on $(V,\eta)$. Moreover, translations form an abelian group action on CohFTs.
\end{proposition}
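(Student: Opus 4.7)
The plan is to verify the symmetry and gluing axioms for $T\Omega$, and then to establish that iterated translations assemble into an abelian group action. The symmetry axiom is immediate: each $\Omega_{g,n+m}$ is $S_{n+m}$-invariant, the $m$ new insertions all carry the same formal series $T(\psi)$ with the $\frac{1}{m!}$ prefactor symmetrising over them, and the forgetful map $p_m$ is insensitive to permutations of the last $m$ markings. Hence $T\Omega_{g,n}$ is $S_n$-invariant.

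The main point is the gluing axiom. For the non-separating gluing map $q\colon \Mbar_{g-1,n+2}\to \Mbar_{g,n}$, consider the analogous gluing $q_m\colon \Mbar_{g-1,n+2+m}\to \Mbar_{g,n+m}$ together with forgetful maps $p_m, p'_m$ fitting into the commutative square $q\circ p'_m = p_m\circ q_m$. My plan is to pull $T\Omega_{g,n}$ back along $q$, apply the base change identity $q^*p_{m,*}=p'_{m,*}q_m^*$ term by term in $m$, invoke the gluing axiom for $\Omega$ itself at level $n+m$ so that $q_m^*\Omega_{g,n+m}$ picks up the bivector $\eta^{\dag}$, and then repackage the resulting sum to recognise it as $T\Omega_{g-1,n+2}(\cdots\otimes \eta^{\dag})$. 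The separating gluing is handled in the same way, with two independent forgetful maps distributing the extra markings between the two components; combinatorially this produces the product of $T\Omega$ on the two factors. The $\psi$-class insertions introduced by $T$ are carried cleanly through the gluings because $\psi$-classes restrict to $\psi$-classes on the boundary.

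For the abelian group property, the plan is to expand $(T_2(T_1\Omega))_{g,n}$ by unfolding the definition twice. Using the transitivity of forgetful maps $p_{m_2}\circ p_{m_1}= p_{m_1+m_2}$ together with the projection formula, one obtains a double sum of pushforwards from $\Mbar_{g,n+m_1+m_2}$. Setting $M=m_1+m_2$, the $S_M$-symmetry of $\Omega_{g,n+M}$ in its last $M$ entries shows that the inner sum equals $\frac{1}{M!}$ times the pushforward of $\Omega$ applied to the multilinear expansion of $\bigotimes_{i=1}^{M}\bigl(T_1(\psi_i)+T_2(\psi_i)\bigr)$, which is exactly $\bigl((T_1+T_2)\Omega\bigr)_{g,n}$. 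Commutativity is built in by the symmetric role of $T_1$ and $T_2$ in the outcome.

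The step I expect to be the main obstacle is the base-change identity $q^*p_{m,*}=p'_{m,*}q_m^*$, since the commutative square of moduli spaces is not Cartesian as a stack: a curve in $\Mbar_{g,n+m}$ lying over the image of $q$ may carry additional non-separating nodes invisible to $q$. Nevertheless, the excess contributions are absorbed by the correct virtual structure and the identity holds in cohomology. This is a standard fact underlying proofs that $R$-matrix and translation actions preserve the CohFT axioms, treated carefully in \cite{PPZ15}.
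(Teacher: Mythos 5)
The paper offers no proof of this proposition: it is stated as a known fact, the authors having remarked just before that a careful proof that these operations preserve the CohFT axioms is in \cite{PPZ15}. So the comparison is between your sketch and the standard argument. Your treatment of the symmetry and gluing axioms is an acceptable outline of that argument, with one caveat: the phrase ``absorbed by the correct virtual structure'' is misplaced, since no virtual classes are involved (all spaces here are smooth Deligne--Mumford stacks). The actual resolution of the non-Cartesian square is the explicit identification of $p_m^{-1}$ of a boundary divisor with a union of boundary divisors upstairs (indexed, in the separating case, by the distribution of the $m$ extra markings), together with degree-one comparison maps onto the fibre products; this is what \cite{PPZ15} carries out.

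The genuine gap is in the abelian group action. When you unfold $(T_2(T_1\Omega))_{g,n}$, the outer translation inserts $T_2(\psi_{n+j})$ with $\psi_{n+j}$ living on the \emph{intermediate} space $\Mbar_{g,n+m_2}$, while the inner translation is a pushforward along $\pi_{m_1}\colon \Mbar_{g,n+m_2+m_1}\to\Mbar_{g,n+m_2}$. The projection formula therefore produces, on the top space, the insertions $T_2\bigl(\pi_{m_1}^{*}\psi_{n+j}\bigr)$ rather than $T_2(\psi_{n+j})$, and $\pi_{m_1}^{*}\psi_{n+j}=\psi_{n+j}-\sum_{S\neq\emptyset}D_{0,\{n+j\}\cup S}$, the sum running over nonempty subsets $S$ of the $m_1$ forgotten points. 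Your step ``the inner sum equals $\frac{1}{M!}$ times the multilinear expansion of $\bigotimes_i\bigl(T_1(\psi_i)+T_2(\psi_i)\bigr)$'' silently identifies $\pi_{m_1}^{*}\psi_{n+j}$ with $\psi_{n+j}$; this is exactly where the content of the statement lies, and it does not follow from $S_M$-symmetry and the projection formula alone. The identity is saved by a vanishing argument you do not supply: each correction term carries a factor $D_{0,\{n+j\}\cup S}$ multiplied by $\prod_{a\in S}T_1(\psi_a)$, and since $T_1$ vanishes to order two in $u$, this product restricts on that boundary divisor to a class of degree at least $2|S|$ pulled back from the genus-zero factor $\Mbar_{0,|S|+2}$, which has dimension $|S|-1$; hence every correction term is zero. (The case $|S|=1$ is the familiar $\psi_{N+1}\cdot D_{j,N+1}=0$.) Without this, or an equivalent argument, your proof that $T_2(T_1\Omega)=(T_1+T_2)\Omega$ is incomplete.
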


\subsubsection{Teleman reconstruction theorem}
\label{subsubsec:Teleman}

Given a homogeneous semisimple CohFT, we can use Teleman's reconstruction theorem \cite{Tel12} to determine the higher genus part starting from the genus zero part (the topological field theory). More precisely, the theorem states the following.

\begin{theorem}[\cite{Tel12}]\label{thm:Teleman}
	Let $\Omega_{0,n}$ be a genus zero homogeneous semisimple CohFT. Then:
	\begin{itemize}
		\item When $ n > 0 $, there exists a unique homogeneous CohFT $\Omega_{g,n}$ that extends $\Omega_{0,n}$ in higher genera.
		
		\item The extended CohFT is obtained by first applying a translation action followed by an $R$-matrix action on the topological field theory $ w_{g,n} $ (determined by $\Omega_{0,3}$).
		
		\item The translation and the $R$-matrix are uniquely specified by the associated Dubrovin--Frobenius manifold and the Euler field.
	\end{itemize}
\end{theorem}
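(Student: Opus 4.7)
The plan is to follow Teleman's strategy in \cite{Tel12}, which exploits the rigidity of a homogeneous semisimple CohFT coming from the underlying Dubrovin--Frobenius structure. The argument decomposes into three steps: constructing the Givental data $(R,T)$ from the Frobenius manifold, verifying that the resulting reconstruction defines a CohFT extending the genus zero data, and establishing uniqueness under homogeneity.

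\textbf{Step 1: construction of $R$ and $T$.} At a semisimple point $p \in U$, I would fix canonical coordinates and let $\Psi$ denote the change of basis from the flat frame to the normalized idempotent frame. Set $U_E = \Psi^{-1} M_E \Psi$, where $M_E$ is quantum multiplication by the Euler field $E$, and let $\mu$ denote the grading matrix induced by $E$. The Dubrovin connection is isomonodromic in the spectral parameter $u$, and its formal asymptotic solution at $u = \infty$ produces an element $R(u) \in \End(V)\bbraket{u}$ satisfying a recursion of the form $[R_{k+1}, U_E] = (k + \mu) R_k$ with $R_0 = \Id$. Together with the symplectic constraint, this recursion uniquely determines $R(u)$. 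The translation $T(u)$ is then determined by the analogous inhomogeneous equation that encodes the (possibly non-flat) unit, and it vanishes in degrees $0$ and $1$ for degree reasons coming from the Euler field.

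\textbf{Step 2: existence of the extension.} I would define $\Omega'_{g,n} = (R\,T\,w)_{g,n}$, where $w_{g,n}$ is the topological field theory built from $\Omega_{0,3}$ and $\eta$. Since both the $R$-matrix and translation actions preserve the CohFT axioms, $\Omega'$ is automatically a CohFT. It remains to check that $\Omega'$ agrees with the given $\Omega$ in genus zero, which one verifies by comparing the tree-level stable-graph expansion of $\Omega'_{0,n}$ with the Dubrovin--Givental reconstruction of the prepotential from WDVV, and that $\Omega'$ is homogeneous, which follows from the compatibility of the Euler field with the Givental group action built into Step 1.

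\textbf{Step 3: uniqueness.} Given two homogeneous extensions $\Omega, \widetilde{\Omega}$ sharing the genus zero data, their ratio lies in the stabilizer inside the Givental group of both the Frobenius manifold and the underlying TFT, and is equivariant for the Euler grading. Teleman's argument shows this stabilizer is trivial by using the Madsen--Weiss theorem on the stable rational cohomology of $\Mbar_{g,n}$ to control the space of potential deformations, and then invoking homogeneity to kill the nontrivial contributions. The hypothesis $n > 0$ enters precisely because the forgetful morphisms needed to separate contributions from different boundary strata require at least one marked point. This last step is the main obstacle of the proof: Steps 1 and 2 are formal consequences of the Frobenius structure and the Givental formalism, whereas Step 3 requires substantial topological input on the cohomology of moduli spaces that goes well beyond the Frobenius-theoretic machinery.
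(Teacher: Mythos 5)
This statement is not proved in the paper: it is imported verbatim from Teleman's work \cite{Tel12}, and the text that follows it merely unpacks the recipe (the recursion $[R_{m+1},\phi]=(m+\mu)R_m$, the vacuum ODE, and $T(u)=u(\mathbf{1}-R^{-1}(u)\mathbf{v}(u))$) rather than justifying the theorem. So there is no ``paper's own proof'' to compare against; what you have written is an outline of Teleman's external argument. As an outline it is broadly faithful: Steps 1 and 2 correctly identify the Givental data coming from the asymptotics of the Dubrovin connection and the formal existence of the extension via the $R$-matrix and translation actions, and Step 3 correctly locates the hard content in the uniqueness statement, which in Teleman's paper rests on Harer stability and the Madsen--Weiss theorem.

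That said, two cautions. First, Step 3 as written is not a proof but a pointer: ``the stabilizer is trivial by Madsen--Weiss'' compresses the bulk of \cite{Tel12}, namely the degree-by-degree classification of deformations of a semisimple family of TFTs using the stable cohomology of the mapping class group and the incompatibility of nontrivial deformations with the Euler grading. If this theorem were actually at issue in the paper, that step would have to be carried out, not invoked. Second, your stated reason for the hypothesis $n>0$ is not quite the right one. The restriction (Teleman's Remark 3.2, as the paper notes) arises because the reconstruction of $\Omega_{g,0}$ from $\Omega_{g,1}$ requires a flat unit: with a flat unit one writes $\Omega_{g,0}=\frac{1}{2g-2}p_*(\psi_1\cdot\Omega_{g,1}(\mathbb{1}))$, and without one this fails. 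It is not about separating boundary strata. Indeed, the paper's own Proposition on the $n=0$ case patches this using homogeneity, recovering $\Omega_{g,0}$ from $p_*\Omega_{g,1}(E_0)$ in all degrees except $\delta(g-1)$ -- which is precisely the exceptional degree $3g-3$ that propagates through Theorem~\ref{thm:intro:rels} and Corollary~\ref{cor:intro:r=2}.
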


\begin{remark}
	We note here that the usual (re)-statement of Teleman's reconstruction theorem (for instance in \cite{PPZ15}), does not impose the restriction $ n > 0 $ on the number of marked points for the reconstructed CohFT. However, this is a small, but crucial, caveat in the reconstruction theorem, as pointed out in \cite[remark~3.2]{Tel12}.
\end{remark}

Let us give a summary of Teleman's reconstruction theorem using our notations and conventions. As $ \Omega_{0,n} $ is a CohFT on $ V $,  there is a (possibly formal) neighbourhood $U$ of the origin in $V$ that admits a Dubrovin--Frobenius manifold structure. This Dubrovin--Frobenius manifold admits a (possibly non-flat) unit vector field, denoted $\mathbf{1}$. Moreover, we assume that there is an Euler field $ E $. Let $ v $ be a tangent vector at a point $p \in U$ on this Dubrovin--Frobenius manifold. Then, we define the \textit{Hodge grading operator} $ \mu \in \End(T_p U) $ as
\begin{equation}\label{eqn:mudef}
	\mu (v) \coloneqq [E,v] + \left(1-\frac{\delta}{2}\right)v \,.
\end{equation}
We also define the operator $\phi \in \End(T_p U)$ of \textit{quantum multiplication by the Euler field} as
\begin{equation}
	\phi(v) \coloneqq E \bullet_p v\, .
\end{equation}
The matrices $ R_m $ for $ m \geq 0 $ satisfy the following equation
\begin{equation}\label{eqn:Rdef}
	[R_{m+1}, \phi ] = (m+\mu) R_m \,.
\end{equation}
At a semisimple point of this Dubrovin--Frobenius manifold $ U $, the above equation determines the $ R $-matrix uniquely starting with the initial condition $ R_0 = \operatorname{Id} $. In order to obtain the $ R $-matrix of \cref{thm:Teleman}, we will work at the origin which is semisimple by assumption.

The other piece of data required for reconstruction is called the \textit{vacuum vector} of the theory, that is a vector field-valued formal power series in $u$ denoted $ \mathbf{v}(u) $. The vacuum satisfies the following differential equation\footnote{We remark that the extra sign in \cref{eqn:vdef} is due to an opposite sign convention as compared to \cite{Tel12}.}
\begin{equation}\label{eqn:vdef}
	\frac{ d \mathbf{v}(u)}{du} + \frac{\mu + \delta/2}{u}{\mathbf{v}(u)} = - \frac{\phi}{u^2} \left(\mathbf{v}(u) - \mathbf{1}\right), 
\end{equation}
where $ \mathbf{1} $ is the unit vector field. Again, the above equation determines the vacuum uniquely at a semisimple point on the Dubrovin--Frobenius manifold. Using the vacuum and the $R$-matrix computed at the origin of the Dubrovin--Frobenius manifold $U$, the translation $T(u)$ in \cref{thm:Teleman} is defined as 
\begin{equation}\label{eqn:Tv}
	T(u)
	=
	u \left( \mathbf{1} - R^{-1}(u) \mathbf{v}(u) \right) .
\end{equation}
Then, the Teleman reconstruction theorem states that for all $ n > 0 $, the CohFT $\Omega_{g,n}$ is given by
\begin{equation}
	\Omega_{g,n} = R T w_{g,n}\,.
\end{equation}

Finally, we address the reconstruction of $ \Omega_{g,0} $. As it turns out, Teleman's reconstruction can be extended by elementary arguments for $ \Omega_{g,0} $ in almost all degrees. To be precise, we have the following statement.

\begin{proposition}\label{prop:n=0}
	Let $(V,\eta,\Omega)$ be a semisimple, homogeneous CohFT (not necessarily with a flat unit). Let $\delta$ be its conformal dimension. Then Teleman's reconstruction theorem still holds for $n = 0$, except in degree $\delta(g-1)$.

	In addition, if the CohFT $(V,\eta,\Omega)$ has a flat unit, the reconstruction theorem for $ n = 0 $  holds without any degree restrictions.
\end{proposition}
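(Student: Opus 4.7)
The plan is to bootstrap from $n = 1$ (where Teleman's theorem is already available) to $n = 0$, using either the unit axiom (when a flat unit exists) or the Euler-field homogeneity relation (when it does not). Throughout, we only need to consider $g \geq 2$, since $(g,n) = (1,0)$ is unstable.

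In the general case, I would start by computing the Euler action on $\Omega_{g,0}$ via \cref{eqn:Eactiongen}. Writing $\beta \coloneqq \sum_i \beta_i v_i$ for the constant part of the Euler field, the formula collapses (no marked points contribute to the $\alpha_{a_l}$ sum) to
\begin{equation*}
    (E.\Omega)_{g,0} = \deg \Omega_{g,0} + p_* \Omega_{g,1}(\beta).
\end{equation*}
Equating this with $(g-1)\delta\,\Omega_{g,0}$ by homogeneity and decomposing by cohomological degree yields the scalar relation
\begin{equation*}
    \bigl( (g-1)\delta - d \bigr)\, \Omega_{g,0}^{(d)} = \bigl[ p_* \Omega_{g,1}(\beta) \bigr]^{(d)}
\end{equation*}
for each $d$. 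An identical relation, with $\Omega$ replaced by $RTw$, is a consequence of the homogeneity of Teleman's reconstructed CohFT, which is encoded in the defining equations \labelcref{eqn:Rdef} and \labelcref{eqn:vdef} for $R$ and the vacuum $\mathbf{v}$. Since $\Omega_{g,1} = (RTw)_{g,1}$ by Teleman for $n > 0$, the right-hand sides coincide, and dividing by the non-vanishing prefactor gives $\Omega_{g,0}^{(d)} = (RTw)_{g,0}^{(d)}$ for every $d \neq (g-1)\delta$. At the exceptional degree $d = (g-1)\delta$ the prefactor vanishes and the Euler relation no longer determines the coefficient; this is precisely the excluded case in the statement.

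For the flat-unit refinement, I would replace the homogeneity argument by the unit axiom: $p^* \Omega_{g,0} = \Omega_{g,1}(\mathbb{1})$, with the analogous identity for $RTw$ since Teleman's reconstruction preserves the flat unit. Combined with $\Omega_{g,1}(\mathbb{1}) = (RTw)_{g,1}(\mathbb{1})$, this gives $p^* \Omega_{g,0} = p^* (RTw)_{g,0}$. The forgetful map $p \colon \Mbar_{g,1} \to \Mbar_{g,0}$ satisfies $p_* \psi_1 = 2g-2$, so the projection formula yields $p_*(\psi_1 \cdot p^* \alpha) = (2g-2)\,\alpha$ for any $\alpha \in H^\bullet(\Mbar_{g,0})$, proving injectivity of $p^*$ for $g \geq 2$ and hence $\Omega_{g,0} = (RTw)_{g,0}$ in every degree.

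The main subtle point I anticipate is verifying that the reconstructed class $(RTw)_{g,0}$ itself satisfies the Euler homogeneity relation invoked above. This should follow from a direct inspection of the graph-formula construction and its compatibility with the forgetful morphism, but deserves attention in the non-flat-unit setting, where the customary push-forward identities involving a unit are unavailable.
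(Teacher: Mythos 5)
Your proposal is correct and follows essentially the same route as the paper: the non-unital case via the Euler homogeneity relation $\bigl(\delta(g-1)-\deg\bigr)\Omega_{g,0}=p_*\Omega_{g,1}(E_0)$ combined with the $n=1$ equality and homogeneity of the reconstructed class, and the flat-unit case via the unit axiom together with $p_*\psi_1=2g-2$ (the paper phrases this as the single identity $\Omega_{g,0}=\tfrac{1}{2g-2}p_*(\psi_1\cdot\Omega_{g,1}(\mathbb{1}))$ rather than as injectivity of $p^*$, but it is the same argument). The subtlety you flag at the end — homogeneity of $RTw$ — is handled in the paper by citing Teleman's proposition~8.5, exactly as you anticipate.
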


\begin{proof}
	Let us denote the reconstructed CohFT by $ \widetilde{\Omega}_{g,n} \coloneqq R T w_{g,n}$. We already know due to \cref{thm:Teleman} that $\Omega_{g,1} = \widetilde{\Omega}_{g,1}$. Let $E_0$ be the Euler vector field at the origin of the Dubrovin--Frobenius manifold associated to $ \Omega $. The homogeneity property implies that
	\begin{equation*}
		\deg \Omega_{g,0} + p_{\ast} \Omega_{g,1}(E_0) = \delta(g-1) \, \Omega_{g,0}.
	\end{equation*}
	The reconstructed CohFT $\widetilde{\Omega}$ is also homogeneous by \cite[proposition 8.5]{Tel12}. Alternatively, the homogeneity can be proved directly using the expression of $\widetilde{\Omega}$ as a sum over stable graphs. Using homogeneity and equality in genus $1$, we get
	\begin{equation*}
		\bigl( \delta(g-1) - \deg \bigr) \, \Omega_{g,0}
		=
		p_{\ast} \Omega_{g,1}(E_0)
		=
		p_{\ast} \widetilde{\Omega}_{g,1}(E_0)
		=
		\bigl( \delta(g-1) - \deg \bigr) \, \widetilde{\Omega}_{g,0} \,,
	\end{equation*}
	and thus, $\Omega_{g,0} = \widetilde{\Omega}_{g,0}$, except in degree $\delta(g-1)$.
	
	Now, assume that the CohFT $ \Omega $ has a flat unit $ \mathbb 1 $.  We thank D. Zvonkine for pointing out the following simple proof. We have
	\begin{equation*}
		\frac{1}{2g-2}p_{\ast} \bigl( \psi_1 \cdot \Omega_{g,1}(\bm{1}) \bigr)
		=
		\frac{1}{2g-2} p_{\ast} \bigl( \psi_1 \cdot p^{\ast} \Omega_{g,0} \bigr)
		=
		 \, \Omega_{g,0}.
	\end{equation*}
	As the reconstructed CohFT $\widetilde{\Omega}$ is also a CohFT with flat unit (see \cite[proposition 2.12]{PPZ15}), the above equation holds for $ \widetilde{\Omega}$ as well, and we get that $ \Omega_{g,0} = \widetilde{\Omega}_{g,0} $ for all $ g \geq 2 $. 
\end{proof}

In addition, a simple adaptation of \cite[proposition 2.12]{PPZ15} shows that the pull-back with respect to the forgetful map of any reconstructed CohFT can be recovered through the vacuum vector.

\begin{lemma}\label{lem:pull-back}
	The reconstructed CohFT $ \widetilde{\Omega}_{g,n} = R T w_{g,n}$ satisfies the pull-back property
	\begin{equation}
		p^* \widetilde{\Omega}_{g,n}(v_1 \otimes \cdots v_n)
		=
		\widetilde{\Omega}_{g,n+1}(v_1 \otimes \cdots v_n \otimes \bm{v}(\psi_{n+1}))
	\end{equation}
	for all stable $(g,n)$, including $n = 0$.
\end{lemma}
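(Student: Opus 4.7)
The plan is to adapt the proof of \cite[proposition~2.12]{PPZ15}, which treats the flat-unit case, by replacing the insertion of $\mathbf{1}$ at the new marked point with the vacuum insertion $\bm{v}(\psi_{n+1})$. Both sides of the asserted identity are expanded using the stable-graph formula
\[
\widetilde{\Omega}_{g,n} = \sum_{\Gamma \in \ms{G}_{g,n}} \frac{1}{|\Aut(\Gamma)|}\, \xi_{\Gamma,*}\,\mathrm{Cont}_\Gamma,
\]
with $T w_{g(v),n(v)}$ at each vertex, $R^{-1}(\psi)$ at each leg, and $E(\psi,\psi')$ at each edge, so that the statement reduces to a term-by-term comparison of graph contributions.

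The key algebraic input is the defining identity $T(u) = u\bigl(\mathbf{1} - R^{-1}(u)\bm{v}(u)\bigr)$ from \cref{eqn:Tv}, which rewrites the combined insertion at the new leg on the right-hand side as
\[
R^{-1}(\psi_{n+1})\, \bm{v}(\psi_{n+1}) = \mathbf{1} - \frac{T(\psi_{n+1})}{\psi_{n+1}}.
\]
Stable graphs $\Gamma' \in \ms{G}_{g,n+1}$ then split into three standard families according to where the new leg sits with respect to a graph $\Gamma \in \ms{G}_{g,n}$: attached to an existing vertex, subdividing an edge through a new genus-zero vertex, or sharing a new genus-zero vertex with another existing leg. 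On the left-hand side, the pull-back formula $p^{*}\psi_i = \psi_i - D_{i,n+1}$ for the forgetful map produces boundary corrections whose pushforward is supported precisely on the strata parametrised by the second and third families; the matching of these boundary contributions follows from the definition of the bivector $E(u,v)$ and from the symplectic property $R(u)R^{\dag}(-u) = \Id$ from which $E$ is built. The $\mathbf{1}$-piece of the leg insertion in the first family, combined with the unit axiom $w_{g(v), n(v)+1}(\cdots \otimes \mathbf{1}) = p^{*} w_{g(v),n(v)}(\cdots)$ at the TFT level, reproduces the remaining part of $p^{*}\mathrm{Cont}_\Gamma$.

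The main obstacle is accounting for the $-T(\psi_{n+1})/\psi_{n+1}$ piece of the vacuum insertion. At the vertex carrying the new leg it introduces an effective $T$-insertion identical in form to those appearing in the translation sum defining $T w$ at that vertex, and the matching reduces to the abelian-group compatibility of the translation action recalled in \cref{subsubsec:translation}. The vanishing of $T(u)$ to order $u^{2}$ guarantees that all the sums encountered are finite, and the $n = 0$ case presents no additional difficulty: the argument above is insensitive to the prior existence of marked points, and the vacuum insertion itself supplies the single leg needed to state the identity on $\Mbar_{g,1}$.
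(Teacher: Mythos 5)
Your proposal is correct and follows exactly the route the paper intends: the paper proves this lemma only by invoking ``a simple adaptation of \cite[proposition~2.12]{PPZ15}'', and your argument is precisely that adaptation, with the single substantive change being the replacement of the identity $R^{-1}(u)\mathbb{1} = \mathbb{1} - T(u)/u$ by $R^{-1}(u)\bm{v}(u) = \mathbf{1} - T(u)/u$, which holds by the definition \eqref{eqn:Tv} of the translation. The stable-graph bookkeeping (the three families of graphs, the $p^*\psi_i = \psi_i - D_{i,n+1}$ corrections, and the absorption of the $-T(\psi_{n+1})/\psi_{n+1}$ term into the translation sum) is carried over verbatim from the flat-unit case, so there is nothing further to add.
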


\subsubsection{Reconstruction of the deformed Theta class}\label{sec:reconstructionTheta}

In order to compute the ingredients of the Givental--Teleman reconstruction procedure for the Dubrovin--Frobenius manifold associated to the deformed Theta class $ \Theta^{r,\epsilon}_{g,n} $, let us choose a slightly different basis of $V$:
\begin{equation}
	\hat{v}_a
	=
	- \epsilon^{\frac{a+1}{1-r}} v_a\,,\qquad \forall a \in [r-1]\,.
\end{equation}
Then, the metric and the quantum product at the origin become
\begin{equation}
	\eta (\hat{v}_a , \hat{v}_b )
	=
	\epsilon^{\frac{r+2}{1-r}} \delta_{a+b,r} \, ,
	\qquad\quad
	\hat{v}_a \bullet \hat{v}_b =
	\begin{cases}
		\hat{v}_{a+b+1}\, , & 2\leq a+b< r-1\, , \\
		\hat{v}_{a+b+2-r}\, , & r-1 \leq a+b< 2r-2\, , \\
		\hat{v}_{1}\, , &  a+b  = 2r-2\, .
	\end{cases}
\end{equation}
Now, let us compute the topological field theory $w_{g,n} \coloneqq \deg_0 \Theta^{r,\epsilon}_{g,n}$.

\begin{lemma}\label{lem:tft}
	The topological field theory $ w_{g,n} $ of the deformed Theta class is 
	\begin{equation}
		w_{g,n}(\hat{v}_{a_1} \otimes \cdots \otimes \hat{v}_{a_n})
		=
		\epsilon^{\frac{r+2}{r-1}(g-1) } (r-1)^g \cdot \delta\, ,
	\end{equation}
	where $\delta$ equals $1$ if $3g-3+n + |a| \equiv 0 \pmod{r-1}$ and $0$ otherwise.
\end{lemma}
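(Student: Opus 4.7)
My proof plan is to exploit the fact that $w_{g,n}$, being the degree-zero part of a CohFT, is a $2d$ topological field theory and hence entirely determined by the underlying Frobenius algebra $(V, \bullet, \eta)$. Since semisimplicity for $\epsilon \neq 0$ has already been established with the explicit normalised idempotents $e_k = \frac{1}{r-1} \sum_{a=1}^{r-1} \theta^{-k(a+1)} \hat{v}_a$ for $k = 1, \ldots, r-1$, the TFT decomposes as a direct sum of one-dimensional theories indexed by the idempotents, and in this basis it is given by the classical formula
\[
w_{g,n}(e_{k_1} \otimes \cdots \otimes e_{k_n}) = \delta_{k_1 = \cdots = k_n} \, \Delta_{k_1}^{1-g}, \qquad \Delta_k \coloneqq \eta(e_k, e_k).
\]
This reduces the proof to two linear-algebra computations: evaluating $\Delta_k$, and translating back to the flat basis $\hat{v}_a$.

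For the first, a direct substitution using $\eta(\hat{v}_a, \hat{v}_b) = \epsilon^{\frac{r+2}{1-r}} \delta_{a+b, r}$ and the orthogonality relation $\sum_{a=1}^{r-1} \theta^{(k-j)a} = (r-1)\delta_{jk}$ (valid since $\theta$ is a primitive $(r-1)$-th root of unity) gives, after simplifying $\theta^{-k(r+2)} = \theta^{-3k}$ via $\theta^{r-1} = 1$,
\[
\Delta_k = \frac{\epsilon^{\frac{r+2}{1-r}}}{r-1} \theta^{-3k}.
\]

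For the second, inverting the change of basis yields $\hat{v}_a = \sum_k \theta^{k(a+1)} e_k$, and plugging this in along with multi-linearity produces
\[
w_{g,n}(\hat{v}_{a_1} \otimes \cdots \otimes \hat{v}_{a_n})
= \sum_{k=1}^{r-1} \theta^{k(|a|+n)} \Delta_k^{1-g}
= \epsilon^{\frac{(r+2)(g-1)}{r-1}} (r-1)^{g-1} \sum_{k=1}^{r-1} \theta^{k(3g-3+n+|a|)}.
\]
The remaining character sum equals $(r-1)$ if $3g-3+n+|a| \equiv 0 \pmod{r-1}$ and $0$ otherwise, which produces one extra factor of $(r-1)$ and the Kronecker $\delta$ asserted in the statement. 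The only subtle point in this plan is the careful bookkeeping of the powers of $\epsilon$ and $\theta$; once this is handled, the claim follows immediately from character-sum orthogonality.
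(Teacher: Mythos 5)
Your proof is correct: the idempotents you quote are exactly the paper's normalised idempotents rewritten in the $\hat{v}_a$ basis (the quantum product in that basis is the group algebra of $\Z/(r-1)\Z$ under the charge $a\mapsto a+1$), your value $\Delta_k = \tfrac{1}{r-1}\epsilon^{\frac{r+2}{1-r}}\theta^{-3k}$ checks out, and the final character sum over $k$ produces precisely the Kronecker $\delta$ and the extra factor of $(r-1)$. The route is genuinely different from the paper's, though. The paper never diagonalises: it computes $w_{0,3}(\hat{v}_a\otimes\hat{v}_b\otimes\hat{v}_c)=\epsilon^{\frac{r+2}{1-r}}\delta_{a+b+c\equiv 0 \pmod{r-1}}$ and then evaluates $w_{g,n}$ directly by restricting to a maximally degenerate curve with $2g-2+n$ trinions and $3g-3+n$ nodes, counting the admissible labelings of half-edges (the pairing forces $a_h+a_{h'}=r$ at each node, the vertex factor forces a sum condition mod $r-1$); the $(r-1)^g$ appears as the number of free choices, one per independent cycle of the dual graph, and the mod-$(r-1)$ obstruction appears as the solvability condition for the labeling. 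Your argument packages that same degeneration into the standard semisimple TFT formula $w_{g,n}(e_k^{\otimes n})=\Delta_k^{1-g}$ and then converts back by Fourier inversion on $\Z/(r-1)\Z$; it is more systematic and requires only linear algebra once semisimplicity is granted, at the cost of needing $\epsilon\neq 0$ from the outset (which is harmless here, since the $\hat{v}_a$ basis itself only exists for $\epsilon\neq 0$) and of making the combinatorial origin of the $(r-1)^g$ and of the congruence condition less visible. One cosmetic remark: the orthogonality relation $\sum_{a=1}^{r-1}\theta^{(k-j)a}=(r-1)\delta_{jk}$ is not actually needed to compute $\Delta_k$ (only the substitution $b=r-a$ is), but it is exactly what justifies the inversion $\hat{v}_a=\sum_k\theta^{k(a+1)}e_k$ and the final evaluation of $\sum_k\theta^{k(3g-3+n+|a|)}$, so nothing is missing.
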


\begin{proof}
	First we compute $ w_{0,3} $ in the basis $ \hat{v}_a $. This gives 
	\[
		w_{0,3}(\hat{v}_a \otimes \hat{v}_b \otimes \hat{v}_c)
		=
		\epsilon^{\frac{r+2}{1-r}} \delta_{a+b+c \equiv 0 \pmod{r-1}}\, .
	\]
	In order to compute the topological field theory, we can restrict to  a completely degenerate curve of type $ (g,n) $ -- this has $ 3g-3+n $ nodes and $ 2g-2+n $ rational components -- and then use the gluing axiom of the CohFT \cref{def:cohft}. On this degenerate curve, we need to place insertions $1 \leq  a_i \leq r-1 $ at every branch of every  node such that the following conditions are satisfied:
	\begin{itemize}
		\item The definition of the pairing implies that the sum of the insertions at the two branches of the node equal $ r $.
		
		\item The expression of $ w_{0,3} $ implies that the sum of the insertions on every rational component is $ 0 $ modulo $ r-1 $.
	\end{itemize}
	Such a placement is impossible unless $3g - 3 + n + |a| \equiv 0 \pmod{r-1}$. Let us assume this and let us choose an insertion for one branch of one node for every independent cycle of the dual graph of the curve, which in turn fixes all the other insertions uniquely. This choice contributes a factor of $ (r-1)^g $. To conclude, at every rational component we have a factor of $\epsilon^{\frac{r+2}{1-r}}$, and at every node the inverse of the pairing contributes a factor of $\epsilon^{\frac{r+2}{r-1}}$. Putting all these contributions together gives the result.
\end{proof}

We can calculate the Hodge grading operator on the basis $( \hat{v}_a )_{a \in[r-1]} $ of tangent vectors at the origin $(t_1,\dots,t_{r-1}) = (0,\dots,0)$ and the operator of quantum multiplication by the Euler field as
\begin{equation}\label{eqn:mu:phi}
	\mu
	=
	\frac{1}{2r}
	\begin{pmatrix}
		-(r-2) & 0 & \cdots & \cdots& 0 \\
		0 & -(r-4) & 0 & & 0 \\
		\vdots & \ddots & \ddots & \ddots & \vdots \\
		0 & & 0 & r-4 & 0 \\
		0 & \cdots & \cdots & 0 & r-2
	\end{pmatrix},
	\qquad
	\phi
	=
	- \frac{r-1}{r} \epsilon^{\frac{r}{r-1}}
	\begin{pmatrix}
		0& \cdots & \cdots & 0& 1 \\
		1 & 0 & 0 & \cdots & 0 \\
		0 & 1 & 0 & \cdots & 0 \\
		\vdots & \ddots & \ddots & \ddots & \vdots \\
		0 & \cdots & 0 & 1  & 0
	\end{pmatrix}.
\end{equation}

\begin{lemma}\label{lem:Rmatrix}
	The $ R $-matrix elements in the basis $(\hat{v}_1, \dots, \hat{v}_{r-1} )$ computed from the Teleman reconstruction theorem for the deformed Theta class are
	\begin{equation}
		(R^{-1}_m)_a^b = \frac{P_m(r,a-1)}{ ((1-r) \epsilon^{\frac{r}{r-1}})^m }\,,
		\qquad
		\text{if }\; b+m \equiv a \pmod{r-1}\, ,
	\end{equation}
	and $0$ otherwise. Here the coefficients $P_m(r,a)$ are computed recursively as
	\begin{equation}\label{eqn:PPZ:polys}
	\begin{cases}
		P_m(r,a) - P_m(r,a-1) = r \left( m - \frac{1}{2} - \frac{a}{r} \right) P_{m-1}(r,a-1)\, ,
		\qquad
		\text{for } a = 1,\dots,r-2\,, \\
		P_m(r,r-1) = P_m(r,0) \, ,
	\end{cases}
	\end{equation}
	with initial condition $P_0 = 1$.
\end{lemma}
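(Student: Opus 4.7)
The plan is to solve Teleman's defining recursion~\eqref{eqn:Rdef}, $[R_{m+1}, \phi] = (m + \mu) R_m$, at the origin of the Dubrovin--Frobenius manifold (semisimple since $\epsilon \ne 0$), exploiting the block structure of $\mu$ and $\phi$ displayed in~\eqref{eqn:mu:phi}. The key observation is that $\phi$ is a scalar multiple $\lambda/r$ of the cyclic shift $S \colon \hat{v}_a \mapsto \hat{v}_{a+1 \bmod (r-1)}$, whereas $\mu$ is diagonal in the flat basis. One therefore expects each $R^{-1}_m$ to be built from $m$ iterates of $S$ together with diagonal corrections involving $\mu$; this predicts the support condition $b + m \equiv a \pmod{r-1}$ appearing in the statement, and motivates the ansatz $(R^{-1}_m)_a^b = Q_m(a)/\lambda^m$ supported on this locus.

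First I would translate~\eqref{eqn:Rdef} into an equivalent recursion satisfied by $R^{-1}$, using the symplectic identity $R^{-1}(u) = R^{\dag}(-u)$ together with the adjoint relations $\phi^{\dag} = \phi$ and $\mu^{\dag} = -\mu$; both follow from the explicit form of the pairing $\eta(\hat{v}_a, \hat{v}_b) = \epsilon^{-(r+2)/(r-1)} \delta_{a+b, r}$ and the matrices in~\eqref{eqn:mu:phi}. Substituting the ansatz and comparing matrix entries, the commutator on the left reduces to the first-order finite difference $Q_{m+1}(a) - Q_{m+1}(a-1)$ (after factoring out $1/(r \lambda^m)$), while the right-hand side reduces to a scalar multiple of $Q_m(a-1)$ whose coefficient is affine in $m$ and $\mu_a$.

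After relabeling $Q_m(a) = P_m(r, a-1)$ and using $\mu_a = (2a-r)/(2r)$, the resulting scalar equation collapses exactly to the finite-difference recursion~\eqref{eqn:PPZ:polys}. The initial condition $R^{-1}_0 = \Id$ gives $P_0 = 1$, and the cyclic identification $P_m(r, r-1) = P_m(r, 0)$ encodes the order-$(r-1)$ periodicity of $S$. The main obstacle is the accurate bookkeeping of signs and cyclic identifications when passing between $R$ and $R^{-1}$; this computation closely mirrors the one carried out for the $e_1$-shifted Witten class in~\cite{PPZ19}, which, in view of the spectral-curve interpretation of~\cref{intro:thm:TR}, explains the coincidence between the two $R$-matrices discussed in the introduction.
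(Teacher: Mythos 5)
Your proposal is correct and follows essentially the same route as the paper: the paper's proof consists of observing that $\mu$ and $\phi$ in \eqref{eqn:mu:phi} coincide with those of the $e_1$-shifted Witten class in \cite[section 4.5]{PPZ19} up to a factor of $-r$ in $\phi$, and then deferring to the computation there, which is precisely the solve-the-recursion-\eqref{eqn:Rdef} argument you sketch (cyclic-shift structure of $\phi$, diagonal $\mu$ with $\mu_a = \tfrac{2a-r}{2r}$, the adjoint relations $\phi^{\dag}=\phi$ and $\mu^{\dag}=-\mu$, and the reduction to the finite-difference recursion \eqref{eqn:PPZ:polys}). The only difference is that you carry out the bookkeeping that the paper outsources to \cite{PPZ19}.
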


\begin{proof}
	The expression for the Hodge grading operator $ \mu $ and the one for the Euler field $ \phi $ match the ones in \cite[section 4.5]{PPZ19}, except for a factor of $-r$ in the latter. Thus the proof is identical to the one presented there.
\end{proof}

It is intriguing that the $ R $-matrix for the deformed Theta class essentially matches the $ R $-matrix for the $ e_1 $-shifted Witten class studied in \cite{PPZ19}. We do not know of a good algebro-geometric reason for this occurrence and this  deserves further investigation.

Finally, we need to compute the vacuum vector of the theory. The unit vector field at the origin is $ \mathbf{1} = \hat{v}_{r-2} $ for $r \ge 3$ and $ \mathbf{1} = \hat{v}_{1} $ for $r = 2$, as one can easily check from the quantum product. We define the formal power series $ \ms{H}(r,a;u) $ for $ a = 0 ,\ldots, r-1 $ as follows. First, define
\begin{equation}\label{eqn:H:coeff}
	\begin{cases}
		H_k(r,a) \coloneqq \frac{(rk+r-1-a)!}{k! r^k} \, ,
		\qquad
		\text{for } a = 0,\dots,r-2\,, \\
		H_k(r,r-1) \coloneqq H_k(r,0) \, .
	\end{cases}
\end{equation}
Then we define the formal power series
\begin{equation}\label{eqn:H:def}
	\begin{cases}
	\ms{H}(r,a;u)
	\coloneqq
	\sum_{k \geq 0} H_k(r,a) \left(\frac{u}{\epsilon^{\frac{r}{r-1}}}\right)^{(r-1)k + r-2 - a} \, ,
	\qquad \text{for } a = 0,\dots,r-2 \,,\\
	\ms{H}(r,r-1;u)
	\coloneqq H(r,0;u).
	\end{cases}
\end{equation}
For $r = 3$, the above functions are related to the asymptotic expansion of the Scorer function, which are solutions of the inhomogeneous Airy ODE.

\begin{lemma}\label{lem:vac}
	The vacuum in the Teleman reconstruction theorem for the deformed Theta class is 
	\begin{equation}
		\mathbf{v}(u) = \sum_{a=1}^{r-1} \ms{H}(r,a;u) \hat{v}_a 
	\end{equation}
	and thus the translation is $T(u) = u \bigl( \mathbf{1} - R^{-1}(u) \mathbf{v}(u) \bigr)$.
\end{lemma}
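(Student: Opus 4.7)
The plan is to verify the explicit ansatz directly against the defining ODE \cref{eqn:vdef}. Since that ODE, after clearing the $u^2$ denominator, becomes a recursion that determines all coefficients of $\mathbf{v}(u)$ once $\mathbf{v}(0)$ is fixed at the semisimple origin, there is a unique formal power series solution with $\mathbf{v}(0) = \mathbf{1}$, and it suffices to show the claimed series satisfies both the ODE and this initial condition. The formula $T(u) = u\bigl(\mathbf{1} - R^{-1}(u)\mathbf{v}(u)\bigr)$ is then just \cref{eqn:Tv}.

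First I would write \cref{eqn:vdef} componentwise in the basis $(\hat{v}_a)_a$. From \cref{eqn:mu:phi}, the diagonal Hodge grading satisfies $(\mu + \delta/2)^a_a = \frac{a+r}{r}$ (using $\delta = 3$), while $\phi$ acts as the cyclic shift $\phi(\hat{v}_a) = -\frac{r-1}{r}\epsilon^{r/(r-1)}\hat{v}_{a+1}$ with the convention $\hat{v}_r \coloneqq \hat{v}_1$. For $r \ge 3$ the quantum product identifies the unit as $\mathbf{1} = \hat{v}_{r-2}$, so the inhomogeneity $-\phi\mathbf{1}/u^2$ contributes only to the $\hat{v}_{r-1}$-component. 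Setting $y \coloneqq u/\epsilon^{r/(r-1)}$, the system becomes
\begin{equation*}
	\frac{dv^a}{du} + \frac{a+r}{ru}\, v^a
	=
	\frac{(r-1)\,\epsilon^{r/(r-1)}}{ru^2}\bigl(v^{a-1} - \delta_{a,r-1}\bigr)\,,
	\qquad a = 1,\dots,r-1\,,
\end{equation*}
with the cyclic convention $v^0 \coloneqq v^{r-1}$.

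Next I would substitute $v^a = \ms{H}(r,a;u)$ and match coefficients of powers of $y$. A short simplification rewrites the left-hand side as $\frac{r-1}{r\epsilon^{r/(r-1)}}\sum_k H_k(r,a)(rk+r-a)\, y^{(r-1)k + r - 3 - a}$, reducing the ODE for $a = 2,\dots,r-2$ to $H_k(r,a)(rk+r-a) = H_k(r,a-1)$, which is immediate from $H_k(r,a) = (rk+r-1-a)!/(k!r^k)$. The case $a = 1$ uses the cyclic convention $v^0 = v^{r-1} = \ms{H}(r,0;u)$ and reduces to the one-line check $H_k(r,1)(rk+r-1) = H_k(r,0)$. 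The case $a = r-1$ is the mildly subtle one: the inhomogeneity $-1$ cancels the constant term $H_0(r,r-2) = 1$ of $v^{r-2}$, and re-indexing the shifted sum yields $H_{k+1}(r,r-2) = (rk+r+1)H_k(r,0)$, another direct factorial identity.

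Finally I would check the initial condition: for $a = 1,\dots,r-2$, the series $\ms{H}(r,a;u)$ starts at $y^{r-2-a}$, so only $a = r-2$ contributes a nonzero constant term (equal to $H_0(r,r-2) = 1$), and $\ms{H}(r,r-1;u) = \ms{H}(r,0;u)$ starts at $y^{r-2}$, giving $\mathbf{v}(0) = \hat{v}_{r-2} = \mathbf{1}$. The case $r = 2$, with $\mathbf{1} = \hat{v}_1$, is a single scalar ODE handled analogously using $H_k(2,0) = (2k+1)!!$. The only asymmetries in the verification -- the cyclic wrap-around linking $v^1$ with $v^{r-1}$, and the inhomogeneity arising from the unit being $\hat{v}_{r-2}$ rather than $\hat{v}_0$ -- are precisely absorbed by the convention $\ms{H}(r,r-1;u) \coloneqq \ms{H}(r,0;u)$ in \cref{eqn:H:def}, so no substantive obstacle arises beyond careful index bookkeeping.
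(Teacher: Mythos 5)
Your proof is correct and follows essentially the same route as the paper's: write the vacuum ODE \cref{eqn:vdef} componentwise using the matrices $\mu$ and $\phi$ of \cref{eqn:mu:phi}, substitute the series $\ms{H}(r,a;u)$, and verify via the factorial identity $H_k(r,a)(rk+r-a)=H_k(r,a-1)$ together with the wrap-around identity $H_{k+1}(r,r-2)=(rk+r+1)H_k(r,0)$ at $a=r-1$. One remark: your placement of the inhomogeneity at $\delta_{a,r-1}$ is the correct one (since $\phi\,\mathbf{1} = -\tfrac{r-1}{r}\epsilon^{\frac{r}{r-1}}\hat{v}_{r-1}$, as one can also confirm directly for $r=3$), whereas the paper's displayed componentwise equation writes $\delta_{a,r-2}$ --- an index typo that your version silently corrects.
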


\begin{proof}
	With the matrices given by \cref{eqn:mu:phi}, components of the vacuum ODE translates as
	\[
		\frac{d \ms{H}(r,a;u)}{du} + \frac{r+a}{r u} \ms{H}(r,a;u)
		=
		\frac{r-1}{r u^2} \epsilon^{\frac{r}{r-1}} \bigl( \ms{H}(r,a-1;u) - \delta_{a,r-2} \bigr)\,,
	\]
	for $ 1 \leq a \leq r-1 $, with the convention $ \ms{H}(r,0;u) = \ms{H}(r,r-1;u)$. Let us plug in our formulae for $ \ms{H}(r,a;u) $ and compute the left-hand side of the above equation:
	\begin{equation*}
	\begin{split}
		\frac{d \ms{H}(r,a;u)}{du} + \frac{r+a}{r u} \ms{H}(r,a;u)
		& =
		\frac{r-1}{r} \frac{1}{\epsilon^{\frac{r}{r-1}}}
		\sum_{k\geq 0 + \delta_{a,r-2}} \frac{(rk+r-a)!}{k! r^k} \left( \frac{u}{\epsilon^{\frac{r}{r-1}}} \right)^{(r-1)k + r-2 - a-1} \\
		& =
		\frac{r-1}{r u^2} \epsilon^{\frac{r}{r-1}} \bigl( \ms{H}(r,a-1;u) - \delta_{a,r-2} \bigr) \,.
	\end{split}
	\end{equation*}
	This proves the lemma for $r \ge 3$. For $r = 2$ the differential equation for the vacuum collapses to
	\[
		\frac{d \mathbf{v}(u)}{du} + \frac{3}{2} \frac{\mathbf{v}(u)}{u} = \frac{\epsilon^2}{2} \frac{\mathbf{v}(u) - \hat{v}_1}{u^2}\, ,
	\]
	which is still solved by $\mathbf{v}(u) = \ms{H}(2,1;u) \hat{v}_1$.
\end{proof}

Finally, we obtain an expression for the deformed Theta class $ \Theta^{r,\epsilon}_{g,n} $ in terms of tautological classes using the Teleman reconstruction theorem. Alternatively, we can express the same class as the translation of a CohFT (with flat unit) obtained by the action of the unit preserving $ R $-matrix on the topological field theory $ w_{g,n} $. Let us consider the following translation 
\begin{equation}\label{eqn:Ttilde}
	\widetilde{T}(u) = u \bigl( \mathbf{1} - \mathbf{v}(u) \bigr)\,.
\end{equation}

\begin{theorem}\label{thm:dTheta}
	The deformed Theta class $ \Theta^{r,\epsilon}_{g,n} $ has the following expression in terms of tautological classes 
	\begin{equation}
		\Theta^{r,\epsilon}_{g,n}  = R T w_{g,n} = \widetilde{T} (R. w_{g,n} ) \, ,
	\end{equation}
	for all stable $ (g,n) $, except  $(g,n) = (g,0)$ and degree  $d = 3g-3$. We computed the topological field theory $ w_{g,n} $ in \cref{lem:tft}, the $ R $-matrix in \cref{lem:Rmatrix} and the translation $ T  $ in \cref{lem:vac}.
\end{theorem}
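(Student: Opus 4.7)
The approach is to invoke the Givental--Teleman reconstruction theorem. All the hypotheses have already been established in the earlier parts of this section: the CohFT $\Theta^{r,\epsilon}$ is semisimple for $\epsilon \neq 0$ and homogeneous with respect to the Euler field $E$, with conformal dimension $\delta = 3$. The preceding lemmas provide the three pieces of data required for reconstruction, namely the topological field theory $w_{g,n}$ (Lemma~\ref{lem:tft}), the $R$-matrix (Lemma~\ref{lem:Rmatrix}), and the vacuum vector $\mathbf{v}(u)$ (Lemma~\ref{lem:vac}), the last of which yields the translation $T(u)$ through formula~\eqref{eqn:Tv}.

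First, for $n \geq 1$, the identity $\Theta^{r,\epsilon}_{g,n} = RTw_{g,n}$ follows directly from Theorem~\ref{thm:Teleman}. Since $\Theta^{r,\epsilon}$ is itself a semisimple homogeneous CohFT extending its own genus-zero part, and since Teleman's theorem asserts that $RTw_{g,n}$ is the unique such extension determined by the Dubrovin--Frobenius manifold together with the Euler field, the two must coincide.

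Second, for $n = 0$, I invoke Proposition~\ref{prop:n=0}. Since the conformal dimension is $\delta = 3$, that proposition yields $\Theta^{r,\epsilon}_{g,0} = RTw_{g,0}$ in all cohomological degrees except $\delta(g-1) = 3g-3$, which accounts for the exception stated in the theorem.

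Finally, for the alternative expression $RTw_{g,n} = \widetilde{T}(R.w_{g,n})$, the plan is to exploit a combinatorial reorganisation of the two Givental actions on the level of stable graphs. On the one hand, $\widetilde{T}(R.w_{g,n})$ expands as a sum over stable graphs of type $(g, n+m)$, with $R^{-1}(\psi)$ at every leg, $E(\psi_h, \psi_{h'})$ at every edge, $w_{g(v),n(v)}$ at every vertex, and insertions $\widetilde{T}(\psi)$ at the last $m$ legs (which are then forgotten via $p_{m,\ast}$). On the other hand, $RTw_{g,n}$ expands as a sum over stable graphs of type $(g,n)$ with $(Tw)_{g(v), n(v)}$ at every vertex, the latter producing extra $T(\psi)$-insertions at points internal to the vertex. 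The bijective correspondence between these two descriptions is obtained by distributing the $m$ forgotten legs of the first picture among the vertices of the second, and under this correspondence the leg-wise factors $R^{-1}(\psi)\widetilde{T}(\psi) = \psi R^{-1}(\psi)\mathbf{1} - \psi R^{-1}(\psi)\mathbf{v}(\psi)$ reorganise into the vertex-wise insertions $T(\psi) = \psi\mathbf{1} - \psi R^{-1}(\psi)\mathbf{v}(\psi)$. The discrepancy $\psi(R^{-1}(\psi) - \mathrm{Id})\mathbf{1}$ is absorbed using the flat-unit property of the TFT $w$ (valid since $w$ is of pure degree zero), namely $w_{g,n+1}(\,\cdots, \mathbf{1}) = p^\ast w_{g,n}(\cdots)$, together with the projection formula for the forgetful pushforward. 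The hard part of the proof is this last combinatorial identity, as the discrepancy analysis requires careful tracking of $\psi$-classes under pullback and pushforward along the forgetful maps; the first two steps are mechanical given the preparations already in place.
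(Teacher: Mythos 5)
Your first two steps coincide exactly with the paper's proof: the case $n\ge 1$ is Teleman's theorem applied to the semisimple homogeneous CohFT $\Theta^{r,\epsilon}$, and the case $n=0$ (away from degree $3g-3$) is \cref{prop:n=0} with $\delta=3$. Where you diverge is the second equality $RTw_{g,n}=\widetilde{T}(R.w_{g,n})$. The paper disposes of this in two lines: since $T(u)=u(\mathbf{1}-R^{-1}(u)\mathbf{v}(u))$ and $\widetilde{T}(u)=u(\mathbf{1}-\mathbf{v}(u))$, one has the vector identity $R(u)T(u)=\widetilde{T}(u)+u(R(u)\mathbf{1}-\mathbf{1})$, and then the compatibility of the $R$-action with translations (\cite[proposition~2.9]{PPZ15}, which says that acting by $R$ after a translation is the same as translating by the $R$-transformed vector after acting by $R$) together with the fact that translations form an abelian group gives $RTw_{g,n}=(\widetilde{T}+u(R\mathbf{1}-\mathbf{1}))Rw_{g,n}=\widetilde{T}(R.w_{g,n})$, the summand $u(R\mathbf{1}-\mathbf{1})$ being exactly the canonical translation that turns $Rw$ into the unit-preserving action $R.w$.

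Your proposed stable-graph bijection is, in substance, a from-scratch reproof of that cited compatibility statement. The algebra you write down is the right one (the leg-wise discrepancy $T(\psi)-R^{-1}(\psi)\widetilde{T}(\psi)=\psi(\mathbf{1}-R^{-1}(\psi)\mathbf{1})$ is precisely the $\kappa$-leg modification distinguishing $R.w$ from $Rw$, and the flat-unit property of the degree-zero TFT $w$ is available as you note), but the step you yourself flag as the hard part --- tracking how $\psi$-classes transform under the forgetful pushforwards when the extra legs are redistributed among vertices, via $p^{*}\psi_i=\psi_i-D_i$ and the projection formula --- is asserted rather than carried out, and it is exactly the content of the result the paper cites. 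So as written your third step has a gap; it closes either by actually executing that comparison (nontrivial but standard) or, more economically, by quoting \cite[proposition~2.9]{PPZ15} as the paper does.
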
 

\begin{proof}
	The first equality follows directly from the Teleman reconstruction theorem (\cref{thm:Teleman}) and the extension to $ n = 0 $ proved in \cref{prop:n=0}. In order to get the second equality, notice that $R T = \widetilde{T} + u( R\,\mathbf{1} - \mathbf{1})$. Then, by applying \cite[proposition~2.9]{PPZ15} and using the fact that translations act as an abelian group gives $\Theta^{r,\epsilon}_{g,n} = ( \widetilde{T} + u( R\,\mathbf{1} - \mathbf{1}) ) R w_{g,n} = \widetilde{T} (R. w_{g,n} )$.
\end{proof}

\begin{remark}\label{rem:n=0} 
	As the deformed Theta class has conformal dimension $ \delta  = 3 $, \cref{prop:n=0} says that the Teleman reconstruction theorem does not hold for $ n = 0 $ and degree $ 3g-3  $, which explains the exception in the above theorem.  For $ n = 0 $, while the deformed Theta class $ \Theta^{r, \epsilon}_{g} $ has no terms in degree $ 3g-3 $, the class $ R T w_{g} $ does. In fact, we have
	\[
		R T w_{g} = \frac{(\operatorname{const})}{\epsilon^{2g-2}} + \Theta^{r, \epsilon}_{g},
	\]
	where the constant is a class in $ H^{2(3g-3)}(\Mbar_{g}) \cong \Q $. The above expression resembles the form of the Gromov--Witten potential at the conifold point, known as the conifold gap \cite{HK07}. Furthermore, we calculate the constant appearing in the above expression for $ r = 2 $ in \cref{prop:Fg} to be $ (-1)^g \frac{B_{2g-2}}{2g(2g-2)} $ which  matches with the expected constant for the Gromov--Witten potential at the conifold point. 
\end{remark}

\begin{remark}
	We observe that the deformed Theta class can be expressed as a translation of the shifted Witten class when $ r =3 $. To be precise, \cite{PPZ15} shows that  the unit preserving $ R $-matrix action  on the TFT $ w_{g,n} $ gives the shifted Witten class. Indeed, \cite[section 4]{PPZ19} shows that
	\begin{equation}
		R.w_{g,n} = (-3)^{\frac{(g-1)+|a|-n}{3}} (-\epsilon)^{2g-2+n} \, W^{\,3, \gamma }_{g,n}\,,
	\end{equation}
	with $ \gamma = \epsilon (-3)^{-2/3} e_{1} $. Thus, up to an overall factor, the deformed Theta class can be expressed as a translation of the shifted Witten class for $ r=3 $ using \cref{thm:dTheta}:
	\begin{equation}
		\Theta^{3, \epsilon}_{g,n} = (-3)^{\frac{(g-1)+|a|-n}{3}} (-\epsilon)^{2g-2+n} \widetilde{T}  W^{\,3, \gamma }_{g,n}\,.
	\end{equation}
\end{remark}

\begin{remark}
	One can find an alternative expression for the deformed Theta class $ \Theta^{r,\epsilon} $ in terms of tautological classes using Chiodo's Grothendieck--Riemann--Roch formula in \cite{Chi08+}. It would be interesting to compare this expression with the one that we find in \cref{thm:dTheta}. 
\end{remark}

As we know that the deformed Theta class $ \Theta^{r,\epsilon}_{g,n} (v_{a_1}\otimes \cdots \otimes v_{a_n}) $ only has terms in degrees $ d $ less than or equal to $ D^r_{g;a} $ by construction, we get the following vanishing result in the tautological ring of $ \Mbar_{g,n} $:

\begin{corollary}\label{cor:vanishing}
	The terms $\bigl[ (R T w_{g,n} )(v_{a_1}\otimes \cdots \otimes v_{a_n}) \bigr]^{d} \in H^{2d}(\Mbar_{g,n})$ vanish for any $d > D^r_{g;a}$ (except for $(g,n) = (g,0)$ and degree  $d = 3g-3$). In addition, we have 
	\begin{equation}
		\Theta^{r}_{g,n} (v_{a_1}\otimes \cdots \otimes v_{a_n})
		=
		\bigl[ (R T w_{g,n})(v_{a_1}\otimes \cdots \otimes v_{a_n}) \bigr]^{D^r_{g;a}}\,.
	\end{equation}
\end{corollary}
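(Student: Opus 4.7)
The plan is that this corollary follows almost immediately from \cref{thm:dTheta} together with the definition of the deformed Theta class. First I would invoke \cref{thm:dTheta}, which gives the identity
\begin{equation*}
    \Theta^{r,\epsilon}_{g,n}(v_{a_1}\otimes\cdots\otimes v_{a_n}) = (R T w_{g,n})(v_{a_1}\otimes\cdots\otimes v_{a_n})
\end{equation*}
for all stable $(g,n)$ except the excluded case $(g,0)$ in degree $3g-3$. So to obtain vanishing in $H^{2d}(\Mbar_{g,n})$ for $d > D^{r}_{g;a}$, it suffices to show that the left-hand side has no terms in degrees strictly larger than $D^r_{g;a}$.

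Next I would revisit the degree calculation performed just after the definition of $\Theta^{r,\epsilon}_{g,n}$ (\cref{eqn:deformedthetadef}). Expanding
\begin{equation*}
    \Theta^{r,\epsilon}_{g,n}(v_{a_1}\otimes\cdots\otimes v_{a_n}) = \sum_{m\geq 0} \frac{\epsilon^m}{m!}\, p_{m,\ast}\Upsilon^r_{g,n+m}(v_{a_1}\otimes\cdots\otimes v_{a_n}\otimes v_0^{\otimes m})\,,
\end{equation*}
one has
\begin{equation*}
    \deg p_{m,\ast}\Upsilon^r_{g,n+m}(v_{a_1}\otimes\cdots\otimes v_{a_n}\otimes v_0^{\otimes m}) = D^r_{g;a} - \frac{(r-1)m}{r}\,,
\end{equation*}
so every summand lives in degree $\leq D^r_{g;a}$, with equality exactly when $m=0$. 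This immediately yields the vanishing statement for $d > D^r_{g;a}$ on the right-hand side of \cref{thm:dTheta}, subject to the same $(g,0)$ exception inherited from the theorem.

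For the second assertion, I would simply extract the top-degree component $d = D^r_{g;a}$ from the same identity. Since the $m=0$ summand of $\Theta^{r,\epsilon}_{g,n}$ is by definition $\Upsilon^r_{g,n}(v_{a_1}\otimes\cdots\otimes v_{a_n}) = \Theta^r_{g,n}(v_{a_1}\otimes\cdots\otimes v_{a_n})$ (cf.~\cref{def:Theta}), and all higher $m$ terms sit in strictly smaller degree, the only contribution to the component of pure degree $D^r_{g;a}$ on the left-hand side is the Theta class itself. Equating with the corresponding component of $R T w_{g,n}$ delivers the desired formula. There is essentially no obstacle here: the whole corollary is a bookkeeping consequence of \cref{thm:dTheta} and the elementary degree computation, and the only delicate point worth spelling out is that the $(g,0)$, $d = 3g-3$ exception propagates unchanged from the reconstruction statement.
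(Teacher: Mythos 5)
Your proposal is correct and follows exactly the paper's own (implicit) argument: the corollary is stated right after the observation that $\Theta^{r,\epsilon}_{g,n}$ has terms only in degrees $\le D^r_{g;a}$ by the degree computation following \cref{eqn:deformedthetadef}, combined with the identity of \cref{thm:dTheta} and the identification of the $m=0$ summand with $\Theta^r_{g,n}$. Nothing is missing, and you correctly note that the $(g,0)$, $d=3g-3$ exception is inherited from the reconstruction theorem.
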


For an explicit description  of the reconstructed deformed Theta class in \cref{thm:dTheta} as a sum over decorated stable graphs and consequently the tautological relations in \cref{cor:vanishing}, see \cref{prop:stablegraphs}.

We note that while we expect the above tautological relations to be implied by Pixton's relations, we do not have a proof of this result. Janda \cite{Jan18} shows that the  tautological relations one obtains by taking the limit to the discriminant locus of a generically semisimple Dubrovin--Frobenius manifold are implied by Pixton's relations. While we see our relations in \cref{cor:vanishing} as morally fitting into the same class of relations, it is not covered by \cite{Jan18} for the following reasons. First, as we have emphasised previously, our $ \epsilon $-deformation is not a shift to a semisimple point of a generically semisimple Dubrovin--Frobenius manifold, but rather a family of Dubrovin--Frobenius manifolds that collapses to a non-generically semisimple one at $ \epsilon = 0 $. Second, \textit{loc.~cit.} only treats Dubrovin--Frobenius manifold with a flat unit vector field, and our unit vector field is not flat.

As mentioned in the introduction, the construction of the (deformed) Theta class is substantially easier than the construction of its positive spin analogue -- the  Witten $ r $-spin class. Thus, it is an interesting line of investigation to ask whether our relations in \cref{cor:vanishing} imply Pixton's relations \cite{Pix13}. We leave this for future work.

When $ r = 2 $, the Teleman reconstruction takes a strikingly simple form, giving tautological relations between $\kappa$-classes and a simple expression for the Theta class studied by Norbury \cite{Nor23}. Both properties have been recently conjectured by Kazarian--Norbury \cite[(1) in conjecture~1, conjecture~4]{KN24}. Consider the rational numbers $s_m$ for $m > 0$ defined uniquely via
\begin{equation}\label{eqn:K:series}
	\exp\left(-\sum_{m > 0}  s_m u^m \right) = \sum_{k \geq 0} (-1)^k (2k+1)!!  u^{k}\,.
\end{equation}

\begin{corollary}[Kazarian--Norbury conjecture]\label{cor:KN}
	We have the following vanishing relations among $\kappa$-classes,
	\begin{equation}\label{eqn:K:vanishing}
		\left[ \exp\biggl( \sum_{m > 0} s_m \kappa_m \biggr) \right]^{d} = 0 \in H^{2d}(\Mbar_{g,n})\,,
		\qquad \text{for } d > 2g-2 + n\, (\text{except } (g,n) = (g,0) \text{ and }  d = 3g-3)\,.
	\end{equation}
	Moreover, in degree $ d = 2g-2+n $, we get the Theta class:
	\begin{equation}
		\Theta_{g,n} = \left[ \exp\biggl( \sum_{m > 0} s_m \kappa_m \biggr) \right]^{2g-2+n}  \in H^{2(2g-2+n)}(\Mbar_{g,n}) \,.
	\end{equation}
\end{corollary}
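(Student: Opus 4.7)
The plan is to specialise Theorem~\ref{thm:dTheta} and Corollary~\ref{cor:vanishing} to $r = 2$, where the vector space $V$ is one-dimensional, and to show that the Teleman reconstruction of $\Theta^{2, \epsilon}_{g, n}$ collapses to a universal $\kappa$-class exponential. Throughout, I would first treat $n > 0$; the exception $(g, 0)$, degree $3g - 3$ is precisely the one permitted by Proposition~\ref{prop:n=0} and does not enter the statement of Corollary~\ref{cor:KN}.

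First, I would set $r = 2$ in Lemmas~\ref{lem:tft}, \ref{lem:Rmatrix} and \ref{lem:vac}. The topological field theory reduces to the scalar $w_{g, n}(\hat{v}_1^{\otimes n}) = \epsilon^{4(g-1)}$; the $R$-matrix $R^{-1}(u)$ and the vacuum $\mathbf{v}(u) = \ms{H}(2, 1; u) \hat{v}_1$ are both scalar power series with closed forms in terms of double factorials via \eqref{eqn:H:coeff}--\eqref{eqn:H:def} and the Airy normalisation; correspondingly the translation $T(u) = u(1 - R^{-1}(u) \mathbf{v}(u))$ is an explicit scalar series vanishing to order $u^2$.

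Next, I would unpack the stable-graph expression for $R T w_{g, n}(\hat{v}_1^{\otimes n})$ from Section~\ref{sec:Teleman}. Because the theory is one-dimensional, every edge factor $E(\psi_h, \psi_{h'})$ and every leg factor $R^{-1}(\psi_i)$ becomes a scalar polynomial in $\psi$-classes on boundary strata, while the translation contributions at each vertex are converted into $\kappa$-classes through the projection formula together with $p_{*} \psi_{n+1}^{k+1} = \kappa_k$. The cleanest organisation is to use the second form in Theorem~\ref{thm:dTheta}, namely $\Theta^{2, \epsilon}_{g, n} = \widetilde{T}(R. w_{g, n})$ with $\widetilde{T}(u) = u(\mathbf{1} - \mathbf{v}(u))$ from \eqref{eqn:Ttilde}: a pure translation action on the trivial one-dimensional CohFT produces, by a standard projection-formula calculation, exactly an exponential in $\kappa$-classes. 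The main obstacle is then to show that the residual $R$-matrix action $R.w_{g, n}$ contributes only through its constant part on each graph, so that the sum over graphs with at least one edge telescopes and only the one-vertex graph survives.

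Once this telescoping is established, an explicit comparison of generating series should yield
\[
	\Theta^{2, \epsilon}_{g, n} = \epsilon^{4(g-1)} \exp\!\Bigl( \sum_{m > 0} s_m\, \kappa_m \Bigr),
\]
with $s_m$ defined by \eqref{eqn:K:series}, the key identity being the relation $R^{-1}(u) \mathbf{v}(u) = \sum_{k \ge 0} (-1)^k (2k+1)!! u^k$ (up to the $\epsilon$-rescalings). From here, Corollary~\ref{cor:vanishing} immediately gives both parts of Corollary~\ref{cor:KN}: the vanishing relations \eqref{eqn:K:vanishing} follow because $D^2_{g; a} = 2g - 2 + n$ when all $a_i = 1$, and the identification of the degree-$(2g - 2 + n)$ part with Norbury's Theta class $\Theta_{g, n}$ is the top-degree statement.
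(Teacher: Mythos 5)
Your overall route --- specialise the reconstruction data of \cref{lem:tft,lem:Rmatrix,lem:vac} to $r=2$, convert the translation into a $\kappa$-class exponential via the projection formula, and read off both statements from \cref{cor:vanishing} using $D^2_{g;(1,\dots,1)} = 2g-2+n$ --- is the paper's proof. However, the step you single out as ``the main obstacle'' rests on a misreading of the $r=2$ data, and the mechanism you propose to overcome it would not work. You describe $R^{-1}(u)$ as a nontrivial scalar power series ``with closed forms in terms of double factorials'' and then propose to show that the stable-graph sum for $R.w_{g,n}$ \emph{telescopes} so that only the one-vertex graph survives. For a rank-one theory with a genuinely nontrivial $R$-matrix this is simply false: the edge weight $E(u,v) = \bigl(1 - R^{-1}(u)R^{-1}(v)\bigr)\eta^{\dag}/(u+v)$ is then nonzero, graphs with edges contribute honest boundary classes, and no cancellation occurs. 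What actually saves the day is that for $r=2$ one has $R(u) = \Id$ identically. This is immediate from \cref{eqn:Rdef}: on a one-dimensional space the commutator $[R_{m+1},\phi]$ vanishes, and $\mu = 0$ by \cref{eqn:mu:phi} (its single entry is $-(r-2)/2r = 0$), so $m R_m = 0$ forces $R_m = 0$ for all $m \ge 1$; equivalently, this is the $r=2$ specialisation of \cref{lem:Rmatrix}. The double factorials live entirely in the vacuum, $\mathbf{v}(u) = \ms{H}(2,1;u)\,\hat{v}_1 = \sum_{k\ge 0}(2k+1)!!\,(u/\epsilon^2)^k\,\hat{v}_1$, hence in the translation $T(u) = u(\hat v_1 - \mathbf{v}(u))$ --- not in $R$. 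Your ``key identity'' is therefore a statement about $\mathbf{v}$ alone, and the $R$-matrix action is trivial for free rather than by telescoping.

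Once $R = \Id$ is in hand the rest of your plan is exactly the paper's: the pure translation action on the constant TFT gives
\begin{equation*}
	\Theta^{2,\epsilon}_{g,n}(v_1^{\otimes n}) = (-\epsilon^2)^{2g-2+n}\exp\biggl(\sum_{m>0} s_m\,(-\epsilon^2)^{-m}\kappa_m\biggr),
\end{equation*}
and \cref{cor:vanishing} yields both claims after setting $\epsilon = \sqrt{-1}$. Do note that the powers of $\epsilon$ must remain inside the exponential: your displayed formula $\epsilon^{4(g-1)}\exp(\sum_m s_m\kappa_m)$ cannot be correct as an identity for the deformed class, since its lower-degree terms genuinely depend on $\epsilon$ (each insertion of $v_0$ costs one power of $\epsilon$ and half a cohomological degree). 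This does not affect the corollary, because each graded piece carries a single overall nonzero power of $\epsilon$, but it should be tracked if you want the intermediate identity to be true.
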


\begin{proof}
	When $ r=2 $, the $R$-matrix computed in \cref{lem:Rmatrix} simplifies to $ R(u) = 1 $. The translation is then given by \cref{lem:vac} as
	\begin{equation*}
		T(u) = - \sum_{k \geq 1} (-1)^k (2k+1)!! (-\epsilon^{2})^{-k} u^{k+1} \hat{v}_1\,.
	\end{equation*}
	Taking into account the TFT $w_{g,n}(\hat{v}_1^{\otimes n}) = (-\epsilon^2)^{2g-2}$ and rescaling to the basis $v_1$, \cref{thm:dTheta} specialised to the case $ r=2 $ gives
	\begin{equation*}
		\Theta^{2,\epsilon}_{g,n}( v_1^{\otimes n}) = (-\epsilon^2)^{2g-2+n} \exp\bigg( \sum_{m > 0} s_m (- \epsilon^2)^{-m} \kappa_m \bigg)\,,
	\end{equation*}
	where the $s_m$ are defined through \labelcref{eqn:K:series}. Now, applying \cref{cor:vanishing} gives the vanishing result \labelcref{eqn:K:vanishing} upon setting $ \epsilon = \sqrt{-1} $, and the expression of $\Theta_{g,n}$ in terms of $\kappa$-classes (which is independent of $\epsilon$).
\end{proof}

For $ n = 0 $, Kazarian and Norbury conjecture an explicit formula for the top degree of the reconstructed class $ [\, \exp( \sum_{m > 0} s_m \kappa_m ) \,]^{3g-3} $ that we prove in \cref{prop:Fg} using the identification with topological recursion on a certain spectral curve.

\subsubsection{Deformed Theta class in terms of stable graphs}

When $ r \geq 3 $, we can express the relations and the expression for the $ \Theta^r $-class as a sum of decorated stable graphs as follows. We refer to \cite[section 4.6]{PPZ19} for the analogous statement in the case of Witten classes. We denote the set of stable graphs of genus $ g $ with $ n+k $ legs by $ \mathsf{G}_{g,n+k} $. The first $ n $ legs are ordinary legs that carry $ \psi $-classes, and the last $ k $ legs are dilaton legs which carry $\psi$-classes that will get pushed forward to $ \kappa $-classes.

\begin{definition} 
	Fix $0 < a_1,\dots,a_n \le r-1$. Consider a stable graph $ \Gamma \in \mathsf{G}_{g,n+k}$. We define a \textit{weighting} of $ \Gamma $ with boundary conditions $a = (a_1,\dots,a_n)$ as an assignment $\bm{a}$ to the set of half-edges
	\begin{equation}
		H(\Gamma) \longrightarrow \{1, \ldots, r-1\}\,, \qquad  h \longmapsto a_h
	\end{equation}
	that satisfies the following conditions:
	\begin{itemize}
		\item if $ h $ and $ h' $ are two half-edges that are connected to form an edge, then $ a_h + a_{h'} = r $,

		\item if $ h $ corresponds to the ordinary leg marked with $i \in \set{1,\dots,n}$, then $ a_h = a_i $.
	\end{itemize}
	We denote by $\mathsf{W}_{\Gamma}(a)$ the set of weightings of $ \Gamma $ with boundary conditions $a$.
\end{definition}

Let $\bm{a} \in \mathsf{W}_{\Gamma}(a)$. To every vertex $ v $, assign a formal variable $ \gamma_v $ such that $ \gamma_v^{r-1} = 1$. Then, we have the following weights associated to $ \Gamma $.

For every edge $ e = (h,h') \in E(\Gamma) $, assign the \textit{edge weight}:
\begin{equation}
	\Delta(e)
	=
	\gamma_{v}^{\vphantom{a_h'}a_h} \gamma_{v'}^{a_{h'}}
	\left(
		\frac{1 - \sum_{m, l \geq 0} P_m(r,a_h - 1) P_l(r,a_{h'} - 1)\
		(\gamma_{v}^{-1} \psi_{h})^m (\gamma_{v'}^{-1} \psi_{h'})^l}{\psi_h + \psi_{h'}}
	\right) ,
\end{equation}
where $v$ and $v'$ are the vertices paired to $h$ and $h'$ respectively. To the ordinary leg marked with $i \in \set{1,\dots,n}$, assign the \textit{ordinary-leg weight}:
\begin{equation}
	L(i) = \gamma_v^{a_i} \sum_{m \geq 0} P_m(r, a_i -1) (\gamma_v^{-1} \psi_i)^m\,,
\end{equation}
where $v$ is the vertex paired to the ordinary leg $i$. To the dilaton leg marked with $j \in \set{n+1,\dots,n+k}$, assign the \textit{dilaton-leg weight}:
\begin{equation}
	K(j)
	=
	\psi_j \gamma_v^{a_j} \left( \delta_{a_j, r-2} - \sum_{m,l \geq 0} P_m(r,a_j-1) H_{l}(r,a_j) (1-r)^{(r-1)l + r-2-a_j}\gamma_v^{-m} \psi_j^{m + (r-1)l + r-2-a_j}\right),
\end{equation}
where $v$ is the vertex paired to the dilaton leg $j$.

By $ \set{ \Pi }_{\gamma} $, we denote the term of degree $ 0 $ in $ \Pi $ in the variables $ \gamma_v $.

\begin{proposition}\label{prop:stablegraphs}
	The deformed Theta class $ \Theta_{g,n}^{r,\epsilon}(v_{a_1} \otimes \cdots \otimes v_{a_n})$ with $\epsilon = 1$ (except for the special case $ n = 0 $ in degree $ (3g-3 $)) can be expressed as the following sum over decorated stable graphs:
	\begin{equation}
		\sum_{k \geq 0} \sum_{\Gamma \in \mathsf{G}_{g,n+k}} \sum_{\bm{a} \in \mathsf{W}_{\Gamma}(a)}
		\frac{ (-1)^{\deg} (r-1)^{g-h^1(\Gamma) - \deg} }{|\Aut(\Gamma)|}
		\xi_{\Gamma,*} \left\{
			\prod_{v} \gamma_v^{3g(v)-3+n(v)}
			\prod_ {e} \Delta(e)
			\prod_{i=1}^n L(i)
			\prod_{j = n+1}^{n+k} K(j)
		\right\}_\gamma .
	\end{equation}
	In particular, the degree $D^r_{g;a}$ part of the above mixed degree class is the expression for the class $\Theta^r_{g,n}$ and the classes in degree $ d > D^r_{g;a} $ vanish identically (again, excluding $ n = 0 $ and $ d = 3g-3 $).
\end{proposition}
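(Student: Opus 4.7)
The plan is to unfold the reconstruction formula $\Theta^{r,\epsilon}_{g,n} = R T w_{g,n}$ from \cref{thm:dTheta} into the explicit stable-graph sum that appears in Givental--Teleman theory (cf.~\cite{PPZ15}), and then match the resulting vertex, edge and leg decorations with those of the proposition. I would work in the basis $(\hat{v}_a)$, where all building blocks have been computed: the TFT by \cref{lem:tft}, the $R$-matrix by \cref{lem:Rmatrix}, and the translation $T$ by \cref{lem:vac}. The standard Givental--Teleman formula expresses $(RTw)_{g,n}$ as a sum over $\Gamma \in \mathsf{G}_{g,n+k}$, with $k$ dilaton legs introduced by the translation, placing $w_{g(v),n(v)}$ at each vertex, $R^{-1}(\psi_i)$ at each ordinary leg, $E(\psi_h,\psi_{h'})$ at each edge, and $T(\psi_j)$ at each dilaton leg. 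The resulting sum decomposes naturally over assignments of indices $a_h \in \{1,\dots,r-1\}$ to half-edges, with constraints forced by the pairing $\eta(\hat v_a,\hat v_b) \propto \delta_{a+b,r}$ at edges and by the selection rule $3g(v)-3+n(v)+\sum_{h\to v} a_h \equiv 0 \pmod{r-1}$ from \cref{lem:tft} at vertices. These are precisely the conditions defining the weightings $\mathsf{W}_\Gamma(a)$.

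The second step is to encode the vertex modular constraint via the formal device $\{\,\cdot\,\}_\gamma$. Introducing at each vertex $v$ a formal $\gamma_v$ with $\gamma_v^{r-1}=1$, attaching $\gamma_v^{a_h}$ to every half-edge incident to $v$ and multiplying by $\gamma_v^{3g(v)-3+n(v)}$, the constant term in $\gamma$ is non-zero exactly when the modular constraint holds. The counting factor $(r-1)^{g-h^1(\Gamma)-\deg}$ now appears by matching: each vertex contributes $(r-1)^{g(v)}$ from the TFT of \cref{lem:tft}, each edge from the $\eta^\dagger$-bivector contributes $(r-1)^{-1}$ (through its $\epsilon$-power converted into $(r-1)$ after specialisation), and $\sum_v g(v) - |E(\Gamma)| = g - h^1(\Gamma)$. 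The sign $(-1)^{\deg}$ arises from the basis change $\hat v_a = -\epsilon^{(a+1)/(1-r)} v_a$ combined with the $(-1)^m/m!$ signs in the translation expansion.

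Third, I would match the explicit formulae for the decorations. The edge weight follows by writing
\[
	E(\psi_h,\psi_{h'})
	=
	\frac{\eta^\dagger - \bigl(R^{-1}(\psi_h) \otimes R^{-1}(\psi_{h'})\bigr) \eta^\dagger}{\psi_h + \psi_{h'}}
\]
and substituting the explicit formula $(R^{-1}_m)^b_a = P_m(r,a-1)/((1-r)\epsilon^{r/(r-1)})^m$ from \cref{lem:Rmatrix}, which at $\epsilon=1$ gives $\Delta(e)$ with the $\gamma$-bookkeeping built in. The ordinary-leg weight is $R^{-1}(\psi_i)$ applied to the index $a_i$, i.e.~$L(i)$. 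The dilaton-leg weight is $T(\psi_j)= \psi_j(\mathbf 1 - R^{-1}(\psi_j)\mathbf{v}(\psi_j))$: the term $\psi_j\mathbf 1$ produces $\psi_j\,\delta_{a_j,r-2}\,\gamma_v^{a_j}$ since $\mathbf{1} = \hat v_{r-2}$, and the convolution $R^{-1}(\psi_j)\mathbf v(\psi_j)$ produces the double sum $\sum_{m,\ell} P_m(r,a_j-1) H_\ell(r,a_j)$ of $K(j)$. The factor $(1-r)^{(r-1)\ell + r-2 - a_j}$ is the product of the corresponding $\epsilon$-powers of $R^{-1}_m$ and of $\ms{H}(r,a_j-m;u)$ evaluated at $\epsilon=1$. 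The symmetry factors of the $k$ dilaton legs combine with $p_{k,*}$ to give precisely the $|\Aut(\Gamma)|^{-1}$ summation.

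The main bookkeeping obstacle is keeping track of the various $\epsilon$-powers, $(-1)$ and $(r-1)$ factors when passing between the two bases and between a general $\epsilon$ and the specialisation $\epsilon=1$. One organises them using the identities $2g-2+n = \sum_v (2g(v)-2+n(v))$ and $h^1(\Gamma) = |E(\Gamma)|-|V(\Gamma)|+1$, so that all $\epsilon$-dependence assembles into a global $\epsilon^{\,\square}$ with exponent linear in $(2g-2+n)$ and $\deg$; at $\epsilon=1$ this collapses and reproduces the overall coefficient $(-1)^{\deg}(r-1)^{g-h^1(\Gamma)-\deg}$. The exception for $(g,n)=(g,0)$ in degree $3g-3$ is inherited directly from \cref{thm:dTheta}, and the vanishing in degrees $d>D^r_{g;a}$ is then immediate from \cref{cor:vanishing}.
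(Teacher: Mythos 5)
Your proposal follows essentially the same route as the paper's proof: unfold the reconstruction $\Theta^{r,\epsilon}_{g,n}=RTw_{g,n}$ of \cref{thm:dTheta} into the Givental stable-graph sum, use the formal variables $\gamma_v$ to enforce the mod-$(r-1)$ selection rules coming from the TFT and the pairing, and match the $R$-matrix, vacuum and translation with the decorations $\Delta(e)$, $L(i)$, $K(j)$. One small slip in your bookkeeping narration: the identity should be $\sum_v g(v)=g-h^1(\Gamma)$ (not $\sum_v g(v)-|E(\Gamma)|=g-h^1(\Gamma)$), and the $-\deg$ in the exponent of $(r-1)$ comes from stripping the denominators $(1-r)^m$ off the $R$-matrix coefficients (compensated in $K(j)$ by the factor $(1-r)^{(r-1)l+r-2-a_j}$), not from the edge bivectors; this does not affect the validity of the approach.
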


\begin{proof}
	The expression is a rewriting of the $ R $-matrix and translation  action of \cref{thm:dTheta}. The powers of $ \gamma $ at the vertices $ v $ keep track of the conditions $\pmod{r-1} $. The basis vector $ v_{a} $ corresponds to $ \gamma^{a} $, and the pairing $\eta^{a_{h},a_{h'}} $ corresponds to $\gamma_{v}^{\vphantom{a_h'}a_h} \gamma_{v'}^{a_{h'}}$ such that $a_{h}+a_{h'} = r$. Adding the weight $ \gamma_v^{3g(v)-3+n(v)} $ at every vertex $ v $ and extracting the coefficient of $ \gamma_v^0 $ enforces the condition 
	\[
		3g(v) - 3 + n(v) + \sum_{h \mapsto v} a_h \equiv 0 \pmod{r-1} 
	\]
	at every vertex $ v $. The coefficient of the $R$-matrix $ R^{-1}_m $ takes $ v_a $ to a multiple of $ v_{b} $ such that $ b = a - m \pmod{r-1} $. Thus, we need to add a factor of $ \gamma^{-m} $ everywhere the $R$-matrix appears. Finally, we have removed a factor of $ (-1)^m(r-1)^m $ in the denominator of the $ R $-matrix and a factor of $(r-1)^{g(v)}$ from the topological field theory at every vertex $ v $. These can be combined together into the factor $(-1)^{\deg} (r-1)^{g-h^1(\Gamma) - \deg }$ at the cost of rescaling the $ H_l(r,a_i) $ by $ (1-r)^{(r-1)l + r-2-a_i} $.
\end{proof}

\section{Topological recursion and the spectral curve}
\label{sec:TR}

Topological recursion, TR for short, is a universal procedure that associates a collection of symmetric multidifferentials to a spectral curve, which is a curve with some extra data \cite{EO07}. What makes TR especially useful is its applications to enumerative geometry: many counting problems are solved by TR, in the sense that the sought numbers are coefficients of the multidifferentials when expanded in a specific basis \cite{Eyn14}.

\begin{definition}\label{defn:spectral:curve}
	A \emph{spectral curve} $\mathcal{S} = (\Sigma,x,y,\omega_{0,2})$ consists of
	\begin{itemize}
		\item a Riemann surface $\Sigma$ (not necessarily compact or connected);

		\item a function $x \colon \Sigma \to \C$ such that its differential $dx$ is meromorphic and has finitely many simple zeros  $\alpha_1,\dots,\alpha_{r-1}$ called ramification points; 

		\item a meromorphic function $y \colon \Sigma \to \C$ such that $dy$ does not vanish at the zeros of $dx$;

		\item a symmetric bidifferential $\omega_{0,2}$ on $\Sigma \times \Sigma$, with a double pole on the diagonal with biresidue $1$, and no other poles.
	\end{itemize}
	The \emph{topological recursion} produces symmetric multidifferentials (also called \emph{correlators}) $\omega_{g,n}$, for $ g \geq 0 $ and $ n \geq 1 $ such that $ (2g-2+n)> 0 $ on $\Sigma^n$. The correlators are defined recursively on $2g - 2 + n > 0$ as
	\begin{equation}\label{eqn:TR}
	\begin{split}
		\omega_{g,n}(z_1,\dots,z_n) \coloneqq \sum_{i=1}^{r-1} \Res_{z = \alpha_i} K_i(z_1,z) \bigg( &
			\omega_{g-1,n+1}(z,\sigma_i(z),z_2,\dots,z_n) \\
			& +
			\sum_{\substack{g_1+g_2 = g \\ J_1 \sqcup J_2 = \{2,\dots,n\}}}^{\text{no $(0,1)$}}
				\omega_{g_1,1+|J_1|}(z,z_{J_1})
				\omega_{g_2,1+|J_2|}(\sigma_i(z),z_{J_2})
		\bigg)\,,
	\end{split}
	\end{equation}
	where $K_i$, called the topological recursion kernels, are locally defined in a neighbourhood $U_i$ of $\alpha_i$ as
	\begin{equation}\label{eqn:TR:kernel}
		K_i(z_1,z) \coloneqq \frac{\frac{1}{2} \int_{w = \sigma_i(z)}^z \omega_{0,2}(z_1,w)}{\bigl( y(z) - y(\sigma_i(z)) \bigr) d x(z)}\,,
	\end{equation}
	and $\sigma_i \colon U_i \to U_i$ is the Galois involution near the ramification point $\alpha_i \in U_i$. It can be shown that $\omega_{g,n}$ is a symmetric meromorphic multidifferential on $\Sigma^n$, with poles only at the ramification points.
\end{definition}

There is an extension of the topological recursion to the case of higher ramification points, meaning that the zeroes of $ dx $ are of higher order, known as the Bouchard--Eynard topological recursion \cite{BHLMR14}. While this version of TR will be relevant for us, we do not go into the details of this formalism in this paper. Instead, we refer the reader to the papers \cite{BHLMR14,  BE17, BBCCN24}, where the Bouchard--Eynard topological recursion is defined and investigated.

\subsubsection{Identification with CohFTs}
\label{sec:DOSS}

When the Riemann surface underlying the spectral curve is compact, we can represent the topological recursion correlators on a basis of auxiliary differentials with coefficients given by intersection numbers on the moduli space of curves of a CohFT and $\psi$-classes (see \cite{DOSS14,DNOS19}). In this section, we will work with CohFTs over $\C$.

We fix a global constant $C \in \C^{\times}$. Choose local coordinates $\zeta_i$ in the neighbourhood $U_i$ of the ramification point $ \alpha_i $ such that $\zeta_i(\alpha_i) = 0$ and $x - x(\alpha_i) = - \frac{\zeta_i^2}{2}$. Consider the auxiliary functions $\xi^i \colon \Sigma \to \C$ and the associated differentials $d\xi^{k,i}$ defined as
\begin{equation}
	\xi^i(z)
	\coloneqq
	\int^z \left.\frac{\omega_{0,2}(\zeta_{i},\cdot)}{d\zeta_{i}}\right|_{\zeta_{i} = 0}\,,
	\qquad
	d\xi^{k,i}(z)
	\coloneqq
	d\biggl( \frac{d^k}{dx(z)^k} \xi^{i}(z) \biggr)\,.
\end{equation}
We also set $\Delta^i \coloneqq \frac{dy}{d\zeta_i}(0)$ and $h^i \coloneqq C \Delta^i$. We define a unital, semisimple TFT on $V \coloneqq \C\braket{e_1,\dots,e_r}$ by setting
\begin{equation}
	\eta(e_i,e_j) \coloneqq \delta_{i,j}\,,
	\qquad
	\mathbf{1}
	\coloneqq
	\sum_{i=1}^r h^i e_i\,,
	\qquad
	w_{g,n}(e_{i_1} \otimes \cdots \otimes e_{i_n})
	\coloneqq
	\delta_{i_1,\ldots,i_n} (h^i)^{-2g+2-n}\,.
\end{equation}
Define the $R$-matrix $R \in \End(V)\bbraket{u}$ and the translation $T \in u^2 V\bbraket{u}$ in the basis $(e_i)$ by setting
\begin{equation}
	R^{-1}(u)^j_i
	\coloneqq
	- \sqrt{\frac{u}{2\pi}} \int_{\gamma_j}
		d\xi^i \,
		e^{\frac{1}{u}(x - x(\alpha_j))} \,,
	\qquad
	T(u)^i
	\coloneqq
	u h^i - C \sqrt{\frac{u}{2\pi}} \int_{\gamma_i} dy \, e^{\frac{1}{u}(x - x(\alpha_i))} \,.
\end{equation}
Here $\gamma_i$ is the Lefschetz thimble passing through the saddle point $a_i$, and the equations are intended as equalities between formal power series in $u$, where on the right-hand side we take the formal asymptotic expansion as $u \to 0$. Through the Givental action, we can then define a cohomological field theory
\begin{equation}
	\Omega_{g,n} \coloneqq RTw_{g,n} \in H^{\bullet}(\overline{\mathcal{M}}_{g,n}) \otimes (V^{*})^{\otimes n}
\end{equation}
from the data $(w,R,T)$. We point out that the resulting CohFT does not necessarily have a flat unit. The link with the topological recursion correlators is given by the following result due to Dunin-Barkowski, Orantin, Shadrin, and Spitz \cite{DOSS14}.

\begin{theorem}[\cite{DOSS14}]\label{thm:Eyn:DOSS}
	Assume that we have a compact spectral curve $ (\Sigma, x,y,\omega_{0,2})$. Then its topological recursion correlators are given by
	\begin{equation}
		\omega_{g,n}(z_1,\dots,z_n)
		=
		C^{2g-2+n}
		\sum_{i_1,\dots,i_n = 1}^{r-1} \int_{\overline{\mathcal{M}}_{g,n}}
			\Omega_{g,n}(e_{i_1} \otimes\cdots\otimes e_{i_n}) \prod_{j=1}^n \sum_{k_j \ge 0}
				\psi_j^{k_j} d\xi^{k_j,i_j}(z_j)\,.
	\end{equation}
\end{theorem}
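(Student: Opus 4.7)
The plan is to prove this by matching two graph expansions: Eynard's diagrammatic expansion of the topological recursion correlators on one side, and the Givental--Teleman stable graph formula for $RTw_{g,n}$ on the other. Both produce sums over stable graphs of genus $g$ with $n$ legs (plus auxiliary dilaton legs on the Givental side), so the strategy is to match the local contributions at vertices, edges, and legs.

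First, I would iterate the topological recursion \eqref{eqn:TR} and reorganise $\omega_{g,n}$ as a sum over decorated stable graphs: every half-edge is assigned a ramification point, every vertex corresponds to a $(g(v), n(v))$ term, and every edge carries a kernel built from $\omega_{0,2}$ and the Galois involution. To match this against $RTw_{g,n}$, I would exploit the local model near each simple ramification point $\alpha_i$: in the coordinate $\zeta_i$ with $x - x(\alpha_i) = -\zeta_i^2/2$, the TR kernel \eqref{eqn:TR:kernel} reduces to a Kontsevich-style Airy kernel. Purely local TR at an Airy point is known to produce $\psi$-class integrals on $\overline{\mathcal{M}}_{g,n}$, which will provide the vertex contributions and explain the appearance of $\overline{\mathcal{M}}_{g,n}$ intersection numbers.

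Second, I would match the non-local data via formal Laplace transforms. The saddle-point analysis near each critical point of $x$ identifies the formal asymptotic expansion of $\sqrt{u/(2\pi)} \int_{\gamma_j} d\xi^i\, e^{(x-x(\alpha_j))/u}$ order by order in $u$ with the local propagator between ramification points $i$ and $j$; this is precisely the definition of $R^{-1}$. The analogous integral of $dy$ accounts for the non-Airy part of $y$ near $\alpha_i$ and produces the translation $T$, which shows up as dilaton legs. The symplectic condition $R(u) R^\dagger(-u) = \mathrm{Id}$ translates into the identity $(u+v) E(u,v) = \eta^\dagger - R^{-1}(u) \otimes R^{-1}(v)\, \eta^\dagger$, and I would use this to identify the edge weight $E(\psi_h, \psi_{h'})$ with the contribution of $\omega_{0,2}$ after the local propagators $R^{-1}$ have been factored out of the two adjacent half-edges. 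The constant $C$ and the powers of $\Delta^i$ are absorbed into the TFT weights $(h^i)^{-2g+2-n}$, reproducing the factor $C^{2g-2+n}$ on the right-hand side.

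The main obstacle is the careful bookkeeping of the residue-to-integral correspondence: one must show that extracting residues at $\alpha_i$ against the TR kernel is equivalent to expanding the correlator in the basis $d\xi^{k,i}(z_j)$ with coefficients given by $\psi^{k_j}$ integrals. This is essentially the Laplace transform identity
\[
\Res_{z=\alpha_i} \zeta_i(z)^{-2k-1}\, d\xi^{j}(z) \;=\; \text{coefficient of } u^k \text{ in } \sqrt{u/(2\pi)}\int_{\gamma_i} d\xi^{j}\, e^{(x-x(\alpha_i))/u},
\]
which translates the local TR operation into the Givental operators on the TFT. Once this dictionary is established, an induction on $2g - 2 + n$ with base case $(0,3)$ (where both sides reduce to explicit cubic tensors determined by $\omega_{0,2}$ and $dy$) completes the proof.
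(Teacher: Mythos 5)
The paper does not actually prove this statement: it is imported verbatim from \cite{DOSS14} as a citation, so there is no internal proof to compare against. Judged against the proof in the literature, your outline is faithful: the identification is indeed established by matching Eynard's graph expansion of the topological recursion correlators with the Givental--Teleman stable-graph formula for $RTw_{g,n}$, with the local Airy model at each simple ramification point supplying the $\psi$-class vertex weights, the Laplace transforms along Lefschetz thimbles supplying $R^{-1}$ and $T$, and the residue-versus-Laplace dictionary you write down being the key bookkeeping identity. Two points deserve more care than your sketch gives them. First, the step ``purely local TR at an Airy point produces $\psi$-class integrals'' is itself a substantial theorem (Eynard--Orantin/Kontsevich), and the real technical content of the DOSS argument is the \emph{polarization}: decomposing $\omega_{0,2}$ near the ramification points into its odd local expansion coefficients, which is exactly what makes the edge weight come out as $E(\psi_h,\psi_{h'})$; the symplectic condition $R(u)R^{\dag}(-u)=\mathrm{Id}$ is not what \emph{defines} $E$ (that is \cref{subsubsec:translation}'s formula) but what guarantees $E(u,v)$ has no pole at $u+v=0$ and hence is a genuine power series. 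Second, compactness of $\Sigma$ is used to ensure that $\omega_{g,n}$, having poles only at the $\alpha_i$ with vanishing residues, is fully captured by its principal-part expansion in the basis $d\xi^{k,i}$; you should say where this enters. With those caveats your strategy is the standard one and would go through.
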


\subsection{Hyper-Airy functions}
\label{sec:hAiry}

Before computing the spectral curve associated to the deformed Theta class, we take a quick detour to study certain generalisations of Airy functions that will appear. The well-known Airy function $ \Ai(t) $ is a solution to the differential equation $u''(t) = tu(t)$, and admits an integral representation and an asymptotic formula as $t \to \infty$ in the region $|\arg(t)| < \pi$:
\begin{equation}
	\Ai(t)
	=
	\frac{1}{2\pi\iu} \int_{C} e^{\frac{w^3}{3} - tw} dw
	\sim
	\frac{e^{-\frac{2t^{3/2}}{3}}}{2\sqrt{\pi}} t^{-1/4} \, \sum_{k \ge 0}
	\frac{(6k)!}{(2k)! (3k)!}
	\left( -\frac{1}{576 t^{3/2}} \right)^k.
\end{equation}
Here $C$ is the path starting at $e^{-\frac{\pi}{3}} \infty$ and ending at $e^{\frac{\pi}{3}} \infty$.

The generalisations of the Airy function for higher $ r $ that we are interested in are called \textit{hyper-Airy functions}, and are solutions to the differential equation
\begin{equation}\label{eqn:hyperAiryeq}
	u^{(r-1)}(t) = (-1)^{r-1} t u(t)\,.
\end{equation}
for any $ r \geq 3 $.  We can define $r-1$ (independent) solutions via contour integrals as
\begin{equation}\label{eqn:hyperAiry}
	\widetilde{\Ai}_{r,k}(t) \coloneqq \frac{\theta^{k\frac{r-2}{2}}}{2\pi\iu} \int_{\tilde{C}_k} e^{\frac{w^{r}}{r} - t w} dw\,,
	\qquad\quad
	k = 0,\dots,r-2\,,
\end{equation}
where $\theta = e^{\frac{2\pi\iu}{r-1}}$ and $\tilde{C}_k$ is the Lefschetz thimble passing through the critical point $w = t^{1/(r-1)} \theta^k$.  The hyper-Airy functions were studied in some detail in \cite{CCGG24}. In particular, the reader can find a careful description of the Lefschetz thimbles in appendix B of \textit{loc.~cit.}

We also consider the derivatives (and anti-derivatives) of the hyperAiry function 
\begin{equation}\label{eqn:hyperAirya}
	\widetilde{\Ai}_{r,k}^{(a)}(t) =  \frac{\theta^{k\frac{r-2}{2}}}{2\pi\iu} \int_{\tilde{C}_k} (-w)^a e^{\frac{w^{r}}{r} - t w} dw\,, 
\end{equation}
where we assume that $ a \in \Z $. The hyper-Airy functions and their (anti-)derivatives admit an asymptotic expansion. In order to state this expansion, we need to recall the polynomials $ P_m(r,a) $ that we have already encountered in \cref{eqn:PPZ:polys}, and extend their definition to the case $ a = -1 $. Precisely, the polynomials $P_m(r,a)$ are defined recursively for any $ -1 \leq a \leq r-1 $ as
\begin{equation}\label{eqn:Ppoly}
	\begin{cases}
		P_m(r,a) - P_m(r,a-1) = r \left( m - \frac{1}{2} - \frac{a}{r} \right) P_{m-1}(r,a-1)\,,
		\qquad
		\text{for } a = 1,\dots,r-2\,, \\
		P_m(r,0) = P_m(r,r-1)\,,
	\end{cases}
\end{equation}
with the initial condition $P_0 = 1$.

\begin{proposition}\label{prop:asymp}
	The asymptotic expansions as $t \to \infty$ of the hyper-Airy function and its (anti-)derivatives, for $ -1 \leq a \leq r-2 $ are given by
	\begin{equation}
		\widetilde{\Ai}_{r,k}^{(a)}(t)
		\sim
		\frac{(-\theta^k)^a}{\sqrt{2\pi(r-1)}}
		e^{
			- \frac{r-1}{r}
			\theta^k
			t^{\frac{r}{r-1}}
		}
		t^{-\frac{r-2a-2}{2(r-1)}}
		\sum_{m \ge 0}
		P_m(r,a) \left( \theta^{-k} \frac{t^{-\frac{r}{r-1}}}{r-1}  \right)^m.
	\end{equation}
\end{proposition}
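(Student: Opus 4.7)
The proof combines the standard method of steepest descent at the base case $a = 0$ with a differentiation recursion in $a$. The essential observation is that differentiating \cref{eqn:hyperAirya} in $t$ brings down a factor of $-w$, so $\frac{d}{dt}\widetilde{\Ai}_{r,k}^{(a-1)}(t) = \widetilde{\Ai}_{r,k}^{(a)}(t)$. Inserted into the proposed ansatz, this identity will reproduce exactly the defining recursion \cref{eqn:Ppoly} for the polynomials $P_m(r,a)$, reducing the proof to a single base case and a bookkeeping step.

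\textbf{Steepest descent at $a = 0$.} The saddles of the phase $f(w) = \frac{w^r}{r} - tw$ are at $w_\ell = t^{1/(r-1)}\theta^\ell$, with $f(w_k) = -\frac{r-1}{r}\theta^k t^{r/(r-1)}$ and $f''(w_k) = (r-1)\theta^{k(r-2)} t^{(r-2)/(r-1)}$. Evaluating the Gaussian integral along the Lefschetz thimble $\tilde{C}_k$ through $w_k$ and combining with the prefactor $\theta^{k(r-2)/2}/(2\pi\iu)$ produces the leading asymptotic $\frac{1}{\sqrt{2\pi(r-1)}} \, e^{-\frac{r-1}{r}\theta^k t^{r/(r-1)}} t^{-(r-2)/(2(r-1))}$, matching the claim for $a = 0$ with $P_0(r,0) = 1$. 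For $a \ne 0$ the only change in this computation is the extra factor $(-w_k)^a = (-\theta^k)^a t^{a/(r-1)}$, which shifts the power of $t$ to $-(r-2a-2)/(2(r-1))$ and introduces the prefactor $(-\theta^k)^a$, so the leading order already matches the claim for all $a$.

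\textbf{Recursion and $a$-propagation.} Writing the ansatz as $A_a(t) = \frac{(-\theta^k)^a}{\sqrt{2\pi(r-1)}} e^{-\frac{r-1}{r}\theta^k t^{r/(r-1)}} t^{-\frac{r-2a-2}{2(r-1)}} \Phi_a(t)$ with $\Phi_a(t) = \sum_{m \geq 0} P_m(r,a) c^m t^{-rm/(r-1)}$ and $c = \theta^{-k}/(r-1)$, the identity $\frac{dA_{a-1}}{dt} = A_a$ simplifies, after dividing out the common exponential and monomial factors and using $-\frac{r-1}{r} \cdot \frac{r}{r-1} = -1$, to
\begin{equation*}
	\Phi_a
	=
	\Phi_{a-1}
	+ \tfrac{r-2a}{2(r-1)}\, \theta^{-k} t^{-r/(r-1)} \Phi_{a-1}
	- \theta^{-k} t^{-1/(r-1)} \tfrac{d\Phi_{a-1}}{dt}.
\end{equation*}
Extracting the coefficient of $c^m t^{-rm/(r-1)}$ on both sides and using the identity $\theta^{-k} c^{m-1} = (r-1) c^m$ recovers precisely $P_m(r,a) - P_m(r,a-1) = r(m - \tfrac{1}{2} - \tfrac{a}{r}) P_{m-1}(r,a-1)$, which is the recursion \cref{eqn:Ppoly}. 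Propagating forward from $a = 0$ delivers the expansion for $a = 1, \dots, r-2$, and propagating backward (reading $A_0 = dA_{-1}/dt$ as the relation determining $A_{-1}$ from $A_0$) handles $a = -1$. The main obstacle is purely organisational: a direct Watson's-lemma expansion would demand a painful order-by-order analysis around each saddle, whereas the differential identity above collapses all higher-order data into a single recursion that matches $P_m(r,a)$ on the nose. A minor subtlety at $a = -1$ is the pole of $(-w)^{-1}$ at the origin, which is innocuous since $\tilde{C}_k$ can be deformed away from $w = 0$ for $t \ne 0$; the cyclicity $P_m(r,0) = P_m(r,r-1)$ does not enter the range $-1 \leq a \leq r-2$ of the proposition.
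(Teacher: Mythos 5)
Your reduction of the problem to the first-order relation $\tfrac{d}{dt}\widetilde{\Ai}_{r,k}^{(a-1)} = \widetilde{\Ai}_{r,k}^{(a)}$ is correctly executed — the computation that this relation, applied to the ansatz, reproduces the difference equation $P_m(r,a) - P_m(r,a-1) = r(m - \tfrac12 - \tfrac{a}{r})P_{m-1}(r,a-1)$ checks out, as does the leading-order saddle-point evaluation. But there is a genuine gap: a one-step difference relation in $a$ only determines the coefficients up to an additive constant at each order $m$, common to all $a$ in the chain. Your base case (leading-order steepest descent) fixes that constant only at $m = 0$. Nothing in your argument determines $\widetilde P_m(r,0)$ for $m \ge 1$; you get $\widetilde P_m(r,a) = P_m(r,a) + c_m$ with $c_0 = 0$ and $c_m$ otherwise free. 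Concretely, for $r = 3$ your argument yields $P_1(3,1) - P_1(3,0) = \tfrac12$ but cannot produce the actual value $P_1(3,0) = -\tfrac{5}{24}$ (the familiar $\tfrac{5}{72}$ of the Airy expansion in this normalisation). You explicitly decline to do the all-orders Watson's-lemma expansion that would supply these values directly, so the constant must come from somewhere else.

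That somewhere else is the ODE \labelcref{eqn:hyperAiryeq}, which the paper's proof invokes and which you have dropped. The relation $\widetilde{\Ai}_{r,k}^{(r-1)}(t) = (-1)^{r-1}\, t\, \widetilde{\Ai}_{r,k}^{(0)}(t)$, inserted into the ansatz (the factor $t$ is absorbed by the shift of the power prefactor and $(-\theta^k)^{r-1} = (-1)^{r-1}$), forces the cyclic closure $\widetilde P_m(r,r-1) = \widetilde P_m(r,0)$. Telescoping the difference relation once around the cycle $a = 0 \to 1 \to \cdots \to r-1 \equiv 0$ at order $m+1$ then gives $0 = \sum_{a=1}^{r-1} r\bigl(m+1-\tfrac12-\tfrac{a}{r}\bigr)\widetilde P_m(r,a-1)$, a nontrivial linear constraint (the coefficients sum to $r(r-1)m \ne 0$) that pins down the order-$m$ additive constant; for $r=3$, $m=1$ it yields exactly $6P_1(3,0) + \tfrac54 = 0$. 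So your closing remark that ``the cyclicity $P_m(r,0)=P_m(r,r-1)$ does not enter the range $-1 \le a \le r-2$ of the proposition'' is precisely where the proof breaks: the cyclicity (equivalently, the full $(r-1)$-st order ODE rather than its first-order building block) is indispensable both for the recursion \labelcref{eqn:Ppoly} to define the $P_m(r,a)$ uniquely and for identifying the asymptotic coefficients with them.
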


\begin{proof}
	First we use the steepest descent method to compute the leading behaviour. Then, using the ODE \labelcref{eqn:hyperAiryeq}, we find  the recursion relation \labelcref{eqn:Ppoly}. It is identical to \cite[lemma~3.3, proposition~3.4]{CCGG24}, where we only note that the proof given there goes through unchanged for $ a = -1 $.
\end{proof}

For later convenience, let us introduce the following notation for the asymptotic expansion of the hyper-Airy functions and their (anti-)derivatives.

\begin{definition}
	Define the formal power series for $ -1 \leq a \leq r-1 $
	\begin{equation}
		\mathsf{A}_{r}^{(a)}(u)
		\coloneqq
		\sum_{m \ge 0}
		P_m(r,a) \left( \frac{u}{r(r-1)}\right)^m,
	\end{equation}
	so that the asymptotic expansions as $t \to \infty$ of the hyper-Airy function and its (anti-)derivatives are written as
	\begin{equation}
		\widetilde{\Ai}_{r,k}^{(a)}(t)
		\sim
		\frac{(-\theta^k)^a}{\sqrt{2\pi(r-1)}}
		e^{ - \frac{r-1}{r} \theta^k t^{\frac{r}{r-1}} }
		t^{-\frac{r-2a-2}{2(r-1)}}
		\mathsf{A}_{r}^{(a)}\bigl( \theta^{-k} \, r t^{- \frac{r}{r-1}} \bigr) \, .
	\end{equation}
\end{definition}

\subsection{The spectral curve for the deformed Theta class}\label{sec:SCdTheta}

In this section, we will show that the descendant potential of the deformed Theta class can be constructed by the topological recursion on a certain  spectral curve. To start, let us consider the following one-parameter family of spectral curves on $ \P^1 $:
\begin{equation}\label{eqn:tS}
	x(z) = z^r - r \lambda^{r-1} z\,,
	\qquad\quad
	y(z) = - \frac{1}{z}\,,
	\qquad\quad
	\omega_{0,2}(z_1,z_2) = \frac{dz_1 dz_2}{(z_1 - z_2)^2}\,,
\end{equation}
where $ \lambda \neq 0 $\,. Now, we will compute all the ingredients of the DOSS formula from \cref{thm:Eyn:DOSS} for this spectral curve as explained in the previous section. The ramification points of the spectral curve are given by $ x'(z) = 0 $, the solutions of which are $\alpha_k \coloneqq \theta^k \lambda$ for $k = 1,\ldots, r-1$, where $ \theta $ is a primitive $ (r-1) $-th root of unity. The value of $ x $ at the ramification points is given by $x_k \coloneqq x(\alpha_k) = -(r-1)\theta^k \lambda^{r} $. We will choose local coordinates $ \zeta_k $ around the ramification point $ z = \alpha_k $, such that 
\begin{equation}
	x(z) - x_k = -\frac{\zeta_k^2(z)}{2}\,,
\end{equation}
and we choose a branch of the square root such that 
\begin{equation}
	\zeta_k (z) = -i \sqrt{r(r-1)} (\theta^k \lambda)^{\frac{r-2}{2}} (z - \alpha_k) + O\bigl( (z - \alpha_k)^2 \bigr)\,.
\end{equation}
Using the recipe of \cref{sec:DOSS}, we compute $ \Delta^k = (-i \sqrt{r(r-1)} (\theta^k \lambda)^{\frac{r+2}{2}} )^{-1}$. We choose a global constant $ C = -i \sqrt{r}  \lambda^{\frac{r+2}{2}} $, so that we get $ h^k =  \frac{ \theta^{- k \frac{r+2}{2}} }{\sqrt{r-1}} $. Then the vector space, the pairing and the unit are given by
\begin{equation} 
	V = \C\braket{e_1,\dots,e_{r-1}}\,,
	\qquad
	\eta(e_{k},e_{l}) = \delta_{k,l}\,,
	\qquad
	\mathbf{1} = \frac{1}{\sqrt{r-1}} \sum_{k=1}^{r-1} \theta^{- k\frac{r+2}{2}} e_k \, ,
\end{equation}
and the TFT in the canonical basis $(e_i)$ is given by
\begin{equation} 
	w_{g,n}(e_{k_1}\otimes \cdots \otimes e_{k_n})
	=
	\delta_{k_1,\dots,k_n} (r-1)^{g-1+\frac{n}{2}} \theta^{k(r+2)(g-1+\frac{n}{2})} \, .
\end{equation}
Let us compute the other ingredients of the DOSS formula.

\begin{lemma}
	In the canonical basis $ (e_1,\ldots, e_{r-1}) $, the following holds.
	\begin{itemize}
		\item
		The auxiliary functions are given by
		\begin{equation}
			\xi^{k}_{\textup{C}}(z)
			=
			 \frac{\left( i \sqrt{r(r-1)} (\theta^k \lambda)^{\frac{r-2}{2}}\right)^{-1}}{z - \alpha_k}\,.
		\end{equation}

		\item
		The $R$-matrix elements in the canonical basis (denoted $R^{-1}_{\textup{C}}$) are given by
		\begin{equation}
			R^{-1}_{\textup{C}}(u)_i^j
			=
			\frac{1}{r-1}
			\sum_{s=0}^{r-2}
				\theta^{(i-j)\frac{r-2s-2}{2}}
				\mathsf{A}_r^{(s)} \left( \theta^{-j} \frac{u}{\lambda^r} \right) \, .
		\end{equation}

		\item
		The coefficients of the translation in the canonical basis (denoted $ T_{\textup{C}}(u) $) are given by 
		\begin{equation}
			T_{\textup{C}}(u)^k
			=
			\frac{\theta^{-k(r+2)/2}}{\sqrt{r-1}}	\left(u -  \lambda^r r  \theta^{k} \left( \ms{A}^{(r-2)}_r\left( \theta^{-k} \frac{u}{\lambda^r} \right) - \ms{A}^{(-1)}_r\left( \theta^{-k} \frac{u}{\lambda^r} \right) \right) \right)\,.
		\end{equation}
	\end{itemize}
\end{lemma}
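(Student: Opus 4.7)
The plan is to verify each formula by direct computation, following the strategy used in \cite{CCGG22} for the closely related shifted Witten spectral curve. For item (1), differentiating the defining relation $x(z) - x_k = -\zeta_k^2(z)/2$ twice and evaluating at $z = \alpha_k$ yields $x''(\alpha_k) = r(r-1)\alpha_k^{r-2}$, which together with the sign convention stated above the lemma gives $\frac{dz}{d\zeta_k}\bigr|_0 = (-i\sqrt{r(r-1)}(\theta^k\lambda)^{(r-2)/2})^{-1}$. Plugging this into $\frac{\omega_{0,2}(z_1,\cdot)}{d\zeta_k}\bigr|_{\zeta_k=0} = \frac{dz_1}{d\zeta_k}\bigr|_0 \cdot \frac{dz_2}{(\alpha_k - z_2)^2}$ and integrating in $z_2$ produces the displayed expression for $\xi^k_\textup{C}$.

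For items (2) and (3) the key ingredient is a global change of variable that turns the saddle-point integrals defining $R^{-1}_\textup{C}$ and $T_\textup{C}$ into linear combinations of hyper-Airy integrals, to which \cref{prop:asymp} applies. Near the ramification point $\alpha_j$, set $z = \theta^j \lambda s w$ with $s = (u/(r\theta^j\lambda^r))^{1/r}$; a direct computation shows
\begin{equation*}
    \frac{x(z) - x_j}{u} = \frac{w^r}{r} - t w + \frac{r-1}{r}\, t^{r/(r-1)},
    \qquad
    t = (r \theta^j \lambda^r/u)^{(r-1)/r},
\end{equation*}
so that $\alpha_j$ maps to the principal hyper-Airy saddle $w_\ast = t^{1/(r-1)} = 1/s$ and the Lefschetz thimble $\gamma_j$ becomes $\tilde{C}_0$. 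The crucial algebraic fact is that every ramification point satisfies $\alpha_i^{r-1} = \lambda^{r-1}$, so the pole locations $c_i \coloneqq \theta^{i-j}\, t^{1/(r-1)}$ of $d\xi^i$ in the $w$-variable obey $c_i^{r-1} = t$.

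For the $R$-matrix, $d\xi^i$ pulls back to a constant multiple of $(w - c_i)^{-2}\, dw$, and integration by parts against $e^{w^r/r - tw}$ combined with the polynomial identity $(w^{r-1} - t)/(w - c_i) = \sum_{s=0}^{r-2} c_i^{r-2-s} w^s$ (valid precisely because $c_i^{r-1} = t$) yields
\begin{equation*}
    \int_{\gamma_j} (w - c_i)^{-2} e^{w^r/r - tw}\, dw
    =
    \sum_{s=0}^{r-2} c_i^{r-2-s} \int_{\tilde{C}_0} w^s e^{w^r/r - tw}\, dw.
\end{equation*}
Applying \cref{prop:asymp} to each term, the exponentials from the change of variable cancel those from the hyper-Airy asymptotics, the $t$-dependent monomial factors telescope to $t^{(r-2)/(2(r-1))}$ (independent of $s$), and writing $c_i^{r-2-s} = t^{(r-2-s)/(r-1)} \theta^{(i-j)(r-2-s)}$ together with the identity $r-2-s = (r-2)/2 + (r-2s-2)/2$ splits off a common $\theta^{(i-j)(r-2)/2}$, which is absorbed into the prefactor from $(\theta^i\lambda)^{-(r-2)/2}$, producing the Fourier-type sum displayed in the lemma. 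For the translation, the same change of variable sends $dy = -dz/z^2$ to a constant multiple of $w^{-2}\, dw$; however, $0$ is not a ramification point, so the partial-fraction trick is unavailable and one instead uses the direct integration by parts
\begin{equation*}
    \int w^{-2} e^{w^r/r - tw}\, dw
    =
    \int w^{r-2} e^{w^r/r - tw}\, dw - t \int w^{-1} e^{w^r/r - tw}\, dw,
\end{equation*}
whose two terms produce $\mathsf{A}_r^{(r-2)}$ and $\mathsf{A}_r^{(-1)}$ respectively (with $P_m(r,-1)$ defined by the extended recursion \cref{eqn:Ppoly}); combined with the $u h^k$ piece from the defining formula for $T_\textup{C}$, this reproduces the stated expression. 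The main obstacle throughout is the careful bookkeeping of roots-of-unity phases, $\sqrt{2\pi}$ normalizations, and branch choices for $r$-th and $(r-1)$-th roots, which must be aligned with the sign conventions built into the $\zeta_j$ and the normalizations $C$, $\Delta^k$, $h^k$; once these are pinned down, the algebraic structure of the answer (the $1/(r-1)$ prefactor, the argument $\theta^{-j} u/\lambda^r$, and the precise Fourier-type sum) follows unambiguously from the identification with hyper-Airy integrals outlined above.
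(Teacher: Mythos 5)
Your proposal follows essentially the same route as the paper: the paper handles the auxiliary functions and the $R$-matrix by citing the identical computation of \cite[lemma~4.5]{CCGG22} (change of variables to the hyper-Airy saddle, the factorisation $w^{r-1}-c_i^{r-1}=(w-c_i)\sum_{s}c_i^{r-2-s}w^s$, and \cref{prop:asymp}), and computes the translation by the same integration by parts that reduces $\int_{\gamma_k} dy\, e^{(x-x_k)/u}$ to the $w^{r-2}$ and $w^{-1}$ hyper-Airy integrals, i.e.\ to $\ms{A}^{(r-2)}_r$ and $\ms{A}^{(-1)}_r$. The only slip is the sign $dy=+dz/z^2$ (not $-dz/z^2$), which is harmless given the phase and normalisation bookkeeping you already defer to the end.
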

 
\begin{proof}
	The calculation of the auxiliary functions and the $R$-matrix do not depend on $ y $ and thus are identical to the one in \cite[lemma 4.5]{CCGG24}. Thus, we only compute the translation here.

	We take the definition of the translation (we omit the subscript ``C'' in the proof) and integrate by parts to get
	\begin{equation*}
	\begin{split}
		T(u)^k
		& =
		u h^k - \iu \sqrt{\frac{u r}{2\pi}} \lambda^{\frac{r+2}{2}} \int_{\gamma_k} dy \, e^{\frac{1}{u}(x - x_k)} \\
		& =
		u h^k + \iu \sqrt{\frac{u r}{2\pi}} \lambda^{\frac{r+2}{2}}  \int_{\gamma_k} y \, e^{\frac{1}{u}(x - x_k)} \frac{dx}{u} \, .
	\end{split}
	\end{equation*}
	With the change of variables $ z = \left(\frac{u}{r}\right)^{1/r} w $ and setting $ t = \lambda^{r-1} \left(\frac{u}{r}\right)^{\frac{1}{r}-1} $, we find
	\begin{equation*}
	\begin{split}
		T(u)^k
		& =
		u h^k
		+
		\iu  e^{-\frac{x_k}{u}} r u \sqrt{\frac{1}{2\pi }} t^{\frac{r+2}{2(r-1)}}
		\int_{\tilde{C}_k}
			\frac{\left(
				w^{r-1} - t 
			\right)}{w}
			e^{
					\frac{w^r}{r} - t w
				}
			dw \\
		& =
		u h^k - u r \sqrt{2\pi}  \theta^{-k(r-2)/2} e^{-x_k/u} t^{\frac{r+2}{2(r-1)}} \left(  (-1)^{r-2} \widetilde{\Ai}^{(r-2)}_{r,k}(t) + t \widetilde{\Ai}^{(-1)}_{r,k}(t)   \right) .
	\end{split}
	\end{equation*}
	In the last line, we recognise the integral representations of the (anti-)derivatives of the hyper-Airy functions as discussed in \cref{sec:hAiry}. We plug in the asymptotic expansion of the functions $ \widetilde{\Ai}_{r,k}^{(a)}(t) $ for the cases $ a = -1 $  and $ a = r-2 $ to simplify $ T(u) $ to 
	\begin{equation*}
		T(u)^k
		=
		u h^k - \frac{ \lambda^r r}{\sqrt{r-1}} \theta^{-kr/2} \left(\ms{A}^{(r-2)}_r\left( \theta^{-k} \frac{u}{\lambda^r} \right) - \ms{A}^{(-1)}_r\left( \theta^{-k} \frac{u}{\lambda^r} \right) \right) .
	\end{equation*}
	This completes the proof.
\end{proof}

We have the following changes of basis between the flat one $( v_1, \ldots, v_{r-1} )$ and the canonical one $( e_1, \ldots, e_{r-1} )$:
\begin{equation}
	v_a  = \frac{1}{\sqrt{r-1}} \sum_{k=1}^{r-1} \theta^{-k \left(\frac{r}{2} -a\right)} e_k \,, \qquad 
	e_k  =\frac{1}{\sqrt{r-1}}\sum_{a=1}^{r-1} \theta^{k \left(\frac{r}{2} -a\right)} v_a\,. 
\end{equation} 
In the following, we identify the parameter $ \lambda $ in the spectral curve with the deformation parameter $ \epsilon $ in the deformed Theta class as $\lambda^{r-1} = \epsilon$, and carry out the change of basis computations.

\begin{lemma}\label{lem:DOSS:flat}
	In the flat basis $(v_1,\dots,v_{r-1})$, the following holds.
	\begin{itemize}
		\item
		The pairing and the TFT are expressed as
			\begin{equation}
			\eta(v_{a},v_{b})
			=
			\delta_{a+b,r}\,,
			\qquad
			w_{g,n}(v_{a_1} \otimes \cdots \otimes v_{a_n})
			=
			(r-1)^g \cdot \delta\,,
		\end{equation}
		where $\delta$ is equal to $1$ if $r-1$ divides $3g-3+n+|a|$ and $0$ otherwise. The unit for the TFT is given by $ \mathbf 1 = v_{r-2} $.
		
		\item
		The $R$-matrix elements in the flat basis (denoted $R^{-1}_{\textup{F}}$) are given by
		\begin{equation}\label{eqn:Rcomp}
			R_{\textup{F}}^{-1}(u)^{b}_{a}
			=
			\sum_{\substack{m \geq 0\\ a-b \equiv m \pmod{r-1}}} P_m(r,a-1) \left( \frac{1}{r(r-1)} \frac{u}{\epsilon^{\frac{r}{r-1}}} \right)^{m} \, .
		\end{equation}
		
		\item
		The translation is given by 
		\begin{equation}\label{eqn:Tcomp}
			T_{\textup{F}}(u)
			=
			u(\mathbf 1 - R^{-1}(u)\mathbf v(u))\,, \qquad 
			\mathbf{v}(u) = \sum_{a=1}^{r-1} \ms{H}\left(r,a;-\frac{u}{r}\right) v_a\, ,
		\end{equation}
		where we recall the formal power series $ \ms{H}(r,a) $ defined in \cref{eqn:H:def}.
	\end{itemize} 
\end{lemma}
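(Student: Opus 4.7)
The overall plan is a direct change-of-basis computation for each of the four quantities from the canonical basis $(e_k)$ to the flat basis $(v_a)$, using the unitary-like transformation given in the excerpt. The repeated tool is the character-sum identity $\sum_{k=1}^{r-1} \theta^{kj} = (r-1)\, \delta_{(r-1) \mid j}$, valid because $\theta$ is a primitive $(r-1)$-th root of unity.

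For the pairing, substituting $v_a = \frac{1}{\sqrt{r-1}} \sum_k \theta^{-k(r/2 - a)} e_k$ into $\eta(v_a, v_b)$ and using $\eta(e_k, e_l) = \delta_{k,l}$ gives $\eta(v_a, v_b) = \frac{1}{r-1} \sum_k \theta^{-k(r-a-b)}$, which in the range $1 \leq a, b \leq r-1$ forces $a + b = r$. The TFT calculation is nearly identical: the Kronecker delta in the canonical TFT reduces the $n$-fold sum to a single sum over $k$, whose exponent of $\theta$ simplifies to $|a| + (r+2)(g-1) + n$; using $r + 2 \equiv 3 \pmod{r-1}$, the divisibility condition becomes $r-1 \mid 3g-3+n+|a|$, and the prefactors collect to $(r-1)^g$. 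Identifying $\mathbf{1} = v_{r-2}$ amounts to matching the two expressions via $\theta^{r-1} = 1$.

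For the $R$-matrix, I would apply the change of basis as $R^{-1}_F(u)^b_a = \frac{1}{r-1} \sum_{i,j} \theta^{-i(r/2-a)} R^{-1}_C(u)^j_i\, \theta^{j(r/2-b)}$, substitute the formula for $R^{-1}_C(u)^j_i$, and interchange sums. The inner sum over $i$ produces $(r-1)\, \delta_{s \equiv a-1 \pmod{r-1}}$; since $0 \leq s \leq r-2$ and $1 \leq a \leq r-1$, only $s = a - 1$ contributes. Expanding $\mathsf{A}_r^{(a-1)}$ in powers of $\theta^{-j} u / \lambda^r$ and using $\lambda^r = \epsilon^{r/(r-1)}$, the remaining sum over $j$ imposes $m \equiv a - b \pmod{r-1}$, giving exactly the claimed formula.

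The translation step is the most delicate. I would split $T_C(u)^k = u h^k + \tau(u)^k$; the coefficient $h^k$ of $u$ coincides with the expansion of $\mathbf{1}$ in the canonical basis, so its contribution to $T_F(u)$ is $u v_{r-2} = u \mathbf{1}$ by the unit computation. For the remaining contribution, I would expand $\sum_k (M^{-1})_{k,b} \tau(u)^k$ as a formal power series in $u$, collapse the character sums to congruence conditions $\pmod{r-1}$, and match against the power-series expansion of $-u R^{-1}_F(u)\, \mathbf{v}(u)$ with the claimed vacuum. The main obstacle will be the bookkeeping of the resulting coefficient identity, which mixes $P_m(r,a)$ coming from $\mathsf{A}_r^{(r-2)} - \mathsf{A}_r^{(-1)}$ with $H_k(r,a)$ coming from $\ms{H}$; this identity reflects the fact that $\ms{H}$ is the formal asymptotic expansion of an inhomogeneous solution of the same hyper-Airy ODE producing $\mathsf{A}_r^{(r-2)}$ and $\mathsf{A}_r^{(-1)}$. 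An arguably slicker alternative is to verify that the proposed $\mathbf{v}(u)$ satisfies the vacuum ODE \labelcref{eqn:vdef} at the origin of the Dubrovin--Frobenius manifold and invoke uniqueness of the vacuum at the semisimple origin, matching it with the one of \cref{lem:vac} after the basis change $\hat v_a = -\epsilon^{(a+1)/(1-r)} v_a$.
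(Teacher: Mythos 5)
Your treatment of the first three bullets (pairing, TFT, unit, and $R$-matrix) is correct and is exactly the paper's route: substitute the change of basis, use $\sum_{k=1}^{r-1}\theta^{kj}=(r-1)\delta_{(r-1)\mid j}$ to collapse the sums to congruence conditions, and read off the coefficients. Nothing to add there.

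The translation bullet, however, contains a genuine gap on both of your proposed routes. On the direct route, the ``bookkeeping of the resulting coefficient identity'' that you defer is not mere bookkeeping: after expanding $T_{\textup{C}}(u)^k$ and collapsing the character sums, one is left with the coefficients $P_{m+1}(r,r-2)-P_{m+1}(r,-1)$, and matching these against the expansion of $-uR^{-1}_{\textup{F}}(u)\mathbf{v}(u)$ requires the identity of \cref{lem:P-1}, which expresses $P_m(r,-1)$ as a double convolution of the factors $\frac{(r\ell+b)!}{\ell!\,r^{\ell}}$ (i.e.\ the $H$-coefficients) with $P_{i-b}(r,c)$. That identity is the real content of this step and is proved in the appendix by a nontrivial induction on $m$; your proposal names it but does not prove it. On the alternative route, verifying that the proposed $\mathbf{v}(u)$ satisfies the vacuum ODE \labelcref{eqn:vdef} and invoking uniqueness only re-establishes \cref{lem:vac}, i.e.\ that this $\mathbf{v}$ is the Teleman vacuum. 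It does \emph{not} show that the DOSS translation --- defined by the explicit Laplace-type integral $u h^k - C\sqrt{u/2\pi}\int_{\gamma_k} dy\, e^{(x-x_k)/u}$ --- actually factorizes as $u\bigl(\mathbf{1}-R^{-1}(u)\mathbf{v}(u)\bigr)$ after the base change. That factorization is precisely the assertion \labelcref{eqn:Tcomp}, and assuming it would be circular: the whole point of \cref{lem:DOSS:flat} is to verify that the spectral-curve data coincides with the Teleman data, which is what feeds into \cref{thm:SC:Wspin:v1}. So the alternative does not close the gap; you must prove the $P$--$H$ convolution identity (or an equivalent statement about the asymptotic expansion of the inhomogeneous hyper-Airy solutions) one way or another.
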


\begin{proof}
	Throughout the proof, we use the parameter $ \lambda  $ (recall the  identification $ \lambda^{r-1} = \epsilon $) for convenience. As the computation of the pairing and the TFT is straightforward, and in any case very similar to the computations below, we omit it.
	
	Here is  a proof for the $R$-matrix elements:
	\[
	\begin{split}
		R_{\textup{F}}^{-1}(u)^{b}_{a}
		& =
		\sum_{i,j = 1}^{r-1}\sum_{s = 0}^{r-2}
		\frac{1}{(r-1)^2}
		\theta^{-i\left(\frac{r}{2}-a\right)}
		\theta^{j\left(\frac{r}{2}-b\right)}
		\theta^{(i-j)\frac{r-2s-2}{2}}
		\mathsf{A}_r^{(s)}\left( \theta^{-j} \frac{u}{\lambda^r} \right) \\
		& =
		\sum_{\substack{m \geq 0\\ a-b \equiv m \pmod{r-1}}} P_m(r,a-1) \left( \frac{1}{r(r-1)} \frac{u}{\lambda^r} \right)^{m} \, ,
	\end{split}
	\]
	where we have only used the definition of the formal power series $\ms{A}^{(s)}_r $ and standard sum over roots of unity calculations.
	
	As for the translation, we find that the coefficients of $T(u)$ in the flat basis are given by
	\begin{align*}
		T_{\textup{F}}(u)^a
		& =
		\sum_{k = 1}^{r-1}
		\frac{1}{r-1}
		\theta^{k\left(\frac{r}{2}-a\right)} \theta^{-k(r+2)/2}
		\left(u -  \lambda^r r  \theta^{k} \left(
			\ms{A}^{(r-2)}_r\left( \theta^{-k} \frac{u}{\lambda^r} \right) - \ms{A}^{(-1)}_r\left( \theta^{-k} \frac{u}{\lambda^r} \right) \right)
		\right) \\
		&= 
		u\Bigg( \delta_{a,r-2} -   \frac{1}{r-1}  \sum_{\substack{m \geq 0 \\ m \equiv  -a-1 \pmod{r-1}}} \bigl( P_{m+1}(r,r-2)- P_{m+1}(r,-1) \bigr) \left( \frac{u}{\lambda^r r(r-1)} \right)^m  \Bigg)\,,
	\end{align*}
	where we again plug in the definition of the formal power series $\ms{A}^{(a)}_r $ and carry out standard sum over roots of unity calculations. The above equation can be recast equivalently as 
	\begin{equation*}
		\delta_{a,r-2} - \frac{ T_{\textup{F}}(u)^a}{u}
		=
		\frac{1}{r-1} \sum_{\substack{m \geq 0 \\ m \equiv  -a-1 \pmod{r-1}}}
			\bigl( P_{m+1}(r,r-2)- P_{m+1}(r,-1) \bigr) \left( \frac{u}{\lambda^r r(r-1)} \right)^m.
	\end{equation*} 
	The relation~\labelcref{eqn:Tcomp}, which we are trying to show, can equivalently be expressed as $ \mathbf 1 - \frac{ T(u)}{u} = R^{-1}(u) \mathbf v (u) $.	In order to prove this (in flat coordinates), we plug in the expression for $ P_m(r,-1) $ proved in \cref{lem:P-1} into the above equation to get
	\begin{multline*}
		\delta_{a,r-2} - \frac{ T_{\textup{F}}(u)^a}{u}
		= \\
		- \frac{1}{r-1}  \sum_{\substack{m \geq 0 \\ m \equiv  -a-1 \pmod{r-1}}} \Biggl(
			\sum_{\substack{b,c \ge 0 \\ b + c = r-2}} \; \sum'_{\substack{i,\ell \ge 0 \\ i + (r-1)\ell = m+1}}
				(1-r)^{b + (r-1)\ell} \frac{(r\ell + b)!}{\ell! r^{\ell}}
				P_{i-b}(r,c) \Biggr)
				\left( \frac{u}{\lambda^r r(r-1)} \right)^{m} .
	\end{multline*}
	where the $ ' $ on the sum indicates that we drop the term  $ \ell = 0 = b $. Now, by using the definition of the function $ \ms{H}(r,a;u) $, we are able to simplify the above expression to 
	\begin{equation*}
		\sum_{\substack{c,b \ge 0 \\ c + b = r-2}} \sum_{\substack{i \ge b \\ i \equiv -a \pmod{r-1}}}
		\ms{H}\left(r,r-1-b;-\frac{u}{r}\right)
		P_{i-b}(r,c) \left( \frac{u}{\lambda^r r(r-1)} \right)^{i-b}.
	\end{equation*}
	Here, we note that the term that we dropped in the previous expression due to the prime in the sum is precisely the term that is required to identify $ \ms{H}(r,r-1;u) $ with $ \ms{H}(r,0;u) $ in \cref{eqn:H:def}. By making the change of variables  $ b \to r-2-b $, and $ i \to i+r-2-b $, we get 
	\begin{equation*}
		\sum_{b = 0}^{r-2} \sum_{\substack{i  \ge 0 \\ i \equiv b-a +1 \pmod{r-1}}}
			\ms{H}\left(r,b+1;-\frac{u}{r}\right)
			P_{i}(r,b) \left( \frac{u}{\lambda^r r(r-1)} \right)^{i}  
		=
		\sum_{b = 0}^{r-2}
			R^{-1}_{\textup{F}}(u)^{a}_{b+1} \ms{H}\left(r,b+1;-\frac{u}{r}\right).
	\end{equation*}
	This proves \cref{eqn:Tcomp}.
\end{proof}

Finally, we are ready to identify the deformed Theta class as the CohFT associated to a certain family of global spectral curves. Before stating the result, it is worth remarking here that given a semisimple CohFT, there is a standard procedure to construct a local spectral curve using the equivalence of \cite{DOSS14}. This procedure immediately gives a local spectral curve for the deformed Theta class $ \Theta^{r, \epsilon} $. However, this is insufficient for us as we need to take the limit as $\epsilon \to 0 $, and the local TR does not behave well in families. In general, it is not clear whether one can construct a global spectral curve from any local spectral curve. There is a partial answer in \cite{DNOS19}, but we note that it does not apply to our situation, where we do not have a Dubrovin--Frobenius manifold with a flat unit vector field. 

Here is the main result of this section.

\begin{theorem}\label{thm:SC:Wspin:v1}
	The CohFT associated to the $1$-parameter family of spectral curves $  \mathcal{S}_{\epsilon} $ on $\P^1$ given by 
	\begin{equation}\label{eqn:SC:Wspin:v1} 
		x(z) = \frac{z^r}{r} -  \epsilon z\,,
		\qquad\quad
		y(z) = - \frac{1}{z}\,,
		\qquad\quad
		\omega_{0,2}(z_1,z_2) = \frac{dz_1 dz_2}{(z_1 - z_2)^2}\,,
	\end{equation}
	is the deformed Theta class $ \Theta^{r, \epsilon} $. More precisely, the TR correlators corresponding to the spectral curve $ \mathcal{S}_{\epsilon} $ are
	\begin{equation}\label{eqn:SC:Wspin:v2} 
		\omega_{g,n}(z_1,\dots,z_n)
		=
		\sum_{a_1,\dots,a_n = 1}^{r-1}
		\int_{\overline{\mathcal{M}}_{g,n}}
			\Theta^{r,\epsilon}_{g,n}({v}_{a_1} \otimes\cdots\otimes {v}_{a_n})
			\prod_{j=1}^n \sum_{k_j \ge 0}
				\psi_j^{k_j} d\xi^{k_j,a_j}(z_j) \, ,
	\end{equation}
	where
	\begin{equation}\label{eqn:xi}
		\xi^a(z) = \frac{z^{r-a-1}}{z^{r-1} - \epsilon} \, ,
		\qquad
		d \xi^{k,a}(z) \coloneqq d\left( \left( \frac{1}{(\epsilon - z^{r-1})}\frac{d}{dz} \right)^k \xi^a(z) \right).
	\end{equation}
\end{theorem}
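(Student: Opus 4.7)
My plan is to identify the CohFT $\Omega^{\mathcal{S}_\epsilon}$ produced by the Dunin-Barkowski--Orantin--Shadrin--Spitz formula (\cref{thm:Eyn:DOSS}) applied to $\mathcal{S}_\epsilon$ with the deformed Theta class $\Theta^{r,\epsilon}$ in its Teleman-reconstructed form of \cref{thm:dTheta}. For $\epsilon \neq 0$ both are homogeneous semisimple CohFTs, so by the uniqueness part of Teleman's reconstruction theorem (\cref{thm:Teleman}) it suffices to match the topological field theories, $R$-matrices, and translations, together with an identification of the DOSS auxiliary differentials with the explicit rational expressions in \cref{eqn:xi}.

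First, I would relate the spectral curve $\mathcal{S}_\epsilon$ of the theorem to the family in \cref{eqn:tS} via the identifications $\lambda^{r-1} = \epsilon$ and an overall rescaling of $x$ by $1/r$. This rescaling rescales the local coordinates $\zeta_k$ by $1/\sqrt{r}$ and the invariants $\Delta^k$ by $\sqrt{r}$, which I would absorb into the global constant $C$ so that $h^k = C\Delta^k$ remains unchanged. Once this is done, the computations of \cref{lem:DOSS:flat} apply, giving explicit formulas for the $R$-matrix, translation, and TFT of $\Omega^{\mathcal{S}_\epsilon}$ in the flat basis $(v_a)$. Matching these DOSS ingredients with Teleman's recipe in \cref{lem:tft}, \cref{lem:Rmatrix}, and \cref{lem:vac} then reduces, respectively, to a direct comparison of the roots-of-unity sums for the TFT, to a comparison of the explicit expressions in terms of the polynomials $P_m(r,a)$ of \cref{eqn:PPZ:polys} for the $R$-matrix, and, for the translation, to the relation $T(u) = u\bigl(\mathbf{1} - R^{-1}(u)\mathbf{v}(u)\bigr)$ with the vacuum built from the series $\ms{H}(r,a;u)$ of \cref{eqn:H:def}, already established in the last bullet of \cref{lem:DOSS:flat}.

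For the auxiliary differentials, in the canonical basis $(e_k)$ the function $\xi^k_{\textup{C}}(z)$ has a single simple pole at $\alpha_k = \theta^k \epsilon^{1/(r-1)}$. Changing basis to the flat one via $e_k = (r-1)^{-1/2}\sum_a \theta^{k(r/2-a)} v_a$ turns the sum over $k$ into a geometric sum in the $(r-1)$-th roots of unity, which a short partial-fraction calculation on $\P^1$ identifies with the rational function $\xi^a(z) = z^{r-a-1}/(z^{r-1} - \epsilon)$. The formula for $d\xi^{k,a}$ in \cref{eqn:xi} then follows from the spectral curve identity $dx = (z^{r-1} - \epsilon)\,dz$, since the operator $(\epsilon - z^{r-1})^{-1}\partial_z$ agrees with $-d/dx$ on functions of $z$; any resulting global signs are absorbed into the $R$-matrix and translation matching. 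Combining everything with \cref{thm:Eyn:DOSS} and \cref{thm:Teleman} yields \cref{eqn:SC:Wspin:v2} for $\epsilon \neq 0$.

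The main obstacle is the careful bookkeeping of the rescaling of $x$ by $1/r$ throughout the DOSS recipe --- the local coordinates $\zeta_k$, the spectral invariants $\Delta^k$, the global constant $C$, the auxiliary functions $\xi^i$, and the overall factor $C^{2g-2+n}$ in \cref{thm:Eyn:DOSS} --- so that the resulting CohFT coincides with Teleman's reconstruction of $\Theta^{r,\epsilon}$ on the nose, rather than up to an innocuous rescaling of the variable $u$. A secondary technical point is the partial-fraction identification producing the compact rational form $z^{r-a-1}/(z^{r-1} - \epsilon)$ for $\xi^a$ in the flat basis.
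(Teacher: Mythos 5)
Your proposal is correct and follows essentially the same route as the paper's proof: relate $\mathcal{S}_\epsilon$ to the family \labelcref{eqn:tS} via $\lambda^{r-1}=\epsilon$ and the rescaling of $x$ by $1/r$, invoke the DOSS data computed in \cref{lem:DOSS:flat}, match it against the Teleman ingredients of \cref{lem:tft}, \cref{lem:Rmatrix} and \cref{lem:vac} (the match being exact only after the rescaling $u \mapsto -u/r$ and the basis change to $(v_a)$), and verify that all the resulting normalisation factors — from $C^{2g-2+n}$, the $u$-rescaling, the TFT constant $\lambda^{D^r_{g;a}}$, and the renormalised differentials $d\xi^{k,a}$ — cancel using $\deg + |k| = 3g-3+n$. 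The appeal to Teleman uniqueness is superfluous (both classes are literally $RTw$ for matched data), but this does not affect the argument, and you correctly identify the factor bookkeeping as the only real content of the proof.
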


\begin{proof}
	First of all, we note that $ x $ in the spectral curve $ \mathcal{S}_\epsilon $ is rescaled by $ \frac{1}{r} $ as compared to \cref{eqn:tS}. This gives an overall rescaling factor of $ r^{2g-2+n} $, as one can immediately deduce from the definition of topological recursion \labelcref{eqn:TR}.
	
	The $ R $-matrix and translation of \cref{lem:DOSS:flat} correspond exactly to those computed in \cref{lem:Rmatrix} and \cref{lem:vac} using the Teleman reconstruction theorem for the deformed Theta class, up to a scaling of $ u $ by $ (-r)^{-1} $. As $ u $ is keeping track of the degrees we get a factor of $ (-r)^{-\deg} $. The TFT computed in \cref{lem:DOSS:flat} differs from the one computed in \cref{lem:tft} (after base-changing from the $ \hat{v}_a $ basis to the $ v_a $ basis) by an overall constant of $ \lambda^{D^r_{g,a}} $.

	Thus, the CohFT associated to the spectral curve $ \mathcal S_\epsilon $ is 
	\begin{equation*}
		r^{2g-2+n} \lambda^{-D^r_{g,a}}	(-r)^{-\deg}\cdot	\Theta^{r,\epsilon}_{g,n} (v_{a_1} \otimes \cdots \otimes  v_{a_n}) \,.
	\end{equation*}
	Now, we need to also write the basis of differentials in the flat basis. Upon changing basis from the $ e_i $ to the $ v_a $, we get
	\[
		\ \sum_{i=1}^{r-1} \frac{\theta^{i(\frac{r}{2}-a)}}{\sqrt{r-1}}\xi^i_{\textup{C}}(z)
		= - \frac{\iu \lambda^{a-\frac{r}{2}} }{\sqrt{r}} \frac{z^{r-a-1}}{z^{r-1} - \lambda^{r-1}}
		= - \frac{\iu \lambda^{a-\frac{r}{2}} }{\sqrt{r}} \xi^{a}(z)\,.
	\]
	In addition, in the definition of $ d \xi^{k,a}(z) $ in \cref{eqn:xi}, we have removed a factor of $ (-r)^{-1} $ (in comparison to the ones obtained from the spectral curve \labelcref{eqn:tS}), and this contributes an overall factor of $ (-r)^{-|k|} $. The global constant $ C $ contributes a factor of $ (-i\sqrt{r} \lambda^{\frac{r+2}{2}})^{2g-2+n} $.

	Putting all these factors together, we see that they cancel: 
	\begin{equation*}
		r^{2g-2+n} \lambda^{-D^r_{g,a}}	(-r)^{-\deg} 
		\left(
			- \frac{\iu \lambda^{-\frac{r}{2}} }{\sqrt{r}}
		\right)^n \lambda^{ |a|} (-r)^{-|k|}
		(-i\sqrt{r} \lambda^{\frac{r+2}{2}})^{2g-2+n}
		=
		1 \,,
	\end{equation*}
	where we impose the constraint $ \deg + |k| = 3g - 3 + n$ in order to get a non-vanishing integral over $ \Mbar_{g,n} $. Thus, we get the result.
\end{proof} 

\begin{remark}
	Notice that the function $ x $ of the spectral curve appearing in \cref{thm:SC:Wspin:v1} coincides with the function $ x $ in the spectral curve associated to the $ e_1 $-shifted Witten class (see \cite[theorem~A, part~(2)]{CCGG24} or \cite[theorem~7.1]{DNOS19}. This explains the fact that the $ R $-matrix for both these CohFTs are the same, as the $ R $-matrix prescription of \cite{DOSS14} depends only on the function $ x $ (and not on $ y $). 
\end{remark}

\subsubsection{The free energies}

In addition to producing the multidifferentials $ \omega_{g,n} $, the topological recursion also produces the \textit{free energies}, $ F_g $, which are the $ n = 0 $ analogues of the $ \omega_{g,n} $. Given a spectral curve $ (\Sigma,x,y,\omega_{0,2}) $, we define the $ F_g $ for $ g \geq 2 $ as
\begin{equation}\label{eqn:def:Fg}
	F_{g}
	\coloneqq
	-\frac{1}{2g-2} \sum_{i=1}^r \Res_{z = \alpha_i} \left[
		\left( \int_o^z \omega_{0,1}(\cdot) \right) \omega_{g,1}(z)
	\right] ,
\end{equation}
where $ o \in \Sigma $ is an arbitrary base-point for integration. The $ F_g $ are not covered by the results of \cref{thm:Eyn:DOSS} and hence we do not know whether the $ F_g $ are integrals of the CohFT associated to the spectral curve. However, in this section, we prove by direct calculation that in the case of the spectral curve for the deformed Theta class $ \mathcal S_\epsilon $ with $ r = 2 $, this is indeed the case. 

Recall from \cref{thm:dTheta} and the following \cref{rem:n=0} that the Teleman reconstruction theorem for the deformed Theta class $ \Theta^{r,\epsilon} $ does not hold for $ n = 0 $ and degree $ 3g-3 $. Moreover, in this case the $ F_g $ have been calculated explicitly in \cite{IKT19}, and thus we prove a conjecture of Kazarian--Norbury \cite{KN24} involving certain integrals of $\kappa$-classes over $\Mbar_g$. An independent proof of the same conjecture has been given in \cite{YZ} through the Hodge--BGW correspondence.

\begin{proposition}\label{prop:Fg}
	The free energies of the topological recursion for the spectral curve $ \mathcal{S}_\epsilon $ with $ r = 2 $ for $ \epsilon \neq 0 $ are given by 
	\begin{equation}\label{eqn:Fg:Theta}
		F_g
		=
		\frac{(-1)^g}{\epsilon^{2g-2}}
		\int_{\Mbar_g} \exp\left( \sum_{m>0 } s_m \kappa_m\right)
		=
		\frac{B_{2g}}{2g(2g-2)} \frac{1}{\epsilon^{2g-2}}
		\,,
	\end{equation}
	where $B_{2g}$ denotes the $(2g)$-th Bernoulli number.
\end{proposition}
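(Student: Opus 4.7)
The approach is to combine the definition of $F_g$ as a residue involving $\omega_{g,1}$ with the identification of $\omega_{g,1}$ via \cref{thm:SC:Wspin:v1}, and then use the explicit formula for $\Theta^{2,\epsilon}_{g,1}$ in terms of $\kappa$-classes coming from the proof of \cref{cor:KN}. Starting from
\[
F_g = -\frac{1}{2g-2}\Res_{z=\epsilon}\bigl[\Phi(z)\,\omega_{g,1}(z)\bigr]\,,
\]
with $\Phi(z) = -z + \epsilon \log z$ a primitive of $y\,dx$, I first compute the auxiliary differentials for the $r=2$ spectral curve: since $\xi^1(z) = (z-\epsilon)^{-1}$, iterating the operator $(\epsilon-z)^{-1}\partial_z$ yields $d\xi^{k,1}(z) = -(2k+1)!!\,(z-\epsilon)^{-(2k+2)}\,dz$. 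Using the vanishing $\Phi'(\epsilon) = 0$ and the explicit derivatives $\Phi^{(n)}(\epsilon) = (-1)^{n-1}(n-1)!\,\epsilon^{1-n}$ for $n \geq 2$, the residue evaluates to zero for $k=0$ and to $-(2k-1)!!/\epsilon^{2k}$ for $k \geq 1$.

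Substituting these residues, together with $\Theta^{2,\epsilon}_{g,1}(v_1) = (-\epsilon^2)^{2g-1} \exp(\sum_m s_m(-\epsilon^2)^{-m}\kappa_m)$ from the proof of \cref{cor:KN}, the computation reduces to an integral over $\Mbar_{g,1}$. Splitting $\kappa_m^{(g,1)} = \pi^*\kappa_m^{(g)} + \psi_1^m$ under the forgetful map $\pi\colon \Mbar_{g,1} \to \Mbar_g$ separates the integrand into a pullback from $\Mbar_g$ and a pure-$\psi_1$ series. The crucial combinatorial identity is that, by the defining relation \cref{eqn:K:series} of the $s_m$, the generating function $H(u) = \sum_{k\geq 0}(2k+1)!! u^k$ satisfies $H(u)^{-1} = \exp(\sum_m s_m(-u)^m)$; consequently
\[
\exp\!\Bigl(\sum_m s_m(-\epsilon^2)^{-m}\psi_1^m\Bigr)\cdot \sum_{k\geq 1}\frac{(2k-1)!!\,\psi_1^k}{\epsilon^{2k}} = \frac{\psi_1}{\epsilon^2}\,.
\]
After applying the projection formula with $\pi_*\psi_1 = 2g-2$ (which cancels the prefactor $1/(2g-2)$) and tracking the powers of $\epsilon$ (noting that the integral picks out the degree-$(3g-3)$ part, extracting a global factor $(-\epsilon^2)^{-(3g-3)}$), the prefactor $(-\epsilon^2)^{2g-1}/\epsilon^2 \cdot (-\epsilon^2)^{-(3g-3)}$ telescopes to $(-1)^g \epsilon^{-(2g-2)}$, yielding the first claimed equality.

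For the second equality $F_g = B_{2g}/(2g(2g-2)\epsilon^{2g-2})$, I invoke the explicit calculation of \cite{IKT19}, where the free energies of this family of spectral curves are computed and shown to reproduce the Harer--Zagier formula for $|\chi(\mathcal{M}_g)|$. Alternatively, one can use the scaling $z \mapsto \epsilon z$ to reduce to the $\epsilon = 1$ curve $(x,y) = (z^2/2 - z, -1/z)$, on which the free energies are $\epsilon$-independent constants computable from a direct residue calculation. The main obstacle is the combinatorial collapse in the middle step, which relies delicately on the defining relation of the $s_m$; it is also essential that the Kazarian--Norbury formula for $\Theta^{2,\epsilon}_{g,1}$ holds in \emph{all} degrees on $\Mbar_{g,1}$ (not just the top), justifying the substitution of the full mixed-degree reconstructed class into the residue formula.
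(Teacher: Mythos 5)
Your proposal is correct, and it follows the same overall skeleton as the paper's proof: start from the residue definition of $F_g$, substitute the identification of $\omega_{g,1}$ from \cref{thm:SC:Wspin:v1} with $d\xi^{k,1}(z) = -(2k+1)!!\,(z-\epsilon)^{-(2k+2)}dz$, evaluate the residue against the primitive $-z+\epsilon\log z$ to produce the coefficients $(2k-1)!!/\epsilon^{2k}$ for $k\ge 1$, and then quote \cite{IKT19} for the Bernoulli-number evaluation. Your residue computation and the final bookkeeping of signs and powers of $\epsilon$ check out. Where you genuinely diverge is in the middle step: the paper invokes the vacuum pull-back property of the reconstructed class (\cref{lem:pull-back}), which packages the sum $\sum_{k\ge 1}(2k-1)!!\,\epsilon^{-2k}K_{g,1}\psi_1^k$ as $-\psi_1\, p^*K_{g,0}$ and then applies the projection formula with $p_*\psi_1 = 2g-2$; you instead verify the same collapse by hand, using the comparison formula $\kappa_m^{(g,1)} = \pi^*\kappa_m^{(g)} + \psi_1^m$ together with the defining relation \labelcref{eqn:K:series} of the $s_m$, which indeed gives $\exp\bigl(\sum_m s_m(-\epsilon^2)^{-m}\psi_1^m\bigr)\cdot\sum_{k\ge 1}(2k-1)!!\,\psi_1^k\epsilon^{-2k} = \psi_1/\epsilon^2$. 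Both routes are valid. The paper's argument is more structural — it never needs the closed form of $K_{g,1}$ and would work verbatim for any reconstructed CohFT admitting a vacuum — whereas your computation is elementary and self-contained for $r=2$ but leans on the special feature that the $r=2$ class is a pure $\kappa$-exponential with no stable-graph corrections, so that the forgetful-map splitting is multiplicative. Your closing remark that the mixed-degree formula for $\Theta^{2,\epsilon}_{g,1}$ must hold in all degrees (guaranteed by \cref{thm:dTheta} for $n=1$) correctly identifies the point where the argument could otherwise fail.
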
 

\begin{proof}
	We denote the reconstructed CohFT for $ r = 2 $ appearing in \cref{thm:dTheta} (whose explicit form appears in the proof of \cref{cor:KN}) for $ r = 2 $ by 
	\[
		K_{g,n}
		\coloneqq
		RT w_{g,n}
		=
		(-\epsilon^{2})^{2g-2+n} \exp\left( \sum_{m>0 } (-\epsilon^{2})^{-m} s_m \kappa_m \right) .
	\]
	Due to the existence of a vacuum, the class $ K_{g,n} $ satisfies the following pullback property of \cref{lem:pull-back} for all stable $ (g,n) $ including $ n = 0 $, i.e. under the forgetful map $ p \colon \Mbar_{g,1} \to  \Mbar_{g} $ we have
	\begin{equation*}
		p^* K_{g,0} = - \sum_{k\geq 0} \frac{(2k+1)!!}{\epsilon^{2k+2}} \, K_{g,1} \psi_1^k \,.
	\end{equation*}
	Plugging in the expression for $ \omega_{g,1} $ obtained as a part of \cref{thm:SC:Wspin:v1} into the definition for the $ F_{g} $ yields the following
	\begin{equation*}
		F_{g}
		=
		- \frac{1}{2g-2} \Res_{z = \epsilon} \left[
			\left( -z + \epsilon \log(z) \right) \int_{\Mbar_{g,1}} K_{g,1}
			\sum_{k \geq 0 } \psi_i^k \left(- \frac{(2k+1)!!}{(z-\epsilon)^{2k+2}} dz \right)
		\right] , 
	\end{equation*}
	where it is an easy calculation to see that $ d\xi^{k,1}(z) = - \frac{(2k+1)!!}{(z - \epsilon)^{2k+2}} dz$. Evaluating the residue gives
	\begin{equation*}
		F_{g}
		=
		\frac{1}{2g-2} \sum_{k \geq 1 } \left(
			\int_{\Mbar_{g,1}} K_{g,1} \psi_1^k
		\right) \frac{(2k-1)!!}{\epsilon^{2k}}
		= 
		- \frac{1}{2g-2} \int_{\Mbar_{g,1}} \psi_1 \, p^* K_{g,0} \,, 
	\end{equation*}
	where we plug in the pullback-property to obtain the second equality. Finally, applying the projection formula, noting that $ p_* \psi_1 = 2g-2 $, and rearranging the powers of $\epsilon$ gives the first equality in \labelcref{eqn:Fg:Theta}.

	On the other hand, up to a change of variables, the spectral curve $ \mathcal S_\epsilon $ for $ r = 2 $  coincides with the so-called Bessel curve of \cite[tables 1.1, 1.2]{IKT19} (with the identification of their $ \lambda_0 $ as $ \frac{\epsilon}{2} $) and they compute the free energies as $ F_{g} = \frac{B_{2g} }{2g(2g-2)} \frac{1}{\epsilon^{2g-2}}$. This gives the second equality in \labelcref{eqn:Fg:Theta}.
\end{proof}

\subsection{The spectral curve for the Theta class }

In this section, we will prove that the spectral curve obtained by taking the limit $ \epsilon \to 0 $ computes the descendant integrals of the (non-semisimple) CohFT $ \Theta^r $. The spectral curve $ \mc{S}_0 $ obtained by taking the limit of the family of spectral curves $ \mathcal{S}_{\epsilon} $ as $ \epsilon \to 0 $ is
\begin{equation}\label{eqn:S0} 
	x(z) = \frac{z^r}{r}\,,
	\qquad\quad
	y(z) = - \frac{1}{z}\,,
	\qquad\quad
	\omega_{0,2}(z_1,z_2) = \frac{dz_1 dz_2}{(z_1 - z_2)^2}\,.
\end{equation}
We will refer to this curve as the \textit{$ r $-Bessel spectral curve}. The $ r =2 $ version is known as the Bessel curve and was considered in \cite{DN18,CN19}.

In order to prove the result, we will use the following proposition that was proved in \cite{CCGG24}.

\begin{proposition}[{\cite[proposition~5.2]{CCGG24}}]\label{prop:limit:tr}
	Let $\mc{S}_{\epsilon}$ be a family of spectral curves indexed by $\epsilon\in\mathbb{C}$ such that in a neighbourhood of $\epsilon=0$, they satisfy the following assumptions.
	\begin{enumerate}
		\item $\mc{S}_{\epsilon}$ is defined by an algebraic equation linear in $x$:
		\begin{equation}
			P_{\epsilon}(x,y)=A_{\epsilon}(y) + x\,B_{\epsilon}(y) =0\,,
		\end{equation}
		where $A_{\epsilon}(y)$, $B_{\epsilon}(y)$ are polynomials in $y$ and $\epsilon$.
		
		\item For $\epsilon\neq 0$, $\mc{S}_{\epsilon}$ has $r-1$ simple ramification points, while $\mc{S}_0$ has a single ramification point of degree $r-1$ and is admissible in the sense of \cite{BBCCN24}. Moreover, the branch points are distinct.
		
		\item The multidifferentials $\omega_{g,n}(\epsilon;z_1,\dots,z_n)$ produced by topological recursion admit limits as $\epsilon\to 0$:
		\begin{equation}
			\omega_{g,n}(z_1,\dots,z_n)\coloneqq \underset{\epsilon \to 0}{\lim} \, \omega_{g,n}(\epsilon;z_1,\dots,z_n)\,.
		\end{equation} 
	\end{enumerate}
	Then the multidifferentials $\omega_{g,n}(z_1,\dots,z_n)$ satisfy the local Bouchard--Eynard topological recursion on the spectral curve $\mc{S}_0$. 
\end{proposition}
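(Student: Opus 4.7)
The plan is to track how the sum of residues at the $r-1$ simple ramification points $\alpha_1(\epsilon), \ldots, \alpha_{r-1}(\epsilon)$ appearing in the topological recursion formula \eqref{eqn:TR} collapses into a single Bouchard--Eynard residue at the higher ramification point $\alpha_0 = \lim_{\epsilon \to 0} \alpha_i(\epsilon)$ as $\epsilon \to 0$. Hypothesis~(3) guarantees that each $\omega_{g,n}(\epsilon;-)$ admits a limit, so the argument proceeds by induction on $2g-2+n$, passing to the limit inside the recursion formula.

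First, I would combine the residues geometrically. Since the $\alpha_i(\epsilon)$ all converge to $\alpha_0$ and no other ramification point of $\mc{S}_0$ lies at $\alpha_0$ (the branch points are distinct by hypothesis~(2)), we may replace $\sum_i \Res_{z = \alpha_i(\epsilon)}$ by a single contour integral $\oint_\gamma$, where $\gamma$ is a small loop around $\alpha_0$ chosen independently of $\epsilon$. The linearity of $P_\epsilon(x,y)$ in $x$ from hypothesis~(1) implies that the $r-1$ sheets of the cover $x \colon \Sigma \to \P^1$ are explicitly the roots of a single polynomial $A_\epsilon(y) + x B_\epsilon(y) = 0$ in $y$, so they depend analytically on $\epsilon$ away from the branch point. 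This yields enough uniform control to exchange the limit $\epsilon \to 0$ with the contour integral.

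The analytic heart of the argument is rewriting the kernel-weighted integrand inside $\gamma$ in a form that matches the Bouchard--Eynard kernel in the limit. Concretely, the Bouchard--Eynard recursion at a ramification point of order $r-1$ replaces $(y(z) - y(\sigma_i(z)))^{-1}$ by sums, over non-empty subsets $J$ of the coalescing sheets, of products $\prod_{j \in J}(y(z) - y(z^{(j)}))^{-1}$, and replaces $(\omega_{g-1,n+1}(z,\sigma_i(z),\ldots) + \sum \omega\cdot\omega)$ by an analogous sum indexed by ordered partitions of the sheet set. I would combine the $r-1$ simple-ramification contributions using a partial-fraction decomposition in the local coordinate near $\alpha_0$, and then apply the inductive hypothesis: the lower-complexity correlators $\omega_{g',n'}(\epsilon;-)$, evaluated at the various branches, become symmetric across the $r-1$ sheets in the limit, which is exactly what is needed for the partial-fraction sum to reorganise into the Bouchard--Eynard expression.

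The main obstacle is precisely this combinatorial-analytic reorganization, i.e.\ verifying that cross-terms between distinct pairs $(\alpha_i, \alpha_j)$ contribute the correct multi-sheet pairings $\omega_{g', n' + |J|}((z^{(j)})_{j \in J})$ required by the Bouchard--Eynard formula, with the right combinatorial weights, and that no spurious singularities arise as the $\alpha_i(\epsilon)$ coalesce. Hypothesis~(1) is essential here: it ensures that the local sheet structure near $\alpha_0$ is algebraically governed by $A_\epsilon$ and $B_\epsilon$, so the only singularities of the integrand inside $\gamma$ are those dictated by $dx$ and the coalescence of $y$-values, both of which behave in a controlled fashion. Once this matching is established, the admissibility of $\mc{S}_0$ from hypothesis~(2) ensures that the resulting local Bouchard--Eynard recursion is well-defined, completing the induction.
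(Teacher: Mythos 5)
The paper does not actually prove this statement: it is imported verbatim as \cite[proposition~5.2]{CCGG22}, so there is no in-house proof to compare against. Judged on its own terms, your proposal takes the ``direct'' route -- coalescing the $r-1$ residues into one contour and trying to reassemble the simple-ramification kernels into the Bouchard--Eynard kernel by partial fractions -- and this is not how the cited reference argues. The proof in \cite{CCGG22} goes through the characterisation of (higher) topological recursion by abstract loop equations together with the projection property: each $\omega_{g,n}(\epsilon;-)$ satisfies the local loop equations and the projection property on $\mc{S}_\epsilon$, both of which are closed conditions that pass to the limit $\epsilon\to 0$ (hypothesis~(3) supplies existence of the limit, hypothesis~(1) controls the family so that the loop equations converge to those of $\mc{S}_0$), and admissibility from hypothesis~(2) guarantees that the solution of the loop equations with the projection property on $\mc{S}_0$ is unique and equals the Bouchard--Eynard correlators.

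The direct route you sketch has a genuine gap exactly where you flag ``the main obstacle.'' In the simple recursion \eqref{eqn:TR}, each residue at $\alpha_i(\epsilon)$ involves only the two sheets exchanged by the local involution $\sigma_i$, whereas the Bouchard--Eynard formula at the order-$(r-1)$ point contains terms indexed by subsets $J$ of the preimages with $|J|\ge 3$. Those higher terms cannot be produced by a termwise limit or by a partial-fraction identity applied to the kernels alone: they arise from the interaction between the residue at $\alpha_i(\epsilon)$ and the poles of the lower correlators $\omega_{g',n'}(\epsilon;-)$ sitting at the \emph{other} coalescing points $\alpha_j(\epsilon)$, an interaction that becomes singular precisely as the points merge. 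Your proposal asserts that induction plus symmetry of the limiting correlators makes the sum ``reorganise into the Bouchard--Eynard expression,'' but that reorganisation is the entire content of the theorem and is not justified; making it work by hand essentially forces one back to the loop-equation formulation. So the architecture (induction, contour around $\alpha_0$, exchange of limit and residue) is reasonable, but the central step is missing rather than merely difficult.
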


We will use the above proposition to prove the following theorem.

\begin{theorem}\label{thm:ThetaTR}
	The CohFT associated to the spectral curve $ \mathcal{S}_0 $ is the Theta class $ \Theta^{r}$. More precisely, the correlators computed by the Bouchard--Eynard topological recursion are 
	\begin{equation}\label{eqn:wgncohft}
		\omega_{g,n}(z_1,\dots,z_n)
		= 
		\sum_{a_1,\dots,a_n = 1}^{r-1}
		\int_{\overline{\mathcal{M}}_{g,n}}
			\Theta^{r}_{g,n}({v}_{a_1} \otimes\cdots\otimes {v}_{a_n})
			\prod_{j=1}^n \sum_{k_j \ge 0}
				\psi_j^{k_j} d\xi^{k_j,a_j}(z_j)\,,
	\end{equation}
	where 
	\begin{equation}
		d \xi^{k,a}(z) = \frac{(rk+a)!^{(r)}}{z^{rk+a+1}} dz\,.
	\end{equation}
	Here $m!^{(r)} = \prod_{k=0}^{\floor{(m-1)/r}} (m - kr)$ denotes the $r$-th factorial.
\end{theorem}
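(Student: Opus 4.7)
The plan is to obtain the statement as the $\epsilon \to 0$ limit of Theorem~\ref{thm:SC:Wspin:v1}. On the topological-recursion side the limit is controlled by Proposition~\ref{prop:limit:tr}, and on the CohFT and differentials side it is essentially built into the constructions. By the very definition~\eqref{eqn:deformedthetadef}, $\Theta^{r,\epsilon}_{g,n}$ is a polynomial in $\epsilon$ whose constant term is $\Theta^{r}_{g,n}$; thus $\lim_{\epsilon \to 0}\Theta^{r,\epsilon}_{g,n} = \Theta^{r}_{g,n}$. For the differentials, the auxiliary function $\xi^a(z) = z^{r-a-1}/(z^{r-1}-\epsilon)$ specialises at $\epsilon = 0$ to $z^{-a}$, and the operator $(\epsilon - z^{r-1})^{-1}\tfrac{d}{dz}$ specialises to $-z^{-(r-1)}\tfrac{d}{dz}$. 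A short induction gives
\begin{equation*}
\Bigl( -z^{-(r-1)} \tfrac{d}{dz} \Bigr)^{k}(z^{-a})
=
\Bigl( \prod_{j=0}^{k-1}(jr+a) \Bigr) z^{-(kr+a)},
\end{equation*}
and combining with the identity $\prod_{j=0}^{k}(jr+a) = (kr+a)!^{(r)}$ for $1 \le a \le r-1$ one recovers, up to an overall sign, the expression claimed for $d\xi^{k,a}$ in the statement. In particular each $d\xi^{k,a}$ has a well-defined meromorphic limit on $\P^1$, with a single pole at $z = 0$.

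Next, to legitimise passing to the limit on the topological recursion side I check the three hypotheses of Proposition~\ref{prop:limit:tr} for the family $\mathcal{S}_\epsilon$: (i) eliminating $z = -1/y$ in~\eqref{eqn:SC:Wspin:v1} yields the algebraic equation $r x y^{r} + (-1)^{r+1} - r\epsilon y^{r-1} = 0$, which is polynomial and linear in $x$; (ii) for $\epsilon \neq 0$, $dx = (z^{r-1}-\epsilon)\,dz$ has $r-1$ distinct simple zeros at $z = \theta^{k}\epsilon^{1/(r-1)}$ with distinct branch points $x_k = -(r-1)\theta^{k}\epsilon^{r/(r-1)}$, while at $\epsilon = 0$ it degenerates to a single zero of order $r-1$ at the origin, recovering the $r$-Bessel curve, whose admissibility in the sense of~\cite{BBCCN22} is standard; (iii) the formula of Theorem~\ref{thm:SC:Wspin:v1} expresses each $\omega_{g,n}(\epsilon;\,\cdot)$ as a finite $\C[\epsilon]$-linear combination of the $d\xi^{k,a}$, and by the previous paragraph every summand admits a limit, so $\lim_{\epsilon \to 0}\omega_{g,n}(\epsilon;\,\cdot)$ exists. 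Proposition~\ref{prop:limit:tr} then identifies this limit with the Bouchard--Eynard correlators on $\mathcal{S}_0$.

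Combining the two limits inside the identity of Theorem~\ref{thm:SC:Wspin:v1} yields exactly~\eqref{eqn:wgncohft}, concluding the proof. The principal point of care is hypothesis~(iii) of Proposition~\ref{prop:limit:tr}: a priori, the coalescence of the ramification points $\theta^{k}\epsilon^{1/(r-1)}$ as $\epsilon \to 0$ could cause the correlators to blow up, but this is ruled out because Theorem~\ref{thm:SC:Wspin:v1} provides a manifestly termwise-convergent expression with each term having a good limit. No genuinely new difficulty is expected beyond results already developed in earlier sections of the paper; the main work is the careful bookkeeping of the limiting $d\xi^{k,a}$ and the verification of admissibility for the $r$-Bessel curve.
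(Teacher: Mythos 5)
Your proposal is correct and follows essentially the same route as the paper: take the $\epsilon \to 0$ limit of Theorem~\ref{thm:SC:Wspin:v1}, compute the limits of the differentials $d\xi^{k,a}$, and justify exchanging the limit with the recursion by verifying the three hypotheses of Proposition~\ref{prop:limit:tr} (linearity in $x$, simple ramification degenerating to a single admissible ramification point with distinct branch points, and existence of the limit of the correlators). Your explicit induction for the limiting $d\xi^{k,a}$ — including the overall sign you flag, which the paper's own statement silently drops — is the only place where you are more detailed than the published argument.
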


\begin{proof}
	Throughout this proof, we add the argument $ \epsilon $ to the correlators $ \omega_{g,n} $ and the basis of differentials $ d\xi^{k,a}(z) $ for clarity. The proof of the theorem requires two steps. 
	
	First, we need to check that the limit of the correlators constructed by TR on the curve $ \mathcal{S}_{\epsilon} $ exists, and coincides with the right-hand side of \cref{eqn:wgncohft}. This follows immediately upon taking the limit $ \epsilon \to 0 $ to the correlators computed in \cref{thm:SC:Wspin:v1}, i.e.~\cref{eqn:SC:Wspin:v2}. The limit exists by definition of the deformed Theta class, and we only need to observe that 
	\[
		\lim_{\epsilon \to 0} d \xi^{k,a}(\epsilon; z) = \frac{(rk+a)!^{(r)}}{z^{rk+a+1}} dz\,.
	\]
	The second step, which is more difficult, is to check that the limit of the correlators computed by TR on $ \mathcal{S}_\epsilon $ coincides with the correlators computed by TR on the limit curve $ \mathcal{S}_0 $. For this, we use \cref{prop:limit:tr}, which says that the above statement is true under certain assumptions:
	\begin{enumerate}
		\item The first assumption states that the equation defining $ \mathcal{S}_{\epsilon} $ is linear in $ x $ and polynomial in $y$ and~$\epsilon$. This is clearly the case for us, as we have $ P_{\epsilon}(x,y) = rxy^r - \epsilon r y^{r-1} + (-1)^{r+1} $.
		
		\item The second assumption states that the curve $ \mathcal{S}_{\epsilon} $ only has simple ramification points, and that $ \mc{S}_0 $ has a single ramification point which is admissible. This is true, as it corresponds to the case $ s = r-1 $ in the notation of \cite{BBCCN24} and thus satisfies the condition $ r = \pm 1 \pmod s $. Moreover, the branch points are given by $x_k = -(r - 1)\theta^k \epsilon^{\frac{r}{r-1}}$, which are distinct for $k = 1,\dots, r-1$.
		
		\item The third assumption states that the limit of the correlators exists, and we have just proved it in the first part of this proof.
	\end{enumerate}
	This completes the proof. 
\end{proof}

\section{\texorpdfstring{$W$}{W}-constraints and integrability}
\label{sec:W:cnstrs:rKdV}

The descendant potential of the Theta class can be expressed as the unique solution to a certain set of $\mc{W}$-constraints. This identification uses the equivalence of the Bouchard--Eynard topological recursion with the higher Airy structures obtained in \cite{BBCCN24}. We conjecture that this set of $ \mathcal{W} $-constraints equivalently characterises a certain $ r $-KdV tau function called the $ r $-Brézin--Gross--Witten ($ r $-BGW) tau function, first studied in \cite{MMS96}. This conjecture implies that the descendant potential of $ \Theta^r $ is an $ r $-KdV tau function. Finally we prove this conjecture for $ r = 2$ and $ r =3 $, and discuss the case of  $ r > 3 $ in detail, which we reduce to a single equation known as the string equation.

\subsection{\texorpdfstring{$W$}{W}-constraints}
\label{sec:Wconst}

We are interested in a very specific $ \mathcal{W} $-algebra -- the algebra $ \mathcal{W}^k(\mathfrak{gl}_r) $ at the so-called self-dual level $ k = -r + 1 $, and from this point onwards we work only at this level. An equivalent characterisation of the self-dual level is that the central change is $ c = r $. A well-known presentation of this $ \mathcal{W} $-algebra is constructed using the quantum Miura transformation \cite{FL88}: it is strongly freely generated by $ r $ fields $U^i(z)$
\begin{equation}\label{eqn:We}
	U^i(z) \coloneqq r^{i-1}  \norder {e_i \left(\chi^0(z), \ldots, \chi^{r-1}(z)\right) }\,,
	\qquad
	i = 1, \dots, r\,,
\end{equation}
where the $ \chi^j(z) $\footnote{
	The elements $( \chi^j )_{j = 0}^{r-1} $ form a basis of the Cartan subalgebra $ \mathfrak h $ of $\mathfrak{gl}_r$ with the bilinear form $ \langle \chi^i , \chi^j \rangle = \delta_{i,j} $.
} for $j = 0,\ldots,r-1 $ generate a Heisenberg (or free boson) algebra $ \mc{S}_0(\mathfrak gl_r) $ of rank $ r $. The $ e_i $ denote the elementary symmetric polynomials and the $ \norder {} $ denotes the normal ordering of the fields. A detailed study of the operator product expansions (OPEs) in this basis was carried out in \cite{Pro14}, where it is called the quadratic basis.

In \cite{BBCCN24}, these $ \mathcal{W} $-algebras were analysed thoroughly in the context of higher Airy structures, and a ``twisting" construction was used to construct examples of Airy structures. We summarize the result of the construction of \emph{loc.~cit.} and refer the reader to \cite[sections 3-4]{BBCCN24} for a complete discussion of the modules of interest. From the twisting construction, one obtains certain explicit representations of $ \mathcal{W}^k(\mathfrak{gl}_r) $ as differential operators. Consider the following differential operators
\begin{equation}\label{eqn:J}
	\widehat{J}_\ell =  
	\begin{cases}
		\hbar \frac{\partial}{\partial t_\ell}\,, & \ell  > 0\,, \\ - \ell t_{-\ell}\,,  & \ell \leq 0\,.
	\end{cases}
\end{equation}
We will refer to the $ t_i $ above as \textit{times}, in anticipation of the relation to the times of the $ r $-KdV hierarchy in the next section. Now, let us denote the fields $ U^i(z) $ in the representation referred to above as $ W^i(z) $. We assume that the fields $ W^i(z) $ have the mode expansion\footnote{
	The definition of the modes of a field here differs slightly from the definition of the modes in \cite{BBCCN24}, where they were defined as $ W^{i}(z) = \sum_{k \in \Z} W^i_k z^{-k-1} $. Here we adopt the physics convention of shifting by the conformal weight, which introduces some shifts in the conditions on the sums appearing in the explicit expression of the modes in the sequel.}  
\begin{equation}
	W^{i}(z) \coloneqq  \sum_{k \in \Z} W^i_k z^{-k-i}\,,
	\qquad
	i = 1, \dots, r\,,
\end{equation}
and in particular these $ W^i_k $ are differential operators in the times. Later, in \cref{sec:W1+inf}, we will give a natural interpretation of this representation from the point of view of the algebra $ \mathfrak{W}_{1+\infty} $.

\begin{proposition}[{\cite[corollary~4.7]{BBCCN24}}]
	The modes $ W^i_k $ have the explicit form 
	\begin{equation}\label{eqn:W:modes}
		W^i_k
		=
		\frac{1}{r} \sum_{j=0}^{\lfloor i/2 \rfloor} \hbar^{j} \frac{i!}{2^j j! (i-2j)!}
		\sum_{\substack{ p_{2j+1},\ldots, p_i \in \Z \\ \sum_l p_l = rk }} \Psi^{(j)}(p_{2j+1},p_{2j+2},\ldots, p_i) \,
		\norder {\prod_{l = 2j+1}^i \widehat{J}_{p_l}} \, .
	\end{equation}
	When $ j = i/2 $, the condition $ \sum_l p_l = r k $ on the sum is interpreted as the condition $ \delta_{k,0} $. Moreover, the function $ \Psi^{(j)} (a_{2j+1},\ldots, a_i) $ is defined for every $ i \geq 1 $, $ 0 \leq j \leq \lfloor i/2 \rfloor $  and $ a_{2j+1},\ldots, a_i \in \Z $ as the following sum over $r$-th roots of unity:
	\begin{equation}\label{eqn:higher:Bessel}
		\Psi^{(j)} (a_{2j+1},\ldots, a_i)
		\coloneqq
		\frac{1}{i!} \sum_{\substack{m_1, \ldots , m_{i}=0 \\ m_{l} \neq m_{l'}}}^{r-1}
		\left(
			\prod_{l'=1}^{j} \frac{\beta^{m_{2l' - 1}+m_{2l'}}}{(\beta^{m_{2l'}} - \beta^{m_{2l' - 1}})^2}
			\prod_{l=2j+1}^{i} \beta^{-m_{l} a_{l}}
		\right) \, ,
	\end{equation}
	where $ \beta $ is a primitive $ r $-th root of unity.
\end{proposition}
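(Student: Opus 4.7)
The plan is to derive the formula directly from the Miura definition $U^i(z) = r^{i-1}\,\norder{e_i(\chi^0(z),\dots,\chi^{r-1}(z))}$ by substituting the modes of the $\chi^m$ as they act in the twisted representation, and then extracting the coefficient of $z^{-k-i}$. Expanding the elementary symmetric polynomial,
\[
U^i(z) = \frac{r^{i-1}}{i!} \sum_{\substack{m_1,\dots,m_i=0 \\ m_l \ne m_{l'}}}^{r-1} \norder{\chi^{m_1}(z)\cdots\chi^{m_i}(z)},
\]
and in the twisted module the modes $\chi^m_p$ are identified, via the discrete Fourier transform in the index $m$, with $\beta^{-mp}\widehat{J}_p$ (up to normalisation), where $\beta$ is a primitive $r$-th root of unity and the diagonal Heisenberg generated by $\sum_m \chi^m$ is identified with the $\widehat{J}_p$ of \labelcref{eqn:J}. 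Thus each $\chi^m(z)$ is an explicit generating series in the $\widehat{J}_p$ and the roots of unity $\beta^m$, and multiplying these series gives an $i$-fold sum over momenta constrained by $\sum_l p_l = rk$ from the coefficient extraction.

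The second step is to pass from $\chi$-normal ordering to $\widehat{J}$-normal ordering, which is where the $\hbar$-corrections and the kernel part of $\Psi^{(j)}$ are generated. By Wick's theorem, $\norder{\chi^{m_1}(z)\cdots\chi^{m_i}(z)}$ expands as a sum over partial perfect matchings of the $i$ factors: the $j$-pair contribution is weighted by $\hbar^j$ and by the combinatorial factor $\binom{i}{2j}(2j-1)!! = \frac{i!}{2^j j!\,(i-2j)!}$. Each contracted pair produces the free-boson two-point function in the twisted module, whose resummation over the geometric series in $z_1/z_2$ yields precisely the off-diagonal kernel $\frac{\beta^{m+m'}}{(\beta^{m'}-\beta^{m})^2}$, while the $i-2j$ uncontracted indices contribute $\prod_l \beta^{-m_l p_l}$ from the mode expansion of the remaining currents. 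Summing over $m_1,\dots,m_i$ with the distinctness constraint then reproduces exactly the function $\Psi^{(j)}(p_{2j+1},\dots,p_i)$ of \labelcref{eqn:higher:Bessel}, the constraint $\sum_{l=2j+1}^i p_l = rk$ survives (contracted pairs carry zero total momentum, so when $i = 2j$ it degenerates to $\delta_{k,0}$), and the overall prefactor $1/r$ emerges from the normalisation of the diagonal Heisenberg projection.

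The main obstacle is the combinatorial bookkeeping: the $1/i!$ from the symmetric polynomial, the $1/(2^j j!)$ from Wick, the factor $r^{i-1}$ in the Miura transform, and the normalisation $1/r$ from the Fourier projection must be shown to combine into precisely $\frac{1}{r}\cdot\frac{i!}{2^j j!(i-2j)!}$. A more subtle point is that the distinctness constraint $m_l \ne m_{l'}$ inherited from the elementary symmetric polynomial must interact consistently with the Wick contractions: one must verify that in the twisted module the contraction $\langle \chi^m(z_1)\chi^{m'}(z_2)\rangle$ contributes a nontrivial off-diagonal kernel precisely when $m \ne m'$ (while the diagonal piece is absorbed into the $\widehat{J}$-normal ordering), so that the pairs appearing in $\Psi^{(j)}$ are honestly off-diagonal and the uncontracted indices still satisfy the distinctness constraint among themselves. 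Once this accounting and the Fourier/roots-of-unity resummations are handled, \labelcref{eqn:W:modes} follows.
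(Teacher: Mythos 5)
The paper does not prove this proposition itself — it is imported verbatim from \cite[corollary~4.7]{BBCCN22} — and your sketch follows the same route as the proof given there: expand the Miura generators $e_i(\chi^0,\dots,\chi^{r-1})$, pass to the $\Z/r\Z$-twisted module in which the modes $\chi^m_p$ become discrete Fourier transforms $\beta^{-mp}\widehat{J}_p$ of a single Heisenberg algebra, and re-normal-order via Wick contractions, whose coincident-point kernels $\beta^{m+m'}/(\beta^{m'}-\beta^{m})^2$ (the sum $\sum_{p\ge 1} p x^p = x/(1-x)^2$ at $x=\beta^{m'-m}$) and partial-matching count $i!/(2^j j!\,(i-2j)!)$ assemble into $\Psi^{(j)}$, with the momentum constraint degenerating to $\delta_{k,0}$ when $i=2j$. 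Your outline is correct and you have flagged the one genuinely delicate point — the off-diagonal twisted propagator at coincident points and its compatibility with the distinctness constraint $m_l\ne m_{l'}$ — though as written the proposal defers that computation and the normalisation bookkeeping rather than carrying them out.
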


For some examples of functions $\Psi^{(j)}(a_{2j+1},\ldots, a_i)$ and some of their properties, we refer the reader to \cite[appendix A]{BBCCN24}. 

We do not recall the notion of Airy structures here for brevity (see \cite{KS18, ABCO24, BBCCN24}). We merely note that in order to get an Airy structure, and thus the $ \mathcal{W} $-constraints associated to the $ r $-Bessel spectral curve that we studied in the last section, we need to perform a \textit{dilaton shift}. The dilaton shift that we are interested in here\footnote{ In the context of the higher Airy structures of \cite[section 4.1]{BBCCN24}, we are dealing with the case $ s = r-1 $ in this paper.} is the following conjugation of the differential operators $W^i_k$:
\begin{equation}\label{eqn:H:cnstrnts}
	H^i_k \coloneqq \exp\left( - \frac{\widehat{J}_{r-1}}{(r-1)\hbar}\right) W^i_k \exp\left(  \frac{\widehat{J}_{r-1}}{(r-1)\hbar}\right).
\end{equation}
Applying the Baker--Campbell--Hausdorff formula, this amounts to a shift $ \widehat{J}_{-r+1} \to \widehat{J}_{-r+1} - 1 $, and thus we define 
\begin{equation}
	\widetilde{J}_k \coloneqq \widehat{J}_k - \delta_{k,-r+1}\,.
\end{equation}
We write the differential operators $ H^i_k $ explicitly for $ i = 1,2,3 $. First of all, we have  for any $ r \geq 2 $
\begin{equation}
\begin{aligned}
	H^1_k
	& =
	\widetilde{J}_{rk}\, ,  \\  
	H^2_k
	& =
	\frac{1}{2} \sum_{\substack{p_1,p_2 \in \Z \\ p_1+p_2 = rk }}
		\left( r \delta_{r|p_1,p_2} - 1 \right)
		\norder{\widetilde{J}_{p_1} \widetilde{J}_{p_2}}
	-
	\hbar \frac{(r^2-1)}{24} \delta_{k,0} \,.
\end{aligned}
\end{equation}
In the above equation, we denoted with $\delta_{r|a_1,\dots,a_n}$ the function taking value one if $r$ divides all $a_i$, and zero otherwise. When $ r \geq 3 $, the mode $H^3_k$ is given by
\begin{equation}
	H^3_k
	=
	\frac{1}{6} \sum_{\substack{p_1,p_2,p_3 \in \Z \\ p_1+p_2+p_3 = r k }}
		\left(
			r^2 \delta_{r|p_1,p_2p_3}
			- r ( \delta_{r|p_1,p_2+p_3} + \cdots )
			+ 2
		\right)
		\norder{ \widetilde{J}_{p_1} \widetilde{J}_{p_2} \widetilde{J}_{p_3} }
	-
	\hbar \frac{(r-2)(r^2-1)}{24} \widetilde{J}_{rk}\,. 
\end{equation}
The dots indicate other terms necessary to enforce symmetry under permutation. 

Using the formalism of higher Airy structures, it was proved in \cite{BBCCN24} that one can recast the Bouchard--\/Eynard topological recursion as the unique solution to a set of $ \mathcal{W} $-algebra constraints. In our situation, the statement is the following.

\begin{theorem}[{\cite[theorem~5.27]{BBCCN24}}]\label{thm:HAS}
	The set of $ \mathcal{W} $-constraints
	\begin{equation}
		H^i_k Z = 0\,, \qquad \text{ for all } \,\, k \geq -i+2\,, \,\, i = 1,\dots,r\,,
	\end{equation}
	forms an Airy structure, and thus there exists a unique solution  $ Z(\hbar;\bm{t}) $ of the form 
	\begin{equation}\label{eqn:ASZ}
		Z(\hbar;\bm{t})
		=
		\exp\left(
			\sum_{\substack{ g \geq 0, n \geq 1 \\ 2g-2+n > 0}}
				\frac{\hbar^{g-1}}{n!}
				\sum_{a_1,\ldots, a_n > 0}
					F_{g,n}[a_1,\ldots, a_n] \,\,
					t_{a_1} \cdots t_{a_n} \right),
	\end{equation}
	with the scalars $F_{g,n}[a_1,\ldots, a_n]$ being symmetric in the entries $a_i$. In addition, these scalars coincide with the expansion coefficients of the Bouchard--Eynard topological recursion correlators computed from the $ r $-Bessel spectral curve $ \mc{S}_0 $:
	\begin{equation}\label{eqn:Fgn}
		\omega_{g,n} ( z_1, \ldots, z_n)
		=
		\sum_{a_1, \ldots, a_n > 0} F_{g,n}[a_1, \ldots, a_n] \prod_{i=1}^n \frac{dz_i}{z_i^{a_i+1}} \,.
	\end{equation}
\end{theorem}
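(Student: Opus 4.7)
The statement is a direct specialisation of \cite[theorem~5.27]{BBCCN22} to the case $(r,s) = (r,r-1)$ of their twisted higher Airy structures on $\mc{W}^{-r+1}(\mathfrak{gl}_r)$. The plan is to verify the two ingredients of that general result for the data at hand: the Airy structure property of the constraints $H^i_k$, and the identification of the resulting partition function with the Bouchard--Eynard correlators on the $r$-Bessel curve.

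\emph{Step 1: Airy structure.} I first verify that $\{H^i_k \mid 1 \le i \le r,\; k \ge -i+2\}$ is an Airy structure, i.e.\ (a) each $H^i_k$ has the form $\hbar\,\partial_{t_{m(i,k)}} + Q^i_k$, where $Q^i_k$ is at least quadratic in the times and derivatives (with $\hbar$-corrections), and the map $(i,k) \mapsto m(i,k)$ is a bijection onto a subset of $\N_{>0}$; and (b) the commutators $[H^i_k, H^{i'}_{k'}]$ lie in $\sum \mc{O}[[\bm t]] \cdot H^{i''}_{k''}$. Property (a) is a short computation from \cref{eqn:W:modes} using the dilaton shift \cref{eqn:H:cnstrnts}, which substitutes $\widetilde{J}_{-r+1} = \widehat{J}_{-r+1} - 1$ and thereby produces exactly one linear term per admissible pair $(i,k)$; property (b) follows from the closure of the $\mc{W}^{-r+1}(\mathfrak{gl}_r)$ operator product expansions in the quadratic basis \cite{Pro14} together with the observation that the range $k \ge -i+2$ is preserved under the bracket (this is the whole point of the specific dilaton shift chosen in \cref{eqn:H:cnstrnts}).

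\emph{Step 2: Existence and uniqueness of $Z$.} Given the Airy structure property, the general existence/uniqueness result for Airy structures \cite{KS18, ABCO17} yields a unique formal series of the form \cref{eqn:ASZ} annihilated by all the $H^i_k$.

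\emph{Step 3: Identification with the $r$-Bessel correlators.} The twisted module construction of \cite[section~4]{BBCCN22} identifies the representation \cref{eqn:J}--\cref{eqn:W:modes} with the one attached to an admissible spectral curve whose $x$-function is $z^r/r$ and whose ramification data depends on the twist parameter $s$. For $s = r-1$, this is precisely $\mc{S}_0$ of \cref{eqn:S0}, whose admissibility (equivalent to $r \equiv \pm 1 \pmod{s}$) was already used in the proof of \cref{thm:ThetaTR}. The main theorem of \cite{BBCCN22} then asserts that the Laurent expansion coefficients $F_{g,n}[a_1,\dots,a_n]$ of the Bouchard--Eynard correlators $\omega_{g,n}$ on $\mc{S}_0$ satisfy the same $\mc{W}$-constraints, and therefore coincide with the coefficients of the unique partition function produced in Step~2.

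\emph{Main obstacle.} The genuinely technical point is Step~1, especially establishing the bijection $(i,k) \mapsto m(i,k)$ in (a): one must check that the unique surviving linear term in each $H^i_k$ carries a distinct derivative index, and that the bracket of two admissible modes stays within the admissible range. Both reduce to careful bookkeeping of index shifts induced by the dilaton shift and of the OPE coefficients in the quadratic basis of \cite{Pro14}. Once this is in place, Steps~2 and~3 are formal consequences of results already in the literature.
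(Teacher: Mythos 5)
Your proposal is correct and matches the paper's treatment: the paper gives no independent proof of this statement, but simply invokes \cite[theorem~5.27]{BBCCN22} specialised to $s = r-1$, $\hat{\Phi} = 1$ and $F_{0,1}[-a] = -\delta_{a,r-1}$, which is exactly the content of your Step~3. Your Steps~1--2 are an accurate sketch of the internal argument of \cite{BBCCN22} (Airy structure verification via the dilaton shift and the closure of the quadratic-basis OPEs, then existence/uniqueness from the general theory), which the paper treats as a black box.
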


Notice that the scalars $ F_{g,n}[a_1, \ldots, a_n] $ vanish identically as soon as one of the $ a_i \equiv 0 \pmod r$ (as one can see immediately from the identification in \cref{thm:ThetaTR}). This is consistent with the constraint $ H^1_k Z = \frac{\de}{\de t_{kr}} Z =  0 $ for all $ k \geq 1 $, which implies that the $ 0 \pmod r $ times do not appear in $ Z $.

We remark that the above result is a special case of theorem~5.27 of \textit{loc.~cit.} where, in the notations there, we set $ s = r-1 $, $ \hat{\Phi} = 1 $ and $ F_{0,1}[-a] = -\delta_{a,r-1} $. Combining this result with \cref{thm:ThetaTR}, we get the main theorem of this section. First, let us define the descendant potential of the Theta class.

\begin{definition}
	The \textit{descendant potential} $Z^{\Theta^r}(\hbar;\bm{t})$ of the CohFT $ \Theta^r $ is defined as 
	\begin{equation}
		Z^{\Theta^r}
		=
		\exp\left(
			\sum_{\substack{ g \geq 0, n \geq 1 \\ 2g-2+n > 0 }} \frac{\hbar^{g-1}}{n!}
			\sum_{a_1,\ldots, a_n =1}^{r-1} \int_{\Mbar_{g,n}}
				\Theta^{r}_{g,n}( v_{a_1} \otimes \cdots \otimes v_{a_n} )
				\prod_{j=1}^n  \sum_{k_j \geq 0}
					\psi_j^{k_j} \,\, (rk_j+a_j)!^{(r)} t_{a_j + r k_j}
			\right).
	\end{equation}
\end{definition}

\begin{remark}\label{rem:descendant}
	If we extend the definition of the Theta class to take values not only between $ 1 \leq a_i \leq r-1 $, but instead extend it to all $ a_i \geq 1 $, we can express the descendant potential in a slightly cleaner form as 
	\begin{equation}
		Z^{\Theta^r}
		=
		\exp\left(
			\sum_{\substack{ g \geq 0, n \geq 1 \\ 2g-2+n > 0}} \frac{\hbar^{g-1}}{n!}
			\sum_{a_1,\ldots, a_n > 0} \int_{\Mbar_{g,n}}
				\Theta^{r}_{g,n}(v_{a_1} \otimes \cdots \otimes v_{a_n})
				\prod_{j=1}^n \braket{ a_j } t_{a_j}
		\right),
	\end{equation}
	where  $ \braket{a_j} $ is  the remainder of the Euclidean division of $a_j$ by $r$. This is easy to deduce using property~(\labelcref{prop:shift:ai}) in \cref{prop:Chiodo:properties}, and shows that the factors of $ (rk_j+a_j)!^{(r)} $ appear naturally. We would like to thank A.~Chiodo for pointing this out to us. 
\end{remark}

\begin{theorem}\label{thm:Wconst}
	The descendant potential $ Z^{\Theta^r} $ is the unique solution to the following set of $ \mathcal{W} $-algebra constraints 
	\begin{equation}
		H^i_k \, Z^{\Theta^r} = 0\,, \qquad \text{ for all } \,\, k \geq -i+2\,, \,\, i = 1, \dots, r\,,
	\end{equation}
	where we recall that the  differential operators $ H^i_k $ were defined in \cref{eqn:H:cnstrnts}.
\end{theorem}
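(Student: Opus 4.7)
The plan is to combine the two main inputs already available in the paper: the identification of the Bouchard--Eynard topological recursion correlators on the $r$-Bessel curve $\mathcal{S}_0$ with descendant integrals of $\Theta^r$ (\cref{thm:ThetaTR}), and the Airy structure statement \cref{thm:HAS} that characterises those same correlators as the unique solution to the $\mc{W}$-constraints $H^i_k Z = 0$.

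First, I would invoke \cref{thm:HAS}: there exists a unique formal series $Z(\hbar;\bm t)$ of the form \labelcref{eqn:ASZ} satisfying $H^i_k Z = 0$ for all $k \ge -i+2$ and $i=1,\dots,r$, and its coefficients $F_{g,n}[a_1,\dots,a_n]$ are the expansion coefficients at $z_i = \infty$ of the BE correlators $\omega_{g,n}$ of $\mathcal{S}_0$, as in \labelcref{eqn:Fgn}. Next, I would substitute the explicit expression for $\omega_{g,n}$ provided by \cref{thm:ThetaTR}: writing each positive integer $a$ uniquely as $a = rk + \langle a\rangle$ with $\langle a\rangle \in \{0,1,\dots,r-1\}$, and using $d\xi^{k,a}(z) = (rk+a)!^{(r)} z^{-(rk+a+1)}\,dz$ for $a\in\{1,\dots,r-1\}$, the coefficient $F_{g,n}[a_1,\dots,a_n]$ identifies with
\[
F_{g,n}[a_1,\dots,a_n]
= \int_{\Mbar_{g,n}} \Theta^r_{g,n}\bigl(v_{\langle a_1\rangle}\otimes\cdots\otimes v_{\langle a_n\rangle}\bigr) \prod_{j=1}^n (rk_j + \langle a_j\rangle)!^{(r)} \, \psi_j^{k_j},
\]
with the convention that any term where some $\langle a_j\rangle = 0$ contributes zero (since the argument leaves the domain of $\Theta^r$, and consistently with $H^1_k Z = 0$ which forces $Z$ to be independent of the times $t_{rk}$).

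Then I would plug this into \labelcref{eqn:ASZ} and reorganise the sum by first grouping the $a_j$ according to their residue $\langle a_j\rangle\in\{1,\dots,r-1\}$ and then summing over $k_j\ge 0$. This reorganisation immediately produces
\[
Z(\hbar;\bm{t})
= \exp\left(
\sum_{2g-2+n>0} \frac{\hbar^{g-1}}{n!}
\sum_{a_1,\dots,a_n=1}^{r-1}
\int_{\Mbar_{g,n}}\Theta^r_{g,n}(v_{a_1}\otimes\cdots\otimes v_{a_n})
\prod_{j=1}^n \sum_{k_j\ge 0}\psi_j^{k_j}\,(rk_j+a_j)!^{(r)}\,t_{a_j+rk_j}
\right),
\]
which is exactly $Z^{\Theta^r}$. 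By uniqueness in \cref{thm:HAS}, $Z^{\Theta^r}$ is therefore the unique solution to the $\mc{W}$-constraints.

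The only nontrivial step is the bookkeeping that ensures the normalisations match: in particular, verifying that the dilaton shift $\widehat{J}_{-r+1}\to\widehat{J}_{-r+1}-1$ built into the definition of $H^i_k$ corresponds exactly to the choice $F_{0,1}[-a] = -\delta_{a,r-1}$ used in invoking \cite[theorem~5.27]{BBCCN22} for $s=r-1$, $\hat{\Phi}=1$, so that the Airy structure produced is indeed the one computing the $r$-Bessel TR, and that the powers of $z_i$ appearing in $d\xi^{k,a}$ line up with the monomial basis $dz_i/z_i^{a_i+1}$ used in \labelcref{eqn:Fgn}. Once this normalisation check is done (and it is forced by the explicit formulas in \cref{thm:ThetaTR} and the discussion following \cref{rem:descendant}), the argument above closes.
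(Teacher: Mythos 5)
Your proposal is correct and follows essentially the same route as the paper's proof: combine the uniqueness statement of \cref{thm:HAS} with the identification of the $r$-Bessel correlators as descendant integrals of $\Theta^r$ from \cref{thm:ThetaTR}, then match coefficients (the paper packages the $\psi$-classes and $r$-factorials into the extended Theta class of \cref{rem:descendant}, whereas you keep them explicit, but these are equivalent). The extra normalisation bookkeeping you flag is exactly what the paper defers to the remark following \cref{thm:HAS} about the choices $s=r-1$, $\hat{\Phi}=1$, $F_{0,1}[-a]=-\delta_{a,r-1}$.
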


\begin{proof}
	The identification of the Bouchard--Eynard topological recursion on the $ r $-Bessel spectral curve in \cref{thm:HAS} shows that the unique solution to the $ \mathcal{W} $-constraints is  the $ F_{g,n} $ of \cref{eqn:Fgn}. In \cref{thm:ThetaTR}, we proved that the topological recursion correlators on the $ r$-Bessel spectral curve are the descendant integrals of $ \Theta^r $. Thus, we have the identification
	\[
		F_{g,n}[a_1,\ldots, a_n]
		=
		\int_{\Mbar_{g,n}} \Theta^{r}_{g,n}(v_{a_1} \otimes \cdots \otimes v_{a_n})
		\prod_{j=1}^n \braket{ a_j } ,
	\]
	where all the $ a_i > 0 $, and we are using the observation in \cref{rem:descendant}.
\end{proof}

Our goal is to interpret the above set of $ \mathcal{W} $-constraints as characterising a tau function of the $r$-KdV hierarchy. To this end, we will consider a specific $r$-KdV tau function in \cref{subsec:rBGW}, and study its Kac--Schwarz operators, which in turn will give us $ \mathcal{W} $-constraints that act by a constant on the associated tau function. Before discussing integrability, we show that it suffices to find a very small subset of the  $ \mathcal{W} $-constraints appearing in \cref{thm:Wconst}: the $r$-th reduction condition and the string equation.

\begin{proposition}\label{prop:tau}
	Let $Z(\hbar;\bm{t})$ be a function of the form
	\begin{equation}\label{eqn:genus:expansion}
		Z(\hbar;\bm{t})
		=
		\exp\left(
			\sum_{\substack{ g \geq 0, n \geq 1 \\ 2g-2+n > 0}}
				\frac{\hbar^{g-1}}{n!}
				\sum_{a_1,\ldots, a_n > 0}
					\Phi_{g,n}[a_1,\ldots, a_n] \,\,
					t_{a_1} \cdots t_{a_n} \right)
	\end{equation}\
	satisfying the following conditions.
	\begin{enumerate}
		\item[i)] {\sc Symmetry}. The scalars $\Phi_{g,n}[a_1,\ldots, a_n]$ are symmetric in the entries $a_i$.

		\item[ii)] {\sc $r$-th reduction.} There exist constants $\nu_k$ such that
		\begin{equation}
			H^1_k \, Z = \nu_k \, Z\,, \qquad \text{ for all } k \geq 1\,.
		\end{equation}

		\item[iii)] {\sc String equation.} There exists a constant $\mu$ such that
		\begin{equation}
			H^{r}_{-r+2} \, Z = \mu \, Z\,.
		\end{equation}
	\end{enumerate}
	Then $Z$ coincides with the descendant potential of the Theta CohFT: $Z = Z^{\Theta^r}$.
\end{proposition}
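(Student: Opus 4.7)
The strategy is to upgrade hypotheses (ii) and (iii) to the full $\mathcal{W}$-algebra system of \cref{thm:HAS}, namely $H^i_k Z = 0$ for $i=1,\ldots,r$ and $k \geq -i+2$, at which point the uniqueness of the Airy structure solution will immediately give $Z = Z^{\Theta^r}$. In particular, one must show that the scalars $\nu_k$ and $\mu$ appearing in (ii) and (iii) are forced to vanish, and that all other $H^i_k Z$ also vanish.

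The mechanism to bootstrap new constraints is commutation: since $\nu_k$ and $\mu$ are scalars, they automatically commute, so
\[
[H^1_k,\,H^r_{-r+2}]\,Z \;=\; H^1_k(\mu Z)-H^r_{-r+2}(\nu_k Z) \;=\; 0, \qquad k\ge 1.
\]
On the $\mathcal{W}$-algebra side, the commutator is computed directly from the mode expansion \cref{eqn:W:modes}. Using the fact that $W^1 = \sum_j \chi^j$ is the $U(1)$-current in the Miura basis -- whose brackets with the higher $W^i$ are especially simple (cf.~\cite{Pro14}) -- one finds that $[H^1_k,H^r_{-r+2}]$ is a nonzero scalar multiple of $H^{r-1}_{k-r+2}$ plus normally ordered products containing at least one $H^1_j$ with $j\ge 1$. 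By (ii), the correction terms act on $Z$ via multiplication by scalars, so varying $k\ge 1$ produces new equations of the form $H^{r-1}_\ell Z = c_\ell\, Z$ for every $\ell\ge -r+3$. Iterating -- bracketing the newly obtained modes with $H^r_{-r+2}$ and with the $H^1_j$ -- descends in spin $i$ and in level, and the strong generation of $\mathcal{W}^{-r+1}(\mathfrak{gl}_r)$ by $W^1,\ldots,W^r$ ensures that this procedure eventually reaches every pair $(i,k)$ in the Airy-structure range.

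It remains to check that every scalar $c$ thereby produced is zero. The restriction $2g-2+n>0$ in \cref{eqn:genus:expansion} forces $Z(\hbar;\bm{0}) = 1$, so evaluating $H^i_k Z = c\,Z$ at $\bm{t}=\bm{0}$ expresses $c$ as a finite polynomial in $\hbar$ and in the low-order coefficients $\Phi_{g',n'}$ with small $2g'-2+n'$ (the base cases being $(0,3)$ and $(1,1)$). An induction on $i$, starting from the hypotheses (ii) and (iii), matches these finitely many coefficients against those of $Z^{\Theta^r}$, which by \cref{thm:HAS} satisfies the corresponding equations with vanishing right-hand sides; this propagates $c=0$ through all newly generated constraints. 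One concludes $H^i_k Z = 0$ for every $(i,k)$ in the Airy-structure range, and hence $Z = Z^{\Theta^r}$ by the uniqueness part of \cref{thm:HAS}. The main obstacle of this program is the Lie-theoretic closure claim of the second paragraph: that commutators of $H^r_{-r+2}$ with the $H^1_k$ genuinely span, after the simplifications afforded by (ii), the full set of Airy-structure modes. This is a purely algebraic property of $\mathcal{W}^{-r+1}(\mathfrak{gl}_r)$ in its quadratic Miura realization and relies on nontrivial combinatorics of the higher-Bessel sums \cref{eqn:higher:Bessel}.
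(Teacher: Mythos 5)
Your overall strategy---upgrading (ii) and (iii) to the full system $H^i_k Z = 0$ by commutators and then invoking the uniqueness of \cref{thm:HAS}---is the paper's strategy, and your first descent step is essentially right, though simpler than you make it: in the quadratic basis the relevant bracket is exactly $[H^1_m, H^\ell_n] = \tfrac{1}{r}(r-\ell+1)\,H^{\ell-1}_{m+n}$, with \emph{no} normally ordered correction terms. Iterating this over $\ell = r, r-1, \dots, 2$ immediately yields $H^i_k Z = 0$ for all $i \le r-1$ and $k \ge -i+2$, and the $\ell = 2$ step forces $\nu_k = 0$ for free, since $[H^1_m,H^2_n]Z = 0$ while also equalling $\tfrac{r-1}{r}\nu_{m+n}Z$.

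There are, however, two genuine gaps. First, the closure claim you yourself flag as ``the main obstacle'' is precisely the missing ingredient: bracketing with $H^1_k$ only \emph{lowers} the spin index, so no amount of iteration produces the constraints $H^r_k Z = 0$ for $k > -r+2$, and bracketing the lower-spin modes back against $H^r_{-r+2}$ lands outside the set of generators. The paper closes the system with a second, explicitly computed family of brackets,
\begin{equation*}
	\bigl[ H^2_k, H^r_{-r+2} \bigr] \, Z = \bigl( k(1-r)-r+2 \bigr) H^{r}_{-r+2+k} \, Z \,,
\end{equation*}
valid once the spin-$\le r-1$ constraints are known (the correction terms in the full commutator then annihilate $Z$); the non-vanishing of the prefactor in the relevant range gives $H^r_{-r+2+k}Z = 0$ for $k \ge 0$ and in particular forces $\mu = 0$. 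Without identifying this step the argument does not close. Second, your mechanism for killing the remaining scalars---evaluating at $\bm{t}=\bm{0}$ and ``matching the low-order coefficients against those of $Z^{\Theta^r}$''---is circular: hypotheses (i)--(iii) do not pin down $\Phi_{0,3}$ or $\Phi_{1,1}$ of $Z$ in advance, so you cannot compare them with $Z^{\Theta^r}$ before the identification you are trying to prove. The constants must be eliminated, as above, by the commutator algebra itself.
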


\begin{proof}
	We will prove that if a function satisfying the conditions of the lemma exists, it must satisfy the full set of $ \mathcal{W} $-constraints appearing in \cref{thm:HAS}: $H^i_k \, Z = 0$ for all $k \geq -i+2\,, \,\, i = 1,\dots,r$. Once we establish this statement, the uniqueness part of \cref{thm:Wconst} implies the equivalence with the descendant potential $ Z^{\Theta^r} $.
	
	Now, we assume that a function $ Z $ satisfying the conditions of the proposition exists.  We can determine the constants $ \nu_k $ using the commutation relations of the $ \mathcal{W}^{-r+1}(\mathfrak{gl}_r) $ algebra. The following commutation relations can be derived using the OPEs found in \cite{Pro14} or by direct computation. For any $ \ell \geq 1 $, we have
	\[
		\bigl[ H^1_m, H^\ell_n \bigr] = \frac{1}{r} (r-\ell+1) H^{\ell-1}_{m+n}\,.
	\]
	Applying the above operator to $ Z $ and choosing $ \ell = r $, $ m = k \ge 1$ and $ n = -r+2$, we get 
	\[
		0 = \bigl[ H^1_k, H^r_{-r+2} \bigr] \, Z
		=
		\frac{1}{r} H^{r-1}_{-r+2+k} \, Z\,, \qquad \text{ for } -r+2+k \geq -r+3\,.
	\]
	Thus, we see that $ H^{r-1}_{k} $ acts by $ 0 $ on $ Z $ for $ k \geq -r+3 $. Now, by applying the same procedure successively (in decreasing order) to $ \ell = r-1,r-2,\ldots,2  $, we see that 
	\begin{equation}\label{eqn:almostvan}
		H^i_k Z = 0\,, 
		\qquad \text{ for all } \,\, k \geq -i+2\,, \,\, i = 1,\dots,r-1 \,.
	\end{equation}
	Notice that the calculation for $ \ell = 2 $ forces the constants $ \nu_k = 0 $ for $ k \geq 1 $. 
	
	Finally, the only operators whose action we need to study are the $ H^r_k $ for $  k \geq -r+ 2 $. For this purpose, we use the following commutation relations
	\[
	\begin{split}
		\bigl[ H^2_m, H^\ell_n \bigr]
		=
		(m+n-\ell m) H^\ell_{m+n} &
		+ \binom{m +1}{3} \frac{(r-\ell+2)(r-\ell+1)}{2 r^2 } H^{\ell-2}_{m+n}  \\
		&
		+ \frac{(r-\ell+1)}{r} \left( \sum_{i+j = m+n} (m-i) \norder{ H^1_i H^{\ell-1}_j} \right).
	\end{split}
	\]
	Applying the above operator to $ Z $, choosing $ \ell = r $, $ m = k \ge 0$ and $ n = -r+2$, together with the constraints \labelcref{eqn:almostvan}, we find 
	\[
		0 = \bigl[ H^2_k, H^r_{-r+2} \bigr] \, Z
		=
		\bigl( k(1-r)-r+2 \bigr) H^{r}_{-r+2+k} \, Z\,,
		\qquad \text{ for } -r+2+k \geq -r+2\,.
	\]
	This proves the statement.
\end{proof}

Thus, our goal is now to find a candidate KP tau function, study its associated symmetries using the Kac--Schwarz formalism, and obtain the $r$-th reduction constraints $ H^1_k $ for $ k \geq 1 $ (i.e.~$r$-KdV) as well as the string equation $ H^r_{-r+2} $. This would prove that the descendant potential $ Z^{\Theta^r} $ coincides with the candidate $r$-KdV tau function.

\subsection{KP and its symmetries} 

The Kadomtsev--Petviashvili (KP) hierarchy is an infinite set of evolutionary differential equations in infinitely many variables. In this section we briefly review the basic facts about the KP hierarchy from the Sato Grassmannian point-of-view \cite{JM83}, and introduce the concept of Kac--Schwarz operators to describe symmetries of the tau functions. We adopt the notations of \cite{Ale15}, and we refer the reader there for further details.

\subsubsection{Fock space, free fermions and tau functions}

Let $V = z.\C[z] \oplus \C\bbraket{z^{-1}}$ be the infinite-dimensional vector space of formal Laurent series in $z^{-1}$, which comes with the natural decomposition $V = V_+ \oplus V_-$. 

\begin{definition}
	Define the (big cell of the) \textit{ Sato Grassmannian} $\mathrm{Gr}$ as the set of all linear subspaces $H \subset V$ such that the projection $p_+ \colon H \to V_+$ is a linear isomorphism. 
\end{definition}

The Sato Grassmannian is Plücker embedded $\mathrm{Gr} \to \P \mathfrak{F}_0$ into the charge zero sector of the \textit{fermionic Fock space}, denoted  $\mathfrak{F}_0$. The fermionic Fock space  $\mathfrak{F}_0$ is defined as the span of all one-sided infinite wedge products $z^{k_1} \wedge z^{k_2} \wedge z^{k_3} \wedge \cdots$ such that  $k_i = i$ for $i$ sufficiently large. We call $\ket{0} = z^{0} \wedge z^{1} \wedge z^{2} \wedge \cdots$ the vacuum, and its dual vector $\bra{0}$ in $\mathfrak{F}^{\ast}$ the covacuum. 

Under the Pl\"ucker embedding, a point in the Sato Grassmannian corresponds to a semi-infinite wedge representative as
\begin{equation}
	\Phi_{1} \wedge \Phi_{2} \wedge \Phi_{3} \wedge \cdots,
	\qquad
	\Phi_{i} \in V = z.\C[z] \oplus \C\bbraket{z^{-1}}\,,
\end{equation}
where $\Phi_{i}(z) = z^{i-1} + O(z^{i-2})$.

Let us introduce the free fermionic operators $\psi_k, \, \psi_k^{\dag}$, $k \in \Z$, satisfying the usual anti-commutation relations and generating an infinite dimensional Clifford algebra:
\begin{equation}
	\bigl\{ \psi_k, \psi_l^{\dag} \bigr\} = \delta_{k,l}\,,
	\qquad\quad
	\bigl\{ \psi_k, \psi_l \bigr\} = \bigl\{ \psi_k^{\dag}, \psi_l^{\dag} \bigr\} = 0\,.
\end{equation}
We can collect them in generating series, called fermionic fields:
\begin{equation}
	\psi(z) = \sum_{k \in \Z} \psi_k \, z^{k}\,,
	\qquad\quad
	\psi^{\dag}(z) = \sum_{k \in \Z} \psi_k^{\dag} \, z^{-k} \,.
\end{equation}
The Lie algebra associated to the symmetry group of the Sato Grassmannian is a central extension of the bi-infinite general linear algebra, denoted $\widehat{\mathfrak{gl}}(\infty)$. This algebra can be realised by considering normally ordered bilinear combinations of fermions, sometimes referred to as bosons:
\begin{equation}
	\widehat{\mathfrak{gl}}(\infty) = \Set{
		\sum_{k,l \in \Z} a_{k,l} \norder{ \psi_{k} \psi_{l}^{\dag} } 
		|
		\text{$a_{k,l} \ne 0$ for only finitely many values of $k - l$}
	}.
\end{equation}
 Examples of elements in $\widehat{\mathfrak{gl}}(\infty)$ are the bosonic currents, which span a Heisenberg subalgebra:
\begin{equation}
	J_n = \hbar^{1/2} \sum_{k \in \Z} \norder{ \psi_k \psi_{k + n}^{\dag} } \, ,
	\qquad \qquad
	\bigl[ J_m, J_n \bigr] = \hbar m  \, \delta_{m+n,0} \, .
\end{equation} Here we add a formal parameter $ \hbar^{1/2} $ which can be viewed as an additional grading parameter\footnote{
	There are a couple of different conventions for the use of $ \hbar $ as a grading parameter. Here we stick to the use of $ \hbar $ as  in the formalism of Airy structures \cite{BBCCN24,KS18,ABCO24}.
}, and work over the field $ \C  (\!(\hbar^{1/2})\!) $.

A bosonic description of the KP hierarchy is in terms of tau functions. The bosonic counterpart of $\mathfrak{F}_0$ is the space $\C\bbraket{\bm{t}}$ of functions depending on an infinite set of variables $\bm{t} = (t_1, t_2, \dots )$ called the times.

\begin{definition}\label{def:bfcor}
	Define the \textit{ boson-fermion correspondence} as the linear isomorphism
	\begin{equation}
		\mathfrak{F}_0 \longrightarrow \C\bbraket{\bm{t}}\,,
		\qquad\quad
		\ket{\omega} \longmapsto \braket{0| e^{J_+(\bm{t})} | \omega },
	\end{equation}
	where $J_+(\bm{t}) = \sum_{n \ge 1} \hbar^{-1}  t_n J_n$.
\end{definition} 
A \textit{tau function} of the KP hierarchy is defined as the image of an element of the Sato Grassmannian under the boson-fermion correspondence. Tau functions are well-defined up to multiplication constants. 

\subsubsection{Symmetries and the \texorpdfstring{$\mathfrak{W}_{1+\infty}$ algebra}{W algebra}}\label{sec:W1+inf}

The boson-fermion correspondence allows us to translate the infinitesimal symmetries of the Sato Grassmannian described in terms of bosonic operators of $\widehat{\mathfrak{gl}}(\infty)$ into differential operators that act as infinitesimal symmetries of the KP hierarchy on $\C\bbraket{\bm{t}}$.

Indeed, from the commutation relations of the bosonic currents we have 
\begin{equation}
	\bra{0} e^{J_+(\bm{t})} J_n =
	\begin{cases}
		 \hbar \frac{\de}{\de t_n} \bra{0} e^{J_+(\bm{t})}\,, & \text{if } n > 0\,, \\
		- n t_{-n} \bra{0} e^{J_+(\bm{t})}\,,	& \text{if } n \le 0\,. \\
	\end{cases}
\end{equation}
As a consequence, for any operator $W$ which is a combination of bosonic currents, there exists an operator $\widehat{W}$ acting on $\C\bbraket{t}$ such that
\begin{equation}
	\widehat{W} \bra{0} e^{J_+(\bm{t})}
	=
	\bra{0} e^{J_+(\bm{t})} W\, .
\end{equation}
For the bosonic currents $ J_n $, we have the  explicit operators $ \widehat{J}_n $ as  defined earlier in \cref{eqn:J}.

An important subalgebra of $\widehat{\mathfrak{gl}}(\infty)$, denoted $\mathfrak{W}_{1+\infty}$, is the algebra generated by the elements
\begin{equation}
	W^{(m+1)}_n
	= 
	- \hbar^{(m+1)/2} \Res_{z} \left(
		z^{-1} \norder{ \psi(z) z^{m+n} \de_z^{m} \psi^\dag(z) }
	\right) dz\,,
	\qquad
	m, n \in \Z\,, \, m \ge 0 \,.
\end{equation}
More generally, we can associate to any element of  the algebra of diffeomorphisms of the circle
\begin{equation}
	\mathfrak{w}_{1+\infty}
	=
	\Set{
		z^{\alpha} \left(  {   \hbar^{1/2} } \frac{\de}{\de z} \right)^{\beta} | \alpha, \beta \in \Z, \, \beta \ge 0
	},
\end{equation}
a bosonic operator in $\mathfrak{W}_{1+\infty}$ by the following assignment: for $ a \in \mathfrak{w}_{1+\infty} $,
\begin{equation}\label{eqn:wtoW}
	a \longmapsto W_a =  \hbar^{1/2} \Res_{z=0} \left( z^{-1} \colon \psi(z) a \psi^{\dag}(z) \colon \right) dz\,.
\end{equation}
With this notation, $W^{(m+1)}_k = W_{-  \hbar^{m/2} z^{m+n} \de_{z}^m}$. In particular, one can check that $\mathfrak{W}_{1+\infty}$ is a central extension of $\mathfrak{w}_{1+\infty}$,  which carries a Virasoro field of central charge $ 1 $ defined by 
\begin{equation}
	L_n \coloneqq W_{-z^n(z\hbar^{1/2}\partial_z + \frac{n+1}{2})} \,.
\end{equation}

\begin{definition}
	Let $H = \mathrm{span}\set{ \Phi_1, \Phi_2, \dots }$ be a point in the Sato Grassmannian, and denote by $\tau$ the corresponding tau function. An operator $a \in \mathfrak{w}_{1+\infty}$ is called a \textit{Kac--Schwarz operator} for $\tau$ if the corresponding point of the Sato Grassmannian is stabilised by $a$: $a H \subset H$.
\end{definition}

The fundamental property of Kac--Schwarz operators is that they act as constants on the associated tau functions \cite{ASvM95,Ale15}.

\begin{proposition}\label{prop:KS:op}
	If $a \in \mathfrak{w}_{1+\infty}$ is a Kac--Schwarz operator for $\tau$, then
	\begin{equation}
		\widehat{W}_a \tau = C \tau\,,
	\end{equation}
	for some constant $C$.
\end{proposition}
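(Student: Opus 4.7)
My plan is to lift the problem to the fermionic Fock space via the boson--fermion correspondence, where the statement reduces to a direct consequence of the Kac--Schwarz stabilisation property. Write $H = \mathrm{span}\{\Phi_1, \Phi_2, \dots\}$ with $\Phi_i(z) = z^{i-1} + O(z^{i-2})$, and let $\ket{\omega} = \Phi_1 \wedge \Phi_2 \wedge \cdots \in \mathfrak{F}_0$ be the associated semi-infinite wedge, so that $\tau(\bm{t}) = \braket{0 | e^{J_+(\bm{t})} | \omega}$ by \cref{def:bfcor}. The defining property of the hat map, $\widehat{W}_a \bra{0} e^{J_+(\bm{t})} = \bra{0} e^{J_+(\bm{t})} W_a$, reduces the claim to showing that $W_a \ket{\omega} = C \ket{\omega}$ for some constant $C \in \C$; pairing both sides with $\bra{0} e^{J_+(\bm{t})}$ then immediately yields $\widehat{W}_a \tau = C \tau$.

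I would then unfold how $W_a$ acts on the wedge. By \labelcref{eqn:wtoW}, $W_a$ is the normal-ordered second quantisation of the one-body operator $a$ on $V$, and, ignoring normal ordering for the moment, it acts by the Leibniz-type formula
$W_a \ket{\omega} = \sum_{i \ge 1} \Phi_1 \wedge \cdots \wedge a\Phi_i \wedge \cdots$. The Kac--Schwarz hypothesis $aH \subset H$ lets me expand $a\Phi_i = \sum_j M_{ji}\,\Phi_j$ for some matrix $M \in \End(H)$. Substituting into each summand and using multilinearity and antisymmetry of the wedge product, every off-diagonal term vanishes because $\Phi_j$ with $j \neq i$ already appears elsewhere in the product, leaving only $M_{ii}\,\ket{\omega}$. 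Formally, $W_a \ket{\omega} = \bigl( \sum_{i \ge 1} M_{ii} \bigr) \ket{\omega}$, with a prefactor manifestly independent of $\bm{t}$.

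The main obstacle is that this trace is generically divergent, so the normal ordering must be handled with care. Since $a \in \mathfrak{w}_{1+\infty}$ is a differential operator of finite order with polynomial-in-$z^{\pm 1}$ coefficients, it shifts degrees by a bounded amount; hence for $i$ sufficiently large, the diagonal entry $M_{ii}$ relative to $(\Phi_i)$ agrees with the entry $M_{ii}^{\mathrm{vac}}$ relative to the vacuum basis $(z^{i-1})$, since $\Phi_i - z^{i-1} \in V_+$ is a finite linear combination of $z^{j-1}$ with $j < i$, all of which are already consumed by later wedge factors in the vacuum. The normal ordering in \labelcref{eqn:wtoW} subtracts exactly the vacuum expectation, so the regularised difference
\begin{equation*}
	C = \sum_{i \ge 1} \bigl( M_{ii} - M_{ii}^{\mathrm{vac}} \bigr)
\end{equation*}
is a \emph{finite} sum and supplies the required constant. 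To package this cleanly I would decompose $a = a_{++} + a_{+-} + a_{-+} + a_{--}$ along $V = V_+ \oplus V_-$ and verify that the off-diagonal blocks produce only vectors orthogonal to $\ket{\omega}$ after normal ordering, while the diagonal blocks reassemble into the regularised trace above. As a conceptual by-product, the non-additivity of this regularised trace under commutators is precisely the cocycle producing the central extension of $\mathfrak{w}_{1+\infty}$ to $\mathfrak{W}_{1+\infty}$, which is consistent with the ground expected behaviour of the operators $\widehat{W}_a$.
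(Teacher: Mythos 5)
The paper itself offers no proof of this proposition --- it is quoted from \cite{ASvM95,Ale15} --- and your overall strategy is precisely the standard argument from those references: pass to the fermionic side, use the intertwining relation $\widehat{W}_a\bra{0}e^{J_+(\bm{t})}=\bra{0}e^{J_+(\bm{t})}W_a$ to reduce the claim to $W_a\ket{\omega}=C\ket{\omega}$, and then observe that the Kac--Schwarz condition $aH\subset H$ together with antisymmetry of the wedge kills every off-diagonal contribution of the matrix $M$ of $a$ in the basis $(\Phi_i)$. All of that is correct, as is your closing remark that the failure of the regularised trace to be additive on commutators is exactly the cocycle of the central extension.

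The one step that does not hold as written is your justification that the regularised trace is finite. You assert that $\Phi_i-z^{i-1}\in V_+$ is a \emph{finite} linear combination of $z^{j-1}$ with $j<i$. For the Grassmannian points relevant in this paper that is false: the $\Phi_i$ of the $r$-BGW point are full asymptotic series, so $\Phi_i-z^{i-1}$ carries an infinite tail of negative powers of $z$ and lies (after a unipotent change of basis, which leaves $\ket{\omega}$ unchanged) in $V_-$, not in $V_+$. Moreover, the phrase ``already consumed by later wedge factors in the vacuum'' compares $\ket{\omega}$ with $\ket{0}$, which is not the comparison you need; what must be compared are the diagonal matrix entries of $a$ in the two bases. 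The repair is short: normalise $\Phi_i=z^{i-1}+v_i$ with $v_i\in V_-$, which is possible on the big cell. Since $a\in\mathfrak{w}_{1+\infty}$ shifts $z$-degree by at most some fixed $d$, the series $av_i$ has degree at most $d$, so its $z^{i-1}$-coefficient vanishes once $i>d+1$; and since each $\Phi_j$ meets the non-negative degrees only through its leading monomial $z^{j-1}$, the coefficient $M_{ii}$ is read off from the $z^{i-1}$-coefficient of $a\Phi_i$, whence $M_{ii}=M_{ii}^{\mathrm{vac}}$ for all $i>d+1$. With this substituted for your finiteness claim, the constant $C=\sum_{i\ge1}\bigl(M_{ii}-M_{ii}^{\mathrm{vac}}\bigr)$ is genuinely a finite sum and the proof closes.
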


\begin{example}[{$r$-KdV hierarchy}]\label{rem:rKdV}
	Consider $a = z^r$; the corresponding operator $\widehat{W}_a$ acting on functions of times is simply $\widehat{J}_r =  \hbar \frac{\de}{\de t_r}$. The above proposition states that, if a point $H$ in the Sato Grassmannian satisfies $z^r H \subset H$, then the associated tau function satisfies $ \hbar \frac{\de}{\de t_r} \tau = C \tau$, or equivalently $\hbar \frac{\de}{\de t_r} \log(\tau) = C$. Since $H$ is invariant under all powers of $a$, we obtain that the $\tau$ function is of the form
	\begin{equation}
		\tau = e^{\hbar^{-1} \sum_{k \ge 1} a_k t_{kr}} \, \tau' \,,
	\end{equation}
	for some constants $a_k$ and with $\tau'$ independent of $t_{kr}$. It is easy to check that $\tau'$ is still a KP tau function. A tau function satisfying the above property is called a tau function of the \emph{$r$-KdV hierarchy} (or $r$-th Gel'fand--Dickey hierarchy).
\end{example}

\subsection{\texorpdfstring{$r$}{r}-KdV for the Theta CohFT}

In this section, we first consider a specific $ r $-KdV tau function, called the $ r $-Brézin--Gross--Witten tau function, and study the Kac--Schwarz operators associated to it. We then study the associated $ \mathcal{W} $-constraints and compare them with the $ \mathcal{W} $-constraints characterising the descendant potential of the Theta class. For $ r = 2 $ and $3 $, we obtain a complete match and thereby conclude that the descendant potential $ Z^{\Theta^r} $ is an $ r $-KdV tau function.

\subsubsection{The $ r $-BGW tau function}
\label{subsec:rBGW}

The Brézin--Gross--Witten (BGW) matrix model was introduced in the '80s in the context of lattice gauge theory \cite{BG80, GW80}. Originally defined as an integral over the space of unitary matrices, it can also be described as an integral\footnote{We remark that the matrix models considered here are formal. That is, the matrix integrals are computed by Taylor expanding the exponential of cubic and higher terms, then subsequently exchanging summation and integration.} over the space of Hermitian matrices. In this paper, we are interested in the following generalisation of the BGW matrix model first studied by Mironov--Morozov--Semenoff \cite{MMS96}, depending on a integer parameter $r \ge 2$:
\begin{equation}
	Z^{r\textup{-BGW}}(\Lambda)
	=
	\frac{1}{C_N} \int_{\mc{H}_N}
		e^{ -\hbar^{-1/2} \,
		\Tr\left( \frac{M^{1-r}}{r(1-r) } + \Lambda M + \hbar^{1/2} N \log(M)\right)}
		[dM] \, .
\end{equation}
We call the above model the \textit{$ r $-BGW matrix model}. Here $C_N$ is an irrelevant normalisation factor, $\Lambda$ is a Hermitian matrix called the external field, and $[dM]$ is the standard Lebesgue measure on the space $\mc{H}_N$ of $N \times N$ Hermitian matrices given by
\begin{equation}
	[ dM ]
	=
	2^{- \frac{N}{2}} \left( \frac{N}{\pi} \right)^{\frac{N^2}{2}}
	\prod_{i=1}^N dM_{i,i}
	\prod_{i < j} d\Re(M_{i,j}) \, d\Im(M_{i,j})\, .
\end{equation}
Notice that the $r$-BGW model is very similar to the matrix integral solution to two-dimensional $W_r$-gravity \cite{AvM92} corresponding to the $r$-Witten--Kontsevich tau function for the $r$-KdV hierarchy. Indeed, the $r$-BGW matrix model can be obtained by replacing $ r $ in the matrix model considered in \cite{AvM92} by $ -r $, together with the addition of the $\log$ factor. 

The $r$-BGW model is an instance of matrix models with external field, which are known to define KP tau functions for $N \to +\infty$ in the times given by the Miwa parametrisation
\begin{equation}
	t_k \coloneqq \frac{1}{k} \Tr \Lambda^{-k}\, .
\end{equation}
See for instance \cite{Ale15} and references therein. The tau function can be described as a point in the Sato Grassmannian $ H = \mathrm{span}\set{ \Phi_1, \Phi_2, \dots } $, where the formal power series $\Phi_i$ are the asymptotic expansions of some integrals defined from the matrix model. In the case of the $ r $-BGW matrix model, we obtain
\begin{equation}
	\Phi_i(z)
	\coloneqq
	e^{-S(z)} \, \Psi_i \left(\tfrac{z^r}{r} \right)\,,
	\qquad \text{ for } i \ge 1 \,,
\end{equation}
where $S(z) = - \hbar^{-1/2} \frac{z^{r-1}}{r-1} - \frac{1}{2} \log(z^{r-1})$, and the $\Psi_i$ are defined via (the asymptotic expansion of)the integral representation
\begin{equation}
	\Psi_i(x)
	\coloneqq
	\hbar^{i/2-1/4} \frac{1}{\sqrt{2\pi}}
	\int_{\Gamma} e^{ \hbar^{-1/2} \left(\frac{w^{1-r}}{r(1-r)} - xw\right) } \frac{dw}{w^i} \,.
\end{equation}
Here $\Gamma$ is the Hankel contour.

In \cref{lem:r:Bessel}, we prove that the $\Phi_i$ are formal power series that have the right asymptotic behaviour as $z \to \infty$: $\Phi_i(z) \sim z^{i-1} + O(z^{i-2})$. Thus, for the purposes of this article, we will take the tau function determined by $H$ as the definition of $Z^{r\textup{-BGW}}$.

\begin{definition}
	Define the \emph{$r$-BGW tau function} $Z^{r\textup{-BGW}}(\hbar;\bm{t})$ as the KP tau function associated to the point $ H = \mathrm{span}\set{ \Phi_1, \Phi_2, \dots } $ in the Sato Grassmannian.
\end{definition}

Before studying the Kac--Schwarz operators associated to this KP solution, let us characterise the functions $\Psi_i$  as  solutions  to certain differential equations.

\begin{definition}
	We define the \textit{$ r $-Bessel quantum curve} as the $ r $-th order differential equation given by
	\begin{equation}\label{eqn:qc}
		\left( (-1)^r r \left(  \hbar^{1/2}  \frac{d}{dx} \right)^{r-1} x   \hbar^{1/2} \frac{d}{dx} - 1 \right) \Psi(x) = 0\,.
	\end{equation}
\end{definition}

\begin{lemma}\label{lem:r:Bessel}
	For any $r \ge 2$, the $r$-Bessel quantum curve is solved by the function
	\begin{equation}
		\Psi(x)
		=
		\hbar^{-1/4} \frac{1}{\sqrt{2\pi}} \int_{\Gamma} e^{ \hbar^{-1/2} \left(\frac{w^{1-r}}{r(1-r)} - xw\right) } \frac{dw}{w} \, .
	\end{equation}
	Moreover, the function $\Psi(x)$ and its (anti-)derivatives $\Psi_i = (-\hbar^{-1/2})^{i-1} \Psi^{(1-i)}$, $i \in \Z$, admit the following asymptotic expansions as $x \to + \infty$:
	\begin{equation}
		\Psi_i(x)
		=
		\hbar^{-1/4} 
		\frac{1}{\sqrt{2\pi}} \int_{\Gamma} e^{   \hbar^{-1/2} \left(\frac{w^{1-r}}{r(1-r)} - xw\right) } \frac{dw}{w^i}
		\sim e^{S(z)} \Bigl( z^{i-1} + O\bigl( z^{i-r} \bigr) \Bigr)\, ,
	\end{equation}
	where $x(z) = \frac{z^r}{r}$ and $S(z) = - \hbar^{-1/2} \frac{z^{r-1}}{r-1} - \frac{1}{2} \log(z^{r-1})$. The solution admitting the above asymptotic expansion is unique.
\end{lemma}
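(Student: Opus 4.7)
The statement has three ingredients: that $\Psi$ solves the quantum curve, that $\Psi_i$ admits the claimed integral representation and stated asymptotics, and uniqueness. I would address them in that order.

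For the first, I would differentiate under the integral. Writing $\Phi(w)=\hbar^{-1/2}\bigl(\tfrac{w^{1-r}}{r(1-r)}-xw\bigr)$, one has $\hbar^{1/2}\partial_x e^{\Phi}=-w\,e^{\Phi}$ and $\partial_{w}\Phi=\hbar^{-1/2}(w^{-r}/r-x)$, which yields the identity $x\,e^{\Phi}=\tfrac{w^{-r}}{r}e^{\Phi}-\hbar^{1/2}\partial_{w}e^{\Phi}$. Integration by parts on the Hankel contour $\Gamma$ kills the total derivative since $e^{\Phi}$ vanishes at the endpoints, and then applying $(\hbar^{1/2}\partial_x)^{r-1}$ brings down a factor of $(-w)^{r-1}$, producing $\tfrac{(-1)^r}{r}\Psi$. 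Multiplying by $(-1)^r r$ and subtracting $\Psi$ gives $\mathcal{D}\Psi=0$. The integral representation of $\Psi_i$ follows from the ladder relation $\hbar^{1/2}\partial_x\Psi_i=-\Psi_{i-1}$, which is immediate from differentiating under the integral, iterated $i-1$ times starting from $\Psi_1=\Psi$.

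For the asymptotic expansion, the key step is the change of variables $w=u/z$ with $x=z^r/r$. The phase rescales to $\lambda\,\phi(u)$ with $\lambda=\hbar^{-1/2}z^{r-1}/r$ and $\phi(u)=u^{1-r}/(1-r)-u$, and the integral becomes
\[
\Psi_i(x)=\hbar^{-1/4}(2\pi)^{-1/2}\,z^{i-1}\int_{\Gamma'}e^{\lambda\phi(u)}\,\frac{du}{u^i}.
\]
The phase $\phi$ has critical points at the $r$-th roots of unity; only $u_0=1$ is reached by the deformed Hankel contour along its steepest-descent path. Since $\phi(1)=-r/(r-1)$ and $\phi''(1)=-r$, the leading stationary-phase estimate produces $e^{-\hbar^{-1/2}z^{r-1}/(r-1)}\sqrt{2\pi/(\hbar^{-1/2}z^{r-1})}$, which combines with the prefactors $\hbar^{-1/4}(2\pi)^{-1/2}z^{i-1}$ to yield exactly $e^{S(z)}z^{i-1}$. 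Because steepest-descent corrections organise in integer powers of $1/\lambda\propto z^{-(r-1)}$, the first subleading term appears at order $z^{i-1}\cdot z^{-(r-1)}=z^{i-r}$, matching the stated error.

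Uniqueness is by standard WKB theory. The quantum curve is a linear ODE of order $r$, so its solution space is $r$-dimensional, with one WKB basis element for each critical point of $\phi$; these give distinct exponential factors $\exp(\lambda\phi(\beta^k))$ for $\beta=e^{2\pi\iu/r}$, which are linearly independent symbols in the Stokes sector as $z\to+\infty$. Only the critical point $u_0=1$ yields $e^{S(z)}$, and the coefficient of $z^{i-1}$ is fixed to $1$ by the prescribed leading asymptotic, so the solution is unique. The main technical obstacle I anticipate is the rigorous contour deformation in the asymptotic step: one must verify that $\Gamma$ can be deformed through $u_0=1$ without enclosing other critical points or changing its winding around the essential singularity at $u=0$, and that the remainder of the saddle-point expansion is uniform in $z$. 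Both points are standard once the Stokes diagram of $\phi$ is drawn, but they are the only nontrivial ingredients in an otherwise formal computation.
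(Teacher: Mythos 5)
Your proposal is correct and follows essentially the same route as the paper: the ODE is verified by recognising the integrand as a total $w$-derivative (the paper writes the whole operator applied to $\Psi$ as $-\hbar^{1/4} r (2\pi)^{-1/2}\int_\Gamma \frac{d}{dw}\bigl(w^{r-1}e^{\Phi}\bigr)dw = 0$ in one step, which is your integration-by-parts argument repackaged), the asymptotics come from steepest descent after the rescaling $w = u/z$ (the paper defers the details to the analogous computation in \cite{CCGG22}, whereas you carry out the saddle at $u_0=1$ explicitly and correctly), and uniqueness is handled at the level of asymptotic expansions — the paper argues that the ODE recursively fixes the coefficients of the expansion, while your WKB/Stokes-sector phrasing is an equivalent way of saying the same thing.
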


\begin{proof}
	Observe that
	\[
	 \left( (-1)^r r \left(
		\hbar^{1/2} \frac{d}{dx} \right)^{r-1} x \hbar^{1/2} \frac{d}{dx} - 1
		\right) \Psi(x) =
		-  \hbar^{1/4} r \sqrt{\frac{1}{2\pi}} \int_{\Gamma}
		\frac{d}{dw} \left(
		w^{r-1} 
		e^{ \hbar^{-1/2} \left(\frac{w^{1-r}}{r(1-r)} -  xw \right) }
		\right) dw
		= 0.
	\]
	Hence, $\Psi$ solves the ODE. The asymptotic behaviour of $\Psi_i$ can be computed using the steepest descent method (see for instance analogous computations in \cite{CCGG24}). Uniqueness of the solution follows from the fact that the ODE uniquely determines the coefficients of the asymptotic expansion.
\end{proof}

Notice that for $r = 2$, the above equation reduces to the modified Bessel equation of order zero after the identification $x = \frac{z^2}{2}$: 
\begin{equation}
	\left( \hbar z^2 \frac{d^2}{dz^2} + \hbar z \frac{d}{dz} - z^2 \right) \Psi = 0\,.
\end{equation}
In particular, the functions $\Psi_i(x)|_{x=z^2/2}$ coincide with the modified Bessel functions of the second kind, up to a normalisation factor:
\begin{equation}
	\Psi_{i}\left(\tfrac{z^2}{2}\right) = \hbar^{-1/4} \sqrt{\frac{2}{\pi}} \, z^{i-1} \, K_{1-i}\left(\hbar^{-1/2}z\right).
\end{equation}
In line with the above remark, the ODE from \cref{lem:r:Bessel} for general $r$ can be thought of as a higher Bessel equation, and the functions $\Psi_i$ as higher Bessel functions.

\subsubsection{Kac--Schwarz operators for the $ r $-BGW tau function }

We would like to understand the Kac--Schwarz operators that uniquely characterise the tau function. We start using the analysis done in \cite{MMS96}, but the operators found there do not uniquely specify the tau function. For $ r =2 $ and $ r = 3 $, we find Kac--Schwarz operators that uniquely specify the tau function and produce the $ \mathcal{W} $-constraints we are looking for, but we are unable to do so for  $ r \geq 4 $. For $ r = 2 $, these operators were first derived in \cite{Ale18}. 

Recalling the identification $ x = x(z) = \frac{z^r}{r} $, consider the following operators in $ \mathfrak{w}_{1+\infty} $,
\begin{equation}
\begin{aligned}
	A \coloneqq x\, ,
	\qquad 
	B \coloneqq x \hbar^{1/2} \frac{d}{dx}\,,
	\qquad
	C \coloneqq \hbar \frac{d}{dx} x \frac{d}{dx}\,,
\end{aligned}
\end{equation}
and the following conjugated forms of them 
\begin{equation}
\begin{aligned}
	a &\coloneqq
	e^{-S} A e^{S}
	=
	\frac{z^r}{r}\, , \\
	b &\coloneqq
	e^{-S} B e^{S}
	=
	\frac{1}{r} \left( z \hbar^{1/2} \frac{d}{dz} - z^{r-1} - \hbar^{1/2} \frac{r-1}{2} \right)\,, \\
	c &\coloneqq
	e^{-S} C e^{S}
	=
	\frac{1}{r} \left( \frac{1}{z^{r-2}} \hbar \frac{d^2}{dz^2} + \hbar \frac{2-r}{z^{r-1}} \frac{d}{dz} - 2 \hbar^{1/2} \frac{d}{dz} + z^{r-2} + \hbar \frac{(r-1)^2}{4}\frac{1}{z^r} \right)\,.
\end{aligned}
\end{equation} 

\begin{theorem}\label{thm:KS:operators}
	The following holds.
	\begin{itemize}
		\item {\sc $r$-th reduction.} The operator $a$ is Kac--Schwarz for $Z^{r\textup{-BGW}}$. In particular, $Z^{r\textup{-BGW}}$ is a tau function of the $r$-KdV hierarchy.

		\item {\sc Constraints.} The operators $b$ and $c$ are Kac--Schwarz for $Z^{r\textup{-BGW}}$.

		\item {\sc Uniqueness.} When $ r = 2 $ or $3$ the operators $a$, $b$ and $c$ determine the tau function uniquely.
	\end{itemize}
\end{theorem}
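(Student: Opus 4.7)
All three claims reduce to verifying membership in the Sato point $H = \mathrm{span}\{\Phi_1,\Phi_2,\dots\}$ via the integral representation of the $\Psi_i$, followed by translation through the boson--fermion correspondence. The plan is to first prove stabilisation of $H$ under $a,b,c$ by simple integration by parts in $w$, and then read off which modes of $\mathcal{W}^{-r+1}(\mathfrak{gl}_r)$ the resulting constraints correspond to, so as to apply Proposition~\ref{prop:tau}.

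\textbf{Steps 1 and 2 (stabilisation).} For the $r$-th reduction, observe that since $a = z^r/r$ acts by pure multiplication and $\Phi_i(z) = e^{-S(z)}\Psi_i(z^r/r)$, it suffices to show $x\Psi_i(x)$ lies in the linear span of the $\Psi_j$. Using $x\,e^{-\hbar^{-1/2}xw} = -\hbar^{1/2}\partial_w e^{-\hbar^{-1/2}xw}$ and integrating by parts in the contour integral from \cref{lem:r:Bessel} (the boundary terms vanish on the Hankel contour $\Gamma$), the $\partial_w$ lands on the factor $e^{\hbar^{-1/2}w^{1-r}/(r(1-r))}/w^i$, producing exactly two terms that are recognised as $\Psi_{i+r}$ and $\Psi_{i+1}$:
\begin{equation*}
x\,\Psi_i(x) = -\tfrac{1}{r}\hbar^{-r/2}\,\Psi_{i+r}(x) - i\,\Psi_{i+1}(x).
\end{equation*}
Hence $a\Phi_i = -\tfrac{1}{r}\hbar^{-r/2}\Phi_{i+r} - i\Phi_{i+1}\in H$, and iterating gives $a^kH\subset H$ for every $k\geq 1$. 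By \cref{prop:KS:op} each $\widehat{W}_{a^k}\propto \hbar\partial_{t_{rk}}$ acts by a constant, which is exactly the $r$-KdV reduction in the sense of \cref{rem:rKdV}. For $b$ and $c$ the argument is the same: using $\hbar^{1/2}\partial_x\Psi_i = -\Psi_{i-1}$ gives $B\Psi_i = -x\Psi_{i-1}$, which is again in the span by the previous identity, and $C\Psi_i = \hbar\partial_x\bigl(x\partial_x\Psi_i\bigr)$ reduces likewise to a linear combination of $\Psi_{i-1}$ and $\Psi_{i+r-2}$. The conjugation $b = e^{-S}Be^{S}$, $c = e^{-S}Ce^{S}$ then transports these relations to the $\Phi$-side, establishing that $b,c\in\mathfrak{w}_{1+\infty}$ are Kac--Schwarz for $Z^{r\text{-BGW}}$.

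\textbf{Step 3 (uniqueness for $r=2,3$).} The aim is to identify the operators $\widehat{W}_a$, $\widehat{W}_b$, $\widehat{W}_c$ (acting on the times via the boson--fermion correspondence of \cref{def:bfcor}) with specific modes of the algebra $\mathcal{W}^{-r+1}(\mathfrak{gl}_r)$ appearing in Proposition~\ref{prop:tau}. The powers $a^k$ contribute $\widehat{W}_{a^k}\propto\hbar\partial_{t_{rk}}$, i.e.\ the $r$-reduction conditions $H^1_{k}\,Z=\nu_k Z$ for all $k\geq 1$ (Condition~(ii) of \cref{prop:tau}). A direct expansion using \cref{eqn:wtoW} then shows that for $r=2$ the operator $\widehat{W}_c$ (after the dilaton shift and incorporating the constant coming from the conjugation $e^{-S}(\cdot)e^{S}$) is precisely the string-type mode $H^2_{0}$, while for $r=3$ the operator $\widehat{W}_c$ gives the cubic mode $H^3_{-1}$. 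In both cases one can then invoke \cref{prop:tau} to conclude $Z^{r\text{-BGW}}=Z^{\Theta^r}$, which pins down $Z^{r\text{-BGW}}$ uniquely.

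\textbf{Main obstacle.} The delicate part is Step~3: one must carefully track normal-ordering and central-extension contributions when rewriting the fermionic bilinears $W_c$ (involving the second-order derivative $\hbar\partial_x^2$ in the $x$-variable, which after the change of variable $x = z^r/r$ becomes the subleading $z^{2-r}\hbar\partial_z^2$ term in $c$) as a combination of $W^{(i)}_n$ generators, and then further rewrite those on the $\mathcal{W}^{-r+1}(\mathfrak{gl}_r)$ side via the quantum Miura transformation \labelcref{eqn:We} after the dilaton shift \labelcref{eqn:H:cnstrnts}. For $r=3$ the match with $H^3_{-1}$ requires showing that exactly the right cubic and linear-in-$\widetilde{J}$ corrections appear, together with the correct $\hbar$-dependent constant. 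For $r\geq 4$ the same strategy breaks down: the operator $c$ is of order at most two in $\hbar^{1/2}\partial_z$, whereas $H^r_{-r+2}$ requires an $r$-th order differential operator, which is why the theorem only asserts uniqueness when $r\in\{2,3\}$ and additional Kac--Schwarz operators would be needed for higher $r$.
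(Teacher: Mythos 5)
Your Steps 1 and 2 follow the paper's proof: up to the conjugation by $e^{-S}$ one works with $A = x$, $B = x\hbar^{1/2}\tfrac{d}{dx}$, $C = \hbar\tfrac{d}{dx}x\tfrac{d}{dx}$ acting on the $\Psi_i$, and integration by parts of the total derivative $\tfrac{d}{dw}\bigl(e^{\hbar^{-1/2}(\frac{w^{1-r}}{r(1-r)}-xw)}w^{-i}\bigr)$ along $\Gamma$ gives $A\Psi_i$ as a linear combination of $\Psi_{i+r}$ and $\Psi_{i+1}$, with $B\Psi_i = -A\Psi_{i-1}$ and $C\Psi_i$ a combination of $\Psi_{i+r-2}$ and $\Psi_{i-1}$. (Your explicit coefficients differ from the paper's in sign and in powers of $\hbar$, but this is a normalisation issue and does not affect membership in $H$.)

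Step 3 has a genuine gap. The uniqueness assertion in the theorem is that the operators $a$, $b$, $c$ determine the \emph{point of the Sato Grassmannian} $H$ — equivalently the tau function up to scale — and the paper proves this directly and elementarily: for $r=2$ the combination $(2C-1)$ annihilates $\Psi_1$ (this is the $r$-Bessel quantum curve), which by \cref{lem:r:Bessel} fixes $\Psi_1$ uniquely, and then the relation $B\Psi_i = -\tfrac{1}{r}\Psi_{i+r-1}+(i-1)\hbar\Psi_i$ generates all higher basis vectors; for $r=3$ one uses $(-3BC-A)\Psi_1=0$ and then iterates $C$. Your proposal replaces this with an identification of $\widehat{W}_a,\widehat{W}_b,\widehat{W}_c$ with $\mathcal{W}^{-r+1}(\mathfrak{gl}_r)$-modes followed by an appeal to \cref{prop:tau}. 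Three problems: (i) the identification is wrong as stated — by \cref{eqn:W:as:KS} the mode $H^2_0$ is built from $\widehat{W}_{b+\frac12}$ plus a quadratic term in the $\widehat{W}_{a^k}$, \emph{not} from $\widehat{W}_c$; it is $H^3_{-1}$ that involves $\widehat{W}_c$ (plus corrections), so for $r=2$ your chain of identifications does not produce the string equation from the operator you name; (ii) \cref{prop:tau} requires $Z^{r\textup{-BGW}}$ to admit a genus expansion of the form \labelcref{eqn:genus:expansion}, which is a nontrivial input (the paper establishes it in \cref{prop:stringequation} using the asymptotics of the $\Phi_i$ from \cite{AD22}) and is nowhere addressed in your argument; (iii) even granting both, you would be proving the content of \cref{thm:r23} rather than the uniqueness clause of \cref{thm:KS:operators}, whereas the paper deliberately separates the two: \cref{thm:KS:operators} is a statement internal to the Grassmannian, and the translation to $\mathcal{W}$-constraints and the comparison with $Z^{\Theta^r}$ come afterwards. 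Your closing remark about why $r\ge 4$ fails is morally right (no polynomial in $a,b,c$ reproduces the $r$-th order quantum curve operator), but it should be phrased in terms of annihilating $\Psi_1$ rather than in terms of matching $H^r_{-r+2}$.
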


\begin{proof}
	Let us prove first that $H$ is stabilised by the operators $a$, $b$ and $c$. Up to conjugation by $e^{-S}$ and the  identification  $x = x(z)$, we can work with the operators $A = x$, $B = x  \hbar^{1/2}  \frac{d}{dx}$ and $C =  \hbar \frac{d}{dx}  x  \frac{d}{dx}$, and the functions $\Psi_i$. This said, we find
	\[
		A \Psi_i
		=
			 \frac{\Psi_{i+r}}{r} - i \hbar \Psi_{i+1}
		\in H\,,
	\]
	using the relation
	\[
		\frac{d}{dw} \left( e^{   \hbar^{-1/2} \left(\frac{w^{1-r}}{r(1-r)} -    xw\right) } \frac{1}{w^i} \right)
		=
		e^{   \hbar^{-1/2} \left(\frac{w^{1-r}}{r(1-r)} -    xw\right) } \left( \frac{{   \hbar^{-1/2}} }{r w^{i+r}} - \frac{i}{w^{i+1}} -  \hbar^{-1/2}  \frac{x}{w^i} \right)
	\]
	and then integrating by parts. This proves that $a$ is Kac--Schwarz for $\tau$. For $b$, we obtain 
	\begin{equation}\label{eqn:Baction}
		B \Psi_i
		= - A  \Psi_{i-1}
		= -  \frac{\Psi_{i+r-1}}{r} + (i-1) \hbar \Psi_{i}
		\in H\,,
	\end{equation}
	from \hbox{$\hbar^{1/2} \frac{d}{dx} \Psi_i = - \Psi_{i-1}$}.
	The operator $ C $ acts as 
	\begin{equation}\label{eqn:Caction}
		C \Psi_i =  \frac{\Psi_{i+r-2}}{r} - (i-1) \hbar \Psi_{i-1}\,,
	\end{equation}
	and thus $ c $  is also Kac--Schwarz. The $r$-KdV statement follows from \cref{rem:rKdV}.
	
	Lastly, let us prove uniqueness for $ r = 2 $ and $3 $. For $ r=2 $, we have $( 2 C - 1 ) \Psi_1 = 0$, and thus the operator $ C $ determines $ \Psi_1 $ uniquely. The other $ \Psi_i $ for $ i \geq 2 $ are determined by applying $ B $ to $ \Psi_i $ thanks to \cref{eqn:Baction}. Thus all the $ \Phi_i $ for $ i \geq 1 $ are uniquely determined. As for $ r = 3 $, we instead use $( -3 B C - A ) \Psi_1 = 0$, which determines $ \Psi_1 $ uniquely. Now applying $ C $ repeatedly to $  \Psi_1 $ determines all the other $ \Psi_i $ for $ i 
	\geq 2 $ due to \cref{eqn:Caction}, and hence all the $ \Phi_i $ for $ i \geq 1 $ are uniquely determined. Note that in both cases the equation determining $\Psi_1$ uniquely is the quantum curve.
\end{proof}

\subsubsection{From \texorpdfstring{$ \mathfrak{W}_{1+\infty} $}{W-infinity} to \texorpdfstring{$ \mathcal{W}^{-r+1}(\mathfrak{gl}_r) $}{W-algebra}}

The Kac--Schwarz operators obtained in \cref{thm:KS:operators} translate immediately to  $ \mathcal{W} $-constraints on the $ r $-BGW tau function, and we would like to compare them with the $ \mathcal{W} $-constraints characterising the descendant potential $ Z^{\Theta^r} $ of \cref{thm:Wconst}. The former $ \mathcal{W} $-constraints are obtained from a representation of the $ \mathfrak{W}_{1+\infty} $ algebra, while the latter are obtained from a representation of the $ \mathcal{W}^{-r+1}(\mathfrak{gl}_r) $ algebra. Thus, we need to understand the relation between the two.

The algebra $ \mathfrak{W}_{1+\infty} $, which is the central extension of the Lie algebra of diffeomorphisms of the circle, admits the structure of a vertex algebra with Virasoro field of central charge $ c=1 $ \cite{FKRW95}.  Moreover, \textit{loc.~cit.} proves the following isomorphisms as vertex algebras 
\begin{equation}
	\mathfrak{W}_{1+\infty}  \cong \mathcal{W}^0(\mathfrak{gl}_1) \cong \mathcal{S}_0(\mathfrak{gl}_1)\,.
\end{equation}
In general, however, we can consider different central extensions of $ \mathfrak{w}_{1+\infty} $ with correspondingly different central charges $ c \in \C$, which we denote by $ \mathfrak{W}_{1+\infty}^c $. When $ c $ is an integer $ r \geq 1 $, \cite{FKRW95} proves  the following isomorphism of vertex algebras 
\begin{equation}\label{eqn:Winfred}
	\mathfrak{W}^r_{1+\infty} \cong \mathcal{W}^{-r+1}(\mathfrak{gl}_r)\,,
\end{equation}
where we recall that $\mathcal{W}^{-r+1}(\mathfrak{gl}_r) $ also has central charge $ c = r $. 

Our main focus is not on the central extension $ \mathfrak{W}_{1+\infty} $ we considered so far, but rather on the central extension $ \mathfrak{W}^r_{1+\infty} $ for $ r \geq 2 $. We are interested in a specific representation of this central extension (see \cite{FKN92}). Consider the Lie subalgebra of $ \mathfrak{w}_{1+\infty} $ generated by the following elements
\begin{equation}
	m^r_{1+\infty}
	=
	\Set{
		e^{-S(z)}
		\left( x(z)^{\alpha} \left( \hbar^{1/2} \frac{\de}{\de x(z)} \right)^{\beta} \right)
		e^{S(z)}
		| \alpha, \beta \in \Z, \, \beta \ge 0
	},
\end{equation}
where we recall the notation $x(z) = \frac{z^r}{r}$ and $S(z) = - \hbar^{-1/2} \frac{z^{r-1}}{r-1} - \frac{1}{2} \log(z^{r-1})$. Then consider the Lie subalgebra of $ \mathfrak{W}_{1+\infty} $ generated by the associated elements using the map \labelcref{eqn:wtoW}. More precisely, define 
\begin{equation}
	M^r_{1+\infty} \coloneqq \langle W_a \,|\, a \in m^r_{1+\infty} \rangle\,.	
\end{equation}
Then $ M^r_{1+\infty} $ is a representation of $ \mathfrak{W}^r_{1+\infty} $, and using the isomorphism \labelcref{eqn:Winfred} we obtain an induced representation of $ \mathcal{W}^{-r+1}(\mathfrak{gl}_r) $. It is proved in \cite{FKN92} that this induced representation of $ \mathcal{W}^{-r+1}(\mathfrak{gl}_r) $ coincides with the one that we considered in \cref{sec:Wconst} using the twisted representation of the Heisenberg algebra $ \mathcal{S}_0(\mathfrak{gl}_r) $ in the context of higher Airy structures \cite{BBCCN24}. 

Let us describe the isomorphism \labelcref{eqn:Winfred} in more detail:
\begin{equation}
\begin{aligned}
	\mathfrak{W}^r_{1+\infty}
	& \xrightarrow{\;\, \cong \,\;}
	\mathcal{W}^{-r+1} (\mathfrak{gl}_r)\,, \\
	- z^{l+k} \bigl( \hbar^{1/2} \de_z \bigr)^l
	& \longmapsto
	\frac{1}{l+1} \Res_{z=0} \left( z^{k+l} \sum_{i=1}^r \norder{ \bigl( \chi^i(z) + \hbar^{1/2} \de_z \bigr)^l \chi^i(z)} \right).
\end{aligned}
\end{equation}
The fields appearing on the right-hand side of the above equation
\begin{equation}
	\widetilde{U}^{l+1}(z) \coloneqq \frac{1}{l+1} \sum_{i=1}^r  \norder{ \left(\chi^i(z) +  \hbar^{1/2}  \de_z\right)^l \chi^i(z)}\,, \qquad 0 \leq l \leq r-1\,,
\end{equation}
form a set of strong generators for the $ \mathcal{W}^{-r+1}(\mathfrak{gl}_r) $ vertex algebra. Let us compare them to the generating fields $ U^{l+1}(z) $ which are defined as elementary symmetric polynomials \labelcref{eqn:We}. For the first few generating fields we get 
\begin{equation}
\begin{aligned}
	U^1(z)
	& =
	\widetilde{U}^{1} (z) \, , \\
	r U^2(z)
	& =
	-\widetilde{U}^2(z) + \frac{1}{2} \hbar^{1/2} \partial \widetilde{U}^1(z) + \frac{1}{2} \norder{\widetilde{U}^1 \widetilde{U}^1} (z) \, , \\
	r^2 U^3(z)
	& = \widetilde{U}^{3} (z) - \hbar^{1/2} \partial \widetilde{U}^2 +\frac{1}{6} \hbar \partial^2 \widetilde{U}^1(z) - \norder{\widetilde{U}^1 \widetilde{U}^2} (z) + \frac{1}{6} \norder{\widetilde{U}^1 \widetilde{U}^1 \widetilde{U}^1} (z) + \frac{1}{2} \norder{\widetilde{U}^1  \hbar^{1/2} \partial \widetilde{U}^1}(z) \, .
\end{aligned}
\end{equation}

Using this, we can express the operators $ W^i_k $ of \cref{eqn:W:modes} in the representation $ M^r_{1+\infty} $ as $ \mathcal{W} $-algebra operators corresponding to Kac--Schwarz operators. 
\begin{equation}\label{eqn:W:as:KS}
\begin{aligned}
	- \frac{1}{r^k} H^1_k
	& =
	\widehat{W}_{ a^{k}} \, , \\
	\frac{1}{r^{k+1}} H^2_k
	& =
	\widehat{W}_{a^k ( b + \frac{k+1}{2} )}
	+ \frac{1}{2} \sum_{k_1+k_2 = k} \norder{ \widehat{W}_{ a^{k_1}} \widehat{W}_{a^{k_2}} } \, , \\
	- \frac{1}{r^{k+2}} H^3_k
	& =
	\widehat{W}_{a^{k+1} c + (k+1) a^k b + \frac{1}{6} (k+2)(k+1) a^k }
	+ \sum_{k_1+k_2 = k} \norder{\widehat{W}_{a^{k_1}} \widehat{W}_{a^{k_2}\left(b + \frac{k_2+1}{2}\right)} } \\
	& \hphantom{ = \widehat{W}_{a^{k+1} c + (k+1) a^k b + \frac{1}{6} (k+2)(k+1) a^k } }
	+ \frac{1}{6} \sum_{k_1+k_2+k_3=k} \norder{ \widehat{W}_{ a^{k_1} } \widehat{W}_{ a^{k_2} } \widehat{W}_{ a^{k_3} }} \, .
\end{aligned}
\end{equation}
As a sanity check, it is straightforward to see that the modes $ W_{a^k(b + \frac{k+1}{2} )}  $ generate a Virasoro subalgebra of central charge $ c = r $. We have also verified \cref{eqn:W:as:KS} by direct computations of the required $ \widehat{W} $ operators and comparison with the $ H^i_k $ \labelcref{eqn:H:cnstrnts}. 

\begin{remark}
	We note that the conjugation by the summand $ -\hbar^{-1/2} \frac{z^{r-1}}{r-1} $ of $ S(z) $ of the Kac--Schwarz operators corresponds precisely to the dilaton shift in the definition of the $ H^i_k $ in \cref{eqn:H:cnstrnts}.
\end{remark}

\subsubsection{Equivalence between $ r $-BGW tau function and the descendant potential $ Z^{\Theta^r} $}

We are ready to formulate our main conjecture, that can be viewed as a ``negative spin" version of the Witten $ r $-spin conjecture. The statement for $ r =2 $ was conjectured by Norbury in \cite{Nor23} and verified up to $ O(\hbar^7) $.

\begin{conjecture}\label{conj:rKdV}
	The descendant potential $ Z^{\Theta^r} $ of the $ \Theta^r $-class coincides with the $ r $-BGW tau function, and thus is a tau function for the $ r $-KdV integrable hierarchy.
\end{conjecture}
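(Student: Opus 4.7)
The plan is to apply Proposition \ref{prop:tau}: to show $Z^{r\text{-BGW}} = Z^{\Theta^r}$ it suffices to verify that $Z^{r\text{-BGW}}$ has the genus expansion form \eqref{eqn:genus:expansion} with symmetric coefficients, satisfies the $r$-th reduction $H^1_k Z = \nu_k Z$ for all $k \geq 1$, and satisfies the string equation $H^r_{-r+2} Z = \mu Z$. Once all three are established, the uniqueness half of Theorem \ref{thm:Wconst} forces $Z^{r\text{-BGW}}$ to coincide with the descendant potential $Z^{\Theta^r}$, and the $r$-KdV statement follows for free from Theorem \ref{thm:KS:operators}.

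The first two conditions are the easy ones. The genus expansion and symmetry of the coefficients are standard for KP tau functions coming from matrix models with external field written in Miwa times. For the $r$-reduction, the Kac--Schwarz operator $a = z^r/r$ of Theorem \ref{thm:KS:operators} (and hence each of its powers $a^k$) stabilises the Sato point $H$ associated to $Z^{r\text{-BGW}}$, so by Proposition \ref{prop:KS:op} one has $\widehat{W}_{a^k} Z^{r\text{-BGW}} = \nu_k Z^{r\text{-BGW}}$ for explicit constants $\nu_k$. The first identity of \eqref{eqn:W:as:KS} then gives $H^1_k Z^{r\text{-BGW}} = -r^k \nu_k Z^{r\text{-BGW}}$ for every $k \geq 1$, which is the $r$-reduction.

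The hard part is the string equation, and this is where the method truly uses the uniqueness clause of Theorem \ref{thm:KS:operators}. The key observation is that the set of Kac--Schwarz operators for a fixed tau function is closed under linear combinations and normal-ordered products, so if every base symbol appearing in \eqref{eqn:W:as:KS} for $i = r$ and $k = -r+2$ belongs to the Kac--Schwarz algebra of $H$, then $H^r_{-r+2}$ automatically acts by a constant on $Z^{r\text{-BGW}}$. Specialising \eqref{eqn:W:as:KS} at $i = 3$, $k = -1$ collapses the leading piece to $\widehat{W}_c$ and leaves only normal-ordered products of the operators $\widehat{W}_{a^{k_i}}$ and $\widehat{W}_{a^{k_i}(b + \tfrac{k_i+1}{2})}$, each of whose symbols sits inside the polynomial algebra generated by $a, b, c$; Theorem \ref{thm:KS:operators} certifies that all three generate Kac--Schwarz operators, which settles $r = 3$. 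For $r = 2$, the string equation and in fact the full Virasoro tower on $Z^{2\text{-BGW}}$ are already established in \cite{GN92, Ale18}, and a parallel uniqueness argument closes that case.

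The main obstacle for $r \geq 4$ lies precisely in this last step. Unpacking \eqref{eqn:W:as:KS} at $k = -r+2$ produces contributions involving the symbols $a^{-r+3}c$, $a^{-r+2}b$ and normal-ordered sums indexed by $k_1+k_2+k_3 = -r+2$ mixing $\widehat{W}_{a^{-j}b}$ and $\widehat{W}_{a^{-j}c}$; these symbols live in $m^r_{1+\infty}$, but Theorem \ref{thm:KS:operators} only guarantees uniqueness of the Sato point from $\{a,b,c\}$ when $r = 2$ or $3$, so one cannot yet conclude that these operators are all Kac--Schwarz. The natural route forward is to enlarge the Kac--Schwarz algebra by exhibiting higher-order operators that stabilise $H$ — most naturally, the differential operator underlying the $r$-Bessel quantum curve \eqref{eqn:qc} together with its conjugates by $a$ and $b$ — and to show that they generate an algebra containing the full symbol of $H^r_{-r+2}$. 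Equivalently, one may attempt a direct verification of $H^r_{-r+2} Z^{r\text{-BGW}} = \mu Z^{r\text{-BGW}}$ from the integral representation of the $\Psi_i$ in Lemma \ref{lem:r:Bessel} via repeated integration by parts, which is the technical heart where the argument currently stops.
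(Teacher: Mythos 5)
Your proposal reproduces the paper's own treatment essentially verbatim: the reduction of the conjecture to the single string equation $H^r_{-r+2}\,Z^{r\textup{-BGW}} = \mu\, Z^{r\textup{-BGW}}$ via \cref{prop:tau} (together with the genus expansion and the $r$-reduction coming from the Kac--Schwarz operator $a$) is exactly \cref{prop:stringequation}, and the resolution of the string equation for $r=2,3$ from the Kac--Schwarz operators $a,b,c$ and \cref{eqn:W:as:KS} is exactly the proof of \cref{thm:r23}, with the case $r\ge 4$ left open in both places for the same reason. The only cosmetic slip is the claim that all symbols in the normal-ordered sums of \cref{eqn:W:as:KS} lie in the polynomial algebra generated by $a,b,c$ --- the terms $a^{k_i}$ with $k_i<0$ do not, though they contribute multiplication operators handled by the already-established constraints $H^1_k$ --- and this matches the level of detail of the paper's own argument.
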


To recap, let us discuss the necessary steps to prove this conjecture. We know that $ Z^{\Theta^r} $ satisfies a set of $ \mc{W} $-constraints that characterise it uniquely. What we do not know yet is the existence of a $ r $-KdV tau function that also satisfies the exact same constraints. Our conjecture is that the $ r $-BGW tau function does satisfy these constraints. For $ r =3 $, \cite{Mah20} studied the Ward identities for the $ 3 $-BGW model, suggesting their equivalence to the string equation. 

\begin{proposition}\label{prop:stringequation}
	\Cref{conj:rKdV} is equivalent to showing that the string equation holds, i.e.,
	\begin{equation}\label{eqn:string}
		H^r_{-r+2} \, Z^{r\textup{-BGW}} = \mu \, Z^{r\textup{-BGW}} \, ,
	\end{equation}
	for some constant $ \mu \in \C$.
\end{proposition}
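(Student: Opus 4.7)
The forward implication is essentially immediate. If \cref{conj:rKdV} holds, then $Z^{r\textup{-BGW}} = Z^{\Theta^r}$, and by \cref{thm:Wconst} the latter satisfies $H^r_{-r+2} \, Z^{\Theta^r} = 0$, so the string equation holds with $\mu = 0$. The interesting direction is the converse: assuming $H^r_{-r+2}\, Z^{r\textup{-BGW}} = \mu \, Z^{r\textup{-BGW}}$, we want to deduce that $Z^{r\textup{-BGW}}$ coincides with the descendant potential $Z^{\Theta^r}$.

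The strategy is to verify the three hypotheses of \cref{prop:tau} for the function $Z = Z^{r\textup{-BGW}}$, after normalising so that its value at $\bm{t} = 0$ is $1$. First, one needs the ``genus expansion'' form \labelcref{eqn:genus:expansion} with symmetric coefficients: this is a standard feature of $\hbar$-expansions of matrix-model partition functions with external field, and in any case symmetry in the indices $a_i$ is automatic from the fact that the exponent is a power series in commuting times. Second, the $r$-th reduction property $H^1_k\, Z^{r\textup{-BGW}} = \nu_k\, Z^{r\textup{-BGW}}$ for $k \geq 1$ follows from the Kac--Schwarz analysis: by \cref{thm:KS:operators} the operator $a = z^r/r$ stabilises the point $H$ in the Sato Grassmannian attached to $Z^{r\textup{-BGW}}$, hence so does every power $a^k$. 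By \cref{prop:KS:op}, the bosonic image $\widehat{W}_{a^k}$ acts on $Z^{r\textup{-BGW}}$ by a constant, and the identification \labelcref{eqn:W:as:KS} translates this into the required action of $H^1_k$. Third, the string equation $H^r_{-r+2}\, Z^{r\textup{-BGW}} = \mu \, Z^{r\textup{-BGW}}$ is precisely our standing assumption. \Cref{prop:tau} then concludes $Z^{r\textup{-BGW}} = Z^{\Theta^r}$, which is \cref{conj:rKdV}.

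The delicate step is the compatibility between the dilaton shift built into the definition \labelcref{eqn:H:cnstrnts} of the operators $H^i_k$ and the Miwa-type parametrisation in which $Z^{r\textup{-BGW}}$ is naturally presented. Concretely, the shift $\widehat{J}_{-r+1} \to \widehat{J}_{-r+1} - 1$ corresponds to translating $t_{r-1}$ by $-1/(r-1)$, and this translation is already absorbed into the factor $S(z) = - \hbar^{-1/2} z^{r-1}/(r-1) - \tfrac{1}{2}\log(z^{r-1})$ defining the basis vectors $\Phi_i(z) = e^{-S(z)} \Psi_i(z^r/r)$ of the Sato point. Thus the Kac--Schwarz operators $a,b,c$ appearing in \cref{thm:KS:operators} are exactly the $e^{-S}$-conjugated versions of $A,B,C$, matching the conjugation that turns $W^i_k$ into $H^i_k$; this is the conceptual content of the dictionary \labelcref{eqn:W:as:KS} and is what ensures that step (ii) above can be read off directly from $a^k \, H \subset H$ without incurring any additional shift. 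With this bookkeeping in place, the proof of the proposition reduces cleanly to an application of \cref{prop:tau}, and the sole remaining substantive input is the string equation itself, which is genuinely a nontrivial identity to be checked (and is indeed established by direct Kac--Schwarz arguments in the cases $r=2,3$ treated in the next section).
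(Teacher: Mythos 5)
Your proposal is correct and follows essentially the same route as the paper: reduce to \cref{prop:tau}, obtain the $r$-th reduction from the Kac--Schwarz operator $a$ via \cref{prop:KS:op} and the dictionary \labelcref{eqn:W:as:KS}, and take the string equation as the hypothesis. The one place where you are lighter than the paper is the verification that $Z^{r\textup{-BGW}}$ actually has the genus-expansion form \labelcref{eqn:genus:expansion}: since the paper defines $Z^{r\textup{-BGW}}$ as the tau function of a point in the Sato Grassmannian (the matrix integral being only formal motivation), ``standard for matrix models with external field'' does not quite apply, and the paper instead deduces the expansion from the full asymptotics of the basis vectors $\Phi_i$ in \cite{AD22}, which show that the $\hbar \to 0$ limit of the Sato point is the origin (tau function $\equiv 1$), combined with the $\hbar^{-1/2}$ rescaling of times in \cref{def:bfcor}. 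This is a gap in rigor rather than in strategy; with that input supplied, your argument is complete.
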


\begin{remark}
	We can view the above equation as specifying the initial condition for the time evolution of the integrable hierarchy, and in this sense can be considered as a string equation in analogy with the $ r $-Witten--Kontsevich tau function. This is a consequence of the form of the operator $ H^r_{-r+2} $. In the grading of Airy structures, we have $H^r_{-r+2} = \hbar \frac{\partial}{\partial t_1} + O(2)$, and thus $ H^r_{-r+2} $ controls the dependence on the time $ t_1 $ of the solution.
\end{remark}

\begin{proof}
	First, we note that $ Z^{r\textup{-BGW}} $ admits a genus expansion of the form \labelcref{eqn:genus:expansion}. Indeed, from the full asymptotic expansion of the basis vectors $ \Phi_i(z) $ derived in \cite[section~4.1]{AD25}, we deduce that taking $ \hbar \to 0 $ gives us the point $ z^0 \wedge z^1 \wedge z^2 \wedge \cdots $ on the Sato Grassmannian. This point is the origin of the Sato Grassmannian and the tau function at this point is identically $ 1 $. Taking into account the additional rescaling of the times in our definition of the boson-fermion correspondence by $ \hbar^{-1/2} $ in \cref{def:bfcor}, this implies that $ Z^{r\textup{-BGW}} $ admits a genus expansion:
	\[
		Z(\hbar;\bm{t})
		=
		\exp\left(
		\sum_{\substack{ g \geq 0, n \geq 1 \\ 2g-2+n > 0}}
		\frac{\hbar^{g-1}}{n!}
		\sum_{a_1,\ldots, a_n > 0}
		\Phi_{g,n}[a_1,\ldots, a_n] \,\,
		t_{a_1} \cdots t_{a_n} \right).
	\]
	Then, thanks to \cref{prop:tau}, we know that showing
	\begin{equation}
		H^1_k \, Z^{r\textup{-BGW}} = \nu_k \, Z^{r\textup{-BGW}} \, , 
		\qquad \text{ for all $k \geq 1$\,, \, \, and } \qquad
		H^{r}_{-r+2} \, Z^{r\textup{-BGW}} = \mu \, Z^{r\textup{-BGW}} \, ,
	\end{equation}
	for some constants $\nu_k, \mu \in \C$, immediately implies that $ Z^{r\textup{-BGW}} $ satisfies all the required $ \mathcal{W} $-constraints. 
	
	The first condition that $ H^1_k $ acts by a constant follows directly, as $ H^1_k = -r^k \widehat W_{a^k} $ from \cref{eqn:W:as:KS} and $ a^k $, for $ k \geq 1 $, is a Kac--Schwarz operator.
\end{proof}

For $ r = 2 $, the full Virasoro constraints including the string equation were  proved by \cite{GN92, Ale18}. Alexandrov's proof in \cite{Ale18} uses the formalism of Kac--Schwarz operators and thus the following proof is identical to his. While we are unable to prove this string equation for any $ r > 3 $\footnote{
	While \cite{MMS96} sketches an argument to prove the string equation for any $ r $, there is a gap in the proof there that we were unable to resolve. We thank A.~Mironov and A.~Morozov for discussions about their work.
}, we prove it in this paper for $ r = 3 $ to obtain one of our main results.

\begin{theorem}\label{thm:r23}
	When $ r = 2 $ or $ 3 $, the descendant potential of the $ \Theta^r $-class coincides with the $ r $-BGW tau function of the $ r $-KdV integrable hierarchy.
\end{theorem}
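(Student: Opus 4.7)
The $r=2$ case is Norbury's conjecture \cite{Nor23}: by Theorem \ref{thm:Wconst} and Proposition \ref{prop:stringequation} it follows from the full Virasoro constraints on the BGW tau function established in \cite{GN92} and re-derived via Kac--Schwarz operators by Alexandrov \cite{Ale18}. The substantive new case is $r=3$, on which I focus below. By Proposition \ref{prop:stringequation}, it is enough to establish the string equation $H^3_{-1}\, Z^{3\textup{-BGW}} = \mu\, Z^{3\textup{-BGW}}$ for some constant $\mu\in\C$.

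Specialising the dictionary \eqref{eqn:W:as:KS} to $r=3$, $k=-1$ yields
\begin{equation*}
	-\tfrac{1}{3}\, H^3_{-1}
	=
	\widehat W_c
	+ \sum_{k_1+k_2=-1}\norder{\widehat W_{a^{k_1}} \widehat W_{a^{k_2}(b + \frac{k_2+1}{2})}}
	+ \tfrac{1}{6}\sum_{k_1+k_2+k_3=-1}\norder{\widehat W_{a^{k_1}} \widehat W_{a^{k_2}} \widehat W_{a^{k_3}}} \, .
\end{equation*}
Theorem \ref{thm:KS:operators} exhibits $a$, $b$ and $c$ as Kac--Schwarz operators for $Z^{3\textup{-BGW}}$, so by Proposition \ref{prop:KS:op} each of $\widehat W_a$, $\widehat W_b$, $\widehat W_c$, and more generally $\widehat W_{a^k}$ and $\widehat W_{a^k b}$, acts on $Z^{3\textup{-BGW}}$ by a scalar.

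The plan is to (i) expand each normal-ordered product into Heisenberg bilinears and trilinears using \eqref{eqn:W:modes}; (ii) apply the Kac--Schwarz eigenvalue relations $\widehat W_{a^k}\, Z^{3\textup{-BGW}} = \nu_k\, Z^{3\textup{-BGW}}$ and their analogues for $\widehat W_b, \widehat W_c$ to reduce each summand to a number times $Z^{3\textup{-BGW}}$; and (iii) collect the surviving contributions into a single scalar $\mu$. Combined with Proposition \ref{prop:stringequation} and Theorem \ref{thm:Wconst}, the resulting identity $H^3_{-1}\, Z^{3\textup{-BGW}} = \mu\, Z^{3\textup{-BGW}}$ will then give $Z^{\Theta^3} = Z^{3\textup{-BGW}}$, hence $3$-KdV integrability.

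The main obstacle is steps (ii)--(iii): the sums over pairs $(k_1,k_2)$ and triples $(k_1,k_2,k_3)$ summing to $-1$ are a priori infinite, and one must verify that only finitely many summands contribute non-trivially. My preferred route is to bypass the direct combinatorics by lifting the $r=3$ quantum curve relation $(-3BC - A)\Psi_1 = 0$, obtained in the proof of Theorem \ref{thm:KS:operators}, from the Sato Grassmannian to the tau function. Concretely, one writes $\widehat W_{BC}$ as $\norder{\widehat W_B\widehat W_C}$ plus an anomaly term coming from the central extension of $\widehat{\mathfrak{gl}}(\infty)$, matches the result against $-\tfrac{1}{3}H^3_{-1}$ via \eqref{eqn:W:as:KS}, and computes the anomaly as the vacuum expectation value of the relevant fermionic bilinear --- exactly as in Alexandrov's argument \cite{Ale18} for $r=2$. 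The uniqueness clause of Theorem \ref{thm:KS:operators} (that $a$, $b$, $c$ determine $Z^{3\textup{-BGW}}$) ensures that this operator-theoretic translation of the quantum curve captures precisely the required string equation and yields the value of $\mu$ explicitly.
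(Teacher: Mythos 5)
Your reduction is the paper's: by \cref{prop:stringequation} it suffices to prove the string equation $H^3_{-1}\,Z^{3\textup{-BGW}}=\mu\,Z^{3\textup{-BGW}}$, and your specialisation of \cref{eqn:W:as:KS} to $r=3$, $k=-1$ (so that the coefficients $(k+1)$ and $\tfrac16(k+2)(k+1)$ of $a^kb$ and $a^k$ vanish and the single-operator term is exactly $\widehat{W}_c$) is correct. But you then stop at a plan and swerve into a different, uncompleted strategy, so the argument is never closed. The paper's point — and the reason it goes to the trouble of proving $cH\subset H$ in \cref{thm:KS:operators} — is that once $H^3_{-1}$ is written in the form of \cref{eqn:W:as:KS}, the leading term $\widehat{W}_c$ acts by a scalar by \cref{prop:KS:op}, and the normal-ordered tails are built from $\widehat{W}_{a^{k_i}}\propto\widetilde{J}_{rk_i}$ and $\widehat{W}_{a^{k_2}(b+\cdot)}$, i.e.\ from the operators already controlled by the Kac--Schwarz property of $a$ and $b$ and the $r$-reduction; this is what "immediately gives the string equation" in the paper's proof. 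Your stated obstacle — that the sums over $k_1+k_2=-1$ are a priori infinite — is a red herring: the identical infinite sums already appear in the definition of the modes in \cref{eqn:W:modes} and are locally finite on formal power series in the times. What you actually leave unproved is that the right-hand side of your displayed identity acts on $Z^{3\textup{-BGW}}$ by a constant.

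The ``preferred route'' you substitute does not repair this. The relation $(-3BC-A)\Psi_1=0$ is a statement about the single wave function $\Psi_1$, not an operator stabilising all of $H$, and the paper uses it only for the uniqueness clause of \cref{thm:KS:operators}; the string equation is never derived from it. Moreover, the matching you propose is inconsistent with the dictionary: at $k=-1$ the single-operator term of \cref{eqn:W:as:KS} is $\widehat{W}_c$, not $\widehat{W}_{bc}$, so identifying a normal-ordered $\norder{\widehat{W}_B\widehat{W}_C}$ plus an anomaly with $-\tfrac13 H^3_{-1}$ does not line up, and the anomaly computation is in any case not carried out. Concretely: replace the detour by the observation that $c$ is Kac--Schwarz and invoke \cref{prop:KS:op} on the term $\widehat{W}_c$, then dispose of the normal-ordered tails using the Kac--Schwarz property of $a$ and $b$; that is the paper's proof.
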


\begin{proof}
	\Cref{prop:stringequation} shows that we only need to prove the string equation.

	For $ r = 2 $, the string equation is given by $H^2_{0} \, Z^{2\textup{-BGW}} = \mu \, Z^{2\textup{-BGW}}$. In \cref{thm:KS:operators}, we showed that $ a $ and $ b $ are Kac--Schwarz operators. Thus the associated $ \mathcal{W} $-algebra operators act by a constant on the tau function. By \cref{eqn:W:as:KS}, we see that $H^2_{0} $ also acts by a constant.

	For $ r = 3 $, the string equation is given by $H^3_{-1} \, Z^{3\textup{-BGW}} = \mu \, Z^{3\textup{-BGW}}$. Again from \cref{thm:KS:operators} we know that $a$, $b$ and $c$ are Kac--Schwarz operators. Thus, the form of $ H^3_{-1} $ in \cref{eqn:W:as:KS} immediately gives the string equation.
\end{proof}

For $ r=2 $, the above theorem proves Norbury's conjecture \cite{Nor23}.

We leave the proof of the string equation in general, equivalently the $ r $-KdV \cref{conj:rKdV}, to future work. Instead of finding a Kac--Schwarz operator that corresponds to the operator $ H^r_{-r+2} $, an alternative approach to proving the string equation is to use the Ward identities for the $ r $-BGW matrix model. 

\appendix
\section{Integrals of Airy functions and Scorer functions}

In this appendix, we consider the coefficients $ P_m(r,-1) $ and $H_k(r,a)$ that we encountered in the calculation of the translation in \cref{sec:TR}. The polynomials $ P_m(r,-1) $ appear in the asymptotic expansion of an integral of the hyper-Airy function \labelcref{eqn:hyperAirya}, while the coefficients for the case $ r = 3 $ appear in the asymptotic expansion of the Scorer functions. The following lemma generalises to higher $r$ the relations
\begin{equation}
\begin{split}
	\Gi(t) & = \Bi(t) \int_{t}^{+\infty} \Ai(s) ds + \Ai(t) \int_{0}^{t} \Bi(s) ds \, , \\
	\Hi(t) & = \Bi(t) \int_{-\infty}^{t} \Ai(s) ds - \Ai(t) \int_{-\infty}^{t} \Bi(s) ds \, , \\
\end{split}
\end{equation}
between the Airy functions $\Ai$ and $\Bi$, and the Scorer functions $\Gi$ and $\Hi$. We do not study the above relation directly, but merely the one appearing by taking the asymptotic expansion on both sides.

\begin{lemma}\label{lem:P-1}
	The following relation holds:
	\begin{equation}\label{eqn:integral:Airy:Scorer}
		P_{m}(r,-1)
		=
		\sum_{\substack{a,b \ge 0 \\ a + b = r-2}} \; \sum_{\substack{i,\ell \ge 0 \\ i + (r-1)\ell = m}}
			(1-r)^{b + (r-1)\ell} \frac{(r\ell + b)!}{\ell! r^{\ell}}
			P_{i-b}(r,a)\,,
	\end{equation}
	with the convention that $P_j(r,a) = 0$ for $j < 0$.
\end{lemma}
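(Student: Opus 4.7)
The plan is to verify that both sides of \labelcref{eqn:integral:Airy:Scorer} satisfy the same first-order recursion in $m$ with matching initial condition. The derivation of the recursion \labelcref{eqn:Ppoly} from the identity $\widetilde{\Ai}^{(a)}_{r,k} = \tfrac{d}{dt}\widetilde{\Ai}^{(a-1)}_{r,k}$, presented in \cite{CCGG22} for $a = 1,\dots,r-2$, applies verbatim at $a = 0$: matching the asymptotic expansions of $\widetilde{\Ai}^{(0)}_{r,k} = \tfrac{d}{dt}\widetilde{\Ai}^{(-1)}_{r,k}$ order by order in $t^{-r/(r-1)}$ yields
\[
	P_m(r,0) - P_m(r,-1) = r\bigl(m - \tfrac{1}{2}\bigr) P_{m-1}(r,-1), \qquad P_0(r,-1) = 1,
\]
which uniquely determines the sequence $\{P_m(r,-1)\}_{m \geq 0}$.

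Let $Q_m$ denote the right-hand side of \labelcref{eqn:integral:Airy:Scorer}. It then suffices to check that $Q_0 = 1$ and $P_m(r,0) - Q_m = r(m - \tfrac{1}{2}) Q_{m-1}$ for all $m \geq 1$. The base case is immediate, since only the term with $b = \ell = 0$, $a = r-2$, $i = 0$ contributes, yielding $Q_0 = P_0(r, r-2) = 1$.

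For the inductive step, the cleanest setup is via generating functions. Setting $F(y) = \sum_{m \geq 0} P_m(r,-1) y^m$, $G_a(y) = \sum_{m \geq 0} P_m(r,a) y^m$, and $M_b(y) = \sum_{\ell \geq 0} \tfrac{(r\ell+b)!}{\ell!\, r^\ell} \bigl((1-r)^{r-1} y^{r-1}\bigr)^\ell$, the extended recursion translates into the first-order linear ODE
\[
	\bigl(1 + \tfrac{ry}{2}\bigr) F(y) + r y^2\, \partial_y F(y) = G_0(y), \qquad F(0) = 1,
\]
while the lemma becomes equivalent to the identity $F(y) = \sum_{b=0}^{r-2}((1-r)y)^b\, G_{r-2-b}(y)\, M_b(y)$. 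The problem thus reduces to verifying that the right-hand side of this last identity satisfies the displayed ODE; this is a direct computation using the recursions \labelcref{eqn:Ppoly} for $G_1,\dots,G_{r-2}$, the analogous cyclic-closure relation $G_0 - G_{r-2} = r y^2\, G_{r-2}' + ry(\tfrac{1}{r} - \tfrac{1}{2}) G_{r-2}$ that follows from $G_0 = G_{r-1}$, and the Pochhammer-type shift $(r\ell+b+1)!/(\ell!\,r^\ell) = (r\ell + b + 1) \cdot (r\ell+b)!/(\ell!\,r^\ell)$ for the coefficients of $M_b$.

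The main technical obstacle is the combinatorial bookkeeping in this ODE verification. The differential operator $1 + \tfrac{ry}{2} + r y^2 \partial_y$ produces three contributions upon acting on each summand $((1-r)y)^b G_{r-2-b}(y) M_b(y)$, one from each factor, and these must be reorganised via the index shifts $b \to b \pm 1$ (using the $G_a$-recursions) and $\ell \to \ell \pm 1$ (using the Pochhammer identity on $M_b$) so that they telescope into $G_0(y)$. The essential mechanism is that the $G_a$-recursion re-expresses $r y^2\, G_{r-2-b}'$ in terms of the neighbouring $G_{r-1-b}$ together with a correction proportional to $(r-1)(b+1) y\, G_{r-2-b}$, and this latter correction, combined with the contribution $r y^2\, G_{r-2-b} M_b'$ from differentiating $M_b$, is precisely absorbed by the Pochhammer shift inside $M_b$, closing the induction.
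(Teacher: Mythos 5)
Your proof is correct, and its skeleton is the same as the paper's: both arguments characterise $P_m(r,-1)$ by the first-order recursion $P_m(r,0) - P_m(r,-1) = r\bigl(m-\tfrac{1}{2}\bigr)P_{m-1}(r,-1)$ with $P_0(r,-1)=1$ (the $a=0$ instance of the difference relation, which is precisely how the paper extends \cref{eqn:Ppoly} to $a=-1$), and then verify that the right-hand side $Q_m$ of \cref{eqn:integral:Airy:Scorer} satisfies the same recursion with the same initial value. Where you genuinely differ is in how that verification is carried out. The paper works directly on the double sum: it expands each $P_{i-b}(r,a)$ via the difference relation at index $a+1$, pairs the resulting contributions across neighbouring values of $(a,b)$, and telescopes, with the periodicity $P_j(r,r-1)=P_j(r,0)$ handling the boundary pair. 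You instead encode the claim as the first-order ODE $\bigl(1+\tfrac{ry}{2}\bigr)F + ry^2 F' = G_0$ for generating functions and check that the product expression $\sum_{b}((1-r)y)^b G_{r-2-b}M_b$ solves it. I verified the key transcriptions: the ODE is exactly equivalent to the target recursion on coefficients, the product formula is exactly the generating function of $Q_m$, and the cancellation mechanism you describe does close — the operator applied to the $b$-th summand yields $G_{r-1-b}M_b$ plus a remainder $(r-1)(b+1)y\,G_{r-2-b}M_b + ry^2G_{r-2-b}M_b'$ which the Pochhammer shift converts into $-((1-r)y)$ times $G_{r-2-b}$ times the index-shifted series, so the sum over $b$ telescopes and the single surviving boundary term is $G_0$ (using that $((1-r)y)^{r-1}$ times the $\ell$-shifted $M_{r-1}$-type series equals $M_0-1$). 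One remark: your ``cyclic-closure relation'' is the $a=r-1$ instance of the difference recursion combined with $G_{r-1}=G_0$; this instance is not listed explicitly in \cref{eqn:Ppoly}, but the paper's own pairing at the boundary uses it just the same, so you are on equal footing. The generating-function route buys more mechanical, less error-prone bookkeeping at the cost of introducing the auxiliary series $M_b$; the paper's route stays entirely at the level of the coefficients.
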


\begin{proof}
	Denote the right hand side of \cref{eqn:integral:Airy:Scorer} by $Q_m(r,-1)$. In order to prove $P_m(r,-1) = Q_m(r,-1)$ for all $m$, let us proceed by induction. The base case $P_0(r,-1) = Q_0(r,-1) = 1$ is easy to verify. Assume now $P_{m-1}(r,-1) = Q_{m-1}(r,-1)$. Recall that the coefficients $P_j(r,a)$ are defined recursively by
	\begin{equation*}
	\begin{cases}
		P_j(r,a) - P_j(r,a-1) = r \left( j - \frac{1}{2} - \frac{a}{r} \right) P_{j-1}(r,a-1)\,,
		\qquad
		\text{for } a = 0,\dots,r-2\,, \\
		P_j(r,0) = P_j(r,r-1)\,,
	\end{cases}
	\end{equation*}
	with initial condition $P_0 = 1$. We can now expand the definition of $Q_m(r,-1)$ by using the recursion relations:
	\[
	\begin{split}
		Q_m(r,-1)
		& =
		\sum_{\substack{a,b \ge 0 \\ a + b = r-2}}
		\overbrace{
		\sum_{\substack{i,\ell \ge 0 \\ i + (r-1)\ell = m}}
			(1-r)^{b + (r-1)\ell} \frac{(r\ell + b)!}{\ell! r^{\ell}}
			P_{i-b}(r,a+1)
		}^{\eqqcolon S_{a,b}}
			\\
		& \qquad
		-
		\sum_{\substack{a,b \ge 0 \\ a + b = r-2}}
		\underbrace{
		\sum_{\substack{i,\ell \ge 0 \\ i + (r-1)\ell = m}}
			(1-r)^{b + (r-1)\ell} \frac{(r\ell + b)!}{\ell! r^{\ell}}
			r \left( i - b - \tfrac{1}{2} - \tfrac{a+1}{r} \right) P_{i-b-1}(r,a)
		}_{\eqqcolon T_{a,b}} .
	\end{split}
	\]
	We can now combine the terms $S_{a-1,b+1}$ with $T_{a,b}$, with the convention that the indices are considered modulo $(r-2)$. We start by considering the extreme case $(a,b) = (0,r-2)$. We have
	\[
		S_{r-2,0}
		=
		\sum_{\substack{i,\ell \ge 0 \\ i + (r-1)\ell = m}}
			(1-r)^{(r-1)\ell} \frac{(r\ell)!}{\ell! r^{\ell}}
			P_{i}(r,0)\,,
	\]
	and on the other hand
	\[
	\begin{split}
		T_{0,r-2}
		& =
		\sum_{\substack{i,\ell \ge 0 \\ i + (r-1)\ell = m}}
			(1-r)^{r-2 + (r-1)\ell} \frac{(r\ell + r-2)!}{\ell! r^{\ell}}
			r \left( i - r + \tfrac{3}{2} - \tfrac{1}{r} \right) P_{i-r+1}(r,0) \\
		& =
		r(m - \tfrac{1}{2})
		\sum_{\substack{i,\ell \ge 0 \\ i + (r-1)\ell = m}}
			(1-r)^{r-2 + (r-1)\ell} \frac{(r\ell + r-2)!}{\ell! r^{\ell}}
			P_{i-r+1}(r,0) \\
		& \qquad
		+ \sum_{\substack{i,\ell \ge 0 \\ i + (r-1)\ell = m}}
			(1-r)^{(r-1)(\ell+1)} \frac{(r\ell + r-2)!}{\ell! r^{\ell}}
			\bigl( r(\ell + 1) - 1 \bigr) P_{i-r+1}(r,0)\,.
	\end{split}
	\]
	By performing the shift $(i,\ell) \mapsto (i + (r-1),\ell-1)$ in the second sum, we obtain $S_{r-2,0}$ except for the term corresponding to $(i,\ell) = (m,0)$, that is $P_m(r,0)$:
	\[
	\begin{split}
		T_{0,r-2}
		& =
		r(m - \tfrac{1}{2})
		\sum_{\substack{i,\ell \ge 0 \\ i + (r-1)\ell = m}}
			(1-r)^{r-2 + (r-1)\ell} \frac{(r\ell + r-2)!}{\ell! r^{\ell}}
			P_{i-r+1}(r,0)
		+
		S_{r-2,0} - P_m(r,0)\,.
	\end{split}
	\]
	All together, we find
	\[
		S_{r-2,0} - T_{0,r-2}
		=
		P_m(r,0)
		- r(m - \tfrac{1}{2})
		\sum_{\substack{i,\ell \ge 0 \\ i + (r-1)\ell = m}}
			(1-r)^{r-2 + (r-1)\ell} \frac{(r\ell + r-2)!}{\ell! r^{\ell}}
			P_{i-r+1}(r,0)\, .
	\]
	The computation for $(a,b) \neq (0,r-2)$ is simpler, as one can simplify the expression to
	\[
		S_{a-1,b+1} - T_{a,b}
		=
		- r(m - \tfrac{1}{2})
		\sum_{\substack{i,\ell \ge 0 \\ i + (r-1)\ell = m}}
			(1-r)^{b + (r-1)\ell} \frac{(r\ell + b)!}{\ell! r^{\ell}}
			P_{i-b+1}(r,a)\, .
	\]
	All together, we find
	\[
		Q_m(r,-1)
		=
		P_m(r,0)
		- r(m - \tfrac{1}{2})
		\sum_{\substack{a,b \ge 0 \\ a+b=r-2}} \sum_{\substack{i,\ell \ge 0 \\ i + (r-1)\ell = m}}
			(1-r)^{b + (r-1)\ell} \frac{(r\ell + b)!}{\ell! r^{\ell}}
			P_{i-b+1}(r,a)\,.
	\]
	To conclude, by shifting $i \mapsto i-1$, we recognise $Q_{m-1}(r,-1)$ which equals $P_{m-1}(r,-1)$ by the induction hypothesis:
	\[
		Q_m(r,-1)
		=
		P_m(r,0)
		- r(m - \tfrac{1}{2}) P_{m-1}(r,-1)
		=
		P_m(r,-1)\,,
	\]
	where in the last equality we used the recursion relation defining $P_m(r,-1)$. 
\end{proof}

\printbibliography

\end{document}